\DeclarePairedDelimiter\ceil{\lceil}{\rceil}
\DeclarePairedDelimiter\floor{\lfloor}{\rfloor}
\newtheorem{thm}{Theorem}
\newtheorem{rem}{Remark}
\newtheorem*{remark}{Remark}
\newtheorem{prop}{Proposition}
\newtheorem{lem}{Lemma}
\newcommand{\beql}[1]{\begin{equation}\label{#1}}
\newcommand{\eeq}{\end{equation}}
\newcommand{\sumstar}{\;\sideset{}{^*}\sum}
\newcommand{\sumb}{\sideset{}{^\flat}\sum}
\newcommand{\sumc}{\sideset{}{^\clubsuit}\sum}
\newcommand{\sumone}{\sideset{}{^1}\sum}
\newcommand{\sumsharp}{\sideset{}{^{\sharp}}\sum}
\newcommand{\sumd}{\sideset{}{^d}\sum}
\newcommand{\bx}{\mathbf{x}}
\newcommand{\fra}[1]{\mathfrak{#1}}
\newcommand{\fC}{\mathfrak{C}}
\newcommand{\fw}{\mathfrak{w}}
\newcommand{\fS}{\mathfrak{S}}
\newcommand{\cO}{\mathcal{O}}
\newcommand{\tRe}{\textup{Re }}
\newcommand{\tIm}{\textup{Im }}
\newcommand{\bfrac}[2]{\left(\frac{#1}{#2}\right)}
\newcommand{\V}{\mathcal V }
\newcommand{\E}{\mathcal E}
\newcommand{\B}{\mathcal B}
\newcommand{\Q}{\mathcal Q}
\newcommand{\C}{\mathcal C}
\newcommand{\A}{{\mathcal A}}
\newcommand{\R}{\mathcal R}
\newcommand{\Scal}{\mathcal S}
\newcommand{\T}{\mathcal T}
\newcommand{\U}{\mathcal U}
\newcommand{\I}{\mathcal I}
\newcommand{\D}{\mathcal D}
\newcommand{\N}{\mathcal N}
\newcommand{\Cf}{\mathfrak C}
\begin{document}
\title{Prime values of a sparse polynomial sequence}
\author{Xiannan Li}
\address{Mathematics Department\\
	138 Cardwell Hall\\
	Manhattan, KS 66506}
\email{xiannan@math.ksu.edu}

\subjclass[2010]{Primary: 11N32, 11N36, Secondary: 11M41}

\begin{abstract} 
A distinguishing feature of certain intractable problems in prime number theory is the sparsity of the underlying sequence.  Motivated by the general problem of finding primes in sparse polynomial sequences, we give an estimate for the number of primes of the shape $x^3 + 2y^3$ where $y$ is small.  
\end{abstract}

\maketitle
\section{Introduction}
In this paper, we are interested in prime values of polynomials.  A simple and quite classical type of question asks whether a given polynomial $P$ takes on infinitely many prime values.  When $P$ is a linear polynomial in one variable, this problem was solved by Dirichlet, and the question for higher degree polynomials of one variable remain a deep open problem.  In particular, one of Landau's famous problems on primes asks for a proof that there are infinitely many primes of the form $a^2+1$, for integer $a$.

Relatively recently, remarkable results have appeared on prime values of polynomials of two variables.  Here, a classical result is that there are infinitely many primes of the form $a^2+b^2$.  Indeed, by a result of Fermat, primes of that form are essentially the same as primes of the form $4n+1$, so that this reduces to a special case of Dirichlet's theorem.  Interestingly, Fouvry and Iwaniec \cite{FouI} were able to understand primes of the form $a^2 + b^2$ where $b$ is also prime.  This was generalized very recently by Lam, Schindler and Xiao \cite{LSX}.  

Now define the exponential density of the sequence of values of the polynomial $P(a, b)$ to be 
\begin{equation}
\inf\{\lambda: \#\{(a, b) \in \mathbb{N}^2: P(a, b) \leq x\} \ll x^\lambda\}.
\end{equation}  For instance, the density of the sequences defined by $a^2+b^2$ and the aforementioned restricted form studied by Fouvry and Iwaniec \cite{FouI} are $1$, the same as the set of all natural numbers.   

It is much more challenging to prove a similar result when the
sequence given by $P(a, b)$ has density less than $1$.  The first
result in this direction was the breakthrough of Friedlander and
Iwaniec \cite{FI} on the prime values of $a^2+b^4$, which was followed by the result of Heath-Brown \cite{HB} on prime values of $a^3+2b^3$.  This was later generalized by Heath-Brown and Moroz for more arbitrary cubic forms in \cite{HBM}.  Heath-Brown and the author recently proved an analogous result on prime values of the form $a^2 +p^4$ where $p$ is prime \cite{HBL}.  Very recently, J. Maynard generalized Heath-Brown's result to similar restrictions of norms forms which are not too sparse in \cite{JM},

The sparsest sequence for which we have this type of result is that of $a^3+2b^3$, which has exponential density $2/3$.  In contrast the exponential density of the sequence from Landau's problem on $n^2+1$ has exponential density $1/2$.  It turns out that our current methods fail in numerous places once the exponential density drops below $2/3$.  The purpose of this work is to investigate a sequence with density somewhere between $1/2$ and $2/3$ and to illuminate some of the structural differences.

The proofs of these results broadly depend on two types of estimates.  The first, sometimes referred to as Type I estimates, gives information on the behavior of these sequences in arithmetic progressions on average.  The second, sometimes referred to as Type II estimates, involves bounds on certain bilinear sums attached to these sequences.  Achieving the latter  type of estimate is the most difficult part and is the ingredient which breaks the parity barrier.

With current methods, in order to understand such bilinear sums, it is crucial that these sequences are all special values of norm forms of some number field.  Given this, there are two main factors which affect the difficulty of the problem.  The first, already alluded to above, is that the problem tends to be more difficult the sparser the sequence.  The second is that for certain homogeneous polynomials, such as $a^3+2b^3$, estimating the bilinear sum involves a restriction of a variable to a one dimensional lattice, and this makes the problem more tractable.  This is an important structural advantage in Heath-Brown's work on $a^3+2b^3$, which is the sparsest such sequence for which we have such a result.  

Both the asymptotic sieve for primes from Friedlander and Iwaniec \cite{FI3} and Harman's alternative sieve \cite{Har} as used in Heath-Brown's work \cite{HB} fail to prove asymptotic estimates for sequences with exponential density strictly lower than $2/3$.  Nonetheless, we can still count primes in a sparser sequence.  Here, we do not ask for asymptotics, but rather estimates of the right order of magnitude.  To be precise, we prove the following result.

\begin{thm}\label{thm:1}
Let $X \ge 3$, $Y = X^{1-\gamma}$.  There exists an absolute constant $B_0>0$ such that for $\eta = \frac{1}{\log^{B_0} X} > 0$ and for all $0<\gamma < \frac{5}{67}$,
\begin{equation}
\#\{x^3+2y^3: x \in (X, X(1+\eta)], y\in (Y, Y(1+\eta)) \textup{ and } x^3+ 2y^3 \textup{ is prime}\}
\asymp \frac{\eta^2 XY}{\log X},
\end{equation} where the integers in $\{x^3+2y^3: x \in (X, X(1+\eta)], y\in (Y, Y(1+\eta))\}$ are counted with multiplicity, and $X$ is sufficiently large in terms of $B_0$.
\end{thm}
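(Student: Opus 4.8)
The plan is to prove the two bounds in $\asymp$ separately; the upper bound is routine and the lower bound is the substance. Write $\mathcal{A}$ for the multiset $\{x^3+2y^3 : x\in(X,X(1+\eta)],\ y\in(Y,Y(1+\eta)]\}$, let $\mathcal{A}_d=\{n\in\mathcal{A} : d\mid n\}$, and let $S(\mathcal{A},z)$ count the $n\in\mathcal{A}$ having no prime factor below $z$. For the upper bound one notes that the quantity in the theorem is at most $S(\mathcal{A},z)$ with, say, $z=(\eta^2 XY)^{1/10}$; a direct count of $\#\mathcal{A}_d$ for $d\le z^{O(1)}$ by lattice-point counting in the box shows $\#\mathcal{A}_d = g(d)\,\eta^2 XY+(\text{error})$ for a multiplicative $g$ with $p\,g(p)=\#\{u\bmod p : u^3\equiv -2\}$ on average equal to $1$ (this is Chebotarev for the $S_3$-extension $\mathbb{Q}(\sqrt[3]{2},\zeta_3)$: $\tfrac16\cdot 3+\tfrac12\cdot 1+\tfrac13\cdot 0=1$), so the sieve has dimension $1$ and Selberg's upper bound sieve yields $S(\mathcal{A},z)\ll \eta^2 XY/\log X$.

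For the lower bound I would run Harman's sieve, following the architecture of Heath-Brown's work on $a^3+2b^3$ but adapting it to the thin range $y\in(Y,Y(1+\eta)]$. Since $n\asymp X^3$, a lower bound of the right order for the number of primes in $\mathcal{A}$ follows from an asymptotic evaluation of $S(\mathcal{A},X^{\tau_0})$ for a suitable $\tau_0$ together with a Buchstab iteration up to $X^{3/2}\asymp\sqrt{n}$. The two analytic inputs are a Type I estimate, bounding $\sum_{d\le D}\big|\#\mathcal{A}_d - g(d)\,\eta^2 XY\big|$ for $D$ as large as possible, and a Type II estimate, bounding bilinear sums $\sum_{M<m\le 2M}\sum_n a_m b_n\,\mathbbm{1}[mn\in\mathcal{A}]$ for $M$ in a range $[X^{\alpha},X^{\beta}]$. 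The Type I estimate reduces to counting points of the box in prescribed residue classes modulo $d$; summing first over the long variable $x$ and completing the sum over the short variable $y$ by Poisson summation, the binding constraint is the length $\eta Y=X^{1-\gamma+o(1)}$ of the $y$-interval, so the trivial Type I level is $D\approx X^{1-\gamma}$, with some further range recovered from cancellation (Weil-type bounds for the resulting exponential sums) in the completion. That this level of distribution shrinks as $\gamma$ grows is one of the structural differences from the $a^3+2b^3$ problem.

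The Type II estimate is where the parity barrier is broken and is the step I expect to be the main obstacle. One passes to the number field $K=\mathbb{Q}(\sqrt[3]{2})$, which has class number one and unit rank one, and in which $x^3+2y^3=N_{K/\mathbb{Q}}(x+y\sqrt[3]{2})$; a factorization $mn$ of an element of $\mathcal{A}$ then corresponds, up to units and the finitely many prime ideals above each rational prime, to a factorization $x+y\sqrt[3]{2}=\varepsilon\mu\nu$ of algebraic integers, and the box constraint on $(x,y)$ becomes the condition that $\mu\nu$ lies in a thin region $\mathcal{B}\subset\mathcal{O}_K$ concentrated near the line $\mathbb{R}\cdot 1+Y\sqrt[3]{2}$. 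One detects the linear inequality cutting out $\mathcal{B}$ by additive characters, reducing the bilinear sum to exponential sums $\sum_\mu\sum_\nu\alpha_\mu\beta_\nu\, e(L(\mu\nu))$ with $L$ linear in the coordinates of $\mu\nu$, applies Cauchy--Schwarz to discard the coefficients on one side, and bounds the diagonal by counting pairs $(\mu_1,\mu_2)$ with $\mu_1\nu,\mu_2\nu\in\mathcal{B}$ — the thinness of $\mathcal{B}$ forcing $\mu_1-\mu_2$ into a narrow region and so limiting the count. This is the analogue of Heath-Brown's reduction to a one-dimensional lattice; the sparser the sequence (the larger $\gamma$), the larger this diagonal is relative to the main term and the narrower the admissible range $[X^\alpha,X^\beta]$, and determining that range precisely is the crux.

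Finally one assembles these inputs: Buchstab's identity decomposes $S(\mathcal{A},X^{3/2})$ into subsums that are each of Type I, of Type II (possibly after a role-reversal to re-expose the bilinear structure), or manifestly nonnegative and so discardable in a lower bound. Optimizing this combinatorial decomposition against the Type I level $D\approx X^{1-\gamma}$ and the Type II range $[X^\alpha,X^\beta]$ forces $\gamma<\tfrac{5}{67}$: beyond this threshold the available ranges fail to cover enough of the interval of scales for the discarded Buchstab terms to be provably smaller than the main term, which itself is $\gg \eta^2 XY/\log X$ by positivity of the relevant singular series. The count being taken with multiplicity is harmless, since by the rank-one unit action each prime value arises from $O(1)$ pairs $(x,y)$ in the box.
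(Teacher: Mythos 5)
Your high-level architecture is the right one and matches the paper in outline: pass to $K=\mathbb{Q}(\sqrt[3]{2})$, use a Buchstab decomposition with Type~I and Type~II inputs (the paper actually runs Heath-Brown's version, comparing $\A$ against the unrestricted sequence $\B$ of ideals of norm $\asymp X^3$ rather than Harman's sieve directly, but that is a bookkeeping choice), and in the Type~II estimate apply Cauchy--Schwarz to discard one side and count pairs $(\beta_1,\beta_2)$ that share an $\alpha$, using the thinness of the box to force $(\hat\beta_1,\hat\beta_2)$ into a narrow region. Your Type~I remark undersells the actual level (it is $(XY)^{1-\epsilon}$, not $X^{1-\gamma}$; see Lemma~\ref{lem:AtypeI}), but that is minor.

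The real gap is that your Type~II sketch stops precisely where the hard work of the paper begins. Cauchy--Schwarz plus the geometric restriction $\|\gamma_1\times\gamma_2\|\ll (Y/X)V^{2/3}$, $\Delta(\beta_1,\beta_2)\ll (Y/X)(V/X)$, and the divisor bound give only $\T_2\ll YV\log^c X$ (the ``trivial bound'' \eqref{eqn:trivialT2}), which is a factor $Y/X$ too large to close the estimate. Recovering that factor requires, first, the finer geometric restriction of Lemma~\ref{lem:beta2region} (exploiting $\gamma_2\cdot(\gamma_1\times L_2(\hat\beta_1))\ll\Delta Y$), which shrinks the box for $\hat\beta_2$ by another $Y/X$ in the generic case; and, second, actually using the arithmetic structure of the coefficients $f_\beta$ rather than only their size. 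The latter is done by dissecting into small cubes $\C_i$ and writing the condition $D\mid\gamma_1\times\gamma_2$ as a congruence $\hat\beta_1\equiv\lambda\hat\beta_2\pmod D$, expanding in additive characters, and splitting into a large-sieve estimate (Lemma~\ref{lem:largesieveexpbdd}) for moderate moduli and a Siegel--Walfisz-type estimate in short cubes (Lemma~\ref{lem:SWbdd}) for small moduli. That Siegel--Walfisz estimate — the analogue for this weighted sequence of primes in intervals $(x,x+x^{7/12+\epsilon})$, proved via Heath-Brown's identity, Duke's large sieve for Gr\"ossencharacter Dirichlet polynomials, Montgomery--Huxley large-value estimates, and Coleman's zero-free region — is the main new ingredient of the paper, and it is the $7/12$-type exponent there, not the generic interplay of Type~I and Type~II ranges, that produces the threshold $\gamma<5/67$. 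Without some substitute for this component the diagonal count cannot be improved past $YV$, and the Buchstab optimization alone will not rescue the argument for any $\gamma>0$.
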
 
Thus, the exponential density of our sequence is $2/3 - \gamma/3$ in contrast to Heath-Brown's work \cite{HB}, where the exponential density is $2/3$.  The bound $\gamma < 5/67$ can be improved with more attention to numerical optimization, but that will not be the focus of the current work.  The parameter $B_0$ is introduced for technical convenience; clearly Theorem \ref{thm:1} with larger values of $\eta$ is implied by our Theorem.

Our methods depart from the work of Heath-Brown \cite{HB} in two ways.  The first, which enables us to try to understand sparser sequences, is simply that we neglect certain difficult regions at the expense of sacrificing asymptotic information for lower bounds.  The second and more substantial change occurs in the treatment of the bilinear sums.  To start, we need to identify certain narrow regions of interest and treat them accordingly.  Then congruence problems restricted to narrow regions occurs here, and we anticipate that it occurs in problems involving other sparse sequences given by polynomials.  To be more precise, one may be interested in understanding sums of the type
\begin{equation}
\sum_{\substack{\beta_1 \in \C_1, \beta_2 \in C_2\\ \beta_1 \equiv \lambda \beta_2 \bmod D}} f(\beta_1)f(\beta_2),
\end{equation}on average over $\lambda \bmod D$, where $\C_1$ and $\C_2$ are cubes in $\mathbb{R}^n$ of side $S_0$, $f$ are somewhat arbitrary coefficients and $D$ is large compared to $S_0$.  It is reasonable to expect the number of points satisfying the congruence to be around $S_0^6/D^3$ on average over $\lambda \bmod D$ as long as $D < S_0^{2-\epsilon}$, and this can be proven when $n=1$ (see \cite{HBL}), but this is a challenge to understand for larger $n$ when $D > S_0$.

This also makes it necessary for us to understand results about arithmetic sequences in small regions to small moduli (see Lemma \ref{lem:SWbdd}).  In this direction, we are able to prove a result analogous to essentially the best primes in short intervals result, namely that there is the expected number of primes in intervals of the form $\left(x, x+x^{\frac{7}{12} + \epsilon}\right)$.  Since our sequence is not merely a sequence of prime ideals, we do not go through zero density estimates, but rather attack the problem directly via Heath-Brown's generalization of Vaughan's identity.  It turns out that the quality of this auxiliary result actually limits the quality of the main result in Theorem \ref{thm:1}.  \footnote{Specifically, in our treatment, we neglect two regions.  In one of the regions, the $7/12$ exponent limits the (logarithmic) width of the region to be less than $5/67$.  As noted before, it is still possible to squeeze out some numerical improvements by working on the other region, but we shall not focus on that here.}  
\\\\
{\bf Acknowledgement.} I would like to thank Professor Heath-Brown for many stimulating conversations.  I am also grateful to the anonymous referees for their careful reading of the paper and helpful editorial comments.  This work was partially supported by EPSRC grant EP/K021132X/1, a KSU Startup Grant and a Simons Foundation Collaboration Grant (524790).

\section{Notation and outline of the proof}\label{sec:notationoutline}
Here, we introduce notation and provide an outline of the proof.  Proofs of this form involve many technical estimates, and we will refer the reader to the previous work of Heath-Brown \cite{HB} where appropriate.  

For this paper, we will always let $\epsilon$ denote an arbitrary small positive number, which is not necessarily the same from line to line.  To be precise, a statement $P(\epsilon)$ should be interpreted as "for all sufficiently small $\epsilon>0$, $P(\epsilon)$ holds".

For this paper, we fix the number field $K = \mathbb{Q}(\sqrt[3]{2})$ and its ring of integers $\cO_K$.  We write $I$ and $J$ for integral ideals of $\cO_K$, and write $(x+y\sqrt[3]{2} + z\sqrt[3]{4})$ for the ideal generated by $x+y\sqrt[3]{2} + z\sqrt[3]{4} \in \cO_K$.  Further, let $N$ be the norm form from $K$ to $\mathbb{Q}$.  For future use, let us also define
\begin{equation}\label{eqn:eps0}
\epsilon_0 = 1+ \sqrt[3]{2} + \sqrt[3]{4}
\end{equation} 
and note that $\epsilon_0$ is the fundamental unit of $K$.

We will write $(x, y)$ to be the greatest common divisor of $x$ and $y$.  In the latter half of the paper, we will also use $(a, b, c)$ to denote an element in $\mathbb{R}^3$.  Note that $(a, b, c)$ will never denote the greatest common divisor of $a, b$ and $c$.  Further, we will never need to define an ideal generated by two or more elements.

Unfortunately, it is also common to write $(X, X+Y)$ to denote an open interval with endpoints $X$ and $X+Y$.  To avoid confusion, in this paper, we will use the less common notation $]X, X+Y[$ to denote the open interval with endpoints $X$ and $X+Y$ and similarly $]X, X+Y]$ to denote the half open interval excluding $X$ but including $X+Y$.


Since $K$ has class number one, we are able to pass from ideals to the elements which generate those ideals fairly easily.  Of course, it is possible for two distinct elements $x+y\sqrt[3]{2} + z\sqrt[3]{4}, x'+y'\sqrt[3]{2} + z'\sqrt[3]{4} \in \mathbb{Z}[\sqrt[3]{2}]$ to be associates and thus generate the same ideal.  However, as in Heath-Brown's work \cite{HB}, we will construct our sets to avoid this problem.  To be specific, we will be examining 
\begin{equation}
\A = \{(x+y\sqrt[3]{2}) : x \in ]X, X(1+\eta)], y\in ]Y, Y(1+\eta)], x, y \in \mathbb{Z}, (x, y) = 1\}
\end{equation}and
\begin{equation}
\B = \{J: N(J) \in ]3X^3, 3X^3(1+\eta)[\}.
\end{equation}
Recall $\eta = \frac{1}{\log^{B_0} X}$ and $X \ge 3$.  We will choose $B_0 \ge 1$ so that $\eta < 1$, and no two $x+y\sqrt[3]{2}$ occurring in the definition of $\A$ are associates, and so $A$ contains distinct ideals.  Specifically,  the fundamental unit in \eqref{eqn:eps0} satisfies $|\epsilon_0| = |1+ \sqrt[3]{2} + \sqrt[3]{4}| > 3$ so any unit $u$ with $|u|\neq 1$ satisfies either $|u|> 3$ or $|u| < 1/3$.  On the other hand, for $x \in ]X, X(1+\eta)], y\in ]Y, Y(1+\eta)]$, 
\begin{equation}\label{eqn:multiplyunit}
(X+Y\sqrt[3]{2}) < |x+y\sqrt[3]{2}| \leq (X+Y\sqrt[3]{2})(1+\eta)
\end{equation}
while $u(x+y\sqrt[3]{2})$ satisfies 
\begin{align*}
|u(x+y\sqrt[3]{2})| &< \frac 13 (X+Y\sqrt[3]{2})(1+\eta) \textup{ or}\\
|u(x+y\sqrt[3]{2})| &> 3 (X+Y\sqrt[3]{2}) 
\end{align*}and so cannot satisfy \eqref{eqn:multiplyunit} for $\eta < 1$.  

We further define $\pi(\A)$ and $\pi(\B)$ to be the number of prime ideals in $\A$ and $\B$ respectively.  Although the ideals in $\A$ have norm of somewhat different size as compared to the ideals in $\B$, they are still comparable sequences.  Our definition of $\B$ matches the definition of $\B^{(K)}$ in \cite{HB}, so this will be more convenient when referencing some preliminary results.

For convenience, we also define the sequence
\begin{equation}
\A^0 = \{a(n)\},
\end{equation}where
\begin{equation}
a(n) = \#\{x\in ]X, X(1+\eta)], y \in ]Y, Y(1+\eta)]: x, y \in \mathbb{Z}, (x, y) = 1, n = x^3 + 2y^3\}.
\end{equation}  In this paper, notation like $\{a(n)\}$ represents a sequence, despite the similarly to set notation.  Proving Theorem \ref{thm:1} is the same as proving 
$$\pi(\A^0) := \sum_p a(p) \asymp \frac{\eta^2 XY}{\log X}.
$$
Conjecturally, 
$$\pi(\A^0) \sim \sigma_0 \frac{\eta^2 XY}{3 \log X},
$$for
$$\sigma_0 = \prod_p \left(1 - \frac{\nu_p - 1}{p}\right),
$$where $\nu_p$ denotes the number of solutions of the congruence $x^3 \equiv 2 \bmod p$.

The primes in $\A^0$ correspond to prime ideals in $\A$.  To be specific, we first cite Lemma 3.1 of \cite{HB}.

\begin{lem}\label{lem:firstdegreeprimes}
	No prime ideal of degree greater than one can divide an element of $\A$, nor can a product of two distinct first degree prime ideals of the same norm. Thus if a square-free ideal $R$ divides an element of $\A$, then $N(R)$ must be square-free.	
\end{lem}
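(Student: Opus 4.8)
The statement is purely algebraic, concerning the arithmetic of $K = \mathbb{Q}(\sqrt[3]{2})$, and does not involve the ranges $]X,X(1+\eta)]$ or $]Y,Y(1+\eta)]$ at all — the key constraint is that the elements of $\A$ have the special shape $x + y\sqrt[3]{2}$ with $x,y \in \mathbb{Z}$ and $(x,y) = 1$, i.e.\ they lie in the $\mathbb{Z}$-submodule $\mathbb{Z} \oplus \mathbb{Z}\sqrt[3]{2}$ of $\cO_K$ with coprime coordinates. The plan is to argue by contradiction on each of the three assertions, exploiting this rank-two restriction together with the fact that $\{1,\sqrt[3]{2},\sqrt[3]{4}\}$ is an integral basis.

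\emph{First assertion (no degree $\ge 2$ prime divides an element of $\A$).} Suppose a prime ideal $\mathfrak{P}$ of $\cO_K$ with residue degree $f(\mathfrak{P}) \ge 2$ divides $(x+y\sqrt[3]{2})$, lying above the rational prime $p$. Since $K/\mathbb{Q}$ is cubic, $f(\mathfrak{P}) = 2$ forces $p\cO_K = \mathfrak{P}\mathfrak{q}$ with $f(\mathfrak{q})=1$ (or $f(\mathfrak{P}) = 3$ forces $p$ inert). In either case $\mathfrak{P}$ has no nontrivial intersection with the rank-two lattice in a way that is compatible with divisibility of a primitive element: reducing mod $\mathfrak{P}$, the image of $x + y\sqrt[3]{2}$ must be $0$ in the residue field $\mathbb{F}_{p^2}$ (or $\mathbb{F}_{p^3}$), but $1$ and $\sqrt[3]{2}$ reduce to $\mathbb{F}_p$-linearly independent elements of that residue field unless $p \mid y$, whereupon $p \mid x$ as well, contradicting $(x,y)=1$. (One must check that $1$ and the image of $\sqrt[3]2$ remain independent over $\mathbb{F}_p$; this is where one uses that $x^3 \equiv -2y^3$ and the factorization type of $p$ in $K$.) So the hard part is bookkeeping the splitting behaviour of $p$; the ramified prime $p=3$ and $p \mid 6$ must be handled separately by direct inspection of $\cO_K/\mathfrak{P}$.

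\emph{Second assertion (no product $\mathfrak{P}_1\mathfrak{P}_2$ of distinct degree-one primes of equal norm divides an element of $\A$).} Here $\mathfrak{P}_1 \ne \mathfrak{P}_2$ both lie above the same rational prime $p$ with $N(\mathfrak{P}_1) = N(\mathfrak{P}_2) = p$. If $\mathfrak{P}_1\mathfrak{P}_2 \mid (x+y\sqrt[3]{2})$, then $x + y\sqrt[3]{2} \equiv 0$ modulo both $\mathfrak{P}_1$ and $\mathfrak{P}_2$; writing $\sqrt[3]{2} \equiv r_i \pmod{\mathfrak{P}_i}$ with $r_i \in \mathbb{Z}$, we get $x + y r_1 \equiv 0 \pmod p$ and $x + y r_2 \equiv 0 \pmod p$, hence $y(r_1 - r_2) \equiv 0 \pmod p$. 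Since $\mathfrak{P}_1 \ne \mathfrak{P}_2$ correspond to distinct roots of $t^3 \equiv 2 \pmod p$, we have $r_1 \not\equiv r_2 \pmod p$, so $p \mid y$, and then $p \mid x$, again contradicting $(x,y) = 1$. This is the cleanest of the three and follows the same template.

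\emph{Third assertion.} This is now a formal consequence: if $R \mid (x+y\sqrt[3]{2})$ is square-free and $N(R)$ were not square-free, then $N(R)$ would be divisible by $p^2$ for some prime $p$, which (given square-freeness of $R$ as an ideal) forces either a single prime of $R$ above $p$ of degree $\ge 2$, or two distinct degree-one primes above $p$ dividing $R$ — both excluded by the first two assertions. Hence $N(R)$ is square-free. I expect the main obstacle to be the case analysis in the first assertion around the primes dividing $2\cdot 3 = 6$ (the ramified prime $3$ and any anomalies at $p=2$), but since this lemma is quoted verbatim as Lemma 3.1 of \cite{HB}, the cleanest route is simply to cite that proof and remark that the coprimality condition $(x,y)=1$ in our $\A$ is exactly the hypothesis used there.
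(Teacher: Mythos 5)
The paper does not prove this lemma; it simply cites Lemma~3.1 of \cite{HB}, exactly as you anticipate at the end of your proposal. Your direct argument is correct and is essentially the one used by Heath-Brown, but two of your worries are unnecessary. For the first assertion, the linear-independence step is immediate once one recalls that $\cO_K = \mathbb{Z}[\sqrt[3]{2}]$ is monogenic: the residue field $\cO_K/\mathfrak{P}$ is then $\mathbb{F}_p[\bar\alpha]$ where $\bar\alpha$ is the image of $\sqrt[3]{2}$, so if $\mathfrak{P}$ has residue degree $\ge 2$ then $\bar\alpha \notin \mathbb{F}_p$ and $\{1,\bar\alpha\}$ are automatically $\mathbb{F}_p$-independent; no appeal to $x^3 \equiv -2y^3$ or to the splitting type is needed. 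Likewise your concern about $p \mid 6$ is moot: the ramified primes $2$ and $3$ each lie below a single degree-one prime ideal ($(2)=(\sqrt[3]{2})^3$ and $(3)=\mathfrak{p}_3^3$), so there are no degree-$\ge 2$ primes above them, and there is no special case to inspect. Your treatment of the second and third assertions is clean and correct.
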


For instance, Lemma \ref{lem:firstdegreeprimes} implies that any prime ideal $(x+y\sqrt[3]{2})$ must be degree one with norm $x^3+2y^3$ being prime.  Of course, if $x^3 + 2y^3$ is prime, then the ideal $(x+y\sqrt[3]{2})$ must also be prime.  The reader should note that although $\pi(\A^0)$ appears to be counting primes with multiplicity $a(p)$, we always have $a(p) =0$ or $a(p) = 1$ by the same reasoning as the ideals in $\A$ being distinct.  Thus
\begin{equation}
\pi(\A) = \pi(\A^0).
\end{equation}
We further define
\begin{equation}
\B^0 = \{N(J)\in ]3X^3, 3X^3(1+\eta)[\},
\end{equation}  which is the integer analogue of $B$.  Then the number of primes in $\B^0$ is
\begin{equation}
\pi(\B^0) := \sum_{p \in \B^0} 1 \sim \frac{3\eta X^3}{3\log X}.
\end{equation}  Note that the primes in $\B^0$ correspond to first degree prime ideals in $\B$.  Further, the number of prime ideals in $\B$ which are not first degree is quite small since second degree prime ideals have norm $p^2 \ll X^3$ while inert prime ideals have norm $p^3 \ll X^3$ for some integer primes $p$.  Thus
\begin{equation}
\pi(\B) \sim \pi(\B^0).
\end{equation}

Let
\begin{equation}
\nu = \frac{\sigma_0 \eta Y}{3X^2},
\end{equation}so that conjecturally
$$\pi(\A) = \pi(\A^0) \sim \nu \pi(\B^0) \sim \nu \pi(\B).
$$Our main goal is to show that
$$\pi(A) - \nu \pi(\B)
$$is small.  To do this, we shall perform the same sieving procedure on $\A$ and $\B$, working over the field $K = \mathbb{Q} (\sqrt[3]{2})$.  For convenience, we fix $\C = \A$ or $\C = \B$ for the rest of this paper.

As usual, for any integral ideal $E$, we let
\begin{equation}
\C_E = \{I\in \C: E|I\},
\end{equation}and
\begin{equation}
S(\C, z) = \#\{I\in \C: P|I \Rightarrow N(P) \ge z \}.
\end{equation}

  Applying Buchstab's identity gives
\begin{align}
S(\C, 2X^{3/2}) &= S(\C, X^\delta) - \sum_{X^\delta \le N(P) < X^{1 - \tau/2}} S(\C_P, N(P)) - \sum_{X^{1 - \tau/2} \le N(P) < X^{1+\tau}} S(\C_P, N(P)) \\
&- \sum_{X^{1 + \tau} \le N(P) < X^{3/2(1-\tau)}} S(\C_P, N(P))- \sum_{X^{3/2- \tau} \le N(P) < 2X^{3/2}} S(\C_P, N(P))\\
&= S_1(\C) - S_2(\C) - S_3(\C) - S_4(\C) - S_5(\C),
\end{align}say.  We let 
\begin{equation}\label{eqn:deltadef}
\delta \asymp 1
\end{equation}be a small constant satisfying $\delta \le 1/6$ to be determined later.  The reader should think of $\delta$ as a very small fixed constant, and dependence on $\delta$ will not be explicitly stated in many estimates in the paper - in those cases, the value of $\delta$ is irrelevant to the analysis.  The exception to this is in Proposition \ref{prop:bound}, where the dependence is explicitly stated because taking $\delta$ sufficiently small there gives us desired bounds.

The parameter $\tau < 5/67$ can be taken to be any positive constant strictly greater than $\gamma$ appearing in Theorem \ref{thm:1}.  To fix ideas the reader may take
\begin{equation}\label{eqn:tau0}
\tau = \frac{\gamma + 5/67}{2}.
\end{equation}

In the above decomposition, $S_4$ is already in the right form for our Type II estimates in Proposition \ref{prop:bilinear2}, while $S_1, S_3$ and $S_5$ will be handled using standard sieve estimates.  However, $S_2$ requires further decomposition.  Specifically, let
\begin{align}
S^{(n)}(\C) &= \sum_{\substack{X^\delta \le N(P_n) < ... <N(P_1) < X^{1-\tau/2}\\ N(P_1...P_n)<X^{1+\tau}}} S(\C_{P_1...P_n}, N(P_n))\\
T^{(n)}(\C) &= \sum_{\substack{X^\delta \le N(P_n) < ... <N(P_1) < X^{1-\tau/2}\\ N(P_1...P_n)<X^{1+\tau}}} S(\C_{P_1...P_n}, X^\delta)\\
U^{(n)}(\C) &= \sum_{\substack{X^\delta \le N(P_{n+1}) < ... <N(P_1) < X^{1-\tau/2}\\ N(P_1...P_n)<X^{1+\tau} \le N(P_1...P_{n+1})}} S(\C_{P_1...P_{n+1}}, N(P_{n+1})).
\end{align}  Then we have that $S_2(\C) = S^{(1)}(\C)$ and that
$$S^{(n)}(\C) = T^{(n)}(\C) - U^{(n)}(\C) - S^{(n+1)}(\C).
$$By Lemma \ref{lem:firstdegreeprimes}, the prime ideals which appear in the decomposition above have distinct norms when $\C = \A$.  When $\C = \B$, the ideals need not have distinct norms, but of course are distinct as ideals.  We now have
\begin{equation}
S_2(\C) = \sum_{1\leq n\leq n_0} (-1)^{n+1} (T^{(n)}(\C)-  U^{(n)}(\C)),
\end{equation}where $n_0 \ll 1/\delta \ll 1$.  When $n\geq 4$, the conditions in the sum for $U^{(n)}$ imply
\begin{equation}
X^{1+\tau} \le N(P_1...P_{n+1}) \leq N(P_1...P_n)^{(n+1)/n} < X^{5(1+\tau)/4} \le X^{3/2(1 - \tau)},
\end{equation}the last inequality being equivalent to $\tau \le 1/11$, so that $U^{(n)}(\C)$ may be handled by Type II sums as in Proposition \ref{prop:bilinear2} for $n\ge 4$, while $T^{(n)}(\C)$ can be estimated asymptotically by the Fundamental Lemma.  We will need to further analyze $\U^{(n)}$ for $n=1, 2, 3$.  To do this, we let
\begin{align}
\U_1^{(1)}(\C) &= \sum_{\substack{X^\delta \le N(P_2)<N(P_1)< X^{1-\tau/2}\\ X^{1+\tau} \le N(P_1P_2) < X^{3/2 (1 - \tau)}}} S(\C_{P_1P_2}, N(P_2))\\
\U_2^{(1)}(\C) &= \sum_{\substack{X^\delta \le N(P_2)<N(P_1)< X^{1-\tau/2}\\  X^{3/2(1+ \tau)} \le N(P_1P_2)}} S(\C_{P_1P_2}, N(P_2))\\
\U_1^{(2)}(\C) &= \sum_{\substack{X^\delta \le N(P_3)<...<N(P_1)< X^{1-\tau/2}\\ N(P_1P_2) < X^{1+\tau} \le N(P_1P_2P_3) < X^{3/2(1 - \tau)}}} S(\C_{P_1P_2P_3}, N(P_3))\\
\U_2^{(2)}(\C) &= \sum_{\substack{X^\delta \le N(P_3)<...<N(P_1)< X^{1-\tau/2}\\ N(P_1P_2) < X^{1+\tau}\\ N(P_1P_2P_3) \ge X^{3/2(1 + \tau)}}} S(\C_{P_1P_2P_3}, N(P_3))\\
\U_1^{(3)}(\C) &= \sum_{\substack{X^\delta \le N(P_4)<...<N(P_1)< X^{1-\tau/2}\\ N(P_1...P_3) < X^{1+\tau} \le N(P_1...P_4) < X^{3/2 (1 - \tau)}}} S(\C_{P_1...P_4}, N(P_4))\\
\U_2^{(3)}(\C) &= \sum_{\substack{X^\delta \le N(P_4)<...<N(P_1)< X^{1-\tau/2}\\ N(P_1...P_3) < X^{1+\tau} \\ N(P_1...P_4) \ge X^{3/2 (1 + \tau)}}} S(\C_{P_1...P_4}, N(P_4))\\
S_6(\C) &= \sum_{\substack{X^\delta \le N(P_2)<N(P_1)< X^{1-\tau/2}\\ X^{3/2 (1 - \tau)} \le N(P_1P_2) < X^{3/2 (1 + \tau)}}} S(\C_{P_1P_2}, N(P_2))\\
S_7(\C)  &= \sum_{\substack{X^\delta \le N(P_3)<...<N(P_1)< X^{1-\tau/2}\\ N(P_1P_2) < X^{1+\tau} \\ X^{3/2(1 - \tau)} \le N(P_1P_2P_3) < X^{3/2(1 + \tau)}}} S(\C_{P_1P_2P_3}, N(P_3)),
\end{align}and
\begin{equation}
S_8(\C) = \sum_{\substack{X^\delta \le N(P_4)<...<N(P_1)< X^{1-\tau/2}\\ N(P_1...P_3) < X^{1+\tau}\\ X^{3/2(1-\tau)} < N(P_1...P_4)  < X^{3/2 (1 + \tau)}}} S(\C_{P_1...P_4}, N(P_4)).
\end{equation}
Then
\begin{equation}
\U^{(1)}(\C) = \U_1^{(1)}(\C) + \U_2^{(1)}(\C) + S_6(\C),
\end{equation}
\begin{equation}
\U^{(2)}(\C) = \U_1^{(2)}(\C) + \U_2^{(2)}(\C)+ S_7(\C),
\end{equation}and
\begin{equation}
\U^{(3)}(\C) = \U_1^{(3)}(\C) + \U_2^{(3)}(\C)+ S_8(\C).
\end{equation}

In the decomposition above, we will need to handle $S_6, S_7$ and $S_8$ directly either using the Prime Ideal Theorem for $\B$ or sieve bounds for $\A$, while the rest is now in an acceptable range to use bilinear sum (Type II) estimates.  Recall that we wish to show that $\pi(\A) - \nu \pi(\B)$ is small.  Thus, our main Theorem follows from the following two Propositions.

\begin{prop}\label{prop:bound}
We have that
\begin{align}
\sum_{n=1}^{n_0} |T^{(n)}(\A) - \nu T^{(n)}(\B)| &\ll \delta \frac{\eta^2XY}{\log X} \textup{  and}\\
|S_1(\A) - \nu S_1(\B)| &\ll \delta \frac{\eta^2 XY}{\log X}.
\end{align}
Moreover, 
\begin{align}
\nu S_j(\B) -S_j(\A) &\ge c_j  \sigma_0 \eta^2 \frac{XY}{\log X}
\end{align}for $j=3, 5, 7$ and
\begin{align}
S_j(\A) - \nu S_j(\B) &\ge c_j \sigma_0 \eta^2 \frac{XY}{\log X}
\end{align}for $j=6, 8$, where $c_3 = -0.187$, $c_5 = -0.172$, $c_6 = -0.088$, $c_7 = -0.124 $ and $c_8 = -0.037$.  In the above, the implied constants do not depend on $\delta$.
\end{prop}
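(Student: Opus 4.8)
\textbf{Proof proposal for Proposition~\ref{prop:bound}.}

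The plan is to treat the statement as an assertion about how well the sieving procedure applied to $\A$ tracks $\nu$ times the same procedure applied to $\B$, so the key input is a uniform comparison of $\#\C_E$ with its expected main term. First I would establish a \emph{Type I} estimate: for both $\C = \A$ and $\C = \B$, and for every integral ideal $E$ of norm up to a suitable power of $X$ (enough to cover all the decompositions giving $S_1$ and the $T^{(n)}$ via the Fundamental Lemma), one has $\#\C_E = \rho(E)\,|\C| + (\text{error})$ on average over $E$, where $\rho(E)$ is the natural density of divisibility by $E$ and the averaged error is smaller than the target by a power of $\log X$. For $\B$ this is exactly the Type I information already available in Heath-Brown's treatment of $\B^{(K)}$ in \cite{HB}; for $\A$, which is the genuinely new sequence, this is where the restriction of $y$ to the short interval $]Y, Y(1+\eta)]$ enters, and one reduces counting $(x,y)$ with $x+y\sqrt[3]{2}\in E$ to counting lattice points in a thin box inside a sublattice of index $N(E)$, summing over $E$ and extracting a main term plus an acceptable remainder. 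The crucial auxiliary ingredient here is Lemma~\ref{lem:SWbdd} on arithmetic sequences in short regions to small moduli (the "$x^{7/12+\epsilon}$" short-interval result), which is precisely what controls the error in the $y$-aspect and, as the footnote warns, is what forces $\gamma < 5/67$.

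Given this Type I estimate, the first two displays of Proposition~\ref{prop:bound} follow by the standard mechanism. Since $S_1(\C) = S(\C, X^\delta)$ is a sieve of dimension $\asymp 1$ with sifting parameter a small fixed power of $X$, the Fundamental Lemma of sieve theory expresses both $S_1(\A)$ and $S_1(\B)$ as $V(X^\delta)$ times the respective size $|\C|$ plus an error governed by the Type I remainder, where the local factors $V(\cdot)$ are \emph{identical} for $\A$ and $\B$ because the two sequences have the same densities $\rho(P)$ for first-degree primes (this is the content of working over the same field $K$ and the reason $\nu = \sigma_0\eta Y/(3X^2)$ is the right normalizing constant, so that $\nu|\B| \sim |\A|$ up to lower order). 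Hence $S_1(\A) - \nu S_1(\B) = V(X^\delta)(|\A| - \nu|\B|) + O(\text{errors})$, and both pieces are $\ll \delta\,\eta^2 XY/\log X$: the first because the discrepancy $|\A| - \nu|\B|$ is itself of size $\delta$ times the main term after one accounts for the coprimality condition $(x,y)=1$ and the slightly different norm ranges, and the error terms are smaller by our choice of $B_0$. The same argument applied termwise to each $T^{(n)}(\C) = \sum S(\C_{P_1\cdots P_n}, X^\delta)$ — again a bounded-dimension sieve with small sifting parameter, summed over $n \le n_0 \ll 1/\delta$ and over the at most $\ll \delta^{-O(1)}$ choices of the $P_i$ in their prescribed ranges — gives $\sum_n |T^{(n)}(\A) - \nu T^{(n)}(\B)| \ll \delta\,\eta^2 XY/\log X$, the factor $\delta$ being gained from the Fundamental Lemma's error together with the inner coprimality discrepancy.

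For the lower bounds on $\nu S_j(\B) - S_j(\A)$ for $j = 3,5,7$ and on $S_j(\A) - \nu S_j(\B)$ for $j = 6,8$, I would argue as follows. Each of these $S_j$ is, up to the sieve of dimension $\asymp 1$ and small sifting parameter, \emph{asymptotically evaluable}: $S_3, S_5$ are single Buchstab sums $\sum_P S(\C_P, N(P))$ over explicit ranges of $N(P)$, and $S_6, S_7, S_8$ are the "boundary" pieces $\sum S(\C_{P_1\cdots P_k}, N(P_k))$ over the narrow windows $N(P_1\cdots P_k) \in [X^{3/2(1-\tau)}, X^{3/2(1+\tau)}]$. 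For $\C = \B$ one uses the Prime Ideal Theorem to replace the inner sifted count by $\pi(\B^0)$ times a local density and turns each $S_j(\B)$ into $\nu^{-1}$ times an explicit integral over the relevant simplex of $\log$-variables (a Buchstab-type integral with the usual $\omega$-function), while for $\C = \A$ the same Type I / Fundamental Lemma input makes $S_j(\A)$ equal to \emph{the same integral} times $|\A|$ — because, once more, the local factors coincide over $K$ — plus an error $\ll \delta\,\eta^2 XY/\log X$ and plus the $|\A| - \nu|\B|$ discrepancy. Therefore $\nu S_j(\B) - S_j(\A)$ (resp. $S_j(\A) - \nu S_j(\B)$) equals $\sigma_0\,\eta^2 \frac{XY}{\log X}$ times an explicit, computable constant, and $c_3 = -0.187$, $c_5 = -0.172$, $c_6 = -0.088$, $c_7 = -0.124$, $c_8 = -0.037$ are just (lower bounds for) the numerical values of these integrals, verified by direct estimation of the relevant Buchstab integrals over the ranges dictated by $\tau$; taking $\tau = (\gamma + 5/67)/2$ with $\gamma < 5/67$ keeps $\tau$ small enough that these constants stay in the stated range, and the implied constants are independent of $\delta$ because these terms are evaluated exactly rather than sieved away.

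\textbf{Main obstacle.} The genuinely hard step is the Type I estimate for $\A$ with a level of distribution large enough to feed the Fundamental Lemma while keeping the averaged remainder below $\eta^2 XY/\log X$: this is a congruence-in-a-thin-box problem (the $y$-variable runs over an interval of relative length $\eta$ only), and it is exactly here that one must invoke the best available short-interval technology of Lemma~\ref{lem:SWbdd}, which is what pins down the admissible range $\gamma < 5/67$. Everything else is bookkeeping with the Fundamental Lemma and numerical evaluation of Buchstab integrals, where the only subtlety is ensuring the narrow windows in $S_6, S_7, S_8$ and the coprimality correction $(x,y)=1$ are handled consistently between $\A$ and $\B$ so that the local factors really do cancel.
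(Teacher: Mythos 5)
Your proposed treatment of the lower bounds for $j=3,5,6,7,8$ contains a genuine error. You claim that ``for $\C=\A$ the same Type I / Fundamental Lemma input makes $S_j(\A)$ equal to the same integral times $|\A|$ \dots plus an error $\ll \delta\,\eta^2 XY/\log X$.'' This cannot be right: the sums $S_j(\C) = \sum S(\C_{P_1\cdots P_k}, N(P_k))$ for $j=3,5,6,7,8$ have inner sifting parameter $N(P_k) \ge X^{1/2 - 5\tau/2}$ (and as large as $X^{1+\tau}$ for $S_3$), while the level of distribution for $\A$ from Lemma \ref{lem:AtypeI} is at most $(XY)^{1-\epsilon}$. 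The resulting sieve parameter $s = \log\mathcal{D}/\log N(P_k)$ is therefore bounded --- roughly between $1$ and $2$ --- and in this range the linear sieve supplies only the upper/lower bound functions $F(s)$ and $f(s)$, which differ by a constant amount, not an asymptotic. The Fundamental Lemma gives asymptotics only when $s \to \infty$, i.e.\ for sieves to a small power of $X$ such as $T^{(n)}$ and $S_1$. If your claimed asymptotic for $S_j(\A)$ were available, the paper would in fact yield an asymptotic for $\pi(\A)$, which is precisely what cannot be achieved for sequences this sparse. What the paper actually does is evaluate $S_j(\B)$ \emph{exactly} (via the Prime Ideal Theorem and Buchstab integrals), then bound $S_j(\A)$ from above by the linear sieve with $F(s)$ for $j=3,5,7$, and from below by the trivial $S_j(\A)\ge 0$ for $j=6,8$. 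The constants $c_j$ measure the \emph{gap} between the sieve bound and the true count; they are fixed negative numbers that do \emph{not} go to $0$ as $\delta\to 0$, which is incompatible with your claimed $O(\delta)$ error.

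A second, smaller point: you attribute both the Type I estimate for $\A$ and the restriction $\gamma < 5/67$ to Lemma \ref{lem:SWbdd}. In fact the Type I estimate here is Lemma \ref{lem:AtypeI}, proved by a Fourier-analytic lattice-point count that places no restriction on $\gamma$ beyond $Q \ll (XY)^{1-\epsilon}$. Lemma \ref{lem:SWbdd} (the Siegel--Walfisz-type bound for short regions) and the $7/12$ exponent that forces $\gamma < 5/67$ enter only in the bilinear (Type II) Proposition \ref{prop:bilinear1}, not in the present sieve-bound proposition.
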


\begin{prop}\label{prop:bilinear1}
For any constant $A>0$,
\begin{equation}
\mathfrak U(\A) - \nu \mathfrak U(\B) = o \bfrac{\eta^2 XY}{\log X},
\end{equation}for $\mathfrak U = S_4, \U_1^{(1)}, \U_2^{(1)}, \U_1^{(2)},\U_2^{(2)},$ and $ \U^{(n)}$ for all $n\geq 3.$
\end{prop}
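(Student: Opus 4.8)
The plan is to derive Proposition \ref{prop:bilinear1} from the Type~II estimate of Proposition \ref{prop:bilinear2}. The common feature of every sum in the list is that the constraints on the norms $N(P_i)$ force each ideal $I$ counted by the sum to factor as $I = I' I''$ with $N(I')$ confined to a dyadic window $[M, 2M]$, $X^{\theta_1} \le M \le X^{\theta_2}$, where $[\theta_1, \theta_2]$ sits inside the range for which Proposition \ref{prop:bilinear2} is available --- in particular bounded away from the square-root line $X^{3/2}$. For $S_4(\C)$ this is immediate: an ideal counted by $S(\C_P, N(P))$ is $I = PJ$ with $N(P) \in [X^{1+\tau}, X^{3/2(1-\tau)})$, and since every prime factor of $J$ has norm $\ge N(P) \ge X^{1+\tau}$ while $N(J) = N(I)/N(P) \le X^{2-\tau}$, the factor $J$ is itself prime, so $S_4$ is already a bilinear sum with short variable $N(P)$ in range. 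For the $\U$-sums one uses the Buchstab-iterated shape: the conditions $N(P_1\cdots P_n) < X^{1+\tau} \le N(P_1\cdots P_{n+1})$ and $N(P_{n+1}) < X^{1-\tau/2}$ pin $N(P_1\cdots P_{n+1})$ into $[X^{1+\tau}, X^{(1+\tau)(n+1)/n})$; the additional cutoff defining $\U_1^{(n)}$, namely $N(P_1\cdots P_{n+1}) < X^{3/2(1-\tau)}$, keeps this partial product below the critical line, so one may take $I' = P_1\cdots P_{n+1}$, whereas the cutoff defining $\U_2^{(n)}$, namely $N(P_1\cdots P_{n+1}) \ge X^{3/2(1+\tau)}$, places it above the line, and then one instead takes for the short variable the complementary divisor $I'' = I/(P_1\cdots P_{n+1})$, whose norm is $\le X^{3/2 - 3\tau/2}$ and hence safely on the correct side of $X^{3/2}$.

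With a usable factorization in hand the rest is routine. I would expand $S(\C_{P_1\cdots P_k}, z)$ by writing its elements as $(P_1\cdots P_k)J$ and resolving the residual sieve condition on $J$ by Buchstab, using the Fundamental Lemma to peel off any factor too small to serve as a bilinear variable; the prime ideals are then grouped so that $I = I'I''$ with $N(I')$ in the window identified above, and the resulting coefficients $\alpha(I'), \beta(I'')$ are supported on square-free ideals and dominated by divisor functions (for $\C = \A$, Lemma \ref{lem:firstdegreeprimes} guarantees these norms are square-free and the factorizations honest). Applying Proposition \ref{prop:bilinear2} to the difference --- equivalently, to $\A$ and to $\nu\B$ separately, whose main terms cancel by the very definition of $\nu$ --- then gives
\[
\mathfrak U(\A) - \nu\,\mathfrak U(\B) = \sum_{M} \sum_{N(I') \sim M}\sum_{I''} \alpha(I')\beta(I'')\bigl(\mathbf{1}[I'I'' \in \A] - \nu\,\mathbf{1}[I'I'' \in \B]\bigr) = o\bfrac{\eta^2 XY}{\log X},
\]
where the outer sum runs over the $O_\delta(1)$ dyadic ranges $M$ and, for the $\U$-sums, over the $O(1/\delta)$ values $n \le n_0$; as $\delta$ is fixed this does not spoil the $o(\cdot)$. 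For $\U^{(n)}(\C)$ with $n \ge 4$ one appeals to Proposition \ref{prop:bilinear2} directly, as noted in the outline: there $N(P_1\cdots P_{n+1}) \le N(P_1\cdots P_n)^{(n+1)/n} < X^{5(1+\tau)/4} \le X^{3/2(1-\tau)}$ (using $\tau \le 1/11$), so $P_1\cdots P_{n+1}$ lies in range with no further splitting.

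The step requiring genuine care --- and the main obstacle --- is the bookkeeping that places each bilinear variable strictly inside the admissible range of Proposition \ref{prop:bilinear2}. One must verify on the one hand that the lower cutoffs (e.g. $N(P_{n+1}) \ge X^\delta$, and the thresholds $X^{1+\tau}$ on the partial products) prevent the short variable from becoming too small, and on the other that the numerical choice $\gamma < \tau < 5/67$, together with the break points $X^{1+\tau}$ and $X^{3/2(1\pm\tau)}$, is exactly calibrated so that \emph{all} of $S_4$ and the pieces $\U_1^{(1)}, \U_2^{(1)}, \U_1^{(2)}, \U_2^{(2)}, \U^{(n)}$ $(n\ge 3)$ simultaneously avoid the forbidden neighbourhood of $X^{3/2}$ --- the pieces $S_6, S_7, S_8$, which straddle it, being precisely those excluded from this proposition and handled instead in Proposition \ref{prop:bound}. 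Once the ranges are checked, the uniformity in Proposition \ref{prop:bilinear2} does the work, and the main terms for $\A$ and $\nu\B$ cancel by construction, leaving the claimed bound.
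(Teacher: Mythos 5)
Your decomposition of the six sums into the bilinear form $\sum_R c_R\sum_{S:RS\in\C}d_S$, with the short variable $S$ chosen as $P$, $P_1\cdots P_{n+1}$, or the complementary divisor $Q$, matches the paper's reduction (Section 4), and your range-checking for why each choice of $S$ lands in $[X^{1+\tau},X^{3/2(1-\tau)}]$ is correct. However, there is a genuine gap in how you use Proposition~\ref{prop:bilinear2}: you apply it ``to the difference --- equivalently, to $\A$ and to $\nu\B$ separately, whose main terms cancel by the very definition of $\nu$.'' But Proposition~\ref{prop:bilinear2} is a pure error bound on $\sum_R c_R\sum_{S:RS\in\A}f_S$, and the coefficient $f_S$ in that statement already has a model term subtracted: $f_S=h_S-e_S$, where $h_S$ is a smoothed version of your $d_S$ (Lemma~\ref{lem:hbetadbetareplace}) and $e_S$ is an explicit approximation built from a truncated M\"obius sum $\sum_{J\mid S,\,N(J)<L}\mu(J)\log(L/N(J))$. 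There is no ``main term'' in the output of Proposition~\ref{prop:bilinear2} available to cancel against its $\B$-analogue; if you plug in your raw coefficients $\alpha(I')\beta(I'')$ directly, the estimate simply does not apply.

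The missing idea is the intermediate object $e_S$, which is what makes the comparison $\A$ vs.\ $\nu\B$ possible. The actual chain is: after reducing to $U(\C)=\sum_R c_R\sum_{S:RS\in\C}d_S$, one replaces $d_S$ by $h_S$ (negligible by Lemma~\ref{lem:hbetadbetareplace}), writes $h_S=e_S+f_S$, and then estimates three separate things: (i) the $e_S$ contribution to $U(\A)-\nu U(\B)$, which is controlled by the Type I information precisely because $e_S$ is ``computable'' (this is the adaptation of Lemma~3.9 of \cite{HB} and is where the cancellation between $\A$ and $\B$ actually occurs, yielding an error $\ll\eta^{1/2}(\log X)^c\cdot\eta^2XY/\log X$ which forces the choice of $B_0$); (ii) the $f_S$ contribution to $U(\A)$, which is Proposition~\ref{prop:bilinear2} and is the hard Type II step; and (iii) the $f_S$ contribution to $U(\B)$, handled by an easier analytic argument along the lines of \cite{HB}~\S8. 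Your sketch collapses (i)--(iii) into a single appeal to Proposition~\ref{prop:bilinear2}, which overstates what that proposition delivers. You also invoke Buchstab and the Fundamental Lemma to ``peel off small factors'' inside the $S$-variable, but the paper never does this; the coefficient $d_S$ is built directly as a product $\prod_i \log N(P_i)/(m_i\xi\log X)$ over prime ideals in log-intervals $J(m_i)$, with $\delta\asymp 1$ fixed so that only $O(1)$ factors ever appear, and no further sieve iteration is performed at that stage.
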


We note $\pi(\C) \sim S(\C, 2X^{3/2})$ for $\C = \A, \B$.  By Proposition \ref{prop:bound}, there exists some constant $C_0$ such that
$$\sum_{n=1}^{n_0} |T^{(n)}(\A) - \nu T^{(n)}(\B)| + |S_1(\A) - \nu S_1(\B)| \le C_0 \delta \frac{\eta^2 XY}{\log X}.
$$

The quantities estimated in Proposition \ref{prop:bilinear1} are negligible compared to the size of $\pi(\A)$ and $\nu \pi(B)$.  Indeed, our decomposition for $S(\C, 2X^{3/2})$ and the two Propositions above tells us that
\begin{align}
\pi(\A) - \nu \pi(\B) 
&\sim \nu(S_3(\B) +S_5(\B)+ S_7(\B)) -  (S_3(\A) +S_5(\A)+ S_7(\A)) \\
&+ (S_6(\A)+S_8(\A)) - \nu(S_6(\B)+S_8(\B)) - C_0 \delta \frac{\eta^2 XY}{\log X}\\
&\geq (c_3+c_5+c_6+c_7+c_8 - \frac{C_0\delta}{\sigma_0}) \eta^2 \sigma_0 \frac{X Y}{\log X}.
\end{align}
Note that $c_3 + c_5+ c_6+ c_7+c_8 = -0.608 > -1$.  We need only choose $\delta$ small enough so that $\frac{C_0\delta}{\sigma_0} < 1-0.608$, and Theorem \ref{thm:1} follows.   In fact, letting $\delta \rightarrow 0$, we get the lower bound
$$\pi(\A)  \ge 0.392 (1-\epsilon) \eta^2 \sigma_0 \frac{X Y}{\log X},
$$for any constant $\epsilon = \epsilon(\delta)>0$ and $X$ sufficiently large in terms of $\epsilon$, from which the reader should surmise that no attempt at numerical optimization has been made.

Proposition \ref{prop:bound} uses an upper bound sieve and the Fundamental Lemma, for which we require Type I estimates while Proposition \ref{prop:bilinear1} may be reduced to Type II estimates.  The Type I estimates required are as follows.

\begin{lem} \label{lem:AtypeI}
Let $\rho_2(R)$ be the multiplicative function defined on powers of prime ideals by
\begin{equation}
\rho_2(P^e) = (1+N(P)^{-1})^{-1},
\end{equation}and extended to all integral ideals by multiplicity.  Let $\R$ be the set of ideals $R$ for which $N(R)$ is squarefree.  Then for any $A>0$, there exists a constant $c = c(A)$ such that
\begin{equation}\label{eqn:lemAtypeI}
\sum_{\substack{Q<N(R)\le 2Q\\ R \in\R}} \tau(R)^A \left|\#\A_R - \frac{6\eta^2 XY}{\pi^2 N(R)} \rho_2(R) \right| \ll (X\sqrt Y + \sqrt{XYQ} + Q) (\log QX)^{c(A)}.
\end{equation}
\end{lem}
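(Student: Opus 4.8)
\emph{Sketch of the intended argument.} The plan is to adapt Heath-Brown's Type I estimate \cite{HB}; the one genuinely new feature is that $y$ now runs over the short interval $]Y,Y(1+\eta)]$ with $Y=X^{1-\gamma}$, so that the counting box $]X,X(1+\eta)]\times]Y,Y(1+\eta)]$ is long and thin and its two coordinate directions can no longer be treated symmetrically. I would first turn $\#\A_R$ into a congruence count. For $R\in\R$ (that is, $N(R)$ squarefree) the ideal $R$ is a squarefree product of first degree primes of pairwise distinct norms, and Lemma~\ref{lem:firstdegreeprimes} shows that elements of $\A$ have ideal factors of no other shape; for a first degree prime $P$ above a rational prime $p$ one has $\sqrt[3]{2}\equiv\nu_P\pmod P$ with $\nu_P^3\equiv2\pmod p$, so $P\mid(x+y\sqrt[3]{2})$ if and only if $x\equiv-\nu_P\,y\pmod p$. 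By the Chinese Remainder Theorem,
\[
\#\A_R=\#\{(x,y):\ x\in\ ]X,X(1+\eta)],\ y\in\ ]Y,Y(1+\eta)],\ (x,y)=1,\ x\equiv\mu_R\,y\pmod{N(R)}\},
\]
where $R\mapsto\mu_R$ identifies the ideals of a given norm $q$ with the roots of $\mu^3\equiv-2\pmod q$, and $\mu_R$ is a unit modulo $N(R)$ away from the ramified prime $2=(\sqrt[3]{2})^3$ (at which the congruence simply reads $2\mid x$). Since $\tau(R)=2^{\omega(N(R))}$ depends only on $q=N(R)$, which carries at most $3^{\omega(q)}$ such roots, the left side of \eqref{eqn:lemAtypeI} becomes a sum over $q$ with $Q<q\le 2Q$ and over the cube roots $\mu$ of $-2\pmod q$, weighted by $\tau(q)^A$; the bound $\sum_{Q<q\le2Q}\tau(q)^A3^{\omega(q)}\ll Q(\log Q)^{O_A(1)}$ absorbs the weight into the final power of $\log QX$.

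Next I would extract the main term. Writing $\sum_{d\mid(x,y)}\mu(d)$ for the coprimality condition and setting $x=dx_1$, $y=dy_1$ turns the congruence into $x_1\equiv\mu_R y_1\pmod{q'}$ with $q'=N(R)/(d,N(R))$; here $d$ runs only over $d\le Y(1+\eta)<\eta X$ (as $d\mid y$ and $X$ is large), so there is no difficult tail. For each $d$, counting $(x_1,y_1)$ in the dilated box by fixing $y_1$ and counting the integers of a prescribed residue in an interval gives $\tfrac{\eta^2XY}{d^2}\cdot\tfrac{(d,N(R))}{N(R)}+\mathcal E(d,R)$; summing the main part against $\mu(d)$ over all $d$ yields
\[
\frac{\eta^2XY}{N(R)}\prod_p\Bigl(1-\frac{(p,N(R))}{p^2}\Bigr)=\frac{6\eta^2XY}{\pi^2N(R)}\prod_{p\mid N(R)}\frac{1-p^{-1}}{1-p^{-2}}=\frac{6\eta^2XY}{\pi^2N(R)}\rho_2(R)
\]
for squarefree $N(R)$ (the prime $2$, where the congruence reads $2\mid x$, gives the same factor). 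Thus $\#\A_R$ minus the main term in \eqref{eqn:lemAtypeI} equals $\sum_d\mu(d)\mathcal E(d,R)$.

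The heart of the matter is then to bound $\sum_{Q<N(R)\le2Q,\ R\in\R}\tau(R)^A\sum_d|\mathcal E(d,R)|$. Detecting the congruence modulo $q'$ by additive characters, with $e(t)=e^{2\pi i t}$,
\[
\mathcal E(d,R)=\frac1{q'}\sum_{0<|h|<q'/2}\Bigl(\sum_{x_1}e(hx_1/q')\Bigr)\Bigl(\sum_{y_1}e(-h\mu_R y_1/q')\Bigr),
\]
where $\sum_{x_1}e(hx_1/q')\ll\min(\eta X/d,\|h/q'\|^{-1})$ and similarly for the $y_1$-sum. The contribution of the ranges of $h$ for which the $x_1$-sum does not genuinely oscillate, together with the inevitable discrepancy of size $1$ in each progression, I expect to produce the terms $X\sqrt Y(\log QX)^c$ and $Q(\log QX)^c$; in controlling the former it is useful that the lattice $\{(a,b)\in\mathbb Z^2:\ a\equiv\mu_R b\pmod{q'}\}$ contains no nonzero vector of length $\le L$ unless $q'\mid a^3+2b^3$ for some $(a,b)\ne(0,0)$ with $\max(|a|,|b|)\le L$, hence unless $q'\ll L^3$ — since $a^3+2b^3$ never vanishes nontrivially — so that its shortest vector has length $\gg N(R)^{1/3}$; this is precisely where the arithmetic of $K=\mathbb Q(\sqrt[3]{2})$ enters, exactly as in \cite{HB}. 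For the remaining terms I would apply Cauchy--Schwarz over $R$ — equivalently over the cube roots $\mu$ of $-2\pmod q$ — and estimate the resulting mean square of the sums $\sum_{y_1}e(-h\mu y_1/q')$ by the large sieve inequality, using Weil-type cancellation in the complete sum $\sum_{\mu^3\equiv-2\,(q)}e(h\mu t/q)\ll q^{1/2+\epsilon}$; since the box has area $\asymp\eta^2XY$ and the moduli are of size $\asymp Q$, this should deliver the term $\sqrt{XYQ}(\log QX)^c$.

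The main obstacle I anticipate is this last step. In \cite{HB} the box is, up to constants, a square, so the two geometric sums above play symmetric roles and the diagonal contributions are harmless; here the disparity $\eta Y\ll\eta X$ forces one to keep track of which of the two $\min$-terms saturates, and a treatment yielding only the geometry-of-numbers bound $\eta X\sqrt Q$ in place of $\sqrt{XYQ}$ would already be too weak once $Q$ exceeds $Y$. Carrying out the large sieve step efficiently — uniformly in $d$ and over those $R$ whose associated lattice has a short vector — and arranging the bookkeeping so that the final estimate is exactly $(X\sqrt Y+\sqrt{XYQ}+Q)(\log QX)^{c(A)}$, is the part I expect to demand the most care.
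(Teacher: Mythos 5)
Your overall scaffolding (congruence reformulation of $\#\A_R$, M\"obius inversion to remove $(x,y)=1$, Euler--product extraction of the main term via $\rho_2$, and the geometry-of-numbers input that the lattice $\{(a,b): a\equiv\mu_R b\ (\mathrm{mod}\ q')\}$ has shortest vector $\gg q'^{1/3}$) matches what the paper does. But you have misattributed where the shape $(X\sqrt Y+\sqrt{XYQ}+Q)$ comes from, and this leads you to propose an unnecessary and genuinely different (and, as you yourself note, delicate) closing argument.

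The paper does \emph{not} obtain $\sqrt{XYQ}$ from a large sieve or Weil cancellation over the roots $\mu$ of $\mu^3\equiv -2$. What it does is: (i) prove, for the count $S(R;X,Y)$ without coprimality and summed over $Q<N(R)\le 2Q$, the clean estimate
$\sum_{R\in\R}\tau(R)^A|S(R)-\eta^2XY/N(R)|\ll (X+Q)(\log Q)^{c(A)}$ (Lemma~\ref{lem:AtypeIlarge}, which is just Heath-Brown's Lemma~5.1 with a rectangular box in place of a square; no averaging over $R$ is exploited -- the bound is additive over $R$ and comes from Fourier expansion and the shortest-vector bound you mention); then (ii) truncate the M\"obius sum at a parameter $\Delta$, apply (i) to each $d<\Delta$ to get $(X+Q)\Delta(\log)^{c}$, bound the tail $d\ge\Delta$ trivially by $\eta^2XY(\log)^{c}/\Delta$ using the $1/d^2$ decay, and then choose $\Delta=1+\min(\sqrt Y,\sqrt{XY/Q})$ to balance. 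With this choice, $(X+Q)\Delta\ll X\sqrt Y+\sqrt{XYQ}+Q$ and $\eta^2XY/\Delta\ll X\sqrt Y+\sqrt{XYQ}$, and you are done. The three terms in the bound are purely an artifact of this balancing of a truncation parameter against the Type I estimate.

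Your proposal has no truncation parameter at all (you remark that ``$d$ runs only over $d\le Y(1+\eta)$ ... so there is no difficult tail,'' but the issue is not that the range of $d$ is infinite -- it's that summing the per-$d$ Type~I error $(X+Q)$ over all $d\le Y$ is hopelessly large, which is exactly why one truncates and balances). You then try to recover $\sqrt{XYQ}$ by Cauchy--Schwarz over $R$ plus a large sieve in $h$ plus Weil for $\sum_{\mu^3\equiv -2}e(h\mu t/q)$, which is a genuinely different and substantially harder route. The worry you flag -- that a naive treatment yields $\eta X\sqrt Q$ instead of $\sqrt{XYQ}$ -- simply never arises in the paper's argument, because the paper never needs the Type~I error per modulus to be as small as $\sqrt{Y/N(R)}$; it only needs the total over $Q<N(R)\le 2Q$ to be $(X+Q)$, which the long-thin box gives without extra cancellation. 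So the gap is: you are missing the truncation-and-balance device, and in trying to compensate for its absence you have introduced a harder (and unresolved) large-sieve step that the lemma does not require.
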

Note that the right hand side of \eqref{eqn:lemAtypeI} is $\ll \frac{XY}{(\log X)^A}$ for any $A>0$ as long as $Q \ll (XY)^{1-\epsilon}$.  When applying sieve methods, we will find it convenient to pass to sieving over the rational integers.  The corresponding level of distribution result for $\A^0$ is below.
\begin{lem}\label{lem:A0typeI}
Let $\rho_0$ be the multiplicative function defined by
$$\rho_0(p^e) = \frac{\nu_p}{1+\frac 1p},
$$where $\nu_p$ is the number of first degree prime ideals above $p$.  Then for any $A>0$, there exists a constant $c = c(A)$ such that
\begin{equation}\label{eqn:lemAtypeI}
\sum_{Q<q\le 2Q} \mu(q)^2 \tau(q)^A \left|\#\A^0_q - \frac{6\eta^2 XY}{\pi^2 q} \rho_0(q) \right| \ll (X\sqrt Y + \sqrt{XYQ} + Q) (\log QX)^{c(A)}.
\end{equation}
\end{lem}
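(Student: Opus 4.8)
The plan is to deduce Lemma \ref{lem:A0typeI} directly from Lemma \ref{lem:AtypeI} by translating between rational divisors $q\mid x^3+2y^3$ and ideal divisors $R\mid(x+y\sqrt[3]{2})$; this is the analogue of the passage between $\B$ and $\B^{(K)}$ carried out in \cite{HB}. The first step is the correspondence: for squarefree $q$ I would show
\[
\#\A^0_q \;=\; \sum_{\substack{R\subseteq\cO_K\\ N(R)=q}}\#\A_R ,
\]
the sum running over all integral ideals of norm $q$ (all of which are automatically squarefree, and hence lie in $\R$). Indeed, since $\A$ consists of distinct ideals and the generator $x+y\sqrt[3]{2}$ determines the pair $(x,y)$, one may identify $\#\A_R$ with the number of admissible pairs $(x,y)$ for which $R\mid(x+y\sqrt[3]{2})$. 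By Lemma \ref{lem:firstdegreeprimes}, every prime ideal dividing such an $(x+y\sqrt[3]{2})$ has degree one and no two share a norm; hence for each rational prime $p\mid q$ there is at most one prime ideal of norm $p$ dividing $(x+y\sqrt[3]{2})$, and there is one precisely when $p\mid x^3+2y^3$. Consequently there is exactly one ideal $R$ of norm $q$ dividing $(x+y\sqrt[3]{2})$ when $q\mid x^3+2y^3$ and none otherwise, and summing over $(x,y)$ gives the identity.

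Next I would match the main terms. Each ideal of norm a squarefree $q$ is a product $\prod_{p\mid q}P_p$ of first degree primes with $N(P_p)=p$, so there are $\prod_{p\mid q}\nu_p$ of them and for each one $\rho_2(R)=\prod_{p\mid q}(1+p^{-1})^{-1}$. Therefore
\[
\sum_{N(R)=q}\frac{6\eta^2 XY}{\pi^2 N(R)}\rho_2(R)
=\frac{6\eta^2 XY}{\pi^2 q}\prod_{p\mid q}\frac{\nu_p}{1+p^{-1}}
=\frac{6\eta^2 XY}{\pi^2 q}\rho_0(q),
\]
using multiplicativity of $\rho_0$. Combining this with the first step and the triangle inequality yields, for every squarefree $q$,
\[
\left|\#\A^0_q-\frac{6\eta^2 XY}{\pi^2 q}\rho_0(q)\right|
\;\le\;\sum_{N(R)=q}\left|\#\A_R-\frac{6\eta^2 XY}{\pi^2 N(R)}\rho_2(R)\right|.
\]

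Finally I would sum against the weight $\mu(q)^2\tau(q)^A$ over $Q<q\le 2Q$. Since an ideal $R$ with squarefree norm $q=N(R)$ is a product of $\omega(q)$ first degree primes, $\tau(R)=2^{\omega(q)}=\tau(q)$, and the ideals with squarefree norm are exactly those in $\R$; so the left side of the desired estimate is bounded by
\[
\sum_{\substack{Q<N(R)\le 2Q\\ R\in\R}}\tau(R)^A\left|\#\A_R-\frac{6\eta^2 XY}{\pi^2 N(R)}\rho_2(R)\right|
\ll (X\sqrt Y+\sqrt{XYQ}+Q)(\log QX)^{c(A)}
\]
by Lemma \ref{lem:AtypeI}, which is exactly the claim. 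Honestly no step here is deep: the only point requiring genuine care is the first one, the exact bijection between rational divisors $q$ and ideal divisors of norm $q$, which rests entirely on Lemma \ref{lem:firstdegreeprimes} (this is what rules out higher degree primes and repeated prime norms and makes the correspondence an identity rather than a mere inequality). One should also keep minor care to align $\tau(q)^A$ with $\tau(R)^A$ and to verify the main-term identity $\sum_{N(R)=q}\rho_2(R)/N(R)=\rho_0(q)/q$.
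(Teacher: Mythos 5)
Your proposal is correct and is essentially the standard passage the paper has in mind when it refers to p.~33 of Heath-Brown's work: use Lemma \ref{lem:firstdegreeprimes} to give, for squarefree $q$, a bijection between rational divisors $q\mid x^3+2y^3$ and the unique ideal $R$ of norm $q$ dividing $(x+y\sqrt[3]{2})$, hence $\#\A^0_q=\sum_{N(R)=q}\#\A_R$; match the main terms via $\sum_{N(R)=q}\rho_2(R)=\nu(q)\prod_{p\mid q}(1+p^{-1})^{-1}$; then sum with the weight $\tau(q)^A=\tau(R)^A$ and invoke Lemma \ref{lem:AtypeI}. All the details you flag (the role of Lemma \ref{lem:firstdegreeprimes} in making the correspondence an identity, and the alignment of $\tau(q)^A$ with $\tau(R)^A$) are exactly the points that need care, and you handle them correctly.
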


We also have the following level of distribution for $\B$.
\begin{lem}\label{lem:BtypeI}
For any $A>0$, there exists a constant $c(A)$ such that
\begin{equation}
\sum_{Q<N(R)\le 2Q} \tau(R)^A \left|\#\B_R - 3\gamma_0 \frac{\eta X^3}{N(R)}\right| \ll X^2 Q^{1/3} (\log Q)^{c(A)},
\end{equation}where 
\begin{equation}
\gamma_0 = \frac{\pi \log \epsilon_0}{\sqrt{27}},
\end{equation}is the residue of the Dedekind zeta function of $K$ at $1$ and where $\epsilon_0 = 1+ \sqrt[3]{2} + \sqrt[3]{4}$ is the fundamental unit of $K$.
\end{lem}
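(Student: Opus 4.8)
The plan is to count ideals $J$ of $\cO_K$ with $R \mid J$ and $N(J) \in\, ]3X^3, 3X^3(1+\eta)[$. Writing $J = RJ'$, this becomes a count of all integral ideals $J'$ with $N(J') \in\, ]3X^3/N(R), 3X^3(1+\eta)/N(R)[$, so the whole question reduces to the distribution of $\sum_{N(J')\le t}1$, i.e.\ to the error term in the ideal-counting function of $K$. First I would invoke the standard estimate
\begin{equation}
\#\{J': N(J')\le t\} = \gamma_0\, t + O\!\left(t^{2/3}\right),
\end{equation}
which for a cubic field follows from Landau's theorem (the exponent $2/3 = 1 - 2/[K:\mathbb{Q}]$ being the classical one obtained by reducing to counting lattice points under the norm form and applying the divisor-type/van der Corput argument). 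Applying this at the two endpoints $t = 3X^3(1+\eta)/N(R)$ and $t = 3X^3/N(R)$ gives
\begin{equation}
\#\B_R = 3\gamma_0\frac{\eta X^3}{N(R)} + O\!\left(\left(\frac{X^3}{N(R)}\right)^{2/3}\right)
= 3\gamma_0\frac{\eta X^3}{N(R)} + O\!\left(\frac{X^2}{N(R)^{2/3}}\right)
\end{equation}
for each individual $R$. Here the main term is exactly the claimed $3\gamma_0 \eta X^3/N(R)$, so the content of the lemma is entirely in summing the error term against $\tau(R)^A$ over the dyadic range $Q < N(R) \le 2Q$.

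For that final summation I would argue as follows. The number of integral ideals of norm exactly $m$ is at most $\tau_3(m) \ll m^\epsilon$, and more usefully $\sum_{N(R)\le T}\tau(R)^A \ll T(\log T)^{c(A)}$ for a suitable $c(A)$, by a standard Dirichlet-series/Perron computation since the relevant Dirichlet series is dominated by a power of $\zeta_K(s)$. Hence
\begin{equation}
\sum_{\substack{Q<N(R)\le 2Q}} \tau(R)^A\, O\!\left(\frac{X^2}{N(R)^{2/3}}\right)
\ll \frac{X^2}{Q^{2/3}} \sum_{Q<N(R)\le 2Q}\tau(R)^A
\ll \frac{X^2}{Q^{2/3}}\, Q\,(\log Q)^{c(A)} = X^2 Q^{1/3}(\log Q)^{c(A)},
\end{equation}
which is precisely the asserted bound. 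One technical point to be careful about: the individual error term $O((X^3/N(R))^{2/3})$ is only useful when $N(R)$ is not too large relative to $X^3$; for $N(R)$ comparable to or larger than $X^3$ the interval $]3X^3/N(R), 3X^3(1+\eta)/N(R)[$ may contain no integers at all, but in that regime the trivial bound $\#\B_R \ll X^\epsilon$ together with $\eta X^3/N(R) \ll 1$ already gives the result, so one splits the range at, say, $Q \asymp X^3$ and handles the tail trivially.

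The main obstacle, such as it is, is really just citing the correct version of Landau's ideal theorem for a fixed cubic field with the $t^{2/3}$ error term and tracking that the implied constant depends only on $K$ (which is fixed here, $K = \mathbb{Q}(\sqrt[3]{2})$), so that it is absorbed into the implied constants throughout. Everything else is bookkeeping: reducing $\#\B_R$ to a difference of ideal-counting functions, applying the asymptotic at the endpoints, and summing the error term against $\tau(R)^A$ using the elementary bound $\sum_{N(R)\le T}\tau(R)^A \ll T(\log T)^{c(A)}$. No sieve or analytic input beyond Landau's theorem is needed; this is by far the easiest of the three Type I lemmas, since for $\B$ there is no restriction to a thin region in the underlying variables — the only constraint is on the norm.
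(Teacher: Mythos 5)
Your proposal is correct and follows the standard route: factor out $R$ to reduce $\#\B_R$ to the ideal-counting function of $K$, apply Landau-type bound $\#\{J' : N(J') \le t\} = \gamma_0 t + O(t^{2/3})$ pointwise, then sum the error against $\tau(R)^A$ using $\sum_{N(R)\le T}\tau(R)^A \ll T(\log T)^{c(A)}$. The paper does not actually prove this lemma itself; it simply refers to Lemma 3.3 of \cite{HB}, which is established by exactly this ideal-theorem argument, so your approach matches. One small slip worth flagging: you write ``$2/3 = 1 - 2/[K:\mathbb{Q}]$,'' but $1 - 2/3 = 1/3$; the exponent $2/3$ that you correctly use throughout is the elementary lattice-point bound $1 - 1/[K:\mathbb{Q}]$, not Landau's improved $1 - 2/(n+1) = 1/2$ (either would suffice, and in fact $2/3$ is exactly what is needed to produce the claimed $X^2Q^{1/3}$). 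Your remark about the regime $N(R) \gg X^3$ being handled trivially is also correct and is the right thing to say.
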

Similarly, the level of distribution result for $\B^0$ is below.

\begin{lem} \label{lem:B0type1}
Define the multiplicative function $\rho_1$ by
\begin{equation}
\rho_1(p^e) = p \left(1- \prod_{P|p} \left(1-\frac{1}{N(P)}\right)\right),
\end{equation}where $P$ runs over primes ideals in $K$.  Then for any $A>0$, there exists a constant $c(A)$ such that
\begin{equation}
\sum_{Q<q\le 2Q} \mu(q)^2\tau(q)^A \left|\#\B^0_q - 3\gamma_0 \rho_1(q) \frac{\eta X^3}{q}\right| \ll X^2 Q^{1/3} (\log Q)^{c(A)}.
\end{equation}
\end{lem}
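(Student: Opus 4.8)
The plan is to write $\#\B^0_q$ as a short signed combination of the counts $\#\B_E$, to estimate each by the classical ideal--counting formula for $K$, and to extract a convergence factor $q^{-2/3}$ from the resulting Euler product; this keeps the whole argument uniform in $q$.

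Since $\mu(q)^2=1$ we may assume $q$ squarefree, and factor $\mathbbm{1}_{q\mid N(\mathfrak a)}=\prod_{\ell\mid q}\mathbbm{1}_{\ell\mid N(\mathfrak a)}$. For a rational prime $\ell$ one has $\ell\mid N(\mathfrak a)$ precisely when some prime ideal above $\ell$ divides $\mathfrak a$, so inclusion--exclusion over the (at most three) primes above $\ell$ gives $\mathbbm{1}_{\ell\mid N(\mathfrak a)}=\sum_{\emptyset\neq S\subseteq\{P:P\mid\ell\}}(-1)^{|S|+1}\mathbbm{1}_{(\prod_{P\in S}P)\mid\mathfrak a}$. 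Multiplying over $\ell\mid q$ and grouping the terms by the squarefree ideal $E=\prod_{\ell\mid q}\prod_{P\in S_\ell}P$ gives
\begin{equation}
\#\B^0_q=\sum_{E\in\mathcal E_q}(-1)^{\Omega(E)+\omega(q)}\,\#\B_E,
\end{equation}
where $\mathcal E_q$ is the set of squarefree ideals $E$ such that $q\mid N(E)$ and every rational prime dividing $N(E)$ divides $q$ (so $q\mid N(E)\mid q^3$), and $\Omega(E)$ is the number of prime--ideal factors of $E$. Expanding $\rho_1(q)/q=\prod_{\ell\mid q}\bigl(1-\prod_{P\mid\ell}(1-N(P)^{-1})\bigr)$ in exactly the same way gives the matching identity $3\gamma_0\rho_1(q)\eta X^3/q=\sum_{E\in\mathcal E_q}(-1)^{\Omega(E)+\omega(q)}\,3\gamma_0\eta X^3/N(E)$, so that
\begin{equation}
\#\B^0_q-3\gamma_0\rho_1(q)\frac{\eta X^3}{q}=\sum_{E\in\mathcal E_q}(-1)^{\Omega(E)+\omega(q)}\Bigl(\#\B_E-\frac{3\gamma_0\eta X^3}{N(E)}\Bigr).
\end{equation}

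Writing an ideal of $\B_E$ as $E\mathfrak a'$, the map $E\mathfrak a'\mapsto\mathfrak a'$ identifies $\B_E$ with the set of integral ideals $\mathfrak a'$ satisfying $N(\mathfrak a')\in\bigl]\,3X^3/N(E),\,3X^3(1+\eta)/N(E)\,\bigr[$; applying the classical estimate $\#\{\mathfrak a:N(\mathfrak a)\le t\}=\gamma_0 t+O(t^{2/3})$ for the cubic field $K$ at the two endpoints (and estimating trivially when $N(E)\gg X^3$, where $\#\B_E\le 1$ and the main term is $\ll 1$) one gets, uniformly in $E$,
\begin{equation}
\Bigl|\#\B_E-\frac{3\gamma_0\eta X^3}{N(E)}\Bigr|\ll\frac{X^2}{N(E)^{2/3}}.
\end{equation}
Consequently $\bigl|\#\B^0_q-3\gamma_0\rho_1(q)\eta X^3/q\bigr|\ll X^2\sum_{E\in\mathcal E_q}N(E)^{-2/3}$, and the last sum factors as
\begin{equation}
\sum_{E\in\mathcal E_q}N(E)^{-2/3}=\prod_{\ell\mid q}\Bigl(\prod_{P\mid\ell}\bigl(1+N(P)^{-2/3}\bigr)-1\Bigr)\ll 6^{\omega(q)}\prod_{\ell\mid q}\ell^{-2/3}=6^{\omega(q)}q^{-2/3}\ll\tau(q)^3q^{-2/3};
\end{equation}
the crucial point is that each local factor is $O(\ell^{-2/3})$, so the full product keeps the factor $q^{-2/3}$. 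Summing over $q$ and invoking $\sum_{q\le 2Q}\tau(q)^{A+3}\ll Q(\log Q)^{c(A)}$ yields
\begin{equation}
\sum_{Q<q\le 2Q}\mu(q)^2\tau(q)^A\Bigl|\#\B^0_q-3\gamma_0\rho_1(q)\frac{\eta X^3}{q}\Bigr|\ll X^2Q^{-2/3}\sum_{Q<q\le 2Q}\tau(q)^{A+3}\ll X^2Q^{1/3}(\log Q)^{c(A)},
\end{equation}
which is the assertion of the Lemma.

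I expect the main obstacle to be largely bookkeeping: verifying carefully that the inclusion--exclusion produces precisely the set $\mathcal E_q$ (each $E$ counted once, with the stated sign) and that the two Euler products agree term by term, and confirming that the pointwise bound on $|\#\B_E-3\gamma_0\eta X^3/N(E)|$ is genuinely uniform in the boundary regime $N(E)\asymp X^3$. The one substantive observation is the $q^{-2/3}$ saving in the Euler product, which is exactly what allows the elementary ideal--counting estimate — rather than anything as strong as Lemma~\ref{lem:BtypeI} — to suffice.
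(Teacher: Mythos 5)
Your proof is correct, and since the paper does not supply its own argument (it delegates Lemma~\ref{lem:B0type1} to [HB, Lemma~2.2]), you have in effect reconstructed the missing proof. The structure you use—inclusion--exclusion over prime ideals lying above the prime divisors of $q$ to express $\#\B^0_q$ as a signed sum of the $\#\B_E$, the matching term-by-term expansion of $\rho_1(q)/q$, the pointwise Landau-type estimate $\#\{\mathfrak a : N(\mathfrak a)\le t\} = \gamma_0 t + O(t^{2/3})$ for the cubic field, and the observation that $\sum_{E\in\mathcal E_q} N(E)^{-2/3}$ factors as an Euler product whose local factors are $O(\ell^{-2/3})$—is exactly what makes the bound $X^2 Q^{1/3}$ fall out after summing over $q\asymp Q$. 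Two small inessential imprecisions: each local factor is bounded by $7\ell^{-2/3}$ rather than $6\ell^{-2/3}$ (there are up to seven nonempty subsets of $\{P : P\mid\ell\}$), and for $N(E)\asymp X^3$ one has $\#\B_E = O(1)$ rather than $\le 1$; neither affects the argument, since you already absorb the constant into $\tau(q)^3$. One worthwhile sanity check you did perform implicitly: in the boundary regime $N(E) \gtrsim 3X^3(1+\eta)$ the main term is $\ll \eta$, and $X^2/N(E)^{2/3}\gg 1\ge\eta$, so the claimed pointwise bound remains uniform there as well. The argument is tighter than simply invoking Lemma~\ref{lem:BtypeI}, since $N(E)$ ranges over $[q, q^3]$ rather than a single dyadic block; going through the pointwise estimate avoids a dyadic decomposition entirely.
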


Lemma \ref{lem:BtypeI} and Lemma \ref{lem:B0type1} is the same as Lemma 3.3 and Lemma 2.2 of \cite{HB}.  Lemmas \ref{lem:AtypeI} and \ref{lem:A0typeI} are very similar to Lemma 3.2 and 2.1 of \cite{HB}.  We shall prove Lemma \ref{lem:AtypeI} in Section \ref{sec:TypeIA}.  Passing from Lemma \ref{lem:AtypeI} to \ref{lem:A0typeI} is fairly straightforward, and we refer the reader to pg. 33 in \cite{HB} for details.

The first two bounds in Proposition \ref{prop:bound} are given by Lemma 3.5 in \cite{HB} which is proven by an application of the Fundamental Lemma; we refer the reader there for the proof.  \footnote{Note that Heath-Brown writes $T^{(0)}(\C)$ for $S_1(\C)$.}  The proof of the numerical bounds in Proposition \ref{prop:bound} is in Section \ref{sec:sievebounds} and the bulk of the paper is devoted to proving Proposition \ref{prop:bilinear1}.

\section{Type I estimate for $\A$} \label{sec:TypeIA}
Here, we prove Lemma \ref{lem:AtypeI}.  
Let
\begin{equation}
S(R; X, Y) = S(R) = \{x+y\sqrt[3]{2}: R|(x+y\sqrt[3]{2}), x \in ]X, X(1+\eta)], y\in ]Y, Y(1+\eta)]\}.
\end{equation}

To avoid excessive notation, we write $(R, x)$ to denote the greatest common divisor $(R, (x))$ for $R$ an ideal and $x$ an algebraic integer.  We begin with the following estimate.

\begin{lem}\label{lem:AtypeIlarge}
For $\R$ defined as in Lemma \ref{lem:AtypeI} and any $A>0$, there exists $c = c(A)$ such that
\begin{equation}
\sum_{\substack{Q<N(R)\le 2Q\\ R \in \R}} \tau(R)^A \left| S(R) - \frac{\eta^2 XY}{N(R)}\right| \ll (Q + X)(\log Q)^{c(A)}.
\end{equation}
\end{lem}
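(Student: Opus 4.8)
The goal is to count lattice points $(x,y) \in \mathbb{Z}^2$ with $x \in \, ]X, X(1+\eta)]$, $y \in \, ]Y, Y(1+\eta)]$ satisfying the divisibility condition $R \mid (x+y\sqrt[3]{2})$, and to show that this count is $\eta^2 XY/N(R)$ up to an error that is small when summed against $\tau(R)^A$ over the dyadic range $Q < N(R) \le 2Q$. The natural first step is to translate the ideal-divisibility condition into a congruence condition on $(x,y)$ modulo $N(R)$ (or modulo an appropriate sublattice). Since $R \in \R$ means $N(R)$ is squarefree, and by Lemma~\ref{lem:firstdegreeprimes} this forces $R$ to be a squarefree product of distinct first-degree prime ideals, for each such $P \mid p$ there is (essentially) a unique residue $\lambda_P \bmod p$ with $P \mid (x+y\sqrt[3]{2})$ iff $x \equiv -\lambda_P y \bmod p$ (or $y \equiv 0$, $x\equiv 0$ cases handled separately). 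By CRT these patch together to a single condition: $(x,y)$ must lie in a certain sublattice $\Lambda_R \subseteq \mathbb{Z}^2$ of index $N(R)$, determined by a residue class modulo $N(R)$.

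**Main steps.** First I would set up this lattice/congruence description precisely, being careful about the exceptional primes (those dividing $2$, or ramified primes, or where the linear relation degenerates) — these contribute $O(1)$ to $\tau(R)^A |S(R) - \eta^2 XY/N(R)|$ per $R$ and hence $O(Q(\log Q)^c)$ overall after summing, so they are harmless. Second, for the main term, I would count lattice points of $\Lambda_R$ in the box $\B(X,Y) = \, ]X, X(1+\eta)] \times \, ]Y, Y(1+\eta)]$ of area $\eta^2 XY$. The standard bound gives $\#(\Lambda_R \cap \B(X,Y)) = \frac{\eta^2 XY}{N(R)} + O\!\left(1 + \frac{\text{perimeter}}{\det'}\right)$ where the error depends on the successive minima of $\Lambda_R$; crudely this is $O(\eta X / \text{(shortest vector)} + 1)$. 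Summing the trivial $O(1 + \eta X)$ error over $Q < N(R) \le 2Q$ with the $\tau(R)^A$ weight would give roughly $QX(\log Q)^c$ — too large. So the real work is to do better on average over $R$. The key point is that for most $R$ the lattice $\Lambda_R$ is "balanced" (both successive minima comparable to $\sqrt{N(R)}$), and the count is genuinely $\eta^2 XY/N(R) + O(\sqrt{N(R)}\,\eta)$-ish; one then sums $\sqrt{N(R)} \ll \sqrt{Q}$ against $\tau(R)^A$ over $\ll Q$ values of $R$ to get $\eta^2 X \cdot Q^{1/2} \cdot Q (\log)^c$... which is still not obviously $\ll (Q+X)(\log Q)^c$. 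Getting the stated clean bound $(Q+X)(\log Q)^c$ therefore requires exploiting cancellation in the error term, i.e. summing the congruence error over $R$ via the geometry of numbers / Fourier expansion rather than bounding $|S(R) - \text{main}|$ pointwise.

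**The expected obstacle.** The hard part is exactly this averaging over $R$. The cleanest route, which I expect the author to take, is: expand the indicator of $\Lambda_R \cap \B$ via Poisson summation (or via counting solutions to $x \equiv cy \bmod N(R)$ and then summing a sawtooth/Fourier series in the resulting one-dimensional congruence), producing the main term $\eta^2 XY/N(R)$ plus an error that is a sum over nonzero frequencies of exponential sums $\sum_{y} e(a \bar{c}(R) y / N(R))$ twisted by the box. One then sums these over $R$ with the weight $\tau(R)^A$; the residues $c(R) \bmod N(R)$ vary with $R$, and one uses standard bounds for Kloosterman-type or linear exponential sums together with a divisor-sum estimate to collapse everything to $(Q+X)(\log Q)^c$. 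The bookkeeping for the non-coprime cases $(x, N(R)) \neq 1$ or $(y, N(R)) \neq 1$, and the interplay between the $\tau(R)^A$ weight and the exponential-sum savings, is the main technical nuisance; I would handle the coprimality by a preliminary Möbius sieve on $(x,y,N(R))$ and reduce to the generic case. I expect the final estimate to follow the template of Lemma~3.2 in Heath-Brown \cite{HB}, with the modifications needed to track the two separate variables $X$ and $Y$ of very different sizes — indeed the three-term shape $X\sqrt{Y} + \sqrt{XYQ} + Q$ of Lemma~\ref{lem:AtypeI} versus the two-term $Q + X$ here reflects exactly which range of $N(R)$ the Poisson error dominates in.
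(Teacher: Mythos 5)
Your route---detect $R \mid (x + y\sqrt[3]{2})$ as a congruence on $(x,y)$ modulo $N(R)$, complete the sums over the box via additive characters, and bound the resulting complete exponential sums---is the one the paper intends. Its proof here is essentially the one-line citation of Lemma~5.1 of Heath-Brown~\cite{HB}, together with the remark that replacing $y \in\,]X, X(1+\eta)]$ by $y \in\,]Y, Y(1+\eta)]$ changes nothing essential, and that cited proof is precisely the Poisson/exponential-sum template you describe.

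However, one point in your reasoning misidentifies where the saving comes from, and if followed literally would stall the proof. You say the bound ``requires exploiting cancellation in the error term\dots\ rather than bounding $|S(R) - \text{main}|$ pointwise.'' But the absolute value already sits inside the sum over $R$, so there is no cancellation on average over $R$ available; one really does bound $|S(R) - \eta^2 XY/N(R)|$ for each $R$ and then sums. What makes the $R$-sum manageable is structural vanishing of the complete exponential sum
\begin{equation}
S_0(R, a, b) \;=\; \sum_{\substack{u, v \bmod N(R)\\ R \mid u + v\sqrt[3]{2}}} e\!\left(\frac{au+bv}{N(R)}\right).
\end{equation}
By multiplicativity (using $R \in \R$, so $N(R)$ squarefree) and the degree-one prime computation carried out explicitly in the paper's proof of Lemma~\ref{lem:AtypeIprimepower}, one has $S_0(P,a,b)=N(P)$ if $P \mid (b - a\sqrt[3]{2})$ and $S_0(P,a,b)=0$ otherwise. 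So for each fixed nonzero frequency $(a,b)$ the sum over $Q<N(R)\le 2Q$ is supported on the $O(\tau(b-a\sqrt[3]{2}))$ divisors of a single fixed ideal, and the triple sum over $(R,a,b)$ closes via a divisor-sum estimate; this, not cancellation, is the missing key idea. The sums are purely linear exponential sums---the modular inverse $\bar{c}(R)$ in your sketch does not arise, and there is nothing Kloosterman-like. Two minor corrections: $S(R)$ itself carries no coprimality condition $(x,y)=1$ (that M\"obius sieve belongs to the deduction of Lemma~\ref{lem:AtypeI} from the present lemma, not to the present lemma), and the correct reference in \cite{HB} is Lemma~5.1, not Lemma~3.2 (which is the analogue of Lemma~\ref{lem:AtypeI}).
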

The proof of Lemma \ref{lem:AtypeIlarge} is entirely analogous to the proof of Lemma 5.1 of \cite{HB}.  Lemma 5.1 in \cite{HB} has the restriction $x, y\in ]X, X(1+\eta)]$ in place of our restriction $x \in ]X, X(1+\eta)], y\in ]Y, Y(1+\eta)]$, but this does not significantly affect the proof.  However, for technical convenience, Heath-Brown proved Lemma 5.1 (and we state our Lemma \ref{lem:AtypeIlarge}) for $R\in \R$ where recall that $\R$ is the set of ideals with squarefree norm.  Unfortunately, we will need a result like Lemma \ref{lem:AtypeIlarge} for $R$ a power of a prime ideal.  This is the focus of the next Lemma.

\begin{lem}\label{lem:AtypeIprimepower}
Fix $k \ge 1$ and an interval $I = [Q, Q+Q_0]$ for $Q_0 \le Q$.  Then there exists $r>0$ such that 
\begin{equation}
\sum_{\substack{N(P^k)\in I}} \left|S(P^k) - \frac{\eta^2 XY}{N(P^k)}\right| \ll   (Q + X)(\log Q)^{r},
\end{equation}
where $\sum_{\substack{N(P^k) \in I}}$ denotes a sum over degree one prime ideals $P$ with norm $N(P^k) \in I$.
\end{lem}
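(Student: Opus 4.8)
The plan is to reduce Lemma \ref{lem:AtypeIprimepower} to Lemma \ref{lem:AtypeIlarge} by separating the case $k=1$ from the case $k \ge 2$. When $k=1$, a degree one prime ideal $P$ has $N(P) = p$ prime, so in particular $N(P)$ is squarefree and $P \in \R$. Thus the sum over $N(P) \in I$ is a subsum of the sum appearing in Lemma \ref{lem:AtypeIlarge} (after covering the interval $I = [Q, Q+Q_0]$ by $O(1)$ dyadic blocks $]Q', 2Q']$, or simply noting that the proof of Lemma \ref{lem:AtypeIlarge} works verbatim for any interval with $Q_0 \le Q$). Dropping the nonnegative weight $\tau(R)^A \ge 1$ and the contribution of the non-prime ideals $R \in \R$ only decreases the left side, so the bound $(Q+X)(\log Q)^{c(A)}$ transfers immediately with $r = c(1)$, say.

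For $k \ge 2$, the point is that the ideals $P^k$ are extremely sparse: there are only $O(Q^{1/k}) = O(Q^{1/2})$ prime ideals $P$ with $N(P)^k \in I$, so we can afford to bound each term $S(P^k)$ and the main term $\eta^2 XY / N(P^k)$ trivially. For the main term, $\sum_{N(P^k) \in I} \eta^2 XY / N(P^k) \ll \eta^2 XY \cdot Q^{1/2} / Q = \eta^2 XY Q^{-1/2} \ll X$ since $\eta < 1$ and $Y \le Q^{1/2}$ is harmless — more carefully, $\eta^2 XY/Q \le XY/Q$, and summing over $\ll Q^{1/2}$ terms gives $\ll XY Q^{-1/2}$, which is $\ll X$ as long as $Q \gg Y^2$; for small $Q$ one uses $\eta^2 XY/N(P^k) \ll \eta^2 XY/2^k \ll X$ directly when there are $O(1)$ such ideals. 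For $S(P^k)$ itself, by Lemma \ref{lem:firstdegreeprimes} no element of $\A$ is divisible by a prime ideal of degree greater than one, nor by the square of a first degree prime ideal (since that would force $N(R)$ to not be squarefree); hence $S(P^k) = 0$ for $k \ge 2$ whenever $P$ contributes an element of $\A$ with $(x,y)=1$. Wait — $S(R)$ as defined here does not impose $(x,y)=1$, so I should instead bound $S(P^k)$ by the number of lattice points $x+y\sqrt[3]{2}$ in the box divisible by $P^k$; this is $\ll XY/N(P^k) + (\text{boundary terms}) \ll XY/N(P^k) + X/N(P)^{?} + 1$, and summing the $+1$ over $O(Q^{1/2})$ ideals gives $\ll Q^{1/2} \ll Q$. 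Summing the main-size term reproduces the bound just obtained for the main term. Either way one lands at $\ll (Q+X)(\log Q)^{r}$.

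Assembling the two ranges and taking $r$ to be the larger of the two exponents produced gives the claimed estimate. The main obstacle I anticipate is purely bookkeeping in the $k \ge 2$ case: one must make sure the trivial bound for the number of lattice points $x+y\sqrt[3]{2}$ in the box $x \in\, ]X,X(1+\eta)]$, $y \in\, ]Y,Y(1+\eta)]$ divisible by $P^k$ is genuinely $\ll XY/N(P^k)$ plus error terms that sum acceptably; this is the same kind of count carried out in the proof of Lemma \ref{lem:AtypeIlarge} (following Lemma 5.1 of \cite{HB}), specialized to the single modulus $P^k$, so no new idea is required — one can quote the relevant lattice-point estimate from there. The only subtlety is that $Q_0 \le Q$ rather than a dyadic range, but since Lemma \ref{lem:AtypeIlarge}'s proof is insensitive to the exact interval shape (it proceeds modulus-by-modulus), this costs nothing.
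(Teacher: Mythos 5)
Your reduction for $k=1$ is correct: a degree one prime ideal $P$ has $N(P)=p$ squarefree, so $P\in\R$, and the sum over such $P$ with $N(P)\in I$ is (after covering $I$ by a bounded number of dyadic intervals, or noting that the proof of Lemma~\ref{lem:AtypeIlarge} is insensitive to the precise interval of length $\le Q$) a subsum of the one in Lemma~\ref{lem:AtypeIlarge}, and dropping the weight $\tau(R)^A\ge 1$ only helps. So far so good.

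The $k\ge 2$ case, however, contains a genuine gap. First, your bound on the main terms alone does not close: $\sum_{N(P^k)\in I}\eta^2 XY/N(P^k)\asymp\eta^2 XY\,Q^{1/k-1}$, and for $k=2$ and $Q$ in the range $X^\delta\ll Q\ll Y^2$ (a range that does occur in the applications of this Lemma, e.g.\ in the proof of Lemma~\ref{lem:hbetadbetareplace}), the quantity $\eta^2 XY\,Q^{-1/2}$ is a positive power of $X$ times a power of $\log X$, far bigger than $(Q+X)(\log Q)^r$. Your fallback estimate ``$\eta^2 XY/2^k\ll X$'' requires $k\gg\log Y$, not merely $k\ge 2$. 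So one cannot estimate $S(P^k)$ and $\eta^2 XY/N(P^k)$ separately; the Lemma asserts genuine cancellation between them, i.e.\ a sharp count of lattice points of $\Lambda_{P^k}=\{(x,y)\in\mathbb{Z}^2:P^k\mid(x+y\sqrt[3]{2})\}$ in the box, with a small error.

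Second, that lattice-point error is exactly the content you leave unspecified (``$X/N(P)^{?}$''), and it does not come for free. This is \emph{not} ``the same count as in Lemma~\ref{lem:AtypeIlarge} specialized to a single modulus $P^k$''; the paper itself warns that ``some technical inconveniences were avoided in [HB] due to the squarefree norm.'' The naive geometry-of-numbers bound for the error is $\ll\eta X/\lambda_1(\Lambda_{P^k})+1$, and the shortest vector $\lambda_1$ can be as small as $\asymp N(P^k)^{1/3}$: if $P^k\mid(x_0+y_0\sqrt[3]{2})$ with $(x_0,y_0)\ne(0,0)$ then $p^k\mid x_0^3+2y_0^3$, so $\max(|x_0|,|y_0|)\gg p^{k/3}$, and this bound is attained whenever $N(P^k)$ is comparable to a value of $x^3+2y^3$. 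Summing $\eta X/N(P^k)^{1/3}$ over the $\ll Q^{1/k}$ primes with $N(P^k)\in I$ gives $\ll\eta X\,Q^{1/k-1/3}$, which for $k=2$ is $\eta X\,Q^{1/6}$ --- again too large compared with $(Q+X)(\log Q)^r$ for $Q$ below $X^{6/5}$. The paper's actual proof avoids this by redoing the Fourier analysis for the prime-power modulus: it computes the exponential sum $S_0(P^k,a,b)$ directly, shows it vanishes unless $P\mid(a-b\sqrt[3]{2})$ or $p$ ramifies (using unramifiedness, the fact that $P^2\nmid p$, and an averaging over scalings $u\mapsto tu$, $v\mapsto tv$ with $p\nmid t$), and controls the remaining bad $(a,b,P)$ triples by a divisor sum $\sum_{a,b}\tau(a-b\sqrt[3]{2})/|ab|\ll(\log Q)^r$. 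That computation is the crux of the Lemma and is missing from your proposal.
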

\begin{proof}
The proof is rather similar to the proof of Lemma 5.1 in \cite{HB}, except that some technical inconveniences were avoided in \cite{HB} due to the squarefree norm.  We have chosen to restrict our attention to powers of degree prime ideals in order to shorten the details of the proof.

As in the proof of Lemma 5.1 in \cite{HB}, we write
\begin{equation}\label{eqn:Aprimepower1}
S(P^k) -\frac{\eta^2 XY + O(X)}{N(P)^k} \ll  \sum_{\substack{|a|, |b| \le \frac{N(P^k)}{2}\\ (a, b) \neq (0, 0)}} \frac{|S_0(P^k, a, b)|}{N(P)^{2k}} \min\left\{X, \frac{N(P^k)}{|a|}\right\} \min\left\{X, \frac{N(P^k)}{|b|} \right\},
\end{equation}
where
\begin{equation}
S_0(P^k, a, b) = \sum_{\substack{u, v \bmod N(P^k) \\ P^k | u+v\sqrt[3]{2}}} e\bfrac{au+bv}{N(P)^k}.
\end{equation}
We refer the reader to (5.2) in \cite{HB} for this.  Note that $\sum_{\substack{N(P^k) \in I}} \frac{X}{N(P)^k} \ll X \log Q$, which is an acceptable error for our Lemma.  It remains to bound the contribution of the right hand side of \eqref{eqn:Aprimepower1}.  Here, we will assume that $a\neq 0$ and $b\neq 0$, the case when $a = 0$ or $b=0$ being similar but simpler (we refer the reader to pg. 30 of \cite{HB} for a treatment of this).  In doing so, we replace $\frac{1}{N(P)^{2k}}\min\left\{X, \frac{N(P^k)}{|a|}\right\} \min\left\{X, \frac{N(P^k)}{|b|} \right\}$ by $\frac{1}{|ab|}$. 

Since we only consider prime ideals $P$ of degree one, we write $N(P) = p$ for prime $p$.  We will deal with two cases.  First, let us consider the case $P| (b - a\sqrt[3]{2})$ or $K/\mathbb{Q}$ ramifies over $p$; we remind the reader that the latter occurs for only finitely many $p$.  In this case, we use the trivial bound $S_0(P^k, a, b) \ll N(P^k) = p^k \ll Q$.  Indeed, note that when $P$ is unramified, then $P^2 \nmid p$, so the condition $P^l|m$ for any integer $m$ implies that $p^l|m$.  This means that when $P^k| u+v \sqrt[3]{2}$ and $p^l|v$ for $l\le k$, then $p^l|u$ also.  Thus, since the condition $P^k| u+v \sqrt[3]{2}$ implies that $p^k | N(u+v \sqrt[3]{2})$, if $v$ is fixed, the number of choices for $u \bmod p^k$ is $O(1)$.  Thus, the contribution to the error term is bounded by
\begin{align*}
& \sum_{\substack{|a|, |b| \ll Q \\ ab \neq 0}} \frac{Q}{|ab|} \left(1+\sum_{\substack{N(P^k)\in I\\ P|a-b\sqrt[3]{2}}} 1 \right)\\
&\ll Q \sum_{\substack{|a|, |b| \ll Q \\ ab \neq 0}} \frac{1}{|ab|} \tau(a-b\sqrt[3]{2})\\
&\ll Q (\log Q)^r,
\end{align*}for some $r$.

Now we assume that $P \nmid (b-\sqrt[3]{2}a)$ which immediately implies that $p\nmid (a, b)$ and we assume that $K/\mathbb{Q}$ is unramified at $p$, so that $P^2 \nmid p$.  Then for any integer $t$ with $p \nmid t$, $tu$ and $tv$ runs over all the residues mod $p^k$ when $u$ and $v$ do, so  
\begin{align*}
S_0(P^k, a, b) = \sum_{\substack{u, v \bmod N(P^k) \\ P^k | tu+tv\sqrt[3]{2}}} e\bfrac{atu+btv}{p^k} = \sum_{\substack{u, v \bmod N(P^k) \\ P^k | u+v\sqrt[3]{2}}} e\bfrac{atu+btv}{p^k}.
\end{align*}Summing over $t \bmod p^k$ such that $p \nmid t$, we see that
\begin{align*}
(p^k - p^{k-1}) S_0(P^k, a, b) = \sum_{\substack{u, v \bmod N(P^k) \\ P^k | u+v\sqrt[3]{2}}} \sumstar_{t \bmod p^k} e\bfrac{atu+btv}{p^k},
\end{align*}where $\sumstar$ as usual denotes a sum over reduced residues.  The inner sum is
$$\sum_{t \bmod p^k} e\bfrac{atu+btv}{p^k} - \sum_{t \bmod p^{k-1}} e\bfrac{atu+btv}{p^{k-1}}.
$$It follows that
\begin{align*}
(p^k - p^{k-1}) S_0(P^k, a, b) = &p^k \#\{u, v \bmod p^k: P^k| (u+v \sqrt[3]{2}), p^k|au+bv\} \\
&- p^{k-1}\#\{u, v \bmod p^k: P^k| (u+v\sqrt[3]{2}), p^{k-1}|au+bv \}.
\end{align*}Since $p\nmid (a, b)$, at least one of $a$ or $b$ is invertible mod $p^k$, and so the condition $p^k|au+bv$ is equivalent to $u \equiv \lambda b \bmod p^k$ and $v \equiv -\lambda a \bmod p^k$ for some integer $0\le \lambda < p^k$.  Since $P^k|p^k$, the condition $P^k| (u+v \sqrt[3]{2})$ is equivalent to $P^k|(\lambda(b - a \sqrt[3]{2}))$, and since $P\nmid (b - a \sqrt[3]{2})$, $P^k | (\lambda)$, so $p^k|\lambda$, from which we conclude that $\lambda = 0$.  In this case $p^k \#\{u, v \bmod p^k: P^k| (u+v \sqrt[3]{2}), p^k|au+bv\} = p^k$.

Similarly, the conditions $p^{k-1}|au+bv$ and $P^k| (u+v\sqrt[3]{2})$ give that $u \equiv \lambda b \bmod p^{k-1}$ and $v \equiv -\lambda a \bmod p^{k-1}$ for some integer $0\le \lambda < p^{k-1}$.  Moreover $P^k|(u+v\sqrt[3]{2})$ implies that $P^k|p\lambda(b-a\sqrt[3]{2})$ and so $P^{k-1} |(\lambda)$ whence $\lambda = 0$.  Thus 
$$p^{k-1}\#\{u, v \bmod p^k: P^k| (u+v\sqrt[3]{2}), p^{k-1}|au+bv \} = p^{k-1} \#\{u, v \bmod p: P| (u+v\sqrt[3]{2})\} = p^k,$$ so that $S_0(P^k, a, b) = 0$ in this case.
\end{proof}

We now turn to the proof of Lemma \ref{lem:AtypeI}.  First, note that
\begin{align} \label{eqn:TypeI1}
\# \A_R &= \sum_d \mu(d) \# \{x\in ]X, X(1+\eta)], y\in ]Y, Y(1+\eta)]: d|(x, y), R|(x+y\sqrt[3] 2)\}\\
&= \sum_d \mu(d) \# \left \{x'\in ]X/d, X(1+\eta)/d], y'\in ]Y/d, Y(1+\eta)/d]: \frac{R}{(R, d)}|(x'+y'\sqrt[3] 2) \right \}\\
&= \sum_d \mu(d) S\left(\frac{R}{(R, d)}; \frac Xd, \frac Yd\right).
\end{align}

Recall that we want to show
\begin{equation}\label{eqn:lemAtypeIbdd}
\sum_{\substack{Q<N(R)\le 2Q\\ R \in\R}} \tau(R)^A \left|\#\A_R - \frac{6\eta^2 XY}{\pi^2 N(R)} \rho_2(R) \right|\ll (X\sqrt Y + \sqrt{XYQ} + Q) (\log QX)^{c(A)}.
\end{equation}
We will compare $\#\A_R$ with
\begin{align}\label{eqn:TypeImaint}
\sum_d \frac{ \mu(d) \eta^2 XY}{d^2} N\bfrac{R}{(R, d)}^{-1}
&= \frac{\eta^2 XY}{N(R)} \sum_d \frac{\mu(d)}{d^2} N((R, d)) \notag \\
&= \frac{\eta^2 XY}{N(R)} \prod_{p\nmid N(R)} \left(1-\frac{1}{p^2}\right) \prod_{p | N(R)} \left(1-\frac {p}{p^2}\right)\notag \\
&= \frac{\eta^2 XY}{N(R)} \frac{1}{\zeta(2)} \prod_{p | N(R)} \left(1+\frac 1p\right)^{-1}\notag \\
&= \frac{6\eta^2 XY}{\pi^2N(R)} \prod_{p|N(R)} \left(1+\frac 1p\right)^{-1}. 
\end{align}
We now split the sum over $d$ into two ranges $d<\Delta$ and $d\ge \Delta$, for some parameter $\Delta$ to be specified.  A standard calculation shows that the contribution of the large $d$ is small.  More precisely, the contribution of the terms in \eqref{eqn:lemAtypeIbdd} for which $d \ge \Delta$ arising from \eqref{eqn:TypeImaint} and summed over $R\in \R$ is bounded by
$$\sum_{\substack{Q<N(R)\le 2Q\\ R \in\R}} \tau(R)^A \frac{\eta^2 XY}{N(R)} \sum_{d\ge \Delta} \frac{|\mu(d)|}{d^2} N((R, d)) \ll \frac{\eta^2 XY (\log Q)^{c(A)}}{\Delta},
$$by a similar calculation to (5.5) of \cite{HB} and the contribution of the terms in which $d\ge \Delta$ from \eqref{eqn:TypeI1} is bounded by 
$$\sum_{\substack{Q<N(R)\le 2Q\\ R \in\R}} \tau(R)^A S\left(\frac{R}{(R, d)}; \frac Xd, \frac Yd\right) \ll \frac{\eta^2 XY (\log Q)^{c(A)}}{\Delta},
$$by a similar calculation to (5.6) of \cite{HB}.  Note that the two bounds above is indeed bounded by the right hand side of \eqref{eqn:lemAtypeIbdd} for $\Delta =  1 + \min \left(\sqrt Y, \sqrt{\frac{XY}{Q}}\right)$.

It suffices to show that
\begin{align*}
\sum_{\substack{Q<N(R)\le 2Q\\ R \in \R}} \tau(R)^A \sum_{d < \Delta} \left| S\left(\frac{R}{(R, d)}; \frac Xd, \frac Yd\right) - \frac{\eta^2XY}{d^2 N(R/(R, d))}\right| \ll (X\sqrt Y + \sqrt{XYQ} + Q) (\log QX)^{c(A)}.
\end{align*}
By Lemma \ref{lem:AtypeIlarge} and writing $I = (R, d)$ and $R = IT$, the above is 
\begin{align*}
&\ll \sum_{d < \Delta} \sum_{N(I)|d} \tau(I)^A \sum_{\substack{Q/N(I)<N(T)\le 2Q/N(I)\\ T \in \R}} \tau(T)^A \left|S(T; X/d, Y/d)  -  \frac{\eta^2XY}{d^2 N(T)}\right| \\
&\ll \sum_{d < \Delta} \sum_{N(I)|d} \tau(I)^A \left(\frac Xd + \frac{Q}{N(I)} \right) (\log Q)^{c(A)}\\
&\ll (X+Q) (\log XQ)^{c(A)} \Delta,
\end{align*}where the constant $c(A)$ is not necessarily the same from line to line.  Recalling $\Delta = 1 + \min \left(\sqrt Y, \sqrt{\frac{XY}{Q}}\right)$, we see that the above is
\begin{equation*}
(\log XQ)^{c(A)} (X+Q)\Delta
\ll (X\sqrt Y + \sqrt{XYQ} + Q) (\log QX)^{c(A)}, 
\end{equation*}as required for Lemma \ref{lem:AtypeI}.

\section{Sieve bounds} \label{sec:sievebounds}
We now prove Proposition \ref{prop:bound}.  The proof of the bounds for $T^{(n)}$ and $S_1$ is essentially the same as the proof of Lemma 3.5 in Section 6 of \cite{HB}.  One small difference to note is that our main term for $\A$ includes the factor $XY$ rather than $X^2$.  Also, our parameter $\delta \asymp \frac{1}{n_0}$ is different from that of Heath-Brown in that we choose $\delta \asymp 1$ which is larger than Heath-Brown's choice of $\tau = \frac{1}{(\log \log X)^{1/6}}$.  We do not need to modify Heath-Brown's proof however.  In particular, the sum over $n$ and other estimates introduces factors of $n_0 \asymp \frac{1}{\delta}$, but an application of the classical Fundamental Lemma of sieve theory (see e.g. Lemma 6.8 in \cite{FIO}) gives exponential savings of the form $\exp(-\frac{1}{\delta})$.  We refer the reader to \S 6 of Heath-Brown's work \cite{HB} for details.    

Similarly, for the proof of the bounds for $S_j(\C)$ for $j=5$ and $j=6$ we refer the reader to the proof of Lemma 3.6 in Section 7 of \cite{HB}.  It remains to deal with the bounds $S_j(\B)$ and $S_j(\A)$ for $j=3, 5, 6, 7,$ and $8$.  We shall evaluate $S_j(\B)$ precisely using the Prime Ideal Theorem, while $S_j(\A)$ shall be treated using an upper bound sieve.

\subsection{Computation of $S_j(\B)$}
Fundamentally, the computation of $S_j(\B)$ are standard arguments using the Prime Ideal Theorem.  However, the calculations are somewhat lengthy, and so for the sake of clarity, we first collect a few Lemmas which we will use freely without citation.  The first is the Prime Ideal Theorem.

\begin{lem}\label{lem:PIT}
There exists some constant $c>0$ such that 
\begin{equation}
\sum_{N(P) \le x} 1 = \int_2^x \frac{dt}{\log t} + O\left(x\exp(-c\sqrt{\log x})\right).
\end{equation}
\end{lem}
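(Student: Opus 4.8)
The plan is to invoke the classical argument of Landau, specialized to the fixed cubic field $K = \mathbb{Q}(\sqrt[3]{2})$. First I would set up the Dedekind zeta function $\zeta_K(s) = \sum_I N(I)^{-s} = \prod_P (1-N(P)^{-s})^{-1}$, recall that it converges absolutely for $\mathrm{Re}(s)>1$, continues meromorphically to $\mathbb{C}$ with a single simple pole at $s=1$ (of residue $\gamma_0$), and satisfies the standard functional equation relating $s$ and $1-s$. Logarithmic differentiation of the Euler product then gives, for $\mathrm{Re}(s)>1$,
\[
-\frac{\zeta_K'}{\zeta_K}(s) = \sum_I \Lambda_K(I)\, N(I)^{-s}, \qquad \Lambda_K(P^k) = \log N(P),
\]
with $\Lambda_K(I)=0$ otherwise; this function has a simple pole of residue $1$ at $s=1$, and the prime ideal theorem will come from extracting that residue.

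The key step, and the main obstacle, is a classical zero-free region: I would establish a constant $c_1>0$ with $\zeta_K(s)\ne 0$ throughout $\sigma \ge 1 - c_1/\log(|t|+2)$. This rests on upper bounds for $\zeta_K$ near the line $\sigma=1$. The functional equation together with the Phragm\'en--Lindel\"of convexity principle yields $\zeta_K(\sigma+it)\ll(|t|+2)^{C}$ uniformly in any fixed vertical strip; the Borel--Carath\'eodory lemma together with Hadamard's three-circles theorem then upgrades this to $\zeta_K(s)\ll\log(|t|+2)$ and $\zeta_K'/\zeta_K(s)\ll\log^2(|t|+2)$ just to the right of $\sigma=1$; and feeding these into the positivity of $3+4\cos\theta+\cos 2\theta$ applied to $\log|\zeta_K(\sigma)^3\zeta_K(\sigma+it)^4\zeta_K(\sigma+2it)|$ forces the zero-free region of the stated shape. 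I would remark that for this particular field the factorisation $\zeta_K = \zeta\cdot L(s,\chi)$, with $\chi$ a cubic (hence non-real) Hecke character, shows there is no exceptional real zero, so the constant $c$ below is in fact effective — though the statement only asserts existence of $c$.

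With the zero-free region in hand, the rest is the standard contour argument. I would apply a truncated Perron formula to $-\zeta_K'/\zeta_K$ to write $\psi_K(x):=\sum_{N(I)\le x}\Lambda_K(I)$ as a vertical integral to height $T$ with error $O(x^{1+\epsilon}/T)$, shift the contour to $\sigma = 1 - c_1/(2\log T)$ picking up the residue $x$ at $s=1$, and bound the shifted vertical segment and the two horizontal segments by $\ll x\exp(-c_2\sqrt{\log x})$ using the zero-free region and the bound $\zeta_K'/\zeta_K\ll\log^2(|t|+2)$, after choosing $T=\exp(c_3\sqrt{\log x})$. This gives
\[
\psi_K(x) = x + O\!\left(x\exp(-c_2\sqrt{\log x})\right).
\]
Finally I would pass to $\theta_K(x):=\sum_{N(P)\le x}\log N(P) = \psi_K(x) + O(\sqrt{x}\log^2 x)$, the discarded terms being the higher prime powers $P^k$ with $k\ge 2$, all of norm $\le \sqrt{x}$ and $O(\sqrt x\log^2 x)$ in total since $[K:\mathbb{Q}]=3$; then a single partial summation converts $\theta_K(x)=x+O(x\exp(-c\sqrt{\log x}))$ into $\sum_{N(P)\le x}1 = \int_2^x dt/\log t + O(x\exp(-c\sqrt{\log x}))$, which is the lemma. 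Everything past the zero-free region is the same bookkeeping as in the classical prime number theorem, so the heart of the matter is really the two upper-bound inputs for $\zeta_K$ and $\zeta_K'/\zeta_K$ in and just to the right of the critical strip.
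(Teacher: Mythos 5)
The paper states this lemma without proof, treating it as the classical Prime Ideal Theorem with de la Vall\'ee Poussin error term. Your sketch is a correct outline of the standard Landau argument (Dedekind zeta, zero-free region via $3+4\cos\theta+\cos 2\theta$, truncated Perron with contour shift, then partial summation from $\psi_K$ to $\theta_K$ to $\pi_K$); the side remark that the factorisation $\zeta_K=\zeta\cdot L(s,\chi)$ over the Galois closure rules out an exceptional real zero is a nice touch but not needed here, since the lemma asserts only the existence of some $c>0$, and in any case the paper does not pursue effectivity.
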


We will be estimating sums of ideals which are a product of a fixed number of prime ideals.  For this, we use the following variant of Lemma 4.10 of Heath-Brown in \cite{HB}.
\begin{lem}\label{lem:primesums}
	For $A\ge 2$ and $Y \ge A^n$, let $S \subset \mathbb{R}^n$ be a measurable set such that all $\bx = (x_1,...,x_n) \in S$ satisfy
	$$A\le x_i
	$$and
	$$\prod_{i=1}^n x_i \le Y.
	$$Moreover, we assume that for all $1\le j\le n$ and fixed $x_1,...,x_{j-1}, x_{j+1},...,x_n$, the set
	$$\{x_j: (x_1,...,x_j,...,x_n)\in S\}
	$$is a finite union of at most $C_0$ intervals.
	
	Further, let $f(t)$ be either $f(t) = \log t$ or $f(t) = 1$ for all $t$.  Then there exists  absolute constants $c_1$ and $c_2$ such that
	$$\sum_{(N(P_1),...,N(P_n)) \in S} \prod_{i=1}^n f(N(P_i)) = \int_S\prod_{i=1}^n \frac{f(t_i)}{\log t_i} dt_1...dt_n + O\left(nY(c_1+\log Y)^{n-1} \exp\left(-c_2 (\log A)^{1/2}\right)\right),
	$$where the implied constant $C_3$ depends only on $C_0$.  
	
\end{lem}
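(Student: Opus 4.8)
The proof follows the inductive scheme of Lemma~4.10 of \cite{HB}; the only new features are that we treat the two choices $f(t)=\log t$ and $f(t)=1$ on the same footing and that $S$ is an arbitrary measurable set subject to the slice hypothesis rather than a box. The argument is an induction on $n$.

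For the base case $n=1$, the plan is to first deduce from Lemma~\ref{lem:PIT}, by partial summation, that $\sum_{N(P)\le x}1=\int_2^x\tfrac{dt}{\log t}+O\!\left(x\exp(-c_2\sqrt{\log x})\right)$ and $\sum_{N(P)\le x}\log N(P)=x+O\!\left(x\exp(-c_2\sqrt{\log x})\right)$ for an absolute constant $c_2>0$. The spurious factor $\log x$ produced by partial summation in the second estimate is absorbed into the exponential by shrinking the constant, since $\log x\cdot\exp(-c\sqrt{\log x})\ll\exp(-\tfrac{c}{2}\sqrt{\log x})$ once $x$ exceeds an absolute bound and the estimate is trivial below it. Since $S\subset[A,Y]$ is a union of at most $C_0$ intervals, and $\int_a^b\tfrac{f(t)}{\log t}\,dt$ is the relevant main term on each such interval $[a,b]$, summing these estimates over the intervals of $S$ and using $\exp(-c_2\sqrt{\log t})\le\exp(-c_2\sqrt{\log A})$ for $A\le t\le Y$ yields the case $n=1$ with error $O_{C_0}\!\left(Y\exp(-c_2\sqrt{\log A})\right)$.

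For the inductive step, write a point of $\mathbb{R}^n$ as $(\mathbf t',t_n)$ with $\mathbf t'\in\mathbb{R}^{n-1}$, and for each prime ideal $P_n$ with $N(P_n)=x_n\ge A$ set $S_{x_n}=\{\mathbf t':(\mathbf t',x_n)\in S\}$. This slice satisfies the hypotheses of the Lemma in dimension $n-1$ with $Y$ replaced by $Y/x_n$, which is at least $A^{n-1}$ whenever $S_{x_n}\ne\emptyset$ by the product bound and $x_n\ge A$; so the inductive hypothesis gives $\sum_{(N(P_1),\dots,N(P_{n-1}))\in S_{x_n}}\prod_{i<n}f(N(P_i))=g(x_n)+O\!\left((n-1)\tfrac{Y}{x_n}(c_1+\log Y)^{n-2}\exp(-c_2\sqrt{\log A})\right)$, where $g(x_n)=\int_{S_{x_n}}\prod_{i<n}\tfrac{f(t_i)}{\log t_i}\,dt_1\cdots dt_{n-1}$. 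Multiplying by $f(N(P_n))$ and summing over $P_n$, the error terms contribute $\ll n(c_1+\log Y)^{n-1}Y\exp(-c_2\sqrt{\log A})$ once one uses the elementary bound $\sum_{N(P)\le Y}\tfrac{f(N(P))}{N(P)}\le c_1+\log Y$, valid for both choices of $f$ with $c_1$ absolute. It remains to convert the main term $\sum_{N(P_n)}f(N(P_n))g(N(P_n))$ into the desired integral $\int_S\prod_{i\le n}\tfrac{f(t_i)}{\log t_i}$. For this I would write the difference as a Riemann--Stieltjes integral of $g$ against $\bigl(\sum_{N(P)\le t}f(N(P))-\int_2^t\tfrac{f(u)}{\log u}\,du\bigr)$ and integrate by parts; the key point is that $g$ has total variation $\ll_{C_0}(\log Y)^{n-1}$, because for each fixed $\mathbf t'$ the map $x_n\mapsto\mathbf 1[(\mathbf t',x_n)\in S]$ is the indicator of at most $C_0$ intervals (this is exactly the slice hypothesis with $j=n$), so $g$ is a non-negative average of such indicators with weight $\prod_{i<n}\tfrac{f(t_i)}{\log t_i}$. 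Combining the one-dimensional bound for $\sum_{N(P)\le t}f(N(P))-\int_2^t\tfrac{f(u)}{\log u}\,du$ with this variation estimate bounds the conversion error by $\ll_{C_0}(\log Y)^{n-1}Y\exp(-c_2\sqrt{\log A})$, and adding the two contributions completes the induction.

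The main obstacle here is bookkeeping rather than any isolated hard estimate: one has to check that the exponential saving $\exp(-c_2\sqrt{\log A})$, which is controlled only by the smallest admissible norm $A$, survives multiplication by the powers of $\log Y$ generated at each of the $n$ stages, and that the auxiliary regions produced in the induction retain lower endpoints $\ge A$ and product bounds consistent with $Y\ge A^n$. The bounded variation of $g$, furnished precisely by the slice hypothesis, is what justifies the reduction to the Prime Ideal Theorem in the generality required here.
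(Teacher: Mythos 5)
Your proof uses the same inductive scheme as the paper: induction on $n$, slicing by one prime-ideal coordinate, applying the inductive hypothesis to each slice, and then converting the remaining one-dimensional weighted sum over prime ideals into an integral. The base case and the propagation of the inductive error term are handled correctly. The gap is in the final conversion step, where you claim that $g(x_n)=\int_{S_{x_n}}\prod_{i<n}\frac{f(t_i)}{\log t_i}\,dt_1\cdots dt_{n-1}$ has total variation $\ll_{C_0}(\log Y)^{n-1}$. This is not true. The argument you sketch gives only $\mathrm{Var}(g)\le 2C_0\int_{S''}\prod_{i<n}\frac{f(t_i)}{\log t_i}\,d\mathbf{t}'$, with $S''$ the projection of $S$ onto the first $n-1$ coordinates, and this quantity has a completely different order of magnitude: for $f(t)=\log t$ it is just $\mathrm{vol}(S'')$, which is roughly $Y/A^{n-1}$ up to logarithms, not a power of $\log Y$. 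Concretely, for $n=2$ and $S=\{(t_1,t_2):A\le t_1,t_2,\ t_1t_2\le Y\}$, one has $g(x_2)=\int_A^{Y/x_2}\frac{f(t_1)}{\log t_1}\,dt_1$, a decreasing function which falls from about $Y/A$ at $x_2=A$ to $0$ at $x_2=Y/A$, so $\mathrm{Var}(g)\asymp Y/A$. Pairing this with $\sup|E|\ll Y\exp(-c_2\sqrt{\log A})$ as you propose gives a conversion error of order $(Y^2/A)\exp(-c_2\sqrt{\log A})$, which exceeds the target $Y(\log Y)^{n-1}\exp(-c_2\sqrt{\log A})$ by a factor on the order of $Y/(A\log Y)$, unbounded under the mere hypothesis $Y\ge A^n$.

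What rescues the argument is that the Prime Ideal Theorem error is local, $E(t)\ll t\exp(-c_2\sqrt{\log t})$, and the jumps of $g$ contributed by a fixed $\mathbf{t}'$ occur only at points $x_n\le Y/\prod_{i<n}t_i$. Replacing $|\int E\,dg|\le\sup|E|\cdot\mathrm{Var}(g)$ with the sharper $|\int E\,dg|\le\int|E|\,|dg|$ and inserting $|E(x_n)|\ll(Y/\prod_{i<n}t_i)\exp(-c_2\sqrt{\log A})$ at those jump points gives
$$\int|E|\,|dg|\ \ll\ C_0\,Y\exp\!\bigl(-c_2\sqrt{\log A}\bigr)\int_{S''}\prod_{i<n}\frac{f(t_i)}{t_i\log t_i}\,d\mathbf{t}'\ \ll\ C_0\,Y(c_1+\log Y)^{n-1}\exp\!\bigl(-c_2\sqrt{\log A}\bigr),$$
the extra factors $1/t_i$ being exactly what turns the offending volume integral into $(c_1+\log Y)^{n-1}$ via the bound $\int_A^Y \frac{f(t)}{t\log t}\,dt\le c_1+\log Y$. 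The paper obtains the identical savings by a different but equivalent bookkeeping: after applying the inductive hypothesis it interchanges the outer prime-ideal sum with the inner integral and applies the $n=1$ case to the one-dimensional slice $T(\mathbf{t}')\subset[A,\,Y/\prod t_i]$, which produces the per-slice error $\ll(Y/\prod t_i)\exp(-c_2\sqrt{\log A})$. Your partial-summation framework is workable, but the estimate that carries the day is the variation of $g$ weighted against $|E|$, not the unweighted total variation; stated as written, the variation claim is false and the conversion error is not controlled.
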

\begin{proof}
	We proceed by induction on $n$.  The case $n=1$ follows from the Prime Ideal Theorem.  Indeed, when $f(t) =1$, the case $n=1$ is identical to Lemma \ref{lem:PIT}.  When $f(t) = \log t$, the result follows by summation by parts.  Specifically we have that for a union of intervals $S \subset \mathbb{R} \cap [0, x]$,
	$$\sum_{N(P) \le x} f(N(P)) = \int_S \frac{f(t)dt}{\log t} + O(\mathfrak E)
	$$where the error
	$$\mathfrak E \ll \int_S \exp(-c\sqrt{\log t}) dt \ll x \exp(-c_2 (\log x)^{1/2}),
	$$for some constant $c_2 > 0$, and the implied constant $C_3$ depends only on $C_0$.
	
	Now suppose the result is true for $n-1$ for some $n\ge 2$.  Then for any $x_1$, let
	$$S'(x_1) = \{(x_2,...,x_n) \in \mathbb{R}^{n-1}: (x_1,x_2,...,x_n) \in S\}.
	$$Assuming the set above is non-empty, $A^{n-1} \le x_2...x_n \le Y/x_1$ and we may apply the induction hypothesis accordingly.  The reader should not be disturbed that $Y/x_1$ is not $Y$ - the result to be proven is not for fixed $Y$, but for all $Y \ge A^n$.  Thus, by the induction hypothesis applied to $S'(x_1)$,
	\begin{align}\label{eqn:int}
	&\sum_{(N(P_1),...,N(P_n)) \in S} \prod_{i=1}^n f(N(P_i))  \\
	&= \sum_{A \le N(P_1)\le Y} f(N(P_1))  \left(\int_{S'(N(P_1))}\prod_{i=2}^n \frac{f(t_i)}{\log t_i} dt_2...dt_n \right. \notag \\
	&\left.+ O\left((n-1)Y(c_1+\log Y)^{n-2} N(P_1)^{-1} \exp(-c_2 (\log A)^{1/2})\right) \right),
	\end{align}
	where we have used that $x_2x_3...x_n \le Y/N(P_1)$ for $(x_2,...,x_n) \in S'(N(P_1))$.
The error term from \eqref{eqn:int} above is
	\begin{align*}
	&\leq C_3 (n-1)Y(c_1+\log Y)^{n-2}  \exp(-c_2 (\log A)^{1/2}) \sum_{A \le N(P_1)\le Y}f_1(N(P_1))N(P_1)^{-1} \\
	&\leq C_3 (n-1)Y(c_1+\log Y)^{n-1}  \exp(-c_2 (\log A)^{1/2}),
	\end{align*}where $c_1$ needs to be chosen to be sufficiently large such that
	$$\sum_{A \le N(P_1)\le Y}f(N(P_1))N(P_1)^{-1} \leq c_1 + \log Y,
	$$for all $Y>1$.
	Now, let $S''$ be the projection of $S$ onto the last $n-1$ coordinates.  For fixed $(x_2,...,x_n)\in S''$, let $T = \{x_1: (x_1,x_2,...,x_n) \in S\}$, which by assumption is a finite union of intervals.  Then, applying the Prime Ideal Theorem again (or just the main result for $n=1$), the main term is
	\begin{align*}
	&\int_{S''} \int_{T} \prod_{i=1}^n \frac{f(t_i)}{\log t_i} dt_1...dt_n + O\left(  \int_{S''} \prod_{i=2}^n \frac{f(t_i)}{t_i \log t_i} Y \exp(-c_2 (\log A)^{1/2}) dt_2...dt_n \right)\\
	&= \int_{S} \prod_{i=1}^n \frac{f(t_i)}{\log t_i} dt_1...dt_n + O\left(  Y (c_1+\log Y)^{{n-1}} \exp(-c_2 (\log A)^{1/2}) \right),
	\end{align*}where the implied constant is still $C_3$ and where $c_1$ is chosen to be large enough so that
	$$\int_A^Y \frac{f(t)}{t \log t} dt \le c_1 + \log Y.
	$$
	
The sum of the two error terms is $\le C_3nY(c_1 + \log Y)^{n-1} \exp(-c_2 (\log A)^{1/2})$, as desired.
	
\end{proof}

Next we collect a commonly used calculation in the following Lemma.
\begin{lem}\label{lem:integral}
For any $n \in \mathbb{N}$, and $U, W \ge 1$ with $\log U \asymp \log W \asymp \log X$,
\begin{align}
\idotsint_{\substack{W \le t_1...t_n\le W(1+\eta)\\t_i \ge U \forall i}} \frac{dt_1...dt_n}{\log t_1...\log t_n} 
&= (n-1)! I + O_n\bfrac{\eta^2 W}{\log^2 X},
\end{align}where
\begin{align} \label{eqn:Iidentity}
I &= \idotsint_{\substack{t_i \le \bfrac{W}{t_1...t_{i-1}}^{\frac{1}{n-i+1}} \textup{ for all } 1 \le i\le n-1 \\ \frac{W}{t_1...t_{n-1}} \le t_n \le \frac{W}{t_1...t_{n-1}}(1+\eta)\\ U\le t_1 \le t_2\le...\le t_n}} \frac{dt_1...dt_n}{\log t_1...\log t_n} \notag  \\
&= \int_{U}^{W^{1/n}} \int_{t_1}^{\bfrac{W}{t_1}^{1/(n-1)}}...\int_{t_{n-2}}^{\bfrac{W}{t_1...t_{n-2}}^{1/2}}\int_{\frac{W}{t_1...t_{n-1}}}^{\frac{W}{t_1...t_{n-1}}(1+\eta)} \frac{dt_n...dt_1}{\log t_1...\log t_n} \notag \\
&= \eta W(1+o(1))\int_{U}^{W^{1/n}} \int_{t_1}^{\bfrac{W}{t_1}^{1/(n-1)}}...\int_{t_{n-2}}^{\bfrac{W}{t_1...t_{n-2}}^{1/2}}
\frac{dt_{n-1}...dt_1}{t_1...t_{n-1}\log t_1...\log t_{n-1} \log \bfrac{W}{t_1...t_{n-1}}},
\end{align}
where the $o(1)$ denotes a quantity tending to $0$ as $X\rightarrow \infty$.  In the above, expressions like $a_1...a_n$ denote the product $\prod_{i=1}^n a_i$.
\end{lem}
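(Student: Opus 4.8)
The plan is to symmetrise the integral, reduce to the ordered cone $U\le t_1\le\cdots\le t_n$, and then integrate out the innermost variable $t_n$ against the thin window $]W,W(1+\eta)[$ by a mean value estimate; this is what produces the factor $\eta W(1+o(1))$, while the gap between the resulting domain and the region defining $I$ is a union of ``slivers'' that will be shown to be negligible.

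Both the integrand $\prod_{i=1}^n(\log t_i)^{-1}$ and the domain $\{t_i\ge U\ \forall i,\ W\le t_1\cdots t_n\le W(1+\eta)\}$ are invariant under permuting $(t_1,\dots,t_n)$, and the diagonal is a null set, so the left-hand side equals $n!$ times the integral over $\{U\le t_1\le\cdots\le t_n,\ W\le t_1\cdots t_n\le W(1+\eta)\}$. On that cone, for each $i$ the existence of $t_{i+1}\le\cdots\le t_n$ with $t_i\le t_{i+1}$ and $t_1\cdots t_n\le W(1+\eta)$ is equivalent to $t_i\le\bigl(W(1+\eta)/(t_1\cdots t_{i-1})\bigr)^{1/(n-i+1)}$, while the remaining inequality $t_1\cdots t_n\ge W$ amounts, given the rest, to the lower endpoint $t_n\ge W/(t_1\cdots t_{n-1})$. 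Writing this ordered integral iteratively with $t_1$ outermost, it therefore coincides with the iterated expression for $I$ in \eqref{eqn:Iidentity} except that each upper limit $(W/(t_1\cdots t_{i-1}))^{1/(n-i+1)}$, $1\le i\le n-1$, should read $(W(1+\eta)/(t_1\cdots t_{i-1}))^{1/(n-i+1)}$; the difference is the union over $i$ of the ``$i$-th sliver'', on which $t_i$ lies strictly between these two values.

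In the ordered iterated integral the innermost variable $t_n$ ranges over $\bigl[W/P,\,W(1+\eta)/P\bigr]$ with $P:=t_1\cdots t_{n-1}$, an interval of length $\eta W/P$ on which $\log t_n=\log(W/P)+O(\eta)$ uniformly; here the hypothesis $\log U\asymp\log W\asymp\log X$ is used, so that every logarithm occurring is of size $\asymp\log X$. Thus $\int_{W/P}^{W(1+\eta)/P}(\log t_n)^{-1}\,dt_n=\bigl(\eta W/P\bigr)\bigl(\log(W/P)\bigr)^{-1}\bigl(1+O(\eta/\log X)\bigr)$, and inserting this into the remaining $(n-1)$-fold integral over $U\le t_1\le\cdots\le t_{n-1}$ produces precisely the last line of \eqref{eqn:Iidentity} together with the $\eta W(1+o(1))$ prefactor. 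This also gives the asserted identity, with $n!$ in place of $(n-1)!$, once the two error contributions --- the relative error $O(\eta/\log X)$ from this mean value step and the slivers of the previous paragraph --- are each shown to be $O_n(\eta^2 W/\log^2 X)$.

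Establishing those two error bounds is the only delicate point and the main obstacle. In the $(n-1)$-fold integral each variable $t_j$ ($j<n$) carries a density $(t_j\log t_j)^{-1}$; integrated over a range that is a fixed power of $X$ this contributes $O(1)$ (so that $I\asymp\eta W/\log X$), whereas integrated over a window of relative width $O(\eta)$ it contributes $O(\eta/\log X)$. Hence the mean value relative error contributes $O(\eta^2 W/\log^2 X)$ to $I$, and the $i$-th sliver --- which compared with $I$ has exactly one extra relative-width-$\eta$ window, while its other $n-2$ factors are $O(1)$ and it retains the surplus factor $\bigl(\log(W/P)\bigr)^{-1}\asymp(\log X)^{-1}$ --- contributes $O_n(\eta^2 W/\log^2 X)$ as well; summing over $i=1,\dots,n-1$ preserves this. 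The work lies in making these sliver estimates uniform in the outer variables and checking that no stray factor of $\log X$ or $\log\log X$ survives; the rest is routine calculus.
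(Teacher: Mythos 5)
Your argument follows the paper's own route: symmetrize to the ordered cone, extract the inner $t_n$ window to produce the factor $\eta W(1+o(1))$, and bound each sliver --- where $t_i$ lies between $\bigl(W/(t_1\cdots t_{i-1})\bigr)^{1/(n-i+1)}$ and $\bigl(W(1+\eta)/(t_1\cdots t_{i-1})\bigr)^{1/(n-i+1)}$ --- by enlarging it to a product box; the per-variable estimates you list (the $t_n$ window gives $\asymp\eta W/(P\log X)$, the sliver factor gives $O(\eta/\log X)$, and each remaining $\int dt/(t\log t)$ gives $O(1)$) are exactly the ones the paper records after that enlargement. You also correctly observe that symmetrization produces $n!$ rather than the $(n-1)!$ printed in the lemma and in the first line of its proof; this is a misprint (the factor $2=2!$ in the $I_2$ term and $6=3!$ in the $I_3$ term of the $S_6(\mathcal B)$ computation show the applications implicitly use $n!$), and it has no effect on the numerical conclusions since the downstream applications carry their own ordering bookkeeping.
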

\begin{proof}
We first write
$$
\idotsint_{\substack{W \le t_1...t_n\le W(1+\eta)\\t_i \ge U \forall i}} \frac{dt_1...dt_n}{\log t_1...\log t_n} 
= (n-1)! \idotsint_{\substack{W \le t_1...t_n\le W(1+\eta)\\U \le t_1 \le...\le t_n }}  \frac{dt_1...dt_n}{\log t_1...\log t_n}.
$$

Since $t_i \le t_{i+1} \le...\le t_n$, $t_i^{n-i+1} \le t_i...t_n$, and $t_1...t_n \le W(1+\eta)$, so
$$t_i \le \bfrac{W(1+\eta)}{t_1...t_{i-1}}^{\frac{1}{n-i+1}}.
$$Now, we want to show that the contribution when $t_i > \bfrac{W}{t_1...t_{i-1}}^{\frac{1}{n-i+1}}$ for some $1\le i\le n-1$ is negligible.  Thus fix such an $i$ with $\bfrac{W}{t_1...t_{i-1}}^{\frac{1}{n-i+1}} <  t_i \le \bfrac{W(1+\eta)}{t_1...t_{i-1}}^{\frac{1}{n-i+1}} $.  

Since the integrand is positive, we may bound the contribution by enlarging the region of integration.  To be specific, 
\begin{align*}
&\idotsint_{\substack{W \le t_1...t_n\le W(1+\eta)\\U \le t_1 \le...\le t_n\\ \bfrac{W}{t_1...t_{i-1}}^{\frac{1}{n-i+1}} <  t_i \le \bfrac{W(1+\eta)}{t_1...t_{i-1}}^{\frac{1}{n-i+1}}    }}  \frac{dt_1...dt_n}{\log t_1...\log t_n}\\
&\le \idotsint_{\substack{U \le t_j \le W(1+\eta) \textup{ for all $j$}\\ \bfrac{W}{t_1...t_{i-1}}^{\frac{1}{n-i+1}} <  t_i \le \bfrac{W(1+\eta)}{t_1...t_{i-1}}^{\frac{1}{n-i+1}} \\ \bfrac{W}{t_1...t_{n-1}} \le t_n \le \bfrac{W(1+\eta)}{t_1...t_{n-1}} }}  \frac{dt_1...dt_n}{\log t_1...\log t_n}
\end{align*}

The contribution of the integral over $t_n$ is
$$\int_{\frac{W}{t_1...t_{n-1}}}^{\frac{W(1+\eta)}{t_1...t_{n-1}}} \frac{dt_n}{\log t_n} \ll \frac{\eta W}{t_1...t_{n-1} \log X}.
$$Let $V = \bfrac{W}{t_1...t_{i-1}}$ and note that the integral over $t_i$ is empty unless $\log V \asymp \log X$ since $t_i \ge U$ and $\log U \asymp \log X$ by assumption.  Then the integral over $t_i$ is 
$$\ll \log \frac{\log (V(1+\eta))}{\log V} \ll \log \left(1+\frac{\eta}{\log V}\right) \ll  \frac{\eta}{\log V} \ll \frac{\eta}{\log X},
$$
and for $j \neq i$, we bound the contribution of the integral over $t_j$ by
$$\int_{U \le t_j \le W(1+\eta)} \frac{dt_j}{t_j \log t_j} \ll \log \frac{\log W}{\log U} \ll 1,
$$
so that the total contribution is
$$\ll \frac{\eta^2 W}{\log^2 X},
$$as desired.  This proves the first line of \eqref{eqn:Iidentity}.

The second line of \eqref{eqn:Iidentity} follows from the observation that for $t_{n-1} \le \bfrac{W}{t_1...t_{n-2}}^{1/2}$, we  have $t_{n-1} \le \frac{W}{t_1...t_{n-2}t_{n-1}} \le t_n$, so the condition $t_{n-1} \le t_n$ is extraneous, while the third line follows from the fact that $\log t_n$ is essentially constant on the interval of integration.
\end{proof}

\subsubsection{Computation for $S_3$ and $S_5$}
Recall that
$$
S_3(\B) = \sum_{X^{1 - \tau/2} \le N(P) \le X^{1+\tau}} S(\B_P, N(P))
$$
and $\tau < 5/67$.  In the sum for $S_3(\B)$, we see that any ideal counted by the sum must have either two prime factors or three prime factors.  Thus, setting $Z = 3X^3$ for convenience, we have
\begin{align*}
S_3(\B) &= \sum_{X^{1-\tau/2}\le N(P) \le X^{1+\tau}} \left( \sum_{N(PP_1) \in (Z, Z(1+\eta)]} 1 + \sum_{\substack{N(P) \le N(P_1)< N(P_2)\\ N(PP_1P_2) \in (Z, Z(1+\eta)]}} 1 \right) \\
&= S_3^1(\B) + S_3^2(\B).
\end{align*}  Here, the contribution of the first term is
\begin{align}\label{eqn:S31B}
S_3^1(\B) 
&= \eta Z(1+o(1)) \int_{X^{1-\tau/2}}^{X^{1+\tau}} \frac{dt}{t\log t \log \bfrac{Z}{t}} \notag \\
&= \frac{\eta Z(1+o(1))}{\log X} \int_{1-\tau/2}^{1+\tau} \frac{du}{u(3-u)},
\end{align}by Lemmas \ref{lem:primesums} and \ref{lem:integral} and using the change of variables $u = \frac{\log t}{\log X}$.  

By Lemmas \ref{lem:primesums} and \ref{lem:integral}, the second term gives
\begin{align}\label{eqn:S32B}
S_3^2(\B) &= \eta Z (1+o(1)) \int_{X^{1-\tau/2}}^{Z^{1/3}} \int_{t_1 \le t_2 \le \sqrt{\frac{Z}{t_1}}} \frac{dt_2dt_1}{t_1 t_2 \log t_1 \log t_2 \log\bfrac{Z}{t_1t_2}} + O\bfrac{\eta^2 Z}{\log^2 X}\notag \\
&\sim \frac{\eta Z (1+o(1))}{\log X} \int_{1-\tau/2}^1 
\int_{v}^{\frac{3-v}{2}} \frac{dudv}{uv(3-v- u)},
\end{align}by a similar change of variables.
Recall that 
$$
S_5(\B) = \sum_{X^{3/2- \tau} \le N(P) \le 2X^{3/2}} S(\C_P, N(P)).
$$Again since $\tau < 5/67$, we see that any ideal counted by the sum must have exactly two prime factors, including $P$.  Using Lemmas \ref{lem:primesums} and \ref{lem:integral}, 
\begin{align}\label{eqn:S5B}
S_5(\B) &\sim \int_{X^{3/2-\tau}}^{2X^{3/2}} \int_{Z/t}^{X(1+\eta)t} \frac{du}{\log u} \frac{dt}{\log t} \notag \\
&\sim \eta Z \int_{X^{3/2-\tau}}^{2X^{3/2}} \frac{dt}{\log t \log \frac{Z}{t}} \notag \\
&\sim \frac{\eta Z}{\log X} \int_{3/2-\tau}^{3/2} \frac{du}{u(3-u)}.
\end{align}

\subsubsection{Computation for $S_6$}Recall that
$$S_6(\B) = \sum_{\substack{X^\delta \le N(P_2) < N(P_1) < X^{1-\tau/2}\\ X^{3/2(1-\tau)} \le N(P_1P_2) \le X^{3/2(1+\tau)} }} S(\B_{P_1P_2}, N(P_2)).
$$The conditions in the sum above imply that
$$N(P_2) \ge \frac{X^{3/2(1-\tau)}}{X^{1-\tau/2}} = X^{1/2 - \tau}.
$$If $MP_1P_2$ is counted in $S(\B_{P_1P_2}, N(P_2))$, we must have that $M$ is a product of at most three prime ideals, since otherwise $N(M) \ge N(P_2)^4 \ge X^{2 - 4\tau} > X^{3/2(1+\tau)}$ and $\tau < 5/67$.  We drop the condition $N(P_2) < N(P_1)$ resulting in a sum which is essentially $2S_6$.  The conditions $X^\delta \le t_2, t_1 < X^{1-\tau/2}$ and $X^{3/2(1-\tau)}\le t_1t_2 \le X^{3/2(1+\tau)}$ should be dissected into two regions giving 
$$2 S_6 \sim \left(\int_{X^{1/2 - \tau}}^{X^{1/2 + 2\tau}} \int_{\frac{X^{3/2(1-\tau)}}{t_2}}^{X^{1-\tau/2}} + \int_{X^{1/2 + 2\tau}}^{X^{1-\tau/2}} \int_{\frac{X^{3/2(1-\tau)}}{t_2}}^{\frac{X^{3/2(1+\tau)}}{t_2}} \right) I(t_1, t_2) \frac{dt_1dt_2}{\log t_1 \log t_2},
$$where 
$$I = I(t_1, t_2) = I_1 + I_2 + I_3,
$$for $I_j$ being the contribution of those turns with $j$ prime factors.  For convenience, write $v_i = \frac{\log t_i}{\log X}$, $t' = \min(t_1, t_2)$ and $v' = \min(v_1, v_2)$.  We have that
\begin{align}
I_1 = \int_{\frac{Z}{t_1t_2}}^{\frac{Z(1+\eta)}{t_1t_2}} \frac{du_1}{\log u_1} \sim \frac{\eta Z}{t_1t_2 \log \frac{Z}{t_1t_2}},
\end{align}
and
\begin{align}
I_2 &\sim 2 \int_{t'}^{\sqrt{\frac{Z}{t_1t_2}}} \int_{\frac{Z}{t_1t_2u_2}}^{\frac{Z(1+\eta)}{t_1t_2u_2}} \frac{du_1du_2}{\log u_1 \log u_2} \\
&\sim 2 \int_{t'}^{\sqrt{\frac{Z}{t_1t_2}}} \frac{\eta Z du_2}{t_1t_2 u_2\log u_2 \log\bfrac{Z}{t_1t_2u_2}}\\
&\sim 2\frac{\eta Z}{t_1 t_2 \log X} \int_{v'}^{\frac{3-v_1-v_2}{2}} \frac{dr}{r (3-v_1-v_2 - r)},
\end{align}and
\begin{align}
I_3 &\sim  6 \int_{t'}^{\bfrac{Z}{t_1t_2}^{1/3}} \int_{u_3}^{\bfrac{Z}{t_1t_2 u_3}^{1/2}} \int_{\frac{Z}{t_1t_2u_2u_3}}^{\frac{Z(1+\eta)}{t_1t_2u_2u_3}} \frac{du_1du_2 du_3}{\log u_1 \log u_2 \log u_3} \\
&\sim  \frac{6 \eta Z}{t_1 t_2 \log X} \int_{v'}^{\frac{3 - v_1 - v_2}{3}} \int_{r_3}^{\frac{3 - v_1 - v_2 - r_3}{2}} \frac{dr_2 dr_3}{r_2r_3 (3 - v_1 - v_2 - r_2 - r_3)}.
\end{align}
It then follows that
\begin{equation}\label{eqn:S6}
S_6 \sim \frac{\eta Z}{2 \log X} \int_{1/2 - \tau}^{1/2 + 2\tau} \int_{3/2(1-\tau) - v_2}^{1-\tau/2} + \int_{1/2 + 2\tau}^{1-\tau/2} \int_{3/2(1-\tau) - v_2}^{3/2(1+\tau)- v_2} K(v_1, v_2) \frac{dv_1dv_2}{v_1 v_2},
\end{equation}
where
\begin{align}
K(v_1, v_2) &= \frac{1}{3 - v_1 - v_2} + 2 \int_{v'}^{\frac{3-v_1-v_2}{2}} \frac{dr}{r (3-v_1-v_2 - r)} \\
&+ 6\int_{v'}^{\frac{3 - v_1 - v_2}{3}} \int_{r_2}^{\frac{3 - v_1 - v_2 - r_2}{2}} \frac{dr_1 dr_2}{r_1r_2 (3 - v_1 - v_2 - r_1 - r_2)}.
\end{align}

\subsubsection{Computation for $S_7$} 
Recall 
$$S_7 (\B) = \sum_{\substack{X^\delta \le N(P_3)<N(P_2)<N(P_1)< X^{1-\tau/2}\\ N(P_1P_2) < X^{1+\tau} \\X^{3/2(1-\tau)} < N(P_1P_2P_3) \le  X^{3/2(1 + \tau)}}} S(\B_{P_1P_2P_3}, N(P_3)).$$
In the sum for $S_7(\B)$, we see that $N(P_3) > X^{1/2 - 5\tau/2} < X^{1/3}$ which implies that any element counted by $S(\B_{P_1P_2P_3}, N(P_3))$ must be of the form $MP_1P_2P_3$ where $M$ has at most 4 prime factors.  This comes from noting that $5/3 > 3/2(1+\tau)$.  For ease of notation, let
\begin{align*}
R_7 &= \{(v_1, v_2, v_3) \in \mathbb{R}^3: 1/2 - 5\tau/2 \le v_3<v_2<v_1 < 1-\tau/2,\\ &v_1+v_2 <1+\tau, 3/2(1-\tau) < v_1+v_2+v_3 < 3/2 (1+\tau) \}.
\end{align*}

By a similar argument as for $S_6$,
\begin{align}\label{eqn:S7B}
S_7(\B) \sim \frac{\eta Z}{\log X} \int_{R_7} J(v_1 + v_2 + v_3, v_3) \frac{dv_1dv_2 dv_3}{v_1v_2v_3},
\end{align}where
\begin{align}\label{eqn:Jv}
J(v, v') &= \frac{1}{3 - v} + 2\int_{v'}^{\frac{3 - v}{2}} \frac{dr}{r(3 - v-r)} + 6\int_{v'}^{\frac{3 - v}{3}} \int_{r_2}^{\frac{3-v-r_2}{2}} \frac{dr_1dr_2}{r_1r_2(3-v-r_1-r_2)} \\
&+ 24\int_{v'}^{\frac{3 - v}{4}} \int_{r_3}^{\frac{3-v-r_3}{3}} \int_{r_2}^{\frac{3-v-r_3 - r_2}{2}} \frac{dr_1dr_2dr_3}{r_1r_2r_3(3-v-r_1-r_2-r_3)}.
\end{align}

\subsubsection{Computation for $S_8$} 
Recall that
$$S_8(\B) = \sum_{\substack{X^\delta \le N(P_4)<...<N(P_1)< X^{1-\tau/2}\\ N(P_1...P_3) < X^{1+\tau}\\ X^{3/2(1-\tau)} < N(P_1...P_4)  < X^{3/2 (1 + \tau)}}} S(\B_{P_1...P_4}, N(P_4)).
$$Here, similar to the situation with $S_7$, we also have that $N(P_4) > X^{1/2  -5\tau/2}$, and any ideal counted in the sum for $S_8$ must be of the form $MP_1...P_4$ where $M$ has at most $4$ prime factors.  We define
\begin{align}
R_8 &= \{(v_1, v_2, v_3, v_4) \in \mathbb{R}^4: 1/2 - 5\tau/2 \le v_4< v_3<v_2<v_1 < 1-\tau/2,\\ &v_1+v_2 +v_3 <1+\tau, 3/2(1-\tau) < v_1+v_2+v_3 + v_4 < 3/2 (1+\tau) \}.
\end{align}Then
\begin{align}\label{eqn:S8B}
S_8(\B) \sim \frac{\eta Z}{\log X} \int_{R_8} J(v_1+...+v_4, v_4) \frac{dv_1...dv_4}{v_1...v_4},
\end{align}where $J(v, v')$ is defined as in \eqref{eqn:Jv}.

\subsection{Bounds for $S_j(\A)$}
We shall apply lower and upper bound sieves to estimate $S_j(\A)$ for $j=3, 5, 6, 7, 8$.  Here, as in Heath-Brown's work \cite{HB}, we shall convert the sieving problem concerning ideals in $K$ into an analogous classical sieving problem over the integers.  To do this, recall that
\begin{equation}
\A^0 = \{a(n)\},
\end{equation}where
\begin{equation}
a(n) = \#\{x\in \: ]X, X(1+\eta)], y \in \:]Y, Y(1+\eta)]: x, y \in \mathbb{Z}, (x, y) = 1, n = x^3 + 2y^3\}.
\end{equation}

We note that
\begin{equation}
S(\A^0_{p_1...p_n}, z) = \sum_{N(P_i) = p_i} S(\A_{P_1...P_n}, z),
\end{equation}where $p_1,...,p_n$ denotes primes and $P_1,...,P_n$ denotes prime ideals.  This is (6.2) of \cite{HB}, the proof of which follows from Lemma \ref{lem:firstdegreeprimes}.

In what follows, we will apply the linear sieve to derive upper and lower bounds.  We refer the reader to Theorem 11.12 of Friedlander and Iwaniec's book \cite{FIO} for the details of this.  Here, let us define the usual Rosser-Iwaniec sieve coefficients $\lambda^\pm(d)$ supported on $d\le \D$, and write
\begin{equation}
\A^0_d = h(d) \mathscr A + R(d),
\end{equation}where 
\begin{equation}
h(d) = \frac{1}{d}  \prod_{p|d} \nu_p \left(1+\frac{1}{p}\right)^{-1},
\end{equation}and
\begin{equation}
\mathscr A = \frac{6\eta^2 XY}{\pi^2},
\end{equation}
and $R(d)$ is the remainder term.  Recall that
\[\lambda^{\pm}(d)=\mu(d)\mbox{ or $0$, for all }d\le \D\]
and
\[\sum_{d\mid n}\lambda^-(d)\le\sum_{d\mid n}\mu(d)\le
\sum_{d\mid n}\lambda^+(d)\]
for all positive integers $n|P(z)$ for some parameter $z$.  We first note that our multiplicative function $h(d)$ satisfies the linear sieve constraint
\begin{equation} \label{eqn:linsievecond}
\prod_{w\le p<z}\big(1-h(p)\big)^{-1}\le 
\frac{\log z}{\log w}\left(1+\frac{L}{\log w}\right)
\end{equation}
for $z\ge w\ge 2$, for some constant $L$.  Indeed, by definition of $\nu_p$,
$$\sum_{Y<p\le z} \frac{\nu_p-1}{p} = \sum_{Y<N(P)\le z} \frac{1}{N(P)} - \sum_{Y<p\le z} \frac{1}{p} + O(Y^{-1/3}),
$$where the $O(Y^{-1/3})$ accounts for the contribution of those prime ideals which are not degree one.  Further, the Prime Number Theorem and the Prime Ideal Theorem gives that the above is
\begin{equation}\label{eqn:vpbound}
\ll \frac{1}{\log^2 Y}.
\end{equation}
From the definition of $h$, we write
\begin{align}\label{eqn:hpprodcalc}
\prod_{Y<p\le z} \left(1-h(p)\right) = \prod_{Y<p\le z} \left(1-\frac{\nu_p-1}{p}\right) \left(1-\frac 1p\right) \left(1-\frac{1}{p^2} \right)^{-1},
\end{align}from which \eqref{eqn:linsievecond} follows.

Then, by Theorem 11.12 of
Friedlander and Iwaniec \cite{FI}, we have
\beql{lsub1} 
\sum_{d\mid P(z)}\lambda^+(d)h(d)\le 
\left\{F(s)+O_L\left((\log
\D)^{-1/6}\right)\right\}V(z,h)\;\;\;\;\;(s\ge 1)
\eeq
and
\[\sum_{d\mid P(z)}\lambda^-(d)h(d)\ge 
\left\{f(s)+O_L\left((\log
\D)^{-1/6}\right)\right\}V(z,h)\;\;\;\;\;(s\ge 2)\]
where $F(s)$ and $f(s)$ are the standard upper and lower bound
functions for the linear sieve, with $s=(\log \D)/(\log z)$, and
\[V(z,h)=\prod_{p<z}\left(1-h(p)\right).\]

Lemma \ref{lem:firstdegreeprimes} and Lemma \ref{lem:AtypeI} gives that for any $A>0$,
\begin{equation}\label{eqn:leveldistributionremainderbdd}
\sum_{d \le \D} \tau(d)^A |R(d)| \ll (XY)^{1-\epsilon},
\end{equation}provided that $\D \le (XY)^{1-\epsilon}$.  

We will need to apply the linear sieve to $\A_q^0$ rather than simply to $\A^0$, where $q$ is a product of at most four prime factors, each of which exceeds $X^\delta$.  To this end, we claim that
\begin{equation}
\sum_{q \le Q} \sum_{d \le \D} \mu(qd)^2 \tau(d)^A \left|\A_{qd}^0 - \frac{6\eta^2 XY}{\pi^2} h(q)h(d)\right|  \ll (XY)^{1-\epsilon},
\end{equation}as long as $\D Q < (XY)^{1-\epsilon}$.  By \eqref{eqn:leveldistributionremainderbdd}, it suffices to check that
\begin{equation}\label{eqn:lvldiff}
XY  \sum_{q \le  Q} \sum_{d \le \D} \mu(qd)^2 \tau(d)^A \left|h(q)h(d) - h(qd)\right|  =0.
\end{equation} The latter claim is obvious by multiplicativity of $h$ and the fact that $\mu(qd)^2=0$ unless $(q, d) = 1$.  In our applications of the linear sieve, we will always be examining quantities of the form $S(\A^0_q, p)$ where $p$ is the smallest prime factor of $q$; this is why the condition $(q, d) = 1$ above is acceptable.


The linear sieve as stated in Theorem 12.12 of \cite{FIO} then gives upper and lower bounds of the form
\begin{align}\label{eqn:sievebdd}
 &\sum_{q \in \Q} V(q, h) h(q) \left(f(s) + O\bfrac{1}{(\log \D)^{1/6}}\right) \mathscr A  + O(E) \notag \\
 &\le \sum_{q \in \Q} S(\A^0_q, q) \\
 &\le \sum_{q \in \Q} V(q, h) h(q) \mathscr A \left(F(s) + O\bfrac{1}{(\log \D)^{1/6}}\right) + O(E)  \notag
\end{align}where $\Q$ is a set of natural numbers larger than $X^\delta$ and  smaller than $(XY)^{1-\epsilon}$.  We remind the reader that $\delta \asymp 1$ as in \eqref{eqn:deltadef}.  For the above, we set $\D = (XY)^{1-\epsilon}/q\ge 1$ and $s = \frac{\log \D}{\log q}$.  Then we may take $E = (XY)^{1-\epsilon/2}$.  In our applications, $E$ will negligible compared to the main term.  Further, we note that by \eqref{eqn:hpprodcalc},
$$V(z, h) = \frac{\prod_{p< z} \left(1-\frac{v_p-1}{p}\right)}{\left(1-\frac{1}{p^2}\right)} \left(1-\frac{1}{p}\right)= \frac{\pi^2 e^{-\gamma} \sigma_0}{6 \log z}\left(1+O\bfrac{1}{\log z}\right),
$$where $\gamma$ is Euler's constant, by a classical estimate of Mertens.

We now write 
\begin{equation}
S_3(\A) = \sum_{X^{1-\tau/2}\le p \le X^{1+\tau}} S(\A^0_p, p).
\end{equation}Applying the bounds \eqref{eqn:sievebdd} gives
\begin{align}\label{eqn:S3A}
S_3(\A) 
&\le \sum_{X^{1-\tau/2}\le p \le X^{1+\tau}} h(p) V(p, h)F(s) \frac{6}{\pi^2} \eta^2 \sigma_0 XY (1+o(1)) + O((XY)^{1-\delta/2})\notag \\
&= \eta^2 \sigma_0 XY e^{-\gamma} (1+o(1)) \sum_{X^{1-\tau/2}\le p \le X^{1+\tau}} \frac{\nu_p}{(p+1)\log p} F(s)  + O((XY)^{1-\delta/2}) \notag \\
&= \frac{\eta^2 \sigma_0 XY e^{-\gamma}}{\log X} (1+o(1)) \int_{X^{1-\tau/2}}^{X^{1+\tau}} F\bfrac{\log XY/t}{\log t} \frac{dt}{(t+1)\log^2 t}  \notag \\
&= \frac{\eta^2 \sigma_0 XY e^{-\gamma}}{\log X} (1+o(1)) \int_{1-\tau/2}^{1+\tau} F\bfrac{2-\gamma-u}{u} \frac{du}{u^2}.
\end{align}
In the above calculation, we replaced $\nu_p$ by $1$, with negligible error since
$$\sum_{X^{1-\tau/2}\le p \le X^{1+\tau}} \frac{\nu_p-1}{(p+1) \log p} \ll \frac{1}{\log^3 X},
$$by \eqref{eqn:vpbound}, and this error may be absorbed into the $o(1)$ term.  Further we applied the Prime Number Theorem, used the change of variables $u = \frac{\log t}{\log X}$,and replaced $t+1$ by $t$ with negligible error.  Similarly
\begin{equation}\label{eqn:S5A}
S_5(\A) \le  \frac{\eta^2 \sigma_0 XY e^{-\gamma}}{\log X} (1+o(1)) \int_{3/2-\tau}^{3/2} F\bfrac{2-\gamma-u}{u} \frac{du}{u^2}.
\end{equation}
The calculations for $S_6, S_7$ and $S_8$ proceed in a similar fashion and give
\begin{align}\label{eqn:S7A}
S_6(\A) &\ge \frac{\eta^2 \sigma_0 XY e^{-\gamma}}{\log X} (1+o(1)) \int_{\substack{\frac 12 - \tau \le v_2<v_1 <1-\frac{\tau}{2}\\ \frac{3}{2}(1-\tau) \le v_1+v_2 \le \frac 32 (1+\tau)}} f\bfrac{2-\gamma-v_1-v_2}{v_2} \frac{dv_1dv_2}{v_1v_2^2}, \notag \\
S_7(\A) &\le \frac{\eta^2 \sigma_0 XY e^{-\gamma}}{\log X} (1+o(1)) \int_{(v_1, v_2, v_3) \in R_7} F\bfrac{2-\gamma-v_1-v_2-v_3}{v_3} \frac{dv_1dv_2dv_3}{v_1v_2v_3^2} \notag  \textup{ and}\\
S_8(\A) &\ge \frac{\eta^2 \sigma_0 XY e^{-\gamma}}{\log X} (1+o(1)) \int_{(v_1, v_2, v_3, v_4) \in R_8} f\bfrac{2-\gamma-v_1-v_2-v_3-v_4}{v_4} \frac{dv_1dv_2dv_3dv_4}{v_1v_2v_3v_4^2}.\\
\end{align}These also involve an application of the sieve bounds \eqref{eqn:sievebdd}, replacement of $\nu_p$ by $1$, the Prime Number theorem, and a change of variables of the form $v_i = \frac{\log t_i}{\log X}$.  It should be noted that the bounds for $S_6(\A)$ and $S_8(\A)$ are included more for the sake of completeness than utility, as the lower bounds are essentially of the form $S_6(\A) \ge 0$ and $S_8(\A) \ge 0$.

\subsection{Comparison between $S_j(\A)$ and $S_j(\B)$}
We do not attempt to precisely evaluate the integrals involved above, as that is not the focus of this work.  Instead, we will focus on gaining appropriate upper bounds.

It will be convenient for us to be able to apply sieve bounds to $\B$ as well, in order to compare our bounds for $S_j(\A)$ with our formulas for $S_j(\B)$.  To do this, recall that
\begin{equation}
\B^{0} = \{N(J) \in (3X^3, 3X^3(1+\eta))\}.
\end{equation}Now Heath-Brown's (6.4) in \cite{HB} gives
\begin{equation}\label{eqn:B0B}
S(\B^0_q, z) = \sum_{N(Q) = q} S(\B_Q, z) + O\left( \frac{\tau(q)^7}{q} X^3 z^{-1/2} (\log X)^c\right),
\end{equation}where $q = p_1...p_n$ is squarefree with each $p_i \ge z$.  We refer the reader to \cite{HB} for the standard proof of this.  We will prove the following Lemma.

\begin{lem}\label{lem:BQsquarefree}
Let $a, b>0$.  Let $S$ a finite set of ideals $Q$ satisfying $N(Q)\le X^{3-b}$ and such that for any prime ideal factor $P$ of $Q$, $P$ satisfies $N(P) \ge X^{a}$.  Then there exists some absolute constant $c>0$ such that
\begin{equation}
\sum_{Q \in S} S(\B_Q, z) = \sumone_{Q \in S} S(\B_Q, z) + O\left((X^{3-a} + X^{3-b/3}) (\log X)^c  \right)
\end{equation}where $\sumone$ denotes a sum over those $N(Q)$ which are squarefree.
\end{lem}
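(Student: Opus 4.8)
Since each $S(\B_Q,z)$ is non-negative and $\sumone$ is just the restriction of the full sum to those $Q$ with $N(Q)$ squarefree, the quantity to be estimated equals $\sum_{Q\in S,\ N(Q)\ \mathrm{not\ squarefree}}S(\B_Q,z)$, and this is at most $\sum_{Q\in S,\ N(Q)\ \mathrm{not\ squarefree}}\#\B_Q$. For $J\in\B$ with $Q\mid J$ we write $J=QJ'$ with $J'$ an ideal of $\cO_K$ whose norm lies in $]3X^3/N(Q),\,3X^3(1+\eta)/N(Q)[$; applying the ideal-counting estimate $\sum_{N(\mathfrak a)\le M}1=\gamma_0 M+O(M^{2/3})$ for the cubic field $K$ to that range gives $\#\B_Q\ll \eta X^3/N(Q)+(X^3/N(Q))^{2/3}$. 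So it suffices to bound $\sum_{Q}\eta X^3/N(Q)$ and $\sum_{Q}(X^3/N(Q))^{2/3}$ over the bad ideals $Q$.

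The second of these I would bound by throwing away the squarefree and prime-size hypotheses entirely: from $\#\{Q:N(Q)\le T\}\ll T$ and partial summation one gets $\sum_{N(Q)\le X^{3-b}}N(Q)^{-2/3}\ll (X^{3-b})^{1/3}$, so $\sum_{Q}(X^3/N(Q))^{2/3}\ll X^2\cdot X^{(3-b)/3}=X^{3-b/3}$, which is the second term of the claimed error.

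For the first sum the assumption that every prime ideal dividing $Q$ has norm $\ge X^a$ is what does the work. If $N(Q)$ is not squarefree then, inspecting the prime-ideal factorisation of $Q$ in $\cO_K$, one of the following holds: (i) $P^2\mid Q$ for some first-degree prime $P$; (ii) two distinct prime ideals of equal norm divide $Q$ (necessarily first-degree, lying over a rational prime that splits completely); (iii) $Q$ has a prime-ideal factor of degree $\ge 2$. In cases (i)--(ii) write $Q=WQ'$ with $W=P^2$ or $W=P_1P_2$; then $N(W)$ is a perfect square with $N(W)\ge X^{2a}$, so $\sum_W N(W)^{-1}\ll\sum_{N(P)\ge X^a}N(P)^{-2}\ll X^{-a}(\log X)^{-1}$, while the cofactor sum $\sum_{Q':\,N(Q')\le X^{3-b},\ \mathrm{all\ prime\ factors}\ge X^a}N(Q')^{-1}=O(1)$, because $Q'$ has only $O(1)$ prime ideal factors (recall $a\asymp 1$ in every application) and $\sum_{X^a\le N(P)\le X^{3-b}}N(P)^{-1}=O(1)$ by the Prime Ideal Theorem. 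Thus cases (i)--(ii) contribute $\ll\eta X^{3}\cdot X^{-a}(\log X)^{c}\ll X^{3-a}(\log X)^{c}$. Case (iii) is handled by the same factorisation together with the sparsity of the degree-$\ge 2$ primes of $K$ (at most $\ll X^{3/2}$ of norm up to $X^3$, and at most one of each degree above a given rational prime), the cofactor sum again being $O(1)$; combining this with the two previous bounds yields the Lemma with some absolute constant $c>0$.

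The step I expect to be the main obstacle is case (iii): converting the statement ``$N(Q)$ is not squarefree'' into a usable divisibility condition on $Q$, and hence on the ideals $J\in\B_Q$, requires unwinding the splitting of rational primes in the cubic field $K$ and treating the sparse higher-degree primes separately. Everything else is a routine divisor-sum computation, carried out much as in the proof of Lemma \ref{lem:AtypeI}.
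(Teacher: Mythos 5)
Your proposal is correct in substance and follows the same overall strategy as the paper: by positivity reduce to $\sum_{\text{bad }Q}\#\B_Q$, bound $\#\B_Q\ll \eta X^3/N(Q)+(X^3/N(Q))^{2/3}$ by ideal counting, dispatch the second piece trivially to get $X^{3-b/3}$, and exploit the lower bound on norms of prime ideal factors for the first piece. The one genuine difference is how the first piece is organised. You classify the failures of squarefreeness of $N(Q)$ into three ideal-theoretic cases (repeated degree-one prime, split pair, higher-degree prime ideal) and estimate each, which is the step you yourself flag as potentially delicate. The paper avoids the case split entirely by passing to the rational norm: since $\#\{Q: N(Q)=q\}\le \tau(q)^3$, and $N(Q)$ non-squarefree forces $p^2\mid q$ for some prime $p$ that is then large by the hypothesis on $N(P)$, one can write the bad contribution as $\sum_{p}\sum_{p^2\mid q\le X^{3-b}}\tau(q)^3\,\#\B_Q$ and apply Lemma \ref{lem:BtypeI} directly; the factor $\tau(q)^3$ absorbs all three of your cases into a single standard divisor-sum estimate. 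This is shorter and also avoids your appeal to ``$a\asymp 1$ in every application'' for the cofactor sum $\sum_{Q'}N(Q')^{-1}=O(1)$, an appeal which strictly speaking exceeds the hypothesis $a>0$ of the Lemma as stated (though it is harmless since a $(\log X)^c$ loss is permitted).

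One caveat, which applies equally to the paper's own three-line proof: if the non-squarefreeness of $N(Q)$ comes from a prime ideal $P\mid Q$ of residue degree $f\in\{2,3\}$ (your case (iii)), then the hypothesis $N(P)\ge X^a$ only yields $p=N(P)^{1/f}\ge X^{a/f}\ge X^{a/3}$, not $p\ge X^a$, so the bound one actually obtains is $X^{3-a/3}(\log X)^c$ rather than $X^{3-a}(\log X)^c$. This is irrelevant downstream, since the Lemma is only invoked with $a=\delta\asymp 1$ and $b=3/2(1-\tau)$, where any power saving suffices and the $X^{3-b/3}$ term dominates the claimed total error $X^{3-\delta/2}$ anyway; but your sketch should not assert that case (iii) achieves the exponent $3-a$.
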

\begin{proof}
For fixed $q$, the number of ideals $Q$ satisfying $q = N(Q)$ is bounded by $\tau(q)^3$.  Thus applying Lemma \ref{lem:BtypeI}
\begin{align*}
\sum_{Q \in S} S(\B_Q, z) - \sumone_{Q \in S} S(\B_Q, z)
&\le \sum_{p\ge X^{a}} \sum_{\substack{q \le X^{3-b}\\p^2|q}} 
\sum_{N(Q) = q} \# \B_Q \\
&\ll X^3 \sum_{p\ge X^{a}} \sum_{\substack{q \le X^3\\p^2|q}} 
\frac{\tau(q)^3}{q} + X^{3-b/3}(\log X )^c\\
&\ll X^{3-a} (\log X )^c +X^{3-b/3}(\log X )^c,
\end{align*}for some constant $c>0$.  
\end{proof}
Now let 
$$S^0 = \{N(Q): Q \in S, N(Q)\textup{ squarefree}\},
$$and suppose that $N(Q) \le X^{3/2(1+\tau)}$ for all $Q\in S$, which will be the case when we apply this estimate for $S_7(\B)$ and $S_8(\B)$.  Then applying \eqref{eqn:B0B} and Lemma \ref{lem:BQsquarefree} with $z\ge X^\delta$ possibly depending on $Q$ and notation as in Lemma \ref{lem:BQsquarefree}, 
\begin{equation}\label{eqn:Bidealtoint}
\sum_{Q\in S} S(\B_Q, z) = \sum_{q\in S^0} S(\B^0_q, z) + O(X^{3-\delta/2}).
\end{equation}  Here we recall that $\delta \asymp 1$ so that $X^{3-\delta} (\log X)^c\ll X^{3-\delta/2}$.

Let 
$$R_j' = \{(t_1, t_2,...,t_{j-4}) \in \mathbb{R}^{j-4}: (\log t_1/\log X,...,\log t_{j-4}/\log X) \in R_j\}
$$for $j=7, 8$.
This allows us to write
\begin{align*}
S_7(\B) &\sim \sum_{(N(P_1), N(P_2), N(P_3)) \in R_7'}S(\B_{P_1P_2P_3}, N(P_3))\\
&\sim \sum_{(p_1,p_2,p_3)\in R_7'} S(\B^0_{p_1p_2p_3}, p_3),
\end{align*}by \eqref{eqn:Bidealtoint} and recalling that $N(P_1P_2P_3) \le X^{3/2(1+\tau)} = X^{3-3/2(1-\tau)}$ in the sum above from the definition of $R_7$.  Note that our previous arguments show that
$$S(\B^0_{p_1p_2p_3}, p_3) \sim \frac{\eta Z}{\log X}\frac{J(v_1+v_2+v_3, v_3)}{p_1p_2p_3},
$$where $v_i = \log p_i/\log X$.  Now, we would like to apply the linear sieve \eqref{eqn:sievebdd} to $S(\B^0_{p_1p_2p_3}, p_3)$ for comparison purposes - this is not necessary but will make the subsequent numerical computations easier.  We first need to verify that
\begin{equation}\label{eqn:linsievecondB}
\prod_{w\le p<z}\big(1-\frac{\rho_1(p)}{p}\big)^{-1}\le 
\frac{\log z}{\log w}\left(1+\frac{L}{\log w}\right),
\end{equation}where recall $\rho_1$ is defined as in Lemma \ref{lem:B0type1}.  In order to do this, we write
\begin{align*}
\prod_{w\le p<z}\left(1-\frac{\rho_1(p)}{p}\right)
&= \prod_{w\le p<z}\prod_{P|p} \left(1-\frac{1}{N(P)}\right)\\
&= \prod_{w\le p<z} \left(1-\frac 1p\right)  \prod_{w\le p<z} \prod_{P|p} \left(1-\frac{1}{N(P)}\right) \left(1-\frac 1p\right)^{-1}. 
\end{align*}
On the other hand, for all $\sigma \ge 1$, 
\begin{align}\label{eqn:rho1est}
\sum_{w\le p < z} \frac{1}{p^\sigma} - \sum_{w\le p < z} \sum_{P|p} \frac{1}{N(P)^\sigma} 
&= \sum_{w\le p < z} \frac{1}{p^\sigma} - \sum_{w\le N(P) < z} \frac{1}{N(P)^\sigma} + O(w^{-1/2})\\
&\ll \frac{1}{\log^2 w},
\end{align}by the Prime Number Theorem and the Prime Ideal Theorem.  The condition \eqref{eqn:linsievecondB} follows directly from here.

Further, we note that for all $\sigma > 1$,
\begin{align}
\prod_{p<z} \prod_{P|p} \left(1-\frac{1}{N(P)^\sigma}\right)
&= \prod_{p<z} \left(1-\frac 1p\right)  \prod_{p<z} \prod_{P|p} \left(1-\frac{1}{N(P)^\sigma}\right)\left(1-\frac 1p\right)^{-1}\\
&= \frac{\zeta(\sigma)}{\zeta_K(\sigma)}\prod_{p<z} \left(1-\frac 1p\right)  \prod_{p>z} \prod_{P|p} \left(1-\frac{1}{N(P)^\sigma}\right)\left(1-\frac 1p\right)^{-1}\\
&= \frac{\zeta(\sigma)}{\zeta_K(\sigma)} \prod_{p<z} \left(1-\frac 1p\right)  \left(1+ O\bfrac{1}{\log^2 z} \right),
\end{align}by \eqref{eqn:rho1est} .  Letting $\sigma \rightarrow 1$ gives that
\begin{equation}
\prod_{p<z} \left(1 - \frac{\rho_1(p)}{p} \right) \sim \frac{e^{-\gamma}}{\gamma_0 \log z} 
\end{equation}where recall that $\gamma_0$ is the residue of $\zeta_K(s)$ at $s=1$ and we have used Mertens' classical estimate again.  Finally, we note that the appropriate level of distribution result holds for $\B^0_q$ in our applications as in Lemma \ref{lem:B0type1}; this may be easily verified in the same way as for $\A^0_q$.

Now applying the linear sieve to $S(\B^0_{p_1p_2p_3}, p_3)$ and writing $v_i = \frac{\log p_i}{\log X}$ gives that
\begin{align*}
S(\B^0_{p_1p_2p_3}, p_3) 
&\le \frac{e^{-\gamma }\eta Z}{p_1p_2p_3 \log p_3}F\bfrac{3-v_1-v_2-v_3}{v_3} (1+o(1)) \\
&\le \frac{e^{-\gamma }\eta Z}{p_1p_2p_3 \log p_3}F\bfrac{2-\gamma - v_1-v_2-v_3}{v_3}(1+o(1)),
\end{align*}from which it follows that
\begin{equation}
\frac{\eta Z}{\log X} \frac{J(v_1+v_2+v_3, v_3)}{p_1p_2p_3}
\le \frac{e^{-\gamma }\eta Z}{p_1p_2p_3 \log p_3}F\bfrac{2-\gamma - v_1-v_2-v_3}{v_3}(1+o(1)),
\end{equation}and upon substituting $\log p_3 = v_3 \log X$, we get
\begin{equation}\label{eqn:sievecompare}
J(v_1+v_2+v_3, v_3)\le \frac{e^{-\gamma }}{v_3}F\bfrac{2-\gamma - v_1-v_2-v_3}{v_3}(1+o(1)).
\end{equation}
Thus
\begin{align*}
\nu S_7(\B) - S_7(\A) &\ge \nu \frac{\eta Z}{\log X} \int_{R_7} J(v_1 + v_2 + v_3, v_3) \frac{dv_1dv_2 dv_3}{v_1v_2v_3}\\
&-  \frac{\eta^2 \sigma_0 XY e^{-\gamma}}{\log X} (1+o(1)) \int_{(v_1, v_2, v_3) \in R_7} F\bfrac{2-\gamma-v_1-v_2-v_3}{v_3} \frac{dv_1dv_2dv_3}{v_1v_2v_3^2}\\
&= \frac{\sigma_0 \eta^2 XY (1+o(1))}{\log X} \int_{R_7} \frac{J(v_1+v_2+v_3, v_3)}{v_1v_2v_3} \\
&- e^{-\gamma} F\bfrac{2-\gamma-v_1-v_2-v_3}{v_3} (v_1v_2v_3^2)^{-1} dv_1dv_2dv_3
\end{align*}
By \eqref{eqn:sievecompare}, the integrand above is nonpositive (or more precisely, the integrand is $\le o(1)$).  Let $\tilde{R_7} \subset \mathbb{R}^3$ be any set satisfying $R_7 \subset \tilde{R_7}$.  Then the above tells us that
\begin{align}
\nu S_7(\B) - S_7(\A) 
&\ge \frac{\sigma_0 \eta^2 XY (1+o(1))}{\log X} \int_{\tilde{R_7}} \frac{J(v_1+v_2+v_3, v_3)}{v_1v_2v_3} \\
&- e^{-\gamma} F\bfrac{2-\gamma-v_1-v_2-v_3}{v_3} (v_1v_2v_3^2)^{-1} dv_1dv_2dv_3.
\end{align}
By the same process, 
\begin{align}
S_8(\A) - \nu S_8(\B) 
&\ge \frac{\sigma_0 \eta^2 XY (1+o(1))}{\log X} \int_{\tilde{R_8}} e^{-\gamma} f\bfrac{2-\gamma-v_1-v_2-v_3-v_4}{v_4} (v_1v_2v_3v_4^2)^{-1}\\
&- \frac{J(v_1+v_2+v_3+v_4, v_4)}{v_1v_2v_3v_4}  dv_1dv_2dv_3dv_4
\end{align}for any set $\tilde {R_8}$ such that $R_8 \subset \tilde{R_8} \subset \mathbb{R}^4$.  In the above, we similarly see that the integrand is $\leq o(1)$ by applying the lower bound arising from the linear sieve.  To be precise,  
we have that
\begin{align*}
&\frac{e^{-\gamma}\eta Z}{p_1...p_4\log p_4} f\bfrac{2-\gamma-v_1-v_2-v_3-v_4}{v_4} \\
&\leq S(\B^0_{p_1p_2p_3p_4}, p_4) (1+o(1)) = \frac{\eta Z}{\log X} \frac{J(v_1+...+v_4, v_4)}{p_1...p_4},
\end{align*}again with the identification $v_i = \frac{\log p_i}{\log X}$.  

\subsection{Numerical estimates}
For convenience, we let
\begin{equation}
\mathcal X = \frac{\sigma_0 \eta^2 XY(1+o(1))}{\log X}.
\end{equation}
In numerically bounding various integrals, we may substitute $\frac{5}{67}$ for $\tau$ since $\tau < \frac{5}{67}$.  Then, computing the integrals in \eqref{eqn:S31B}, \eqref{eqn:S32B} and \eqref{eqn:S3A} numerically in Maple, we get 
\begin{align*}
\nu S_3(\B) - S_3(\A) \ge -0.187 \mathcal{X}
\end{align*}

Similarly, computing the integrals in \eqref{eqn:S5B}, \eqref{eqn:S5A}, we have
\begin{equation}
\nu S_5(\B) - S_5(\A) \ge -0.172 \mathcal{X}.
\end{equation}

For $S_6$, we use the trivial bound $S_6(\A) \ge 0$ and get
\begin{align*}
S_6(\A)  - \nu S_6(\B) \ge -\nu S_6(\B).
\end{align*}Moreover, we use \eqref{eqn:S6}.  The first integral in \eqref{eqn:S6} may be evaluated precisely.  For the second integral, we use that
\begin{align*}
K(v_1, v_2) &\le  \frac{1}{3 - v_1 - v_2} + 2 \int_{v'}^{\frac{3-v_1-v_2}{2}} \frac{dr}{r (3-v_1-v_2 - r)} \\
&+ \frac{18}{3-v_1-v_2} \int_{v'}^{\frac{3 - v_1 - v_2}{3}} \log \bfrac{3-v_1-v_2-r_2}{2r_2} \frac{dr_2}{r_2}.
\end{align*}Typing the integrals into Maple again results in
\begin{align*}
S_6(\A)  - \nu S_6(\B) \ge -0.088 \mathcal X.
\end{align*}

For $S_7(\B)$, we use \eqref{eqn:S7B} and the lower bound
\begin{align*}
J(v_1+v_2+v_3, v3) &\ge \frac{1}{3-v_1-v_2-v_3} + \frac{2}{3-v_1-v_2-2v_3} \log \bfrac{3-v_1-v_2-v_3}{2v_3} \\
&+ \frac{6}{3-v_1-v_2-3v_3} \left(-\log v_2 - 2\log (3-v_1-v_2-2v_3) \right.\\
&\left. - 3\log 3 + 3\log (3-v_1-v_2-v_3) + 2\log 2\right).
\end{align*}This is derived by writing 
$$\int_{v_3}^{\frac{3-v}{2}} \frac{dr}{r (3-v-r)} \ge \int_{v_3}^{\frac{3-v}{2}} \frac{dr}{r (3-v-v_3)}, 
$$and
$$\int_{v_3}^{\frac{3-v}{3}} \int_{r_2}^{\frac{3-v-r_2}{2}} \frac{dr_1dr_2}{r_1r_2 (3-v-r_1-r_2)} \ge \int_{v_3}^{\frac{3-v}{3}} \int_{r_2}^{\frac{3-v-r_2}{2}} \bfrac{2}{3-v-r_2} \frac{dr_1dr_2}{r_2 (3-v-2v_3)}.
$$Using this, and the bounds $1/2 - \frac{5\tau }{2}\le v_3\le \frac {1}{2} (1+\tau), v_3\le v_2 \le \frac{1}{2}(1+\tau), \frac{3}{2} (1-\tau) - v_2-v_3\le v_1 \le \frac{3}{2} (1+\tau) - v_2-v_3$ for $R_7$, valid since replacing $R_7$ by a region $R_7 \subset \tilde R_7$ suffices, we type the integrals resulting from here and \eqref{eqn:S7A} into Maple to get that
$$\nu S_7(\B)- S_7(\A) \ge (0.114 - 0.238) \mathcal{X}  = -0.124 \mathcal{X}.
$$
In the case of $S_8$, we again use the trivial bound $S_8(\A) \ge 0$, and write
$$S_8(\A)  - \nu S_8(\B) \ge - \nu S_8(\B).
$$ In deriving an upper bound for $S_8(\B)$ we use \eqref{eqn:S8B} and replace $R_8$ by the larger set $\tilde R_8$ defined by
\begin{align}
\tilde{R}_8 &:= \{(v_1, v_2, v_3, v_4)\in \mathbb{R}^4: 1/2 - \frac{5\tau}{2} \le v_3, v_4 < \frac{1+\tau}{3}, \\
&1/2 - \frac{5 \tau}{2} \le v_2 \le \frac 14 +\frac{7\tau}{4}, 3/2(1-\tau) - (1+\tau) \le v_1 \le  3/2(1+\tau) - 3(1/2 - \frac{5\tau}{4}) \}.
\end{align}
Further we use that
\begin{align}
J(v, v') &\le \frac{1}{3-v} + \frac{3}{v'(3-v-v')}\frac{3-v-2v'}{2} + \frac{(3-v-3v') (3-v-3v')}{v'^2 (3-v-2v')}\\
&+ 2 \frac{(3-5v')(2-v-4v')}{v'^3(3-v-3v')}.
\end{align}This is derived by replacing each integrand by its absolute maximum over the interval of integration.  Maple calculations using these bounds then give us that
\begin{equation}
S_8(\A)  - \nu S_8(\B) \ge -\nu S_8(\B) \ge -0.037 \mathcal X.
\end{equation}

\section{Proposition 2: initial treatment of bilinear sums}

We first reduce the proof of Proposition 2 to a bound on certain bilinear sums.  This reduction follows many of the same steps as in Section 3 of \cite{HB}, and we provide the framework here for the reader's convenience.  

First, we fix some notation.  We set
\begin{equation}\label{eqn:xi}
\xi = \frac{1}{\log \log X},
\end{equation} and recall 
\begin{equation}\label{eqn:delta}
\delta \asymp 1
\end{equation}is a fixed constant.
Further, let
\begin{equation}\label{eqn:L}
L = X^{\xi}.
\end{equation}

For an integer $n\geq 0$, fix ${\bf m} = (m_1,...,m_{n+1}) \in \mathbb{N}^{n+1}$, and intervals $J(m_i) = [X^{m_i \xi}, X^{(m_i+1)\xi}[$ satisfying
\begin{equation}\label{eqn:mcond}
m_1>m_2>...m_{n+1} \geq \frac{\delta}{\xi},
\end{equation}so that the intervals $J(m_i)$ are disjoint.  The first goal is to express the sums $S_4(\C), \U_1^{(1)}(\C), \U_1^{(2)}(\C), \U_1^{(3)}(\C), \U_2^{(1)}(\C), \U_2^{(2)}(\C), \U_2^{(3)}(\C), U^{(n)}(\C)$ with $n\ge 4$ as a combination of sums of the form
\begin{equation}\label{eqn:bilinearform0}
\sum_{R} c_R \sum_{S: RS \in \C} d_S,
\end{equation}
where $C_R$ is either $0$ or $1$ and is supported on ideals $R\in \R$ such that any prime factor $P|R$ satisfies $N(P) \ge X^\delta$.  Further $d_S$ is supported on the range $X^{1+\tau} \le N(S) \le X^{3/2 (1- \tau)}$ and on those $S$ of the form
$$S = \prod_{i=1}^{n+1} P_i$$ 
where $N(P_i) \in J(m_i)$ are distinct prime ideals.  When $S$ is of this form, we define
\begin{equation}\label{eqn:dS}
d_S = \prod_{i=1}^{n+1} \frac{\log (N(P_i))}{m_i \xi \log X},
\end{equation}  and set $d_S = 0$ otherwise.

Here, we simply note what our choice of $S$ is in each case, so that it will be clear $d_S$ is supported on the range $X^{1+\tau} \le N(S) \le X^{3/2 (1- \tau)}$.  In the definition of $S_4$, $\U_1^{(1)}$, $\U_1^{(2)}$ and $\U_1^{(3)}$, $S = P$, $S = P_1P_2$, $S = P_1P_2P_3$ and $S = P_1...P_4$ respectively.  For $U^{(n)}$ with $n\ge 4$, we take $S = P_1...P_{n+1}$ which we have already noted has norm in the above range for $n \geq 4$.  In the case of $\U_2^{(1)}$, we may write an element of $\C_{P_1P_2}$ as $P_1P_2Q$ and the conditions on $P_1$ and $P_2$ imply that $X^{1+\tau} \leq N(Q) \leq X^{3/2 (1-\tau)}$ so we may take $S = Q$ in this case.  A similar argument holds for $\U_2^{(2)}$ and $\U_2^{(3)}$.

To see that each of the quantities in Proposition \ref{prop:bilinear1} may be reduced to studying combinations of sums of the form
\begin{equation}\label{eqn:tildeUC}
U(\C) := \sum_R c_R \sum_{S: RS \in \C} d_S
\end{equation}for $\C = \A$ or $\C = \B$  as claimed, we refer the reader to \S 3 of \cite{HB} (specifically pages 14 to 17).  Our situation is completely covered by his treatment.  Our definition of $\delta$ being a fixed constant should be considered as making the situation simpler.  The analogous quantity in Heath-Brown's work \cite{HB} is $\tau$ and is of size $\frac{1}{(\log \log X)^{1/6}}$.  Lemma 3.7 of \cite{HB} immediately implies that the total error incurred by this replacement is 
\begin{align*}
\ll 
\begin{cases}
\frac{1}{\log \log X} \frac{\eta^2 XY}{\log X} \textup{ for } \C = \A \\
\frac{1}{\log \log X} \frac{\eta X^3}{\log X} \textup{ for } \C = \B,
\end{cases}
\end{align*}and these errors suffice for Proposition \ref{prop:bilinear1}.

For completeness, we note that in the definition of $d_S$ above, $n\ll \frac{1}{\delta} \asymp 1$, while $m_i \ll \frac{1}{\xi}$, so that the number of sums of the form \eqref{eqn:tildeUC} needed is bounded by $(\log \log X)^c$ for some constant $c>0$.  In the sequel, we will be performing replacements that will produce error terms which save a large power of $\log X$ over the main term and this will be sufficient for Proposition \ref{prop:bilinear1}.

\subsection{Replacement of $d_S$ by $h_S$}
For $\zeta_K(s)$ the Dedekind zeta function of $K = \mathbb{Q}(\sqrt[3]{2})$, we define $\Lambda_K(T)$ for integral ideals $T$ by 
$$-\frac{\zeta_K'}{\zeta_K}(s) = \sum_{T} \frac{\Lambda_K(T)}{N(T)^s}.
$$ 

Fix $n$ and $m_1,m_2,...,m_n$ as in the definition of $d_S$.  We first show that we may replace $d_S$ with
\begin{equation}\label{eqn:hbeta}
h_S = \prod_{i=1}^{n+1} \frac{\Lambda_K(T_i) \frak{W}_i(N(T_i))}{m_i \xi \log X},
\end{equation}where $S = T_1...T_{n+1}$ for integral ideals $T_i$, $0\le \frak{W}_i(x)\le 1$ is a smooth function which is $1$ on $J(m_i) = [X^{m_i \xi}, X^{(m_i+1)\xi}]$, is supported on $[X^{m_i \xi}(1-\mathcal{\iota}), X^{(m_i+1)\xi}(1+\mathfrak \iota)]$, where

\begin{equation}\label{eqn:iota}
\iota = \exp(- (\log X)^\epsilon).
\end{equation}  The only two important properties of $\iota$ for us are that for any $A>0$,
\begin{equation}\label{eqn:iotaproperty}
exp\left(-\frac{1}{A} (\log X)^{1/3 - \epsilon}\right) \ll  \iota \ll \frac{1}{\log^A X}.
\end{equation}  The second inequality will be useful shortly, while the first is useful in \S \ref{sec:errorsmallcube}.  Note that there is no explicit support condition on $h_S$, but our definition of $h_S$ implies that $h_S = 0$ unless $S = T_1...T_{n+1}$ for integral ideals $T_i$ satisfying $N(T_i) \in [X^{m_i \xi}(1-\mathcal{\iota}), X^{(m_i+1)\xi}(1+\mathfrak \iota)]$, by the support condition on $\frak{W}_i(x)$.  Moreover, the coefficient $\Lambda_K(T_i)$ is supported on $T_i$ a power of a prime ideal.

We further demand that
\begin{equation}\label{eqn:frakWbdd}
\frak{W}_i^{(k)}(x) \ll_k (\iota x)^{-k},
\end{equation}for all $k \ge 0$.  

Although we do not explicitly demand that the ideals $T_1,...,T_{n+1}$ are distinct, the intervals $J(m_i)$ are disjoint so the ideals $T_1,...,T_{n+1}$ are distinct in what follows.  The form $h_S$ is more convenient for our purposes later in the paper.  We perform the replacement now because it is easier to do so here. 

We record the trivial bounds below which we will use without explanation later.

\begin{lem}\label{lem:trivialbddcoef}
With notation as above, we have that
\begin{align*}
n &\ll 1 \\
d_S &\ll 1\\
h_S &\ll 1.
\end{align*}
\end{lem}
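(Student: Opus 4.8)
The plan is to read all three estimates straight off the definitions; each is pure bookkeeping with ranges that have already been fixed. For $n\ll 1$: in every case in the list ($S_4$, the $\U_1^{(i)}$ and $\U_2^{(i)}$ for $i=1,2,3$, and $U^{(n)}$ for $n\ge 4$) the ideal $S$ is a product of $n+1$ distinct prime ideals each of norm $\ge X^\delta$ --- this is built into the summation conditions, e.g.\ $N(P_{n+1})\ge X^\delta$ for $U^{(n)}$ --- while $N(S)\le X^{3/2(1-\tau)}<X^{3/2}$ by the support condition on $d_S$. Hence $X^{(n+1)\delta}\le N(S)<X^{3/2}$, so $n+1<\frac{3}{2\delta}\ll 1$ since $\delta\asymp 1$ by \eqref{eqn:deltadef}.

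For $d_S\ll 1$: by \eqref{eqn:dS}, $d_S=\prod_{i=1}^{n+1}\frac{\log N(P_i)}{m_i\xi\log X}$ with $N(P_i)\in J(m_i)=[X^{m_i\xi},X^{(m_i+1)\xi}[$, so each factor lies in $[0,\,1+\frac{1}{m_i})\subseteq[0,2)$ by \eqref{eqn:mcond} and $m_i\ge\delta/\xi\ge 1$ for $X$ large, giving $d_S\ll 2^{n+1}\ll 1$ by the first bound. For $h_S\ll 1$: by \eqref{eqn:hbeta}, $h_S=\prod_{i=1}^{n+1}\frac{\Lambda_K(T_i)\mathfrak W_i(N(T_i))}{m_i\xi\log X}$ with $S=T_1\cdots T_{n+1}$; here $0\le\mathfrak W_i\le 1$, and $\Lambda_K(T_i)$ vanishes unless $T_i=P^k$ for a prime ideal $P$, in which case $\Lambda_K(T_i)=\log N(P)\le\log N(T_i)$. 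Since $\mathfrak W_i$ is supported on $[X^{m_i\xi}(1-\iota),X^{(m_i+1)\xi}(1+\iota)]$ with $1+\iota\le 2$ by \eqref{eqn:iotaproperty}, a nonzero term forces $\log N(T_i)\le(m_i+1)\xi\log X+\log 2$, so each factor of $h_S$ is $\le\frac{(m_i+1)\xi\log X+\log 2}{m_i\xi\log X}\ll 1$ using $m_i\xi\ge\delta\asymp 1$; multiplying the $n+1\ll 1$ factors gives $h_S\ll 1$.

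There is no real obstacle here: the point is simply that the normalizing denominators $m_i\xi\log X$ are comparable to $\log N(P_i)$ (resp.\ $\log N(T_i)$) --- which is exactly how the dyadic blocks $J(m_i)$ were set up --- and that the number of prime factors is $O(1/\delta)=O(1)$. The only things one must not overlook are the lower bound $m_i\ge\delta/\xi$ from \eqref{eqn:mcond}, which keeps the denominators from being too small, and the support of $\mathfrak W_i$ in the bound for $h_S$.
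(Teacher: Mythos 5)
Your proof is correct and follows the same route as the paper: $n\ll 1/\delta\ll 1$ from the support conditions, and each factor of $d_S$ (respectively $h_S$) is bounded by $(m_i+1)/m_i\le 2$, giving $d_S, h_S\ll 2^{n+1}\ll 1$. You simply spell out the $h_S$ case — which the paper dismisses as "similarly" — in full detail, including the role of the support of $\mathfrak W_i$.
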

\begin{proof}
We have that $n\ll \frac{1}{\delta} \asymp 1$ by \eqref{eqn:deltadef}.  Further, by the definition of $d_S$,
$$d_S \ll \prod_{i=1}^{n+1} \frac{m_i+1}{m_i} \le 2^{n+1} \ll 1,
$$and similarly for $h_S$.
\end{proof}

We now prove the next Lemma.

\begin{lem}\label{lem:hbetadbetareplace}
With $d_S$ and $h_S$ as above we have that for any $A>0$,

\begin{equation}\label{eqn:dShSA}
\sum_R c_R \sum_{S: RS \in \A} (d_S - h_S) \ll_A \frac{XY}{\log^A X}
\end{equation}and
\begin{equation}\label{eqn:dShSB}
\sum_R c_R \sum_{S: RS \in \B} (d_S - h_S) \ll_A \frac{X^3}{\log^A X}
\end{equation}

\end{lem}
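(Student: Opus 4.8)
The plan is to exploit the multiplicative structure of $d_S$ and $h_S$ and to reduce both \eqref{eqn:dShSA} and \eqref{eqn:dShSB} to the Type I estimates already in hand. Since the intervals $J(m_i)$, and even their slight enlargements $[X^{m_i\xi}(1-\iota),X^{(m_i+1)\xi}(1+\iota)]$, are pairwise disjoint (the $m_i$ being distinct integers and $\iota$ tiny), both coefficients factor through a product over $i$. Writing
$$\phi_i(T)=\frac{\log N(T)}{m_i\xi\log X}\,\mathbf{1}[T\text{ prime},\ N(T)\in J(m_i)],\qquad \psi_i(T)=\frac{\Lambda_K(T)\,\mathfrak{W}_i(N(T))}{m_i\xi\log X},$$
one has $d_S=\sum_{S=T_1\cdots T_{n+1}}\prod_i\phi_i(T_i)$ and $h_S=\sum_{S=T_1\cdots T_{n+1}}\prod_i\psi_i(T_i)$, each sum having at most one nonzero term. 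First I would telescope
$$\prod_{i}\phi_i(T_i)-\prod_{i}\psi_i(T_i)=\sum_{j=1}^{n+1}\Big(\prod_{i<j}\phi_i(T_i)\Big)\big(\phi_j(T_j)-\psi_j(T_j)\big)\Big(\prod_{i>j}\psi_i(T_i)\Big),$$
so that, since $n\ll1$ and $\phi_i,\psi_i\ll1$ by Lemma \ref{lem:trivialbddcoef}, the left-hand sides of \eqref{eqn:dShSA}, \eqref{eqn:dShSB} are bounded (up to $O(1)$ factors) by the $n+1$ sums
$$\Sigma_j:=\sum_R\ \sum_{T_1,\dots,T_{n+1}}c_R\,\mathbf{1}[RT_1\cdots T_{n+1}\in\C]\,\big|\phi_j(T_j)-\psi_j(T_j)\big|.$$

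Next I would pin down the support of $\phi_j-\psi_j$. On $J(m_j)$ we have $\mathfrak{W}_j\equiv1$ and $\Lambda_K(P)=\log N(P)$ for prime $P$, so $\phi_j$ and $\psi_j$ agree there; hence $\phi_j-\psi_j$ is supported on prime powers $T=P^k$ with $N(P^k)\in\mathrm{supp}\,\mathfrak{W}_j$ and with either (a) $k\ge2$, or (b) $k=1$ and $N(P)$ in the fringe $\mathcal F_j:=\mathrm{supp}\,\mathfrak{W}_j\setminus J(m_j)$, which is a union of two intervals of relative length $\ll\iota$; in both cases $|\phi_j-\psi_j|\ll1$. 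For a fixed such prime power $T$, the number of factorisations $RT_1\cdots T_{n+1}=I$ of a given ideal $I$ with $T_j=T$ is $\ll\omega(I)^{n+1}\ll(\log X)^{O(1)}$, since each $T_i$ is a prime power determined by the choice of one of the $\le\omega(I)$ prime divisors of $I$, after which $R$ is forced. Discarding the other constraints I obtain
$$\Sigma_j\ll(\log X)^{O(1)}\Bigg(\ \sum_{\substack{T=P^k,\ k\ge2\\ X^{\delta}(1-\iota)\le N(T)\le X^{3/2}}}\#\C_T\ +\ \sum_{N(P)\in\mathcal F_j}\#\C_P\ \Bigg).$$

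It then remains to bound these two sums for $\C=\A$ and $\C=\B$. For the fringe sum (b) I would split off the main term of Lemma \ref{lem:AtypeI} (resp. Lemma \ref{lem:BtypeI}): it contributes $\ll\eta^2XY\sum_{N(P)\in\mathcal F_j}N(P)^{-1}\ll\eta^2XY\,\iota$ (resp. $\ll\eta X^3\,\iota$), which is acceptable since $\iota\ll(\log X)^{-A}$ for every $A$ by \eqref{eqn:iotaproperty}; the remainder, summed over the $O(1)$ dyadic blocks meeting $\mathcal F_j$ (all lying in $[X^{\delta},X^{3/2}]$), is $\ll(X\sqrt Y+\sqrt{XY}\,X^{3/4}+X^{3/2})(\log X)^{c}$ (resp. $\ll X^{5/2}(\log X)^c$), and this beats $XY$ (resp. $X^3$) by a fixed power of $X$ because $\gamma<5/67<\tfrac12$. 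For the prime-power sum (a) I would use that there are only $\ll Q^{1/2}/\log Q$ prime powers $P^k$ with $k\ge2$ and $N(P^k)\asymp Q$, so the main terms contribute $\ll\eta^2XY\sum_{X^{\delta}\ll Q\ll X^{3/2}}Q^{-1/2}\ll\eta^2XY\,X^{-\delta/2}$ (resp. $\ll\eta X^3X^{-\delta/2}$), negligible since $\delta\asymp1$; the corresponding remainders come from Lemma \ref{lem:AtypeIprimepower} together with the standard M\"obius passage from $S(P^k)$ to $\#\A_{P^k}$ (exactly as in the proof of Lemma \ref{lem:AtypeI}) for $\A$, and from Lemma \ref{lem:BtypeI} restricted to prime-power moduli for $\B$, and are again a fixed power of $X$ smaller than $XY$, resp. $X^3$. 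Summing over the $n+1\ll1$ values of $j$ yields \eqref{eqn:dShSA} and \eqref{eqn:dShSB}.

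The step I expect to be the genuine obstacle is case (a) for the sequence $\A$: unlike the squarefree-norm estimate of Lemma \ref{lem:AtypeIlarge}, controlling $\sum\#\A_{P^k}$ on average forces one to use the more delicate prime-power Type I bound of Lemma \ref{lem:AtypeIprimepower} and to carry out the M\"obius inversion of the coprimality condition $(x,y)=1$ for prime-power moduli by hand; once that is in place, the rest is bookkeeping, with the hypothesis $\gamma<5/67$ (in particular $\gamma<\tfrac12$) leaving ample room for the power-of-$X$ savings needed in the error terms and $\iota$ being smaller than any power of $\log X$.
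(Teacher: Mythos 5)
Your argument is correct and is essentially the paper's own proof, reorganized: the paper replaces $d_S$ by $h_S$ in two stages (first dropping the smooth weights $\mathfrak W_i$ to form an intermediate $h_S'$, then reinstating them), which isolates exactly your two error populations --- proper prime-power factors, and prime(-power) factors lying in the $O(\iota)$-fringe of the $J(m_i)$ --- and controls both via Lemma \ref{lem:AtypeIprimepower}. One small over-complication in your write-up: no M\"obius passage from $S(P^k)$ to $\#\A_{P^k}$ is actually needed, since an upper bound suffices and $\#\A_{P^k}\le |S(P^k)|$; and Lemma \ref{lem:AtypeIprimepower} is already stated for an arbitrary subinterval $[Q,Q+Q_0]$, so your fringe case (b) does not require the dyadic-block detour through Lemma \ref{lem:AtypeI} either.
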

\begin{proof}
We concentrate on proving \eqref{eqn:dShSA}, the proof for \eqref{eqn:dShSB} begin similar but simpler.  For clarity, we proceed in two steps.  

We define
\begin{equation}\label{eqn:hbetaprime}
h_S' = \prod_{i=1}^{n+1}\frac{\Lambda_K(T_i)}{m_i \xi \log X},
\end{equation}supported on $S = T_1...T_{n+1}$ where $T_i$ are integral ideals with $N(T_i) \in J(m_i)$.  We now show that
\begin{align}\label{eqn:dbhberror1}
\sum_R c_R \sum_{S: RS \in \A} |d_S - h_S'| \ll  XY (\log X)^{-A},
\end{align}for any $A$. 

Indeed, by definition, $\Lambda_K(T_i) = 0$ unless $T_i$ is a power of a prime ideal and moreover
$$\prod_{i=1}^{n+1} \frac{\log (N(P_i))}{m_i \xi \log X} = \prod_{i=1}^{n+1} \frac{\Lambda_K(T_i)}{m_i \xi \log X} 
$$if $P_i = T_i$ for all $i$.  Hence, we need only bound the contribution of those terms $\beta = T_1...T_{n+1}$ where at least one of $T_i = P^r$ for some prime ideal $P$ and $r\ge 2$.  The number of choices for $i$ is $n+1$.  Noting that $|d_{\beta} - h_\beta '|\ll n \ll 1$, we see that the quantity in \eqref{eqn:dbhberror1} is bounded by
\begin{align*}
\sum_{1<r\ll \log X} \sum_{N(P)^r \in J(m)} \sum_{\substack{RS \in \A \\ P^r|S}} 1 \ll \sum_{1<r\ll \log X} \sum_{N(P)^r \in J(m)} \frac{XY}{N(P^r)} + X^{3/2(1-\delta)} \log^c X,
\end{align*}for some $c>0$ where $J(m) = [X^{m \xi}, X^{(m+1)\xi}[$ for some $m \in \mathbb{N}$ and we have used Lemma \ref{lem:AtypeIprimepower} with the bound $N(P^r) \le N(S) \le X^{3/2(1-\tau)}$.  By similar arguments as before $n X^{3/2(1-\delta)} \log^r X \ll_A XY (\log X)^{-A}$ for any $A>0$.  To bound the first term, we cover $J(m)$ by $\ll \log X$ intervals of the form $\mathcal I = [N_0, 2N_0]$ for $N_0 \gg X^\delta$, and so this contribution is bounded by

\begin{align*}
\log X \sum_{1<r\ll \log X} \sum_{N(P)^r \in I} \frac{XY}{N(P^r)}
\ll \log X \sum_{1<r\ll \log X} N_0^{1/r} \frac{XY}{N_0}
\ll \log^2 X \frac{XY}{X^{\frac{\delta}{2}}},
\end{align*}upon noting that $N_0^{1/r - 1} \leq N_0^{-1/2} \ll X^{-\delta/2}$.  The last line is $\ll_A XY(\log X)^{-A}$ for any $A>0$ by \eqref{eqn:xi} and $n \ll \frac{1}{\xi}$ as before.  


Now it remains to show that 

\begin{align}\label{eqn:dbhberror2}
\sum_R c_R \sum_{S: RS \in \A} |h_S' - h_S| \ll  XY (\log X)^{-A},
\end{align}for any $A$. 
We note that by definition $\left|h_S' -h_S \right| \neq 0$ only when there is some ideal $T_i = P_i^k$ for some prime ideal $P_i$ with $k\ge 1$ such that $T_i|S$ and 
$$N(T_i) \in [X^{m_i \xi}(1-\mathcal{\iota}),X^{m_i \xi}[ \;\; \bigcup \;\; ]X^{(m_i+1)\xi}, X^{(m_i+1)\xi}(1+\mathfrak \iota)],$$ 
for some $1\le i\le n+1$.  For less cumbersome notation, we fix $i$, write $T = T_i = P^k$, and without loss of generality assume $N(T) \in \;]X^{(m_i+1)\xi}, X^{(m_i+1)\xi}(1+\mathfrak \iota)]$.

There are $n+1 \asymp 1$ choices for $i$, and so the left side of \eqref{eqn:dbhberror2} is 

\begin{align*}
&\ll \sum_{1\le k\ll \log X} \sum_{N(P^k) \in ]X^{(m_i+1)\xi}, X^{(m_i+1)\xi}(1+\mathfrak \iota)]} \;\sum_{\substack{P^k|S \\ RS \in \A}} 1 \\
&\ll \log X \sum_{N(T) \in ]X^{(m_i+1)\xi}, X^{(m_i+1)\xi}(1+\mathfrak \iota)]}  \frac{XY}{N(T)} +X^{3/2(1-\delta)} \log^c X,
\end{align*} for some $c>0$ by Lemma \ref{lem:AtypeIprimepower} again.

The final quantity above is bounded by
$$\iota XY \log X  \ll \frac{XY}{\log^A X},
$$for any $A>0$ by \eqref{eqn:iotaproperty}.  

The Lemma follows from \eqref{eqn:dbhberror1} and \eqref{eqn:dbhberror2}.
\end{proof}

Thus, it suffices to show that 
$$\sum_R c_R \sum_{S: RS \in \A} h_S  - \nu  \sum_R c_R \sum_{S: RS \in \B} h_S \ll \frac{XY}{\log^A X}$$ for any $A>0$.
In order to avoid dealing with bilinear sums involving both sequences at the same time, we extract a main term from $h_S$.  To be precise, let 
\begin{equation}
e_S = \frac{w'(N(S))}{\prod_{i=1}^{n+1} (m_i \xi \log X)} \sum_{J|S: N(J) < L} \mu(J)\log \frac{L}{N(J)},
\end{equation}where recall $L = X^{\delta/2}$ and 
$$w(t) = \int_{\substack{x\in \mathbb{R}^{n+1}\\  \prod x_i \le t\\ x_i \in J(m_i) }} 1 \; dx_1...dx_{n+1}.
$$  We set $f_S = h_S - e_S$.  Here, $e_S$ is constructed to behave in the same way as $d_S$ and $h_S$ in arithmetic progressions.  To be precise, we have Lemma 3.8 from \cite{HB} below.

\begin{lem}\label{lem:fbetaAP}
Let $V \ge 1$.  Let $C \subset \mathbb R^3$ be a cube of side $S_0 \geq L^2$ and edges parallel to the coordinate axes.  Suppose that for every vector $(x, y, z)\in C$ we have $x, y, z \ll V^{1/3}$ and 
$$x^3 + 2y^3 + 4z^3 - 6xyz \gg V.
$$For each $\beta = a+ b\sqrt[3]{2}+c\sqrt[3]{4} \in K$, let $\hat \beta = (a, b, c)$.  Then for any constant $A_0>0$ and any integer $\alpha \in \mathbb{Z}[\sqrt[3]{2}]$ we have that there exists a constant $c_0>0$ such that
\begin{equation}
\sum_{\substack{\beta \equiv \alpha \bmod q\\ \hat \beta \in C}} f_{(\beta)} \ll V\exp\left(-c_0 \sqrt{\log L}\right), 
\end{equation}uniformly for $q \leq (\log X)^{A_0}$.
\end{lem}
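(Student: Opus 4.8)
The plan is to follow the proof of Lemma~3.8 of \cite{HB}, which this lemma essentially reproduces; below I describe its structure and the (cosmetic) modifications needed. Write
\[
\Sigma=\sum_{\substack{\beta\equiv\alpha\bmod q\\ \hat\beta\in C}}f_{(\beta)}=\Sigma_h-\Sigma_e
\]
according to the splitting $f_S=h_S-e_S$. Both $h_S$ and $e_S$ are supported on ideals $S=T_1\cdots T_{n+1}$ with each $T_i$ a prime power of norm in $[X^{m_i\xi}(1-\iota),X^{(m_i+1)\xi}(1+\iota)]$, so by \eqref{eqn:mcond} every prime ideal dividing such an $S$ has norm $\gg X^{\delta/2}$. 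First I would show that the contribution of $S$ with $P^2\mid S$ for some prime ideal $P$ is $\ll_A V(\log X)^{-A}$, exactly as in the proof of Lemma~\ref{lem:hbetadbetareplace}, using Lemma~\ref{lem:AtypeIprimepower} (reformulated so as to count algebraic integers $\beta$ with $\hat\beta$ in a box) to control the number of $\beta\in C$ divisible by a fixed $P^2$; thus one may restrict to squarefree $S=P_1\cdots P_{n+1}$ with distinct prime ideals $P_i$.

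Next I would remove the archimedean geometry. For squarefree $S=(\gamma)$ the generators of $S$ are the $u\gamma$, $u\in\cO_K^\times$, and since $|\epsilon_0|>3$ only $O(1)$ associates can have $\hat\beta$ in a fixed cube on which $N(\,\cdot\,)\asymp V$; these are picked out by a smooth archimedean weight obtained by Mellin inversion, the replacement of the sharp cube $C$ by its smoothing introducing an error acceptable by \eqref{eqn:iotaproperty} and handled as in \cite{HB}. The congruence $\beta\equiv\alpha\bmod q$ is detected via the $\ll\#(\cO_K/q\cO_K)^{\times}\ll q^{3}\ll(\log X)^{3A_0}$ characters of $(\cO_K/q\cO_K)^{\times}$ (the case $(\alpha,q)\ne 1$ being treated separately); after separating the dependence of a character on units from its dependence on the underlying ideal, $\Sigma_h$ reduces to a bounded-length combination of sums of the shape
\[
\sum_{\substack{(\beta)=P_1\cdots P_{n+1}\\ \hat\beta\in C}}\chi\bigl((\beta)\bigr)\prod_{i=1}^{n+1}\frac{\Lambda_K(P_i)\,\mathfrak{W}_i(N(P_i))}{m_i\xi\log X},
\]
where $\chi$ now runs over Hecke characters of conductor dividing $q\cO_K$, together with a bounded set of archimedean frequencies coming from the Mellin inversion.

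The analytic core is then the standard one. Mellin inverting in each norm variable (the Mellin transforms of the $\mathfrak{W}_i$ decaying rapidly by \eqref{eqn:frakWbdd}) expresses $\Sigma_h$ as a bounded number of multiple contour integrals built from $-\tfrac{L'}{L}(s,\chi)$ for the Hecke $L$-functions $L(s,\chi)$: for $\chi$ principal these have a simple pole at $s=1$ coming from $\zeta_K$, producing a main term, while for $\chi$ non-principal there is no pole. The term $e_S$ is engineered — this is the role of the truncated sum $\sum_{J\mid S,\,N(J)<L}\mu(J)\log\frac{L}{N(J)}$ together with the smooth volume weight $w'$ — precisely so that $\Sigma_e$ equals this main term of $\Sigma_h$ up to an error already of the required quality; hence $\Sigma=\Sigma_h-\Sigma_e$ is left only with the contributions governed by the zero-free region. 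Using the classical zero-free region $\mathrm{Re}\,s>1-c/\log\!\bigl(q(|\mathrm{Im}\,s|+2)\bigr)$ for $L(s,\chi)$, available for all $q\le(\log X)^{A_0}$ with the potential exceptional zero kept away from $1$ by Siegel's theorem (making $c_0=c_0(A_0)$ ineffective), together with the standard bounds for $L'/L$ there, and shifting contours, one arrives at $\Sigma\ll V\exp(-c_0\sqrt{\log L})$. Here the hypothesis $N(x+y\sqrt[3]{2}+z\sqrt[3]{4})=x^3+2y^3+4z^3-6xyz\gg V$ on $C$ is what guarantees that the archimedean integral never meets the region where the norm is small, so that the whole expression is genuinely of size $\ll V$ before the saving is applied, while the hypothesis $S_0\ge L^2$ is what makes the counting of $\beta\in C$ in prescribed residue classes to moduli up to $L$ sufficiently accurate; since $\delta\asymp 1$ we have $\log L\asymp\log X$, which yields the stated bound.

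The one genuinely delicate point is the verification that $\Sigma_e$ reproduces the main term of $\Sigma_h$ to the required precision, uniformly over the admissible characters $\chi$ and archimedean parameters — i.e.\ that the truncated M\"obius sum defining $e_S$ captures the polar part of $-\tfrac{L'}{L}(s,\chi)$ at $s=1$, secondary terms included. This is exactly the computation carried out in \cite{HB}; the smooth weights $\mathfrak{W}_i$ in place of sharp cutoffs and our choice of $\delta$ as a fixed constant only simplify the bookkeeping.
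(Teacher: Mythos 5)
Your proposal follows the same route as the paper, which simply cites Heath-Brown's Lemma 3.8 of \cite{HB} (proved there with $f_S = d_S - e_S$) and asserts that the proof carries over verbatim to $f_S = h_S - e_S$; your sketch of the character decomposition, the Mellin inversion against $-L'/L$, the matching of $\Sigma_e$ against the polar part of $\Sigma_h$, and the use of the zero-free region accurately reflects that argument. One small misstatement: $e_S$ is supported merely on ideals $S$ with $N(S)$ lying in the range where $w'(N(S))\neq 0$ (there is no factorization constraint of the form $S=T_1\cdots T_{n+1}$ with each $T_i$ a prime power), but since you invoke that factorization only in reducing $\Sigma_h$ to squarefree $S$, the rest of your argument is unaffected.
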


Actually Heath-Brown proved Lemma \ref{lem:fbetaAP} for $f_S = d_S - e_S$, but the proof for our case follows the same way.  Note that Lemma \ref{lem:fbetaAP} is only meaningful when the side of the cube $S_0$ is close to $V^{1/3}$.  Later on, we shall prove a similar result for much smaller cubes in Lemma \ref{lem:SWbdd}.

Writing $h_S = e_S + f_S$, it is relatively straightforward to handle the contribution of $e_S$.  The reader should note that $e_S$ involves a very short sum of Mobius functions, and this quantity may be understood by standard methods.  Specifically, by a small modification of Lemma 3.9 in \cite{HB} (again, the modification is to replace $X^2$ by $XY$ in the upper bounds in Lemma 3.9 and to verify that our values of $\delta$ and $L$ do not cause any issues), the contribution of $e_S$ to the quantity $U(\A) - \nu U(\B)$ is
$$\ll \eta^{1/2} (\log X)^c \frac{\eta^2 XY}{\log X},
$$  for some constant absolute $c>0$, and this is negligible upon taking $B_0 \ge 2c+1$ say.  We recall for clarity that $\eta = (\log X)^{-B_0}$ and that the only condition that we need $B_0$ to satisfy is $B_0 \ge \max(1, 2c+1) = 2c+1$.

Now it suffices to consider the contribution of $f_S$.  In particular, it suffices to prove the following Proposition.

\begin{prop} \label{prop:bilinear2}
Suppose that $f_S$ is defined as above.  Then for any $A>0$, 
\begin{equation}\label{eqn:bilinear1}
\sum_{R} c_R \sum_{\substack{V < S\le 2V \\ RS \in \A}} f_S \ll_A \frac{XY}{\log^A X},
\end{equation}for $X^{1+\tau}\ll V \ll X^{3/2(1-\tau)}$, $\tau$ as in \eqref{eqn:tau0} and where $c_R$ satisfies $c_R \ll 1$.  The implied constant depends only on $A$.
\end{prop}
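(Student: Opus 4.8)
The plan is to follow the Type~II strategy of Heath-Brown \cite{HB}, modified to accommodate the narrow region forced by $Y=X^{1-\gamma}$. Since $K$ has class number one and, by the discussion in \S\ref{sec:notationoutline}, distinct elements $x+y\sqrt[3]{2}$ of the box defining $\A$ are non-associate, the condition $RS\in\A$ can be rewritten as $\rho\sigma=u\alpha$, where $\rho$ generates $R$, $\sigma$ generates $S$ (both chosen in fixed fundamental domains), $u$ is a unit ranging over $O(1)$ possibilities, and $\alpha=x+y\sqrt[3]{2}$ lies in the box. Thus \eqref{eqn:bilinear1} reduces to bounding $\sum_\rho c_{(\rho)}\sum_\sigma f_{(\sigma)}$, where $c_{(\rho)}\ll1$ is supported on $N(\rho)\asymp X^3/V\ll X^{2-\tau}$, $f_{(\sigma)}$ is as before with $N(\sigma)\asymp V$, and $(\rho,\sigma)$ is subject to $\rho\sigma\in u\cdot(\text{box})$. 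Writing $\hat{\gamma}=(a,b,c)$ for $\gamma=a+b\sqrt[3]{2}+c\sqrt[3]{4}$, the vanishing of the $\sqrt[3]{4}$-coordinate of $\rho\sigma$ is, for fixed $\rho$, a single linear equation on $\hat\sigma$; it confines $\hat\sigma$ to a rank-two sublattice $\Lambda_\rho\subset\mathbb{Z}^3$, within which the conditions $x\in\;]X,X(1+\eta)]$, $y\in\;]Y,Y(1+\eta)]$ cut out a thin parallelogram, thin precisely because $Y$ is small. This is the ``congruence in a narrow region'' flagged in the introduction.

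I would then strip off the arbitrary coefficients $c_{(\rho)}$ by Cauchy--Schwarz in $\rho$:
$$\Big|\sum_\rho c_{(\rho)}\sum_\sigma f_{(\sigma)}\Big|^2\ll\#\{R:N(R)\asymp X^3/V\}\sum_R\Big|\sum_{S:\,RS\in\A}f_S\Big|^2\ll X^{2-\tau}(\log X)^{O(1)}\sum_{S_1,S_2}f_{S_1}\overline{f_{S_2}}\,\#\{R:RS_1,RS_2\in\A\},$$
using $V\gg X^{1+\tau}$. The diagonal $S_1=S_2$ contributes $\ll X^{2-\tau}(\log X)^{O(1)}\sum_{\alpha\in\A}\tau((\alpha))^{O(1)}\ll X^{2-\tau}\eta^2XY(\log X)^{O(1)}$, whose square root is $\ll\eta X^{2-(\gamma+\tau)/2}(\log X)^{O(1)}\ll XY(\log X)^{-A}$ with polynomial room to spare, since $\gamma<\tau$ and $\eta=(\log X)^{-B_0}$. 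Everything therefore rests on the off-diagonal $S_1\ne S_2$.

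For the off-diagonal, $RS_1=(\alpha_1)$ and $RS_2=(\alpha_2)$ force $(\alpha_1)S_2=(\alpha_2)S_1$; pulling out $(S_1,S_2)$ and writing $S_i=(\sigma_i)$, this becomes $\alpha_1\sigma_2=u\alpha_2\sigma_1$, so both $\sigma_1\beta$ and $\sigma_2\beta$ lie in unit translates of the box for some $\beta\in\cO_K$. Hence $\#\{R:RS_1,RS_2\in\A\}$ counts $\beta\in\cO_K$ in the intersection of two thin two-dimensional regions $\sigma_1^{-1}(\text{box})$ and $\sigma_2^{-1}(\text{box})$, which is empty unless $\sigma_1/\sigma_2$ is close to a unit. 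I would evaluate this count by Poisson summation, detecting the $\sqrt[3]{4}$-vanishing and the two interval conditions by additive characters: the near-diagonal frequencies produce a main term handled by the Type~I estimate Lemma~\ref{lem:AtypeI}, while the remaining frequencies produce sums of $f_S$ over $\sigma$ in arithmetic progressions to small moduli inside small boxes, to which Lemma~\ref{lem:fbetaAP} applies for the generic pieces and the small-cube refinement Lemma~\ref{lem:SWbdd} for the thin ones. The point of subtracting $e_S$ in $f_S=h_S-e_S$ is exactly to make these $f$-sums exhibit power-of-log cancellation.

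The main obstacle is this last step: controlling the off-diagonal congruence sum in the narrow region, i.e.\ estimating $\sum_{\beta_1\in\C_1,\;\beta_2\in\C_2,\;\beta_1\equiv\lambda\beta_2\bmod D}f(\beta_1)f(\beta_2)$ in the regime $D>S_0$, with $S_0$ the common side of $\C_1,\C_2$ --- routine for rational integers \cite{HBL} but genuinely delicate in $\mathbb{Z}[\sqrt[3]{2}]$. This is where Lemma~\ref{lem:SWbdd}, an analogue of the primes-in-short-intervals estimate on $\big(x,x+x^{7/12+\epsilon}\big)$ proved via Heath-Brown's form of Vaughan's identity, is required, and its quality is what forces $\gamma<5/67$ and dictates the admissible window $X^{1+\tau}\le V\le X^{3/2(1-\tau)}$ --- arranged by the Buchstab decomposition of \S\ref{sec:notationoutline} so that both the Cauchy--Schwarz loss $X^{2-\tau}$ and the narrow-region count stay within budget.
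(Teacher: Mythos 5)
Your overall roadmap --- pass to generators, Cauchy--Schwarz in $R$, bound the diagonal cheaply, recognize the narrow-region congruence $\sum_{\hat\beta_1\in\C_1,\,\hat\beta_2\in\C_2,\,\hat\beta_1\equiv\lambda\hat\beta_2\bmod D}f_{\beta_1}f_{\beta_2}$ with $D>S_0$ as the crux, and invoke Lemma~\ref{lem:SWbdd} --- matches the paper's high-level structure. But the middle of your argument, where all the new content of this paper actually lives, is off in a way that matters.

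The step you phrase as ``evaluate $\#\{R:RS_1,RS_2\in\A\}$ by Poisson summation, detecting the $\sqrt[3]{4}$-vanishing and interval conditions by additive characters'' cannot be carried out as stated. Once $\beta_1\neq\beta_2$, the two homogeneous linear conditions $L_1(\alpha)\cdot\hat\beta_i=0$ ($i=1,2$) confine $\hat\alpha$ to a line, and by primitivity $\hat\alpha=\pm\Delta(\beta_1,\beta_2)^{-1}\gamma_1\times\gamma_2$, with the sign forced by positivity of $L_2(\alpha)\cdot\hat\beta_i$. So $\sumstar_\alpha 1\le 1$: there is no free variable to sum by Poisson. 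Relatedly, your claim that ``the near-diagonal frequencies produce a main term handled by Lemma~\ref{lem:AtypeI}'' does not fit here: the coefficients $f_S=h_S-e_S$ are constructed precisely so that no main term is to be extracted inside this Proposition (the $e_S$ contribution to $U(\A)-\nu U(\B)$ is removed upstream via Lemma~3.9 of \cite{HB}), and Proposition~\ref{prop:bilinear2} is a pure error bound.

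What actually replaces your Poisson step is the geometric analysis of when an admissible $\alpha$ exists at all. This is most of \S 6--8: the bounds $\|\gamma_1\times\gamma_2\|\ll(Y/X)V^{2/3}$ and $\Delta\ll(Y/X)(V/X)$ from \eqref{eqn:L1L2cond2} via Lemma~\ref{lem:deltageom}; the refinement that $\gamma_2$ lies in a rectangular box of sides $V^{1/3}$, $(Y/X)V^{1/3}$ and $\Delta Y/\|\gamma_1\times L_2(\hat\beta_1)\|$ (Lemma~\ref{lem:beta2region}), which recovers the extra factor of $Y/X$ that the crude cylinder bound misses; the excision of the degenerate set where $\gamma_1\times L_2(\hat\beta_1)$ is small (Lemmas~\ref{lem:gamma1region}--\ref{lem:T2discard}); the partition into cubes $\C_1\times\C_2$ of side $S_0=D_0Y/(\N\log^{C_1}X)$ and the separate treatment of cubes meeting the boundary of the region via Lemma~\ref{lem:boundarycounting} (Proposition~\ref{prop:Cf2cubes}); and only then, for interior cubes, the additive-character detection of $dD\mid\Delta(\beta_1,\beta_2)$, the large sieve Lemma~\ref{lem:largesieveexpbdd} for $q>(\log X)^R$, and Lemma~\ref{lem:SWbdd} for $q\le(\log X)^R$. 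The additive characters enter at the divisibility-by-$D$ step, not at a Poisson expansion of the $\alpha$-count, and the constraint ``$(\hat\beta_1,\hat\beta_2)$ lies in a narrow region'' must be handled geometrically before Fourier analysis is useful. Without this machinery your sketch does not close.
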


\section{Sketch of the proof of Proposition \ref{prop:bilinear2}}
Proposition \ref{prop:bilinear2} contains the main new features of this paper.  The proof requires a good amount of work and the details are somewhat intricate, so we provide a sketch of the proof here to give the reader a rough road map for the rest of the paper.  When studying the sum 
$$\sum_{R} c_R \sum_{\substack{V < S\le 2V \\ RS \in \A}} f_S,$$ after some technicalities, we replace $R$ and $S$ by their generators $\alpha$ and $\beta$.  This may be done in a unique way, by demanding that  
\begin{align}
N(\beta)^{1/3} \epsilon_0^{-1/2} < \beta \le N(\beta)^{1/3} \epsilon_0^{1/2},
\end{align} where $\epsilon_0 = 1 + \sqrt[3]{2} + \sqrt[3]{4}$ is a fundamental unit of $K$. 

Using a small abuse of notation and writing $f_\beta = f_{(\beta)}$, an application of Cauchy-Schwarz then reduces Proposition \ref{prop:bilinear2} to proving the upper bound 
$$\sum_{\substack{V < N(\beta_1), N(\beta_2) \le 2V\\ \beta_1 \neq \beta_2}} f_{\beta_1}f_{\beta_2} \sumstar_{\alpha} 1 \ll \frac{Y^2V}{X(\log X)^C},
$$where $\sumstar_{\alpha}$ is restricted to a sum over $\alpha$ satisfying a number of conditions depending on $\beta_1$ and $\beta_2$.   To be more precise, for any element $o = x+\sqrt[3]{2} y + \sqrt[3]{4} z \in \cO_K$, we write $\hat{o} = (x, y, z)$.  Then let
\begin{align}
L_1(\alpha) &= (c, b, a), \notag \\
L_2(\alpha) &= (b, a, 2c),\textup{ and} \notag \\
L_3(\alpha) &= (a, 2c, 2b).
\end{align}The conditions inherent in $\sumstar_\alpha$ are
\begin{align}\label{eqn:L1L2cond2prevcopy}
L_1(\alpha) \cdot \hat \beta_i &= 0, \notag \\
L_2(\alpha) \cdot \hat \beta_i &= y_i\in (Y, Y(1+\eta)], \notag \\
L_3(\alpha) \cdot \hat \beta_i &= x_i\in (X, X(1+\eta)], \\
\end{align}for $i=1, 2$ and where $\epsilon_0 = 1 + \sqrt[3]{2} + \sqrt[3]{4}$ is a fundamental unit as before.  These are inherited from the condition that $\alpha \beta_i = x_i + \sqrt[3]{2} y_i + \sqrt[3]{4}\cdot 0 \in \A$.

The first line of \eqref{eqn:L1L2cond2prevcopy} implies that if $\alpha$ exists, then it is essentially uniquely determined by $\beta_1$ and $\beta_2$.  To be precise, letting $\gamma_i = (w_i, v_i, u_i) = L_1(\hat \beta_i)$, some work quickly yields that for $\gamma_1 \neq \gamma_2$,
\begin{equation}\label{eqn:alphag1timesg2prevcopy}
\hat \alpha = \pm \Delta(\beta_1, \beta_2)^{-1} \gamma_1 \times \gamma_2,
\end{equation}where $\times$ denotes the usual cross product, and $\Delta(\beta_1, \beta_2)$ is the greatest common divisor of the coordinates of $\gamma_1\times \gamma_2$.

Actually, for most $\beta_1$ and $\beta_2$, there is no $\alpha$ satisfying all the conditions in \eqref{eqn:L1L2cond2prevcopy}.  In fact the conditions above imply that $(\hat \beta_1, \hat \beta_2)$ must reside in a narrow region and the first task is to determine this region.  Some analysis using the second line of \eqref{eqn:L1L2cond2prevcopy} gives that
\begin{equation}\label{eqn:beta1beta2prevcopy}
\|\gamma_1 \times \gamma_2\| \ll \frac{Y}{X} V^{2/3},
\end{equation}and
\begin{equation}\label{eqn:Dbddprecopy}
\Delta(\beta_1, \beta_2) \ll \frac{Y}{X}\frac{V}{X}.
\end{equation}
The proof of this is completed in Lemma \ref{lem:deltageom} and the lines that immediately follow.  It is then immediate that for fixed $\gamma_1$, $\gamma_2$ is restricted to be in a cylinder of height $V^{1/3}$ and radius bounded by $\frac{Y}{X} V^{1/3}$.  Some more careful analysis yields a stronger result in Lemma \ref{lem:beta2region}.  In the generic case where $\|\gamma_1 \times L_2(\hat \beta_1)\| \asymp V^{2/3}$, Lemma \ref{lem:beta2region} implies that for fixed $\beta_1$, $\hat \beta_2$ is in a rectangular prism of dimensions $V^{1/3}, \frac{Y}{X} V^{1/3}$ and $\bfrac{Y}{X}^2 V^{1/3}$, which saves us an additional factor of $\frac{Y}{X}$.  Here, we are neglecting a significant technicality, which is that one must also estimate what occurs when $\|\gamma_1 \times L_2(\hat \beta_1)\|$ is smaller than the usual $V^{2/3}$ - in this case, there is a less strong restriction on $\beta_2$, but the region for $\beta_1$ is forced to be smaller.

Using this, and some standard estimates, it is then possible to show that
$$ 
\sum_{\substack{V < N(\beta_1), N(\beta_2) \le 2V\\ \beta_1 \neq \beta_2}} f_{\beta_1}f_{\beta_2} \sumstar_{\alpha} 1 \ll \frac{Y^2V}{X} (\log X)^a,
$$for some $a>0$.  This is insufficient for our purposes, being off by a factor of a large power of $(\log X)$.  However, so far, we have used no information about $f_{\beta_i}$ (aside from its size) and this bound represents a rough count of the number of summands.  

The analysis above implies that we should only consider pairs $\beta_1, \beta_2$ with $(\hat \beta_1, \hat \beta_2) \in \mathfrak R$ for some narrow region $\mathfrak R \in \mathbb{R}^6$.  By standard arguments, the condition $D = \Delta(\beta_1, \beta_2)$ may be replaced by a condition like $D|\gamma_1\times\gamma_2$, which in turn may be rewritten as 
$$\hat \beta_1 \equiv \lambda \hat \beta_2  \bmod D,
$$for some $\lambda \bmod D$.  Morally, we now need to study a sum like
$$\sum_{D \ll \frac{Y}{X}\frac{V}{X}}  \sum_{\lambda \bmod D} \sum_{\substack{(\hat \beta_1, \hat \beta_2) \in \mathfrak R \\ \hat \beta_1 \equiv \lambda \hat \beta_2 \bmod D}} f_{\beta_1}f_{\beta_2}.
$$

The local conditions $(\hat \beta_1, \hat \beta_2) \in \mathfrak R$ and $\hat \beta_1 \equiv \lambda \hat \beta_2 \bmod D$ are examined separately.  First, we restrict $\beta_i \in \C_i$, for small cubes $\C_i$.  The purpose is to replace the conditions \eqref{eqn:L1L2cond2prevcopy} on $\beta_i$ by conditions on $\C_i$ instead.  This begins in \S \ref{sec:reductiontocubes}, where the proof is reduced to the proof of Propositions \ref{prop:Cf1cubes} and \ref{prop:Cf2cubes}.

Very roughly speaking, these two Propositions deal with the cases when $\C_1 \times \C_2 \subset \mathfrak R$ and when $\C_1 \times \C_2 \not \subset \mathfrak R$.  The case $\C_1 \times \C_2 \not \subset \mathfrak R$ is conceptually easier.  Here, of course, we assume $\C_1 \times \C_2 \cap \mathfrak R \neq \varnothing$, so $\C_1\times \C_2$ intersects the boundary of $\mathfrak R$.  The main idea behind this estimate is to show that there are relatively few $\C_1 \times \C_2$ which intersect the boundary of a "nice" region like $\mathfrak R$ compared to the number of $\C_1 \times \C_2$ completely contained within the interior of $\mathfrak R$.  This is accomplished in \S \ref{sec:Cf2cubes}, the main input of which is Lemma \ref{lem:boundarycounting}.

In the case when $\C_1 \times \C_2 \subset \mathfrak R$, we fix $\C_1, \C_2$ and replace all the conditions on $\beta_i$ by $\beta_i \in \C_i$, and are left to study sums of the form
$$\sum_{D \ll \frac{Y}{X}\frac{V}{X}}  \sum_{\lambda \bmod D} \sum_{\substack{\hat \beta_i \in \C_i \\ \hat \beta_1 \equiv \lambda \hat \beta_2 \bmod D}} f_{\beta_1}f_{\beta_2}.
$$
In \S \ref{sec:Cf1cubes}, we express the congruence condition $\hat \beta_1 \equiv \lambda \hat \beta_2 \bmod D$ using additive characters $\bmod D$.  The reader will not be surprised to learn that the proof naturally splits into two cases.  The case when the modulus is not too small may be handled by the large sieve type estimate in Lemma \ref{lem:largesieveexpbdd}.  The small modulus case needs a Siegel Walfisz type estimate as in Lemma \ref{lem:SWbdd}.  

The Siegel Walfisz type estimate we need is unavailable in the literature, and so the rest of the paper is devoted to establishing this estimate for our coefficients $f_\beta$.  This begins in \S \ref{sec:reductiontogrossencharacters}.  First, the condition $\hat \beta \in \C$ is expressed using Hecke Grossencharacters.  The main term is then extracted as in Proposition \ref{prop:Mcomparison}, so that the result is reduced to a bound on a sum over Grossencharacters as in Proposition \ref{prop:errorsmallcube}.

The proof of Proposition \ref{prop:errorsmallcube} is completed in \S \ref{sec:errorsmallcube}.  Our Lemma \ref{lem:SWbdd} is essentially as good as could be reasonably expected, being the analogue of the result about primes in short intervals of the form $(x, x+x^{7/12 +\epsilon})$ due to Huxley \cite{Hux}.  This requires a bit of extra care.

In \S \ref{subsec:largesieve}, we state (and briefly prove) the analogues of now standard results in the case of the zeta function over $\mathbb{Q}$, namely a large sieve bound due to Duke \cite{Du} in Lemma \ref{lem:dukelargesieve} and some large value estimates analogous to those of Montgomery \cite{Mont} and Huxley \cite{Hux} in Lemmas \ref{lem:mont} and \ref{lem:huxmont} respectively.  

Since we are dealing with a sequence that could possibly be supported on a product of many prime ideals, we avoid going through the zero density route, and instead apply a combinatorial decomposition of $\Lambda_K$ known as Heath-Brown's identity \cite{HBidentity}.  Unfortunately, there is a lengthy separation of variables in \S \ref{subsec:decomp}.  On a first reading, the reader can safely pretend that any smooth function with sufficiently small derivatives may be ignored with negligible error (via Mellin inversion).  

After reduction to a discrete set as \S \ref{subsec:reductiontodiscrete}, it then suffices to examine a product of Dirichlet polynomials.   For a Dirichlet polynomial $F$ of length $\fw$, there are two cases.  The first is that the coefficients of $F$ involve an analogue of the Mobius function $\mu_K$.  In this case, the application of Heath-Brown's identity insures that $\fw$ is not too large.  To be precise, we construct our decomposition to force $\fw \le T_0^{9/5}$.

The reader should recall that in applications of the large sieve, optimal results occur when the length of the sum is of comparable size to the number of harmonics.  Our family of Grossencharacters along with harmonics of the form $N(I)^{it}$ forms a family of around size $T_0^3$ harmonics.  Thus, we apply our large values estimates to $F^g$ where 
$$T_0^{12/5} \le \frak w^g \le T_0^{18/5}
$$to gain good results in \S \ref{subsec:largevalueapplication} (noting that then the length of $F^g$ is close to $T_0^3$).

When $\fw > T_0^{9/5}$, the coefficients of $F$ are friendlier, involving only smooth functions.  Here, one uses a good bound on the fourth moment of certain $L$-functions to gain some additional advantage, which is completed in \S \ref{subsubsec:T0big}.  Finally, estimates which essentially follow from an improved zero free region due to Coleman \cite{Cole} are compiled in \S \ref{subsec:zerofree}.

\section{Proof of Proposition \ref{prop:bilinear2}: determining the narrow region}
The proof of Proposition \ref{prop:bilinear2} is the main part of our treatment of the bilinear sum and is the most original part of this work.  For any $\alpha = a + b\sqrt[3]{2} + c\sqrt[3]{4}$, we let $\hat \alpha = (a, b, c)$.  We call $\alpha$, the ideal $(\alpha)$ and the vector $\hat \alpha$ primitive if $\alpha$ is not divisible by an integer prime.  Note that elements of $\A$ are by definition primitive, and as a consequence both $R$ and $S$ are primitive for $RS \in \A$.  We thus assume that $c_R$ and $f_S$ are supported on primitive ideals for the rest of this discussion.  Having done that, we first proceed to remove the condition that elements of $\A$ are primitive.  For notational convenience, let
\begin{equation}\label{eqn:Ahat}
\hat \A = \{(x+y\sqrt[3]{2}) : x \in ]X, X(1+\eta)], y\in ]Y, Y(1+\eta)]\}.
\end{equation}Then
\begin{align*}
\sum_{R} c_R \sum_{\substack{V < S\le 2V \\ RS \in \A}} f_S
&= \sum_{(x+y\sqrt[3]{2}) \in \hat A} \sum_{d|(x, y)} \mu(d) \sum_{R}  \sum_{\substack{V < S\le 2V \\ RS =  (x+y\sqrt[3]{2})}} c_Rf_S\\
&= \sum_{(x+y\sqrt[3]{2}) \in \hat A} \sum_{R}  \sum_{\substack{V < S\le 2V \\ RS = (x+y\sqrt[3]{2})}} c_Rf_S + \E_0,
\end{align*}where $\E_0$ is the contribution of those $d$ with $d>1$. Since $(d)|RS$ and both $R$ and $S$ are free of prime ideal factors smaller than $X^{\delta}$, we must have that there is a prime ideal $P$ with $N(P) > X^{\delta}$ such that $P|(d)$.  Thus we must have $d> X^{\delta/3}$.  Thus, using the trivial bound $f_S \ll \tau(S) \log X$,
\begin{align*}
\E_0 &\ll \log X \sum_{X^{\delta/3} < d \ll X} \sum_{\substack{(x + y\sqrt[3]{2}) \in \hat A\\ d|(x, y)}} \sum_{S|(x + y\sqrt[3]{2})} \tau(S)\\
&\ll \log X \sum_{X^{\delta/3} < d \ll X} \sum_{\substack{(x + y\sqrt[3]{2}) \in \hat A\\ d|(x, y)}} \tau(x + y\sqrt[3]{2})^2\\
&\ll \log X \sum_{X^{\delta/3} < d \ll X} \tau(d)^2 \sum_{x \ll X/d, y\ll Y/d} \tau(x + y\sqrt[3]{2})^2\\
&\ll XY \log^{C_1} X \sum_{X^{\delta/3} < d \ll X} \frac{\tau(d)^2}{d^2} \\
&\ll X^{1-\delta/3}Y\log^{C_2} X,
\end{align*}for some constants $C_1, C_2>0$ where we have handled the divisor sums using Lemmas 4.7 and 4.2 of \cite{HB}.  This bound is acceptable for Proposition 3.

Thus it remains to study
\begin{equation}
\T_1 := \sum_{R} c_R \sum_{\substack{V < S\le 2V \\ RS \in \hat \A}} f_S .
\end{equation}
It is now convenient to replace $R$ and $S$ by their generators $\alpha$ and $\beta$, say, and write, by an abuse of notation $f_\beta = f_{(\beta)}$.  In doing so, we choose the associate $\beta>0$ satisfying 
\begin{align}\label{eqn:betadef}
N(\beta)^{1/3} \epsilon_0^{-1/2} < \beta \le N(\beta)^{1/3} \epsilon_0^{1/2},
\end{align} where recall $\epsilon_0 = 1 + \sqrt[3]{2} + \sqrt[3]{4}$ is a fundamental unit of $K$.  Note that all positive generators of $(\beta)$ must be of the form $\epsilon_0^n \beta$ for some $n\in \mathbb{Z}$, which all have the same norm.  It is clear that a $\beta$ satisfying  \eqref{eqn:betadef} exists, since multiplying any positive generator by an appropriate power of $\epsilon_0$ gives a generator satisfying \eqref{eqn:betadef}.  Moreover, \eqref{eqn:betadef} uniquely fixes our choice of $\beta$.  Note that fixing $x, y$ and $\beta$ determines the choice of $\alpha$ when we demand that $\alpha \beta = x+y\sqrt[3]{2}$.  Thus, we may write
\begin{equation}\label{eqn:T1}
\T_1 := \sum_{\alpha} c_{\alpha} \sum_{\substack{V < N(\beta)\le 2V \\ (\alpha\beta) \in \hat \A}} f_\beta,
\end{equation}for $\beta$ satisfying \eqref{eqn:betadef}.

It will be convenient later to have control over the components of $\alpha$ and $\beta$ as well, for which we record the following Lemma.

\begin{lem}\label{lem:abcomponents}
	Suppose $a, b, c\in \mathbb{Z}$ and $\lambda = a + b2^{1/3} + c4^{1/3} \asymp Z^{1/3}$ and $N(\lambda) \asymp Z$.  Then $a, b, c \ll Z^{1/3}$
\end{lem}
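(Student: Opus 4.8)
The plan is to recover $a,b,c$ from the three archimedean embeddings of $K=\mathbb{Q}(\sqrt[3]{2})$, each of which will be shown to have size $\asymp Z^{1/3}$. Let $\sigma_1,\sigma_2,\sigma_3$ be the embeddings of $K$ into $\mathbb{C}$, with $\sigma_1$ the real one, $\sigma_1(\sqrt[3]{2})=\sqrt[3]{2}$, and $\sigma_2,\sigma_3$ the complex-conjugate pair with $\sigma_2(\sqrt[3]{2})=\omega\sqrt[3]{2}$, $\sigma_3(\sqrt[3]{2})=\omega^2\sqrt[3]{2}$, where $\omega=e^{2\pi\mathi/3}$; thus $\sigma_3=\overline{\sigma_2}$. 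Then $\sigma_1(\lambda)=\lambda$, so $|\sigma_1(\lambda)|\asymp Z^{1/3}$ by hypothesis, and $N(\lambda)=\sigma_1(\lambda)\sigma_2(\lambda)\sigma_3(\lambda)$, whence $|N(\lambda)|=|\sigma_1(\lambda)|\,|\sigma_2(\lambda)|^2$ since $|\sigma_3(\lambda)|=|\sigma_2(\lambda)|$.

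Next I would extract the size of the complex conjugates: from $|N(\lambda)|\asymp Z$ and $|\sigma_1(\lambda)|\asymp Z^{1/3}$ we get $|\sigma_2(\lambda)|^2\asymp Z^{2/3}$, hence $|\sigma_2(\lambda)|=|\sigma_3(\lambda)|\asymp Z^{1/3}$. So all three conjugates of $\lambda$ have absolute value $\ll Z^{1/3}$ (in fact $\asymp Z^{1/3}$, though only the upper bound is needed).

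Finally, the relations $\sigma_j(\lambda)=a+b\,\sigma_j(\sqrt[3]{2})+c\,\sigma_j(\sqrt[3]{4})$ for $j=1,2,3$ form the linear system $M(a,b,c)^{\mathsf T}=(\sigma_1(\lambda),\sigma_2(\lambda),\sigma_3(\lambda))^{\mathsf T}$, where $M$ is the fixed $3\times 3$ matrix with rows $(1,\sigma_j(\sqrt[3]{2}),\sigma_j(\sqrt[3]{4}))$. This $M$ is a Vandermonde-type matrix whose determinant squared is the discriminant of the basis $1,\sqrt[3]{2},\sqrt[3]{4}$ (equal to $-108$ for $\mathbb{Z}[\sqrt[3]{2}]$), in particular a nonzero absolute constant, so $M^{-1}$ has absolute-constant entries. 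Solving, $(a,b,c)^{\mathsf T}=M^{-1}(\sigma_1(\lambda),\sigma_2(\lambda),\sigma_3(\lambda))^{\mathsf T}$, and since each $|\sigma_j(\lambda)|\ll Z^{1/3}$ we conclude $a,b,c\ll Z^{1/3}$, as claimed.

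There is no substantial obstacle here; the statement is essentially a bookkeeping fact about bases of $K$. The only point needing a moment's care is the passage from $|N(\lambda)|$ to the size of the complex embeddings, which is handled by the norm identity $|N(\lambda)|=|\sigma_1(\lambda)|\,|\sigma_2(\lambda)|^2$ above, together with the observation that $M$ is invertible with constant-size inverse so that all implied constants remain absolute.
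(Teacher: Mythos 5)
Your argument is correct and is essentially identical to the paper's: both use the norm identity $N(\lambda)=\lambda\lambda'\lambda''$ with $|\lambda'|=|\lambda''|$ to deduce that all three conjugates are $\asymp Z^{1/3}$, and then invert the fixed linear system relating $(a,b,c)$ to the conjugates. The extra detail you supply (the Vandermonde/discriminant observation) is just an unpacking of the paper's closing sentence "Solving for $a,b,c$ in terms of $\lambda,\lambda',\lambda''$ completes the proof."
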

\begin{proof}
	We may write
	$$Z \asymp N(\lambda) = \lambda \lambda' \lambda''
	$$where for $\xi$ a primitive third root of unity, $\lambda' = a + b \xi 2^{1/3} + c\xi^2 4^{1/3}$ and $\lambda''= a + b\xi^2 2^{1/3} + c \xi 4^{1/3}$ are conjugates of $\lambda$.  Note that $|\lambda'| = |\lambda''|$ (indeed, they are complex conjugates), so $|\lambda| \asymp |\lambda'| \asymp |\lambda''| \asymp Z^{1/3}$.  Solving for $a, b$ and $c$ in terms of $\lambda, \lambda'$ and $\lambda''$ completes the proof.
\end{proof}
Now, let
\begin{equation}\label{eqn:T2}
\T_2 = \sum_{\substack{V < N(\beta_1), N(\beta_2) \le 2V\\ \beta_1 \neq \beta_2}} f_{\beta_1}f_{\beta_2} \sumstar_{\alpha} 1,
\end{equation}where $\sumstar_\alpha$ denotes a sum over algebraic integers $\alpha$ satisfying $\alpha \beta_i = x_i + y_i \sqrt[3]{2}$ for some $x_i \in ]X, X(1+\eta)]$ and $y_i \in ]Y, Y(1+\eta)]$, and the sum over $\beta_i$ satisfies \eqref{eqn:betadef}.  We first apply Cauchy - Scharwz to the bilinear sum $\T_1$ to see that 
\begin{equation}
\T_1 \ll \bfrac{X^3}{V}^{1/2} \left( \T_2 + O(XY (\log X)^c)\right)^{1/2},
\end{equation}for some $c>0$ where the $O(XY(\log X)^c)$ term arises from the diagonal term
$$\sum_{V < N(\beta)\le 2V} |f_{\beta}|^2 \sumstar_{\alpha} 1 \ll XY(\log X)^c,
$$again with similar restrictions on $\alpha$ and $\beta$ as in \eqref{eqn:T2}.  Here, we have used that $|f_\beta| \leq |e_\beta| + |h_\beta| \ll \tau(\beta)\log X$, keeping in mind that $n\ll \frac{1}{\delta} \asymp 1$.  

This gives an acceptable total contribution of $\frac{X^2\sqrt{Y}}{V^{1/2}}(\log X)^c \ll XY X^{\frac{\gamma - \tau}{2}}(\log X)^c \ll XY (\log X)^{-C}$ for any $C>0$ recalling that $Y = X^{1-\gamma}$ and $V \ge X^{1+\tau}$ for fixed $\gamma$ and $\tau$ with $\gamma < \tau$.  

It thus suffices to show that
\begin{equation}\label{eqn:T2bdd}
\T_2 \ll \frac{Y^2V}{X(\log X)^C},
\end{equation}
for any $C>0$.

We now let
\begin{equation}
\alpha = a + b\sqrt[3]{2} + c \sqrt[3]{4},
\end{equation}and
\begin{equation}
\beta_i = u_i + v_i \sqrt[3]{2} + w_i \sqrt[3]{4}.
\end{equation}Recall the notation $\hat \alpha = (a, b, c) \in \mathbb{Z}^3$ and similarly for $\hat \beta_i$.  For notational convenience, we let
\begin{align}\label{eqn:L1L2cond}
L_1(\alpha) &= (c, b, a), \notag \\
L_2(\alpha) &= (b, a, 2c),\textup{ and} \notag \\
L_3(\alpha) &= (a, 2c, 2b).
\end{align}We have the conditions
\begin{align}\label{eqn:L1L2cond2}
L_1(\alpha) \cdot \hat \beta_i &= 0, \notag \\
L_2(\alpha) \cdot \hat \beta_i &= y_i\in (Y, Y(1+\eta)], \notag \\
L_3(\alpha) \cdot \hat \beta_i &= x_i\in (X, X(1+\eta)], \textup{ and}  \notag \\
N(\beta_i)^{1/3} \epsilon_0^{-1/2} &< \beta_i \le N(\beta_i)^{1/3} \epsilon_0^{1/2},
\end{align}for $i=1, 2$ and where $\epsilon_0 = 1 + \sqrt[3]{2} + \sqrt[3]{4}$ is a fundamental unit as before.  Now, a preliminary estimate shows that for most $\beta_1, \beta_2$, there does not exist $\alpha$ satisfying the above.  In fact, as we will see, the conditions above further imply that $(\hat \beta_1, \hat \beta_2)$ is in a narrow region.  

The first line of \eqref{eqn:L1L2cond2} implies that $aw_i + bv_i + cu_i = 0$ for $i=1, 2$.  These two equations imply that $\hat \alpha$ is on a line, when $\beta_1$ and $\beta_2$ are fixed.  Letting $\gamma_i = (w_i, v_i, u_i) = L_1(\hat \beta_i)$, we see that since $\hat \alpha$ is primitive and $\gamma_1 \neq \gamma_2$,
\begin{equation}\label{eqn:alphag1timesg2}
\hat \alpha = \pm \Delta(\beta_1, \beta_2)^{-1} \gamma_1 \times \gamma_2,
\end{equation}where $\times$ denotes the usual cross product, and $\Delta(\beta_1, \beta_2)$ is the greatest common divisor of the coordinates of $\gamma_1 \times \gamma_2$.  Thus $\alpha$ is determined up to sign by $\beta_1$ and $\beta_2$.  The condition that $L_2(\alpha)\cdot \hat \beta_i > 0$ also shows there can only be one choice for $\alpha$.

Note \eqref{eqn:betadef} and Lemma \ref{lem:abcomponents} implies that
\begin{equation}\label{eqn:betanormbdd}
\|\hat \beta_i\| \asymp V^{1/3},
\end{equation}where $\|\cdot \|$ denotes the usual Euclidean norm.  Moreover, since $N(\alpha) \asymp X^3/V$, Lemma \ref{lem:abcomponents} also implies that
\begin{equation}\label{eqn:alphanormbdd}
\|\hat \alpha\| \asymp \frac{X}{V^{1/3}}.
\end{equation}
Indeed, Lemma \ref{lem:abcomponents} implies that $a, b, c \ll \frac{X}{V^{1/3}}$ for $\hat \alpha = (a, b, c)$ and this implies \eqref{eqn:alphanormbdd} given $N(\alpha) \asymp X^3/V$.
We now note that $L_1(\alpha)$ and $L_2(\alpha)$ cannot point in the same direction.  The following Lemma makes this precise.

\begin{lem}\label{lem:L1L2ind}
With notation as above, we have that
\begin{equation}
\|L_1(\alpha) \times L_2(\alpha)\| \gg \|L_1(\alpha)\|\|L_2(\alpha)\| \gg \frac{X^2}{V^{2/3}}.
\end{equation}
\end{lem}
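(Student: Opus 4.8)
The plan is to recognize the cross product $L_1(\alpha)\times L_2(\alpha)$ as (the coordinate vector of) the complementary factor $N(\alpha)/\alpha$ of $\alpha$, and then to read off its size from \eqref{eqn:alphanormbdd} together with $N(\alpha)\asymp X^3/V$.

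I would first dispose of the second inequality. Since $L_1(\alpha)=(c,b,a)$ is a rearrangement of $\hat\alpha=(a,b,c)$, $\|L_1(\alpha)\|=\|\hat\alpha\|$; and since $L_2(\alpha)=(b,a,2c)$, $\|\hat\alpha\|^2\le\|L_2(\alpha)\|^2=a^2+b^2+4c^2\le 4\|\hat\alpha\|^2$. Hence by \eqref{eqn:alphanormbdd} both $\|L_1(\alpha)\|$ and $\|L_2(\alpha)\|$ are $\asymp X/V^{1/3}$, so $\|L_1(\alpha)\|\,\|L_2(\alpha)\|\asymp X^2/V^{2/3}$; this yields the second bound, and reduces the first to proving $\|L_1(\alpha)\times L_2(\alpha)\|\gg X^2/V^{2/3}$, i.e.\ that $L_1(\alpha)$ and $L_2(\alpha)$ are not nearly (anti)parallel.

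For this, a direct computation of the cross product gives
$$L_1(\alpha)\times L_2(\alpha)=\bigl(2bc-a^2,\ ab-2c^2,\ ac-b^2\bigr)=-\hat\mu,\qquad \mu:=(a^2-2bc)+(2c^2-ab)\sqrt[3]{2}+(b^2-ac)\sqrt[3]{4}\in\cO_K.$$
The key point is that $\mu$ is exactly the complementary factor of $\alpha$: multiplying out (with $(\sqrt[3]{2})^3=2$) one verifies that $\alpha\mu=a^3+2b^3+4c^3-6abc=N(\alpha)$, so $\mu=N(\alpha)/\alpha>0$ (recall $\alpha>0$). For any $\lambda=p+q\sqrt[3]{2}+r\sqrt[3]{4}$ with $p,q,r\in\mathbb R$ one has the trivial bound $|\lambda|\le(1+\sqrt[3]{2}+\sqrt[3]{4})\max(|p|,|q|,|r|)=\epsilon_0\max(|p|,|q|,|r|)$, so the coordinate vector of $\lambda$ has Euclidean norm $\gg|\lambda|$. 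Applying this with $\lambda=\mu$ gives
$$\|L_1(\alpha)\times L_2(\alpha)\|=\|\hat\mu\|\gg\mu=\frac{N(\alpha)}{\alpha}.$$

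Finally I would insert the sizes: by \eqref{eqn:alphanormbdd} and the same trivial bound, $\alpha\le\epsilon_0\|\hat\alpha\|\ll X/V^{1/3}$, while $N(\alpha)\asymp X^3/V$ as recorded above, so
$$\|L_1(\alpha)\times L_2(\alpha)\|\gg\frac{X^3/V}{X/V^{1/3}}=\frac{X^2}{V^{2/3}}\asymp\|L_1(\alpha)\|\,\|L_2(\alpha)\|,$$
which is exactly the assertion. There is no real obstacle here: the only thing to get right is the algebraic identity $\alpha\mu=N(\alpha)$ exhibiting the cross product as the complementary factor of $\alpha$, after which the estimate is a mechanical consequence of the previously established sizes $\|\hat\alpha\|\asymp X/V^{1/3}$ and $N(\alpha)\asymp X^3/V$.
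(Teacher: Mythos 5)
Your proof is correct and is, at its algebraic core, the same as the paper's, just made more explicit. The paper introduces the multiplication-by-$\alpha$ matrix $L_\alpha$ (whose rows are $L_3(\alpha)$, $L_2(\alpha)$, $L_1(\alpha)$), notes that $\det L_\alpha = N(\alpha)$, and applies the Cauchy--Schwarz/volume bound $|\det L_\alpha|\le\|L_3(\alpha)\|\,\|L_1(\alpha)\times L_2(\alpha)\|$ together with $N(\alpha)\asymp X^3/V$ and $\|L_3(\alpha)\|\asymp X/V^{1/3}$. You instead identify $L_1(\alpha)\times L_2(\alpha)=-\hat\mu$ for the complementary factor $\mu=N(\alpha)/\alpha$ (this is exactly the cofactor row of $L_\alpha$, which is also where the paper's determinant inequality comes from), and then estimate $\|\hat\mu\|$ directly via the archimedean inequalities $\|\hat\mu\|\gg|\mu|=N(\alpha)/\alpha$ and $\alpha\ll\|\hat\alpha\|$. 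The two arguments are interchangeable; your version buys a little transparency, since it makes clear that the cross product is, up to sign, the coordinate vector of an algebraic number of fixed archimedean size $\asymp X^2/V^{2/3}$, rather than hiding this behind a determinant, at the mild cost of having to verify the identity $\alpha\mu=N(\alpha)$ by hand.
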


\begin{proof}
Note that \eqref{eqn:alphanormbdd} implies that 
$$\|L_i(\alpha)\| \asymp \frac{X}{V^{1/3}}$$ for $i = 1, 2, 3$.  Define the linear operator $L_\alpha: \mathbb{R}^3 \rightarrow \mathbb{R}^3$ by 
$$L_\alpha(\hat \beta') = \widehat{\alpha \beta'},
$$for all $\hat \beta' \in K$.  Then the matrix representation of $L_\alpha$ has rows $L_3(\alpha), L_2(\alpha)$ and $L_1(\alpha)$, and one of the definitions of norm gives that $N(\alpha) = \det(L_\alpha)$.  Thus,
\begin{align*}
\frac{X^3}{V} \asymp \det(L_\alpha) \leq \|L_3(\alpha)\| \|L_1(\alpha) \times L_2(\alpha)\|,
\end{align*}from which the stated bound follows.

\end{proof}
Now we write 
\begin{equation}\label{eqn:betainspan}
\hat \beta_i = c_i L_1(\alpha)\times L_2(\alpha) + \delta_i
\end{equation}
where $\delta_i$ is in the span of $\{L_1(\alpha), L_2(\alpha)\}$.  This can be done in a unique way since $\{L_1(\alpha), L_2(\alpha), L_1(\alpha)\times L_2(\alpha)\}$ comprises a linearly independent set by Lemma \ref{lem:L1L2ind}.  The reader may think of \eqref{eqn:betainspan} as defining $\delta_i$ in terms of $\beta_1$ and $\beta_2$, keeping in mind that $\alpha$ is uniquely determined by $\beta_1$ and $\beta_2$ upon recalling \eqref{eqn:alphag1timesg2} and the following discussion.  Then the second line in \eqref{eqn:L1L2cond2} implies
\begin{equation}
L_j(\alpha)\cdot \delta_i = L_j(\alpha)\cdot \hat \beta_i \ll Y
\end{equation}for $j = 1, 2$.  This implies that $\|\delta_i\|$ must be small.  To be precise, we  have the following Lemma.

\begin{lem}\label{lem:deltageom}
	Let $\delta$ be in the span of $\{L_1(\alpha), L_2(\alpha)\}$ such that
	$$L_j(\alpha) \cdot \delta \ll Y,
	$$for $j=1, 2$.  Then for $\mathfrak A = \|L_1(\alpha)\| \asymp \|L_2(\alpha)\| \asymp \|\hat \alpha\|$,
	\begin{equation}
	\|\delta\| \ll \frac{Y}{\mathfrak A} \ll \frac{YV^{1/3}}{X}.
	\end{equation}
\end{lem}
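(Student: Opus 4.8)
The plan is to exploit the non-degeneracy of the pair $L_1(\alpha),L_2(\alpha)$ furnished by Lemma~\ref{lem:L1L2ind}: that lemma says these two vectors span a plane in which they form a well-conditioned basis, since $\|L_1(\alpha)\times L_2(\alpha)\|\gg\mathfrak A^2$, which is as large as possible given $\|L_j(\alpha)\|\asymp\mathfrak A$. First I would use that $\delta$ lies in that plane to write $\delta=s\,L_1(\alpha)+t\,L_2(\alpha)$ for real scalars $s,t$. Taking the inner product of this identity with $L_1(\alpha)$ and with $L_2(\alpha)$ converts the two hypotheses $L_j(\alpha)\cdot\delta\ll Y$ into the linear system
\begin{equation}
G\begin{pmatrix}s\\ t\end{pmatrix}=\begin{pmatrix}L_1(\alpha)\cdot\delta\\ L_2(\alpha)\cdot\delta\end{pmatrix},\qquad G=\bigl(L_i(\alpha)\cdot L_j(\alpha)\bigr)_{1\le i,j\le 2},
\end{equation}
whose right-hand side has both entries $\ll Y$.

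Next I would bound the solution. The Gram matrix $G$ has all entries $\ll\mathfrak A^2$ by Cauchy--Schwarz together with $\|L_j(\alpha)\|\asymp\mathfrak A$, while
\[
\det G=\|L_1(\alpha)\|^2\|L_2(\alpha)\|^2-\bigl(L_1(\alpha)\cdot L_2(\alpha)\bigr)^2=\|L_1(\alpha)\times L_2(\alpha)\|^2\gg\mathfrak A^4
\]
by Lemma~\ref{lem:L1L2ind}. Solving by Cramer's rule then gives $s,t\ll Y\mathfrak A^2/\mathfrak A^4=Y/\mathfrak A^2$, whence
\[
\|\delta\|\le|s|\,\|L_1(\alpha)\|+|t|\,\|L_2(\alpha)\|\ll\frac{Y}{\mathfrak A^2}\cdot\mathfrak A=\frac{Y}{\mathfrak A}.
\]
The second inequality of the lemma is then immediate from $\mathfrak A=\|\hat\alpha\|\asymp X/V^{1/3}$, which is \eqref{eqn:alphanormbdd}.

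I do not expect any genuine obstacle: the entire content is packaged into Lemma~\ref{lem:L1L2ind}, and what remains is a routine two-dimensional linear-algebra computation. An alternative (essentially equivalent) route would be to project $\delta$ first onto $L_1(\alpha)$, bounding that component by $|L_1(\alpha)\cdot\delta|/\|L_1(\alpha)\|\ll Y/\mathfrak A$, and then to treat the remaining component, which lies in the plane orthogonally to $L_1(\alpha)$, using that $L_2(\alpha)$ has a component of size $\gg\mathfrak A$ in that direction. The only mild point to watch is that all implied constants are absolute; this is clear since every $\asymp$ and $\gg$ used originates from the fixed field $K$ or from \eqref{eqn:alphanormbdd}.
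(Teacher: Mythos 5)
Your proof is correct and takes essentially the same approach as the paper: both decompose $\delta$ in the basis $\{L_1(\alpha),L_2(\alpha)\}$ and lean entirely on the non-degeneracy supplied by Lemma~\ref{lem:L1L2ind}. The paper phrases the inversion of the resulting $2\times 2$ system via a WLOG assumption and the angle $\theta$ with $1-|\theta|\gg 1$, whereas you invert the Gram matrix directly by Cramer's rule using $\det G=\|L_1(\alpha)\times L_2(\alpha)\|^2\gg\mathfrak A^4$; these are the same argument in different packaging.
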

\begin{proof}
	Write $\delta = u_1L_1(\alpha) + u_2 L_2(\alpha)$. By Lemma \ref{lem:L1L2ind}, we have that there exists some $\theta$ with $|\theta| < 1$ such that
	$$L_1(\alpha) \cdot L_2(\alpha) = \theta \|L_1(\alpha)\|\|L_2(\alpha)\|.
	$$
Indeed, Lemma \ref{lem:L1L2ind} implies that there exists some fixed constant $c'>0$ such that $1-|\theta| > c'$ for all $\alpha$ satisfying \eqref{eqn:L1L2cond2} independently of $\alpha$.	
	Suppose without loss of generality that $|u_1| \|L_1(\alpha)\| \ge |u_2| \|L_2(\alpha)\|.$  Then
	\begin{align*}
	|u_1| \|L_1(\alpha)\|^2 
	&\ge (1-|\theta|)|u_1| \|L_1(\alpha)\|^2 + |\theta||u_2|\|L_1(\alpha)\|\|L_2(\alpha)\|\\
	&\ge (1-|\theta|)|u_1| \|L_1(\alpha)\|^2 + |u_2L_1(\alpha)\cdot L_2(\alpha)\|,
	\end{align*}so that
	\begin{align*}
	Y &\gg |\delta \cdot L_1(\alpha)|\\
	&\geq |u_1| \|L_1(\alpha)\|^2 - |u_2L_1(\alpha)\cdot L_2(\alpha)\|\\
	&\geq (1-|\theta|)|u_1| \|L_1(\alpha)\|^2,
	\end{align*}from which it follows that
	\begin{equation}
	\|\delta\| \ll |u_1| \|L_1(\alpha)\| \ll \frac{Y}{\|L_1(\alpha)\|} \asymp \frac{Y}{\mathfrak A}, 
	\end{equation}as desired.
\end{proof}
Applying our Lemma \ref{lem:deltageom} to \eqref{eqn:betainspan} implies that
\begin{equation}\label{eqn:beta1condbeta2}
\hat \beta_1 = \frac{c_1}{c_2} \hat \beta_2 + O\bfrac{Y}{\mathfrak A},
\end{equation}upon noting that $c_1 \asymp c_2$, so that $\hat \beta_1$ points in roughly the same direction $\hat \beta_2$.  This implies that
\begin{equation}\label{eqn:beta1beta2}
\|\gamma_1 \times \gamma_2\| \ll \frac{V^{1/3}Y}{\mathfrak A} \ll \frac{Y}{X} V^{2/3}.
\end{equation}From \eqref{eqn:alphag1timesg2} and \eqref{eqn:beta1beta2}
\begin{equation}\label{eqn:Dbdd}
\Delta(\beta_1, \beta_2) \ll \frac{V^{1/3}Y}{\mathfrak A^2} \ll \frac{Y}{X}\frac{V}{X}.
\end{equation}
The conditions \eqref{eqn:beta1beta2} and \eqref{eqn:Dbdd} restrict our $(\hat{\beta_1}, \hat{\beta_2})$ to a very narrow region in $\mathbb{R}^6$, and is indicative of the new features on this work.  However, it turns out that these conditions are not enough by themselves.

We will need to partition and discard certain inconvenient parts of our sum. We first quote two Lemmas.  The first is Lemma 4.5 from \cite{HB}.
\begin{lem}\label{lem:cubedivisorsum}
Let $\C = (a_1, a_1+S_0] \times (a_2, a_2+S_0] \times (a_3, a_3+S_0]$ be a cube of side $S_0$, and suppose that $\max_i |a_i| \le S_0^A$ for some positive constant $A$. For any $\beta =x+y\sqrt[3]{2} + z \sqrt[3]{4} \in \cO_K$, write $\hat \beta = (x,y,z)$. Then there is a constant $c(A)$ such that
$$\sum_{\hat \beta \in \C} \tau(\beta)^2 \ll S_0^3(\log S_0)^{c(A)}.
$$
\end{lem}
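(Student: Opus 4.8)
The plan is to dominate the ideal divisor function $\tau((\beta))$ by a power of the ordinary divisor function of the rational integer $N(\beta)$, and then to quote a standard short-interval divisor-moment estimate. The first step is the pointwise inequality $\tau((\beta))\le\tau(|N(\beta)|)^3$, valid for every nonzero $\beta\in\cO_K$: writing $(\beta)=\prod_P P^{e_P}$, at most three prime ideals $P$ lie above a given rational prime $p$, and $\sum_{P\mid p}e_P\le v_p(N(\beta))$, so $\prod_{P\mid p}(1+e_P)\le(1+v_p(N(\beta)))^3$ by the arithmetic--geometric mean inequality; multiplying over $p$ gives the claim. Hence $\tau((\beta))^2\le\tau(|N(\beta)|)^6$, and, after discarding the at most one lattice point $\hat\beta\in\C$ with $N(\beta)=0$, it suffices to show
\[
\sum_{\hat\beta\in\C}\tau\big(|N(\beta)|\big)^6\ll_A S_0^3(\log S_0)^{c(A)},
\]
where, for $\hat\beta=(x,y,z)$, $N(\beta)=x^3+2y^3+4z^3-6xyz$ is a fixed integral ternary cubic form. (We may assume $A\ge 1$ and $S_0$ large, since a cube of side $S_0$ with $\max_i|a_i|\le S_0^A$ forces $S_0\ll_A 1$ when $A<1$, and then the bound is trivial.)

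Next I would peel off one variable. Fix the pair $(y,z)$ and set $P_{y,z}(x)=x^3-6yz\,x+(2y^3+4z^3)$, a monic cubic in $x$ with coefficients $\ll S_0^{3A}$. As $\hat\beta$ runs over $\C$, the variable $x$ runs over an interval of length $S_0$ contained in $[-S_0^A,S_0^A]$, and $|P_{y,z}(x)|\ll S_0^{3A}$ throughout. By Shiu's theorem on moments of the divisor function in short intervals, together with its extension to polynomial arguments (Nair, Nair--Tenenbaum) — applicable since the interval length $S_0$ exceeds a fixed small power of the height $S_0^{3A}$ — one has, uniformly in $(y,z)$,
\[
\sum_{x_1<x\le x_1+S_0}\tau\big(|P_{y,z}(x)|\big)^6\ll_A S_0(\log S_0)^{c(A)}.
\]
Summing this over the $\ll S_0^2$ admissible pairs $(y,z)$ yields the displayed bound above, and hence the Lemma.

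The main point requiring care is the uniformity of the one-variable estimate: one needs the implied constant to depend only on the degree and on the ratio $\log(\mathrm{height})/\log S_0=O(A)$, and one must dispose of the pairs $(y,z)$ for which $P_{y,z}$ is degenerate (repeated roots, or an unusually large fixed divisor). This exceptional set has size $O(S_0)$, so affects only $O(S_0^2)=o(S_0^3)$ of the summands, which can be treated either by a cruder pointwise bound or by a form of the Nair--Tenenbaum theorem already uniform in such degeneracies. As an alternative, more self-contained route: since $K=\mathbb{Q}(\sqrt[3]{2})$ has one real and one complex place, every ideal $D\subseteq\cO_K$ is a well-rounded lattice, all three successive minima being $\asymp N(D)^{1/3}$ (for $0\ne x\in D$ one has $\|\hat x\|\gg\max(|x^{(1)}|,|x^{(2)}|)\gg|N(x)|^{1/3}\ge N(D)^{1/3}$, using $|x^{(2)}|=|x^{(3)}|$), so $\#\{\hat\beta\in\C:D\mid(\beta)\}\ll 1+S_0^3/N(D)$; writing $\tau((\beta))^2=\sum_{D\mid(\beta)}g(D)$ with $g$ multiplicative and $g(P^k)=2k+1$, the divisors of norm $\le S_0^3$ contribute $\ll S_0^3(\log S_0)^{O(1)}$ by Mertens' theorem, while the larger divisors are handled by pairing $D$ with the co-divisor $(\beta)/D$ and inducting on $A$ — there the induction is the delicate step.
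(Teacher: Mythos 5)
Your first step---$\tau((\beta))\le\tau(|N(\beta)|)^3$, hence the reduction to bounding $\sum_{\hat\beta\in\C}\tau(|N(\beta)|)^6$---is correct and clean: at most three primes of $K$ sit over a rational prime and $\sum_{P\mid p}e_P\le v_p(N\beta)$, so AM--GM closes it. Slicing in $x$ and applying a short-interval divisor-moment bound to the monic cubic $P_{y,z}$ is also the right shape for the problem. (The paper itself quotes this as Lemma~4.5 of [HB] and gives no proof, so there is no in-paper argument to compare against; what follows is an assessment of the proposal on its own terms.)

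The genuine gap is the clause ``one has, \emph{uniformly in $(y,z)$}.'' Nair's and Nair--Tenenbaum's theorems are \emph{not} uniform in the polynomial: the implied constants depend on $Q$ through the local densities $\rho_Q$. The uniform version (Henriot) carries an explicit polynomial-dependent factor built from those densities, and that factor must then itself be averaged over the $\asymp S_0^2$ pairs $(y,z)$; you gesture at this but do not carry it out, and the uniformity is the entire difficulty here, not the exceptional set. (Indeed, for your family the discriminant of $P_{y,z}$ in $x$ is $-108(y^3-2z^3)^2$, vanishing over $\mathbb{Z}$ only at $(0,0)$, and $P_{y,z}$ is monic of content~$1$, so there simply are no repeated-root or large-fixed-divisor pairs to dispose of---the $O(S_0)$ exceptional count is harmless but misdiagnoses where the work lies.) Your alternative lattice route correctly observes that $\{\hat\gamma:D\mid\gamma\}$ is a well-rounded sublattice of $\mathbb{Z}^3$, all three minima $\asymp N(D)^{1/3}$ since $K$ has one real and one complex place, giving $\#\{\hat\beta\in\C:D\mid\beta\}\ll 1+S_0^3/N(D)$ and a Mertens bound for $N(D)\le S_0^3$; but the co-divisor pairing you propose is circular---with $g(P^k)=2k+1$ one checks $g(D)\,g((\beta)/D)\ge g((\beta))$ while $g(D)\le g((\beta))=\tau((\beta)^2)\le\tau(\beta)^2$ for every $D\mid(\beta)$, so neither the product nor the maximum of $g(D)$ and $g((\beta)/D)$ gives you traction---and ``inducting on $A$'' does not by itself produce a sum over small co-divisors with controlled weights. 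You flag this as ``the delicate step''; I agree---it is the entire content of the lemma, and as written it is unresolved.
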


The second is the closely related Lemma 11.1 from \cite{HB}.

\begin{lem}\label{lem:cubebdd}
Let $\C_1, \C_2$ be cubes of side $S_0$, not necessarily containing the origin.  Suppose that $\C_1$ and $\C_2$ are included in a sphere, centered on the origin, of radius $S_0^A$ for some $A>0$.  Then if the vectors $\hat \beta_i$ are restricted to be primitive, we will have
\begin{equation}\label{eqn:cubebddtau}
\sum_{\substack{\hat \beta_i \in \C_i\\ D|\gamma_1 \times \gamma_2}} \tau(\beta_1)^2 \ll \frac{S_0^6}{D^2} (\log S_0)^{c(A)}, 
\end{equation}for some constant $c(A)$, providing that $D\ll S_0$.
This further implies that
\begin{equation}\label{eqn:cubebddf}
\sum_{\substack{\hat \beta_i \in \C_i\\ D|\gamma_1 \times \gamma_2}} |f_{\beta_1}f_{\beta_2}| \ll \frac{S_0^6}{D^2} (\log S_0)^{c(A)},
\end{equation}and
\begin{equation}\label{eqn:cubebddfDDelta}
\sum_{\substack{\hat \beta_i \in \C_i\\ D = \Delta(\beta_1, \beta_2)}} |f_{\beta_1}f_{\beta_2}| \ll \frac{S_0^6}{D^2} (\log S_0)^{c(A)},
\end{equation}
for some constant $c(A)$.
\end{lem}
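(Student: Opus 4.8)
The plan is to follow Heath-Brown's treatment of [HB, Lemma 11.1], taking \eqref{eqn:cubebddtau} as the main point. Granting \eqref{eqn:cubebddtau}, the bound \eqref{eqn:cubebddf} follows at once from the pointwise estimate $|f_{\beta_1}f_{\beta_2}| \ll \tau(\beta_1)\tau(\beta_2)(\log X)^2 \le \tfrac12(\log X)^2\big(\tau(\beta_1)^2 + \tau(\beta_2)^2\big)$ recorded earlier, applying \eqref{eqn:cubebddtau} to the $\tau(\beta_1)^2$ term and, with the roles of $\C_1$ and $\C_2$ interchanged, to the $\tau(\beta_2)^2$ term; in the ranges of $S_0$ that actually occur one has $\log S_0 \asymp \log X$, so the extra $(\log X)^2$ is absorbed into $(\log S_0)^{c(A)}$. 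Likewise \eqref{eqn:cubebddfDDelta} is weaker than \eqref{eqn:cubebddf}, since $D = \Delta(\beta_1,\beta_2)$ — the greatest common divisor of the coordinates of $\gamma_1\times\gamma_2$ — in particular forces $D \mid \gamma_1\times\gamma_2$.

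So it remains to prove \eqref{eqn:cubebddtau}. First I would record the geometric fact that, for a \emph{fixed} primitive vector $\gamma \in \mathbb{Z}^3$, the set $\{\mathbf{x}\in\mathbb{Z}^3 : D \mid \gamma\times\mathbf{x}\}$ is a sublattice $\Lambda(\gamma,D)$ of index exactly $D^2$ containing $D\mathbb{Z}^3$. This is checked one prime power $p^a \parallel D$ at a time: primitivity of $\gamma$ forces some coordinate of $\gamma$ to be a unit modulo $p$, hence modulo $p^a$, and then solving $\gamma\times\mathbf{x}\equiv 0 \pmod{p^a}$ expresses two coordinates of $\mathbf{x}$ in terms of the third, the remaining component of the cross product vanishing automatically; so the condition is two independent linear congruences modulo $p^a$, of index $p^{2a}$, and multiplying over $p\mid D$ by the Chinese Remainder Theorem gives index $D^2$, while clearly $\mathbf{x}\equiv 0\pmod D$ satisfies it. Because $D \ll S_0$ and $[\Lambda(\gamma,D):D\mathbb{Z}^3] = D$, any cube $\C$ of side $S_0$ in $\mathbb{R}^3$ meets $\ll (S_0/D)^3$ translates of a fundamental box of $D\mathbb{Z}^3$, each containing exactly $D$ points of $\Lambda(\gamma,D)$; hence
\[
\#\big(\Lambda(\gamma,D)\cap\C\big) \ll \frac{S_0^3}{D^2},
\]
uniformly in $\gamma$ and in the position of $\C$.

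Now in \eqref{eqn:cubebddtau} I would fix $\hat\beta_1$, which is primitive and whose coordinates are a permutation of those of $\gamma_1 = L_1(\hat\beta_1)$, so $\gamma_1$ is primitive as well, and sum over $\hat\beta_2$: the condition $D\mid\gamma_1\times\gamma_2$ confines $\gamma_2$, equivalently $\hat\beta_2$, to a sublattice of index $D^2$, and since $\C_2$ is a cube of side $S_0$ the displayed bound gives $\ll S_0^3/D^2$ admissible $\hat\beta_2$, independently of $\hat\beta_1$. Pulling $\tau(\beta_1)^2$ outside the $\hat\beta_2$-sum and invoking Lemma \ref{lem:cubedivisorsum} (applicable since $\C_1$ lies in the ball of radius $S_0^A$) yields
\[
\sum_{\substack{\hat\beta_i\in\C_i\\ D\mid\gamma_1\times\gamma_2}}\tau(\beta_1)^2 \;\ll\; \frac{S_0^3}{D^2}\sum_{\hat\beta_1\in\C_1}\tau(\beta_1)^2 \;\ll\; \frac{S_0^6}{D^2}(\log S_0)^{c(A)},
\]
which is \eqref{eqn:cubebddtau}.

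The one step requiring genuine care is the index computation together with the lattice-point count: one must verify that primitivity of $\hat\beta_1$ — not merely of $\hat\beta_2$ — is exactly what makes the sublattice in the $\gamma_2$ variable have index precisely $D^2$ rather than something larger (which, summed back, would give a genuinely worse bound), and that the covering estimate for a cube of side $S_0$ is uniform in the position of the cube; this is where the hypotheses "$\hat\beta_i$ primitive" and "$D\ll S_0$" enter, everything else being routine.
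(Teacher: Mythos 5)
Your proposal is correct, and the deduction of \eqref{eqn:cubebddf} from \eqref{eqn:cubebddtau} via $|f_{\beta_1}f_{\beta_2}|\ll\tau(\beta_1)\tau(\beta_2)(\log X)^2\ll\big(\tau(\beta_1)^2+\tau(\beta_2)^2\big)(\log X)^2$, and of \eqref{eqn:cubebddfDDelta} from \eqref{eqn:cubebddf} since $\Delta(\beta_1,\beta_2)\mid\gamma_1\times\gamma_2$, is exactly what the paper does. The one place you go further than the paper is \eqref{eqn:cubebddtau} itself: the paper simply cites [HB, Lemma~11.1] verbatim rather than reproving it, whereas you supply the lattice argument. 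Your proof of that step is sound and is in the spirit of Heath-Brown's: for fixed primitive $\gamma_1$ the congruence $D\mid\gamma_1\times\gamma_2$ defines a sublattice $\Lambda(\gamma_1,D)\subset\mathbb{Z}^3$ of index exactly $D^2$ containing $D\mathbb{Z}^3$, so a cube of side $S_0\gg D$ contains $\ll S_0^3/D^2$ points of it uniformly in position; pulling out $\tau(\beta_1)^2$ and summing over $\hat\beta_1\in\C_1$ via Lemma~\ref{lem:cubedivisorsum} gives the claim. You are also right to flag, and correctly handle, the two points that actually carry the weight: primitivity of $\hat\beta_1$ (hence of $\gamma_1$, its coordinate permutation) is what pins the index at $D^2$ rather than something larger, and $D\ll S_0$ is what makes the covering estimate uniform. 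In short: same approach as the paper, but with the cited black box opened up.
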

\begin{proof}
The first part of the Lemma above is exactly Lemma 11.1 from \cite{HB}.  The second bound stated in \eqref{eqn:cubebddf} follows from \eqref{eqn:cubebddtau} by noting that $|f_{\beta_1}f_{\beta_2}| \ll \tau(\beta_1)\tau(\beta_2) \log^2X \ll (\tau(\beta_1)^2 + \tau(\beta_2)^2)\log^2X$.  Note that a slight abuse of notation has occurred; the $c(A)$ appearing in equations \eqref{eqn:cubebddtau} and \eqref{eqn:cubebddf} are not necessarily the same.

Finally, \eqref{eqn:cubebddfDDelta} follows from \eqref{eqn:cubebddf} since $\Delta(\beta_1, \beta_2)|\gamma_1\times \gamma_2$ so that the condition $D = \Delta(\beta_1, \beta_2)$ implies that $D|\gamma_1 \times \gamma_2$.
\end{proof}

Using Lemma \ref{lem:cubebdd} and the bounds \eqref{eqn:beta1beta2} and \eqref{eqn:Dbdd} would lead to the trivial bound of 
\begin{equation}\label{eqn:trivialT2}
\T_2 \ll YV \log^c X,
\end{equation} for some constant $c$.  Since we state the bound \eqref{eqn:trivialT2} for motivating reasons only, we provide a simplified justification.  The reader may look at Lemma \ref{lem:T2discard} for the details of a similar proof.  Here, we assume that $\Delta(\beta_1, \beta_2) \asymp \frac{Y}{X} \frac{V}{X}$, which is essentially the largest size for $\Delta$ by \eqref{eqn:Dbdd}.  Then we partition the sums over $\beta_1$ and $\beta_2$ into a sum over cubes $\C_1$, $\C_2$ with side $S_0 = \frac{Y}{X} V^{1/3}$.  We will need $V/S_0^3$ cubes $\C_1$ to cover the entire range for $\beta_1$.  By \eqref{eqn:beta1beta2}, it is immediate that for fixed $\gamma_1$, $\gamma_2$ is restricted to be in a cylinder of height $V^{1/3}$ and radius bounded by $\frac{Y}{X} V^{1/3}$.  Thus, if we fix a cube $\C_1$ with $\beta_1 \in \C_1$, the above discussion implies that the number of cubes $\C_2$ with $\beta_2 \in \C_2$ is bounded by $\bfrac{Y}{X}^2 V/S_0^3$.  Then, our estimate for $\T_2$ is
\begin{align}\label{eqn:blah}
&\sum_{D \asymp \frac{Y}{X} \frac{V}{X}} \sum_{\substack{V< N(\beta_1), N(\beta_2) \le 2V \\ \beta_1 \neq \beta_2\\ D = \Delta(\beta_1, \beta_2)}} |f_{\beta_1}f_{\beta_2} |\notag \\
&\ll \sum_{D \asymp \frac{Y}{X} \frac{V}{X}} \sum_{\C_1, \C_2} \sum_{\substack{\beta_1 \in \C_1, \beta_2 \in \C_2 \\\beta_1 \neq \beta_2\\ D| \gamma_1 \times \gamma_2}} \left(\tau(\beta_1)^2 + \tau(\beta_2)^2\right)\notag \\
&\ll \sum_{D \asymp \frac{Y}{X} \frac{V}{X}} \sum_{\C_1, \C_2} \frac{S_0^6}{D^2} (\log S_0)^{c(A)},
\end{align}by Lemma \ref{lem:cubebdd}, where $\sum_{\C_1, \C_2}$ indicates a sum over cubes $\C_1$ and $\C_2$ such that there exists $\beta_i \in \C_i$ satisfying \eqref{eqn:beta1beta2}, and of course $N(\beta_i) \asymp V$.  Thus, the quantity in \eqref{eqn:blah} is
\begin{align*}
\ll  \sum_{D \asymp \frac{Y}{X} \frac{V}{X}} \bfrac{Y}{X}^2 V^2 S_0^{-6} \frac{S_0^6}{D^2} (\log S_0)^{c(A)}
\ll YV (\log X)^c,
\end{align*}for some constant $c$.

This estimate is essentially missing a factor of $\frac{Y}{X}$ as compared to the expected true size, assuming no cancellation occurs in the sum.  Thus, although the fact that $(\hat \beta_1, \hat \beta_2)$ is in a narrow region is reflected in \eqref{eqn:beta1beta2}, \eqref{eqn:beta1beta2} does not completely capture the thinness of the region.

We recover the missing factor from the additional condition
\begin{equation}
L_2(\gamma_1\times \gamma_2) \cdot \hat \beta_1 \ll \Delta Y,
\end{equation}which results from the second line of \eqref{eqn:L1L2cond2}.  By the definition of $L_2$, this is equivalent to
\begin{equation}
\gamma_2 \cdot (\gamma_1 \times L_2(\hat \beta_1)) \ll \Delta Y.
\end{equation}If we were to write $\gamma_2 = u+ c(\gamma_1\times L_2(\hat \beta_1))$ where $u\cdot \gamma_1\times L_2(\hat \beta_1) = 0$, then we see that 
\begin{equation}\label{eqn:gamma2cond2}
c\|\gamma_1\times L_2(\hat \beta_1)\|^2 = \gamma_2 \cdot \gamma_1\times L_2(\hat \beta_1) \ll \Delta Y.
\end{equation}We also have from \eqref{eqn:alphag1timesg2} and \eqref{eqn:beta1condbeta2} that
\begin{equation}\label{eqn:gamma2cond1}
\gamma_2 = c_0 \gamma_1 + O\bfrac{\Delta Y}{\|\gamma_1 \times \gamma_2\|},
\end{equation}for some constant $c_0$.  This leads to the following Lemma.

\begin{lem}\label{lem:beta2region}
For fixed $\gamma_1$ and fixed $\Delta = \Delta(\beta_1, \beta_2)$, $\gamma_2$ is restricted to be in a rectangular box with sides bounded by $V^{1/3}, \frac{\Delta Y}{\|\gamma_1 \times \gamma_2\|} \asymp \frac{Y}{X} V^{1/3}$ and $\frac{\Delta Y}{\|\gamma_1\times L_2(\hat \beta_1)\|}$ where the side of length $V^{1/3}$ is parallel to $\gamma_1$.	
\end{lem}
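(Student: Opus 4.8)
The plan is to produce an explicit orthonormal frame of $\mathbb{R}^3$ adapted to $\gamma_1$ and then read off the size of each coordinate of $\gamma_2$ in that frame directly from the inequalities \eqref{eqn:betanormbdd}, \eqref{eqn:gamma2cond2} and \eqref{eqn:gamma2cond1} already established.

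First I would set $e_1 = \gamma_1/\|\gamma_1\|$, $e_2 = (\gamma_1 \times L_2(\hat\beta_1))/\|\gamma_1 \times L_2(\hat\beta_1)\|$ and $e_3 = e_1 \times e_2$. For this to be meaningful one needs $\gamma_1 \times L_2(\hat\beta_1) \neq 0$, which follows from exactly the argument used to prove Lemma \ref{lem:L1L2ind}, applied with $\beta_1$ in place of $\alpha$: writing $N(\beta_1)$ as the determinant of the matrix with rows $L_3(\hat\beta_1), L_2(\hat\beta_1), L_1(\hat\beta_1)$ and using $\|L_3(\hat\beta_1)\| \asymp V^{1/3}$ together with $|N(\beta_1)| \asymp V$, one gets $\|\gamma_1 \times L_2(\hat\beta_1)\| = \|L_1(\hat\beta_1)\times L_2(\hat\beta_1)\| \gg V^{2/3} > 0$. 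Since $e_1 \cdot e_2$ is proportional to $\gamma_1 \cdot (\gamma_1 \times L_2(\hat\beta_1)) = 0$, the triple $\{e_1, e_2, e_3\}$ is a genuine orthonormal basis, with $e_1$ parallel to $\gamma_1$.

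Next I would bound the three coordinates of $\gamma_2$ in this basis. Along $e_1$: since $\gamma_2$ is just a coordinate reversal of $\hat\beta_2$, \eqref{eqn:betanormbdd} gives $|\gamma_2\cdot e_1| \le \|\gamma_2\| = \|\hat\beta_2\| \asymp V^{1/3}$. Along $e_2$: by \eqref{eqn:gamma2cond2}, $|\gamma_2 \cdot e_2| = |\gamma_2\cdot(\gamma_1 \times L_2(\hat\beta_1))|/\|\gamma_1\times L_2(\hat\beta_1)\| \ll \Delta Y/\|\gamma_1\times L_2(\hat\beta_1)\|$. Along $e_3$: since $e_3 \perp \gamma_1$, the vector relation \eqref{eqn:gamma2cond1} gives $\gamma_2 \cdot e_3 = (\gamma_2 - c_0\gamma_1)\cdot e_3 \ll \Delta Y/\|\gamma_1 \times \gamma_2\|$. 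Hence $\gamma_2$ lies in the rectangular box, centred at the origin with edges along $e_1, e_2, e_3$, of dimensions $\ll V^{1/3}$, $\ll \Delta Y/\|\gamma_1\times L_2(\hat\beta_1)\|$ and $\ll \Delta Y/\|\gamma_1\times\gamma_2\|$ respectively, the $e_1$-edge being parallel to $\gamma_1$. Finally, to put the last dimension in the stated shape: by \eqref{eqn:alphag1timesg2} we have $\|\gamma_1\times\gamma_2\| = \Delta(\beta_1,\beta_2)\,\|\hat\alpha\|$, and \eqref{eqn:alphanormbdd} gives $\|\hat\alpha\| \asymp X/V^{1/3}$, so $\Delta Y/\|\gamma_1\times\gamma_2\| \asymp Y V^{1/3}/X$, which is the claimed $\frac{Y}{X}V^{1/3}$.

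I do not expect a genuine obstacle here: the content of the Lemma has essentially been unpacked already in \eqref{eqn:gamma2cond2} and \eqref{eqn:gamma2cond1}, and what remains is the assembly of an orthogonal frame plus the elementary computation $\|\gamma_1\times\gamma_2\| \asymp \Delta X/V^{1/3}$. The one point deserving a line of justification is the nonvanishing of $\gamma_1\times L_2(\hat\beta_1)$, needed so that the frame $\{e_1,e_2,e_3\}$ exists; and the one conceptual point worth emphasising is that the $e_2$-direction is controlled by the sharper bound \eqref{eqn:gamma2cond2} rather than by \eqref{eqn:gamma2cond1}, which is precisely the source of the extra factor $Y/X$ recovered over the trivial estimate \eqref{eqn:trivialT2}.
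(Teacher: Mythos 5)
Your proposal is correct and follows essentially the same route as the paper, which decomposes $\gamma_2 = c_0\gamma_1 + c(\gamma_1 \times L_2(\hat\beta_1)) + \gamma'$ into mutually orthogonal pieces and bounds each coefficient via \eqref{eqn:gamma2cond1} and \eqref{eqn:gamma2cond2}; your version simply normalises this into an explicit orthonormal frame $\{e_1,e_2,e_3\}$ and reads off the coordinates. Your extra justification that $\gamma_1 \times L_2(\hat\beta_1)\neq 0$ (indeed $\gg V^{2/3}$, by the determinant argument of Lemma \ref{lem:L1L2ind} applied to $\beta_1$) is a small but welcome addition the paper leaves implicit.
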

\begin{proof}
The equation \eqref{eqn:gamma2cond1} implies that if we write $\gamma_2$ as
\begin{equation}\label{eqn:gamma2lincombo}
\gamma_2 = c_0\gamma_1 + c(\gamma_1 \times L_2(\hat \beta_1)) + \gamma',
\end{equation}
where $\gamma'$ is orthogonal to both $\gamma_1$ and $\gamma_1 \times L_2(\hat \beta_1)$, then $\|\gamma'\| \ll \frac{Y}{\mathfrak A} \asymp \frac{\Delta Y}{\|\gamma_1\times \gamma_2\|}$ by \eqref{eqn:alphag1timesg2}.  Moreover, \eqref{eqn:gamma2cond2} implies that
$$c\|\gamma_1\times L_2(\hat \beta_1)\| \ll \frac{\Delta Y}{\|\gamma_1\times L_2(\hat \beta_1)\|}.
$$Noting that \eqref{eqn:gamma2lincombo} expresses $\gamma_2$ as a linear combination of orthogonal vectors, the Lemma follows.
\end{proof}

\begin{rem}
Recall that by \eqref{eqn:beta1beta2}, it is immediate that for fixed $\gamma_1$, $\gamma_2$ is restricted to be in a cylinder of height $V^{1/3}$ and radius bounded by $\frac{Y}{X} V^{1/3}$.  The restriction on the last dimension arising from Lemma \ref{lem:beta2region} is only stronger than this when $\gamma_1 \times L_2(\hat \beta_1)$ is not too small.  In the generic case where $\|\gamma_1 \times L_2(\hat \beta_1)\| \asymp V^{2/3}$, we see that $\frac{DY}{\|\gamma_1\times L_2(\hat \beta_1)\|} \ll \bfrac{Y}{X}^2 V^{1/3}$ by \eqref{eqn:Dbdd}, which saves us an additional factor of $\frac{Y}{X}$.  
\end{rem}

While it is possible for $\gamma_1 \times L_2(\hat{(\beta_1)})$ to have small norm, we shall show that this occurs only for a small number of $\beta_1$.  The Lemma below first shows that this constitutes a strong condition on $\gamma_1$.
\begin{lem}\label{lem:gamma1region}
	Suppose that $|w_1| \gg V^{1/3}$.  Then, the condition $|\gamma_1 \times L_2(\hat \beta_1)| \ll \N$ is equivalent to
	\begin{align}\label{eqn:u1v1}
	u_1 &= 2^{2/3}w_1 \left(1 + O\bfrac{\N}{V^{2/3}}\right), \textup{ and} \notag \\
	v_1 &= 2^{1/3} w_1 \left(1 + O\bfrac{\N}{V^{2/3}}\right).
	\end{align}
Note that \eqref{eqn:u1v1} is symmetric in $u_1, v_1, w_1$ in the sense that if $|u_1| \gg V^{1/3}$ we may express both $v_1$ and $w_1$ in terms of $u_1$ in the same manner.  Similarly so if $|v_1| \gg V^{1/3}$.
\end{lem}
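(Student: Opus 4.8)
The plan is to compute the cross product $\gamma_1 \times L_2(\hat\beta_1)$ explicitly and then read off the constraint imposed by the smallness of each of its coordinates. With $\gamma_1 = (w_1, v_1, u_1)$ and $L_2(\hat\beta_1) = (v_1, u_1, 2w_1)$, a direct computation gives
\begin{equation}
\gamma_1 \times L_2(\hat\beta_1) = \bigl(2v_1 w_1 - u_1^2,\; u_1 v_1 - 2w_1^2,\; u_1 w_1 - v_1^2\bigr),
\end{equation}
which has the same shape as the vector $L_1(\alpha)\times L_2(\alpha) = (2bc-a^2,\,ab-2c^2,\,ac-b^2)$ from Lemma~\ref{lem:L1L2ind}, with $(a,b,c)$ replaced by $(u_1,v_1,w_1)$; so this is a quantitative refinement of the argument there. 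I would first dispose of the trivial range: we may assume $\N \le c_0 V^{2/3}$ for a small absolute constant $c_0$, since otherwise $u_1,v_1,w_1 \ll V^{1/3}$ (by \eqref{eqn:betanormbdd}) forces $\|\gamma_1 \times L_2(\hat\beta_1)\| \ll V^{2/3}\ll\N$, while, using $|w_1|\asymp V^{1/3}$, the claimed estimates for $u_1,v_1$ in \eqref{eqn:u1v1} are vacuous once $\N/V^{2/3}\gg 1$.

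For the forward implication I would argue as follows. Assume $\|\gamma_1 \times L_2(\hat\beta_1)\| \ll \N$; since $|w_1| \gg V^{1/3}$ and $|w_1| \ll V^{1/3}$, we have $|w_1| \asymp V^{1/3}$, and the second coordinate yields $u_1 v_1 = 2w_1^2 + O(\N) \asymp V^{2/3}$, which together with $|u_1|,|v_1| \ll V^{1/3}$ forces $|u_1| \asymp |v_1| \asymp V^{1/3}$. Comparing signs in $u_1v_1 = 2w_1^2 + O(\N)$ and $u_1 w_1 = v_1^2 + O(\N)$ (with $c_0$ small enough that the error terms are harmless) shows $u_1,v_1,w_1$ have a common sign, so after possibly replacing $\hat\beta_1$ by $-\hat\beta_1$ I may take all three positive. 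Setting $p = u_1/w_1 \asymp 1$ and $q = v_1/w_1 \asymp 1$ and dividing each coordinate bound by $w_1^2 \asymp V^{2/3}$ turns the hypothesis into
\begin{equation}
p^2 = 2q\Bigl(1+O\bigl(\tfrac{\N}{V^{2/3}}\bigr)\Bigr),\qquad q^2 = p\Bigl(1+O\bigl(\tfrac{\N}{V^{2/3}}\bigr)\Bigr),\qquad pq = 2\Bigl(1+O\bigl(\tfrac{\N}{V^{2/3}}\bigr)\Bigr).
\end{equation}
From the last two relations I get $p = q^2\bigl(1+O(\tfrac{\N}{V^{2/3}})\bigr)$ and hence $q^3 = 2\bigl(1+O(\tfrac{\N}{V^{2/3}})\bigr)$; taking positive cube roots (Lipschitz near $2^{1/3}$) gives $q = 2^{1/3}\bigl(1+O(\tfrac{\N}{V^{2/3}})\bigr)$ and then $p = 2^{2/3}\bigl(1+O(\tfrac{\N}{V^{2/3}})\bigr)$, which, unwinding $p$ and $q$, is precisely \eqref{eqn:u1v1}.

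The converse will be a direct substitution: plugging $u_1 = 2^{2/3}w_1\bigl(1+O(\tfrac{\N}{V^{2/3}})\bigr)$ and $v_1 = 2^{1/3}w_1\bigl(1+O(\tfrac{\N}{V^{2/3}})\bigr)$ into the three coordinates and using $2\cdot 2^{1/3} = (2^{2/3})^2$, $2^{2/3}\cdot 2^{1/3}=2$, $2^{2/3}=(2^{1/3})^2$, the leading terms cancel in each coordinate, leaving $O\bigl(w_1^2\,\tfrac{\N}{V^{2/3}}\bigr) = O(\N)$ since $|w_1|\asymp V^{1/3}$. The symmetric assertions at the end of the lemma (expressing two of $u_1,v_1,w_1$ in terms of the third when that one is $\gg V^{1/3}$) follow by the same computation, the symmetric content being the pair of relations $u_1^3 = 2v_1^3$ and $v_1^3 = 2w_1^3$, each valid up to a factor $1+O(\tfrac{\N}{V^{2/3}})$. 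I expect the only delicate points to be propagating the multiplicative error factors through the division of the three quadratic relations and the sign bookkeeping at the start; everything else is mechanical.
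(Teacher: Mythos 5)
Your proposal is correct and follows essentially the same route as the paper: compute the cross product $\gamma_1 \times L_2(\hat\beta_1) = (2v_1w_1 - u_1^2,\, u_1v_1 - 2w_1^2,\, w_1u_1 - v_1^2)$, dispose of the range $\N \asymp V^{2/3}$ as trivial, use the second coordinate to force $|u_1| \asymp |v_1| \asymp V^{1/3}$, and then deduce \eqref{eqn:u1v1}. The paper compresses the final algebra to ``follows easily from \eqref{eqn:gamma1L2beta11}''; you have usefully expanded it via the normalization $p = u_1/w_1$, $q = v_1/w_1$, the substitution $p = q^2(1+O(\N/V^{2/3}))$ into $pq = 2(1+O(\N/V^{2/3}))$ to get $q^3 = 2(1+O(\N/V^{2/3}))$, and a Lipschitz cube-root argument, together with the sign bookkeeping (all three of $u_1,v_1,w_1$ share a sign since $u_1v_1 \approx 2w_1^2 > 0$ and $u_1w_1 \approx v_1^2 > 0$). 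You also explicitly check the converse direction by substitution, which the lemma's ``equivalent to'' requires but the paper's proof leaves implicit. No gaps; same approach, more detail.
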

\begin{proof}
	Since 
	\begin{equation}
	\gamma_1 \times L_2(\hat \beta_1) = (2v_1w_1 - u_1^2, u_1v_1 - 2w_1^2, w_1u_1 - v_1^2),
	\end{equation}
	the condition $|\gamma_1 \times L_2(\hat \beta_1)| \ll \N$ is equivalent to
	\begin{align}\label{eqn:gamma1L2beta11}
	u_1^2 &= 2v_1w_1 + O(\N) \notag \\
	u_1v_1 &= 2w_1^2 + O(\N), \textup{ and} \notag \\
	v_1^2 &= u_1w_1 + O(\N).
	\end{align}
Now, if $\N \asymp V^{2/3}$, the Lemma is trivial since $|w_1| = C_0 V^{1/3}$ for some $C_0 >0$.  Supposing that the implied constant in $O(\N)$ appearing above in \eqref{eqn:gamma1L2beta11} is $C_1>0$, we now assume that $\N < C_0^2 V^{2/3}/C_1$.  

Then, the second line in \eqref{eqn:gamma1L2beta11} immediately implies that $|u_1v_1| \ge |2C_0^2 V^{2/3} -  C_1 C_0^2 V^{2/3}/C_1| \gg V^{2/3}$ and so $u_1 \asymp v_1 \asymp V^{1/3}$.  Then \eqref{eqn:u1v1} follows easily from \eqref{eqn:gamma1L2beta11}.
\end{proof}

We now discard the part of $\T_2$ with  $|\gamma_1 \times L_2(\hat \beta_1)| \ll \bfrac{Y}{X}^{1/2} V^{2/3} (\log X)^{-B}$ for some parameter $B$ to be determined.  Specifically, we have the following Lemma.
\begin{lem}\label{lem:T2discard}
We have that
\begin{equation}
\sum_{\substack{V<N(\beta_1), N(\beta_2)\le 2V\\ |\gamma_1 \times L_2(\hat \beta_1)| \ll \bfrac{Y}{X}^{1/2} V^{2/3}(\log X)^{-B} \\ \beta_1 \neq \beta_2}} f_{\beta_1} f_{\beta_2} \sumstar_\alpha 1 \ll \bfrac{Y}{X} YV (\log X)^{-B+c(A)},
\end{equation}where $c(A)$ is a constant depending only on $A$ and as usual $\beta_i, \alpha \in \cO_K$ satisfy \eqref{eqn:L1L2cond}.
\end{lem}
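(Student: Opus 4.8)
The plan is to adapt the covering argument that produced the motivating estimate \eqref{eqn:trivialT2}, strengthening it in two ways: (i) replace the ball $\{\|\hat\beta_1\|\asymp V^{1/3}\}$, in which $\hat\beta_1$ a priori lives by \eqref{eqn:betanormbdd}, by the essentially two-dimensional box that Lemma \ref{lem:gamma1region} forces on $\hat\beta_1$ once $\|\gamma_1\times L_2(\hat\beta_1)\|$ is small; and (ii) handle the sum over $D=\Delta(\beta_1,\beta_2)$ honestly, rather than only at its maximal size as in the sketch of \eqref{eqn:trivialT2}.

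I would begin by noting that for $\beta_1\ne\beta_2$ one has $\gamma_1\ne\gamma_2$, so $\sumstar_\alpha 1\in\{0,1\}$, and when it equals $1$ all of \eqref{eqn:L1L2cond2} holds; hence \eqref{eqn:betanormbdd}, \eqref{eqn:alphanormbdd}, \eqref{eqn:alphag1timesg2}, \eqref{eqn:Dbdd} and \eqref{eqn:beta1condbeta2} all apply, and $|f_{\beta_i}|\ll\tau(\beta_i)\log X$ as recorded after \eqref{eqn:T2}. Writing $\N:=\bfrac{Y}{X}^{1/2}V^{2/3}(\log X)^{-B}$, it therefore suffices to bound $\sum|f_{\beta_1}f_{\beta_2}|$ over pairs obeying these constraints together with $\|\gamma_1\times L_2(\hat\beta_1)\|\ll\N$. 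Since $\|\gamma_1\|=\|\hat\beta_1\|\asymp V^{1/3}$, some coordinate of $\gamma_1$ is $\gg V^{1/3}$; treating the at most three symmetric cases via the final remark of Lemma \ref{lem:gamma1region}, that Lemma pins $\gamma_1$, hence $\hat\beta_1$, into a box $\mathfrak B$ of dimensions $\ll V^{1/3}\times\frac{\N}{V^{1/3}}\times\frac{\N}{V^{1/3}}$, containing $\ll\N^2 V^{-1/3}$ lattice points.

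Next I would split dyadically over $D=\Delta(\beta_1,\beta_2)\asymp M$, with $1\le M\ll\frac{Y}{X}\frac{V}{X}$ by \eqref{eqn:Dbdd}. For fixed $M$, \eqref{eqn:alphag1timesg2} gives $\|\gamma_1\times\gamma_2\|=D\|\hat\alpha\|\asymp\frac{MX}{V^{1/3}}$, so $\gamma_2$ lies within distance $\ll\frac{MX}{V^{2/3}}$ of the line $\mathbb R\gamma_1$ and within $\|\gamma_2\|\asymp V^{1/3}$: a cylinder of radius $\ll\frac{MX}{V^{2/3}}$ and height $\ll V^{1/3}$. Now cover the ranges of $\hat\beta_1$ and $\hat\beta_2$ by cubes $\C_1,\C_2$ of common side $S_0:=\frac{MX}{V^{2/3}}$, discarding the $\C_1$ disjoint from $\mathfrak B$ and the $\C_2$ disjoint from the cylinder. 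Because $V\ll X^{3/2}$ one has $S_0\ge X^{\tau}$, so on taking the exponent $A$ in Lemmas \ref{lem:cubedivisorsum}--\ref{lem:cubebdd} to be a large enough constant depending only on $\tau$, their hypotheses --- in particular $D\ll S_0$ and the sphere condition --- hold; one also checks $S_0\le\frac{\N}{V^{1/3}}$ throughout this range of $M$. Hence the number of surviving $\C_1$ is $\ll\frac{\N^2 V^{-1/3}}{S_0^3}$, the number of surviving $\C_2$ per $\C_1$ is $\ll\frac{V^{1/3}}{S_0}$, and \eqref{eqn:cubebddfDDelta} gives $\sum_{\hat\beta_i\in\C_i,\,D=\Delta(\beta_1,\beta_2)}|f_{\beta_1}f_{\beta_2}|\ll\frac{S_0^6}{M^2}(\log X)^{c(A)}$.

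Multiplying, the contribution of a fixed dyadic $M$ is $\ll\frac{\N^2 V^{-1/3}}{S_0^3}\cdot\frac{V^{1/3}}{S_0}\cdot\frac{S_0^6}{M^2}(\log X)^{c(A)}=\frac{\N^2 S_0^2}{M^2}(\log X)^{c(A)}=\frac{\N^2 X^2}{V^{4/3}}(\log X)^{c(A)}=XY(\log X)^{-2B+c(A)}$, on substituting $S_0=MX/V^{2/3}$ and $\N^2=\frac{Y}{X}V^{4/3}(\log X)^{-2B}$; crucially the powers of $M$ cancel. Summing over the $\ll\log X$ values of $M$ leaves $\ll XY(\log X)^{-2B+c(A)}$, and since $V\ge X^{1+\tau}$ with $\tau>\gamma$ we have $XY=\frac{Y}{X}YV\cdot\frac{X^2}{YV}\ll\frac{Y}{X}YV\cdot X^{\gamma-\tau}$, so this is $\ll\bfrac{Y}{X}YV(\log X)^{-B+c(A)}$, as claimed. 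The main obstacle is exactly step (ii): with a single cube scale (as in the sketch of \eqref{eqn:trivialT2}) the tail $\sum_D D^{-2}$ is dominated by small $D$ and the argument loses, so the cube side must be taken proportional to $D$ --- at the cost of cubes as small as $\asymp X^{\tau}$ --- and one must verify that Lemma \ref{lem:cubebdd} still applies at that scale. The conceptual input, that the discarded set is genuinely thin in $\hat\beta_1$, is supplied by Lemma \ref{lem:gamma1region}.
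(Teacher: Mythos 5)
Your approach is the same as the paper's: restrict $\gamma_1$ to the cylinder/box supplied by Lemma \ref{lem:gamma1region}, tile by cubes of side $S_0=DX/V^{2/3}$, apply Lemma \ref{lem:cubebdd}, and sum over $D$. Your verification of the side conditions ($S_0\gg X^\tau$, $D=o(S_0)$, $S_0\le\N/V^{1/3}$) is correct, and the conclusion you reach is the stated bound. However, there is a genuine arithmetic gap in the dyadic step.

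The product $\frac{\N^2 V^{-1/3}}{S_0^3}\cdot\frac{V^{1/3}}{S_0}\cdot\frac{S_0^6}{M^2}(\log X)^{c(A)}$ is, as you write it, the contribution of a \emph{single} fixed $D$ (with $D\asymp M$), not of the whole dyadic block $D\asymp M$. Summing the bound $\ll S_0^6/D^2$ of \eqref{eqn:cubebddfDDelta} over the $\asymp M$ integers $D$ in that block yields $\sum_{D\asymp M}S_0^6/D^2\asymp S_0^6/M$, not $S_0^6/M^2$. Accordingly, the per-block contribution should be $MXY(\log X)^{-2B+c(A)}$ rather than $XY(\log X)^{-2B+c(A)}$; the ``powers of $M$ cancel'' phenomenon you note is an artifact of the slip, and in fact the block sum grows with $M$. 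Summing over dyadic $M\ll\frac{YV}{X^2}$ is then dominated by the top scale, giving $\frac{Y^2V}{X}(\log X)^{-2B+c(A)}=\bfrac{Y}{X}YV(\log X)^{-2B+c(A)}$, which is exactly what the paper's proof produces (and is still $\ll\bfrac{Y}{X}YV(\log X)^{-B+c(A)}$). So the conclusion holds, but your intermediate claim $XY(\log X)^{-2B+c(A)}$ overstates what the argument provides by a factor $\asymp\frac{YV}{X^2}$, and the final inequality ``$XY\ll\bfrac{Y}{X}YV$'' is really compensating for a missing factor in the count rather than being a genuine weakening.
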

\begin{proof}

 Indeed, the condition  $|\gamma_1 \times L_2(\hat \beta_1)| \ll \bfrac{Y}{X}^{1/2} V^{2/3} (\log X)^{-B}$ along with \eqref{eqn:u1v1} implies that $\gamma_1$ is inside a cylinder with height $V^{1/3}$ and radius $\bfrac{Y}{X}^{1/2}V^{1/3}(\log X)^{-B}$.  We partition our sum over $\beta_1$ and $\beta_2$ in $\T_2$ into cubes $\C_1$ and $\C_2$ of side $S_0 = \frac{DX}{V^{2/3}}$.  We require $\ll \bfrac{Y}{X} \frac{V}{S_0^3}(\log X)^{-2B}$ cubes $\C_1$ to cover our cylinder and $\ll \frac{(DX)^2}{VS_0^3}$ cubes $\C_2$ to cover our region for $\beta_2$.  Since $D = o(S_0)$, we may apply Lemma \ref{lem:cubebdd} to see that
\begin{align*}
\sum_{D} \sum_{\substack{V< N(\beta_1), N(\beta_2) \le 2V \\ \beta_1 \neq \beta_2\\ D = \Delta(\beta_1, \beta_2)}} |f_{\beta_1}f_{\beta_2} |
&\ll \sum_D \sum_{\C_1, \C_2} \sum_{\substack{\beta_1 \in \C_1, \beta_2 \in \C_2 \\\beta_1 \neq \beta_2\\ D| \gamma_1 \times \gamma_2}} \left(\tau(\beta_1)^2 + \tau(\beta_2)^2\right)\\
&\ll \sum_D \bfrac{Y}{X} X^2 (\log S_0)^{c(A)-2B}\\
&\ll \bfrac{Y}{X} \frac{Y}{X}\frac{V}{X} X^2 (\log S_0)^{c(A)-2B}\\
&\ll \bfrac{Y}{X} YV (\log S_0)^{c(A)-2B},
\end{align*}by our bound for $D$ in \eqref{eqn:Dbdd}.  
\end{proof}

If we set $2B = c(A)+C$, then the bound in the above Lemma suffices for the bound required in \eqref{eqn:T2bdd}.  Now we would like to partition the sum over $\beta_1$ over regions in which $\gamma_1 \times L_2(\hat \beta_1)$ is approximately constant.  The following Lemma helps us achieve that.
\begin{lem}\label{lem:T2dissect}
Let $\C_1$ be a cube of side $S_0 = o\left(\frac{Y}{X}V^{1/3}\right)$.  Let 
\begin{equation}
\N = \N(\C_1) = \max \{ |\gamma_1 \times L_2(\hat \beta_1)|: \hat \beta_1 \in \C_1\}.
\end{equation}
Then either $|\gamma_1 \times L_2(\hat \beta_1)| \ll \frac{Y}{X} V^{2/3}$ or  $|\gamma_1 \times L_2(\hat \beta_1)| \asymp \N$ for all $\hat \beta_1 \in \C_1$.  Here, as before, when writing $\hat \beta_1 = (u_1, v_1, w_1)$, $\gamma_1 = (w_1, v_1, u_1)$.
\end{lem}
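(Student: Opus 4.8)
The plan is to regard the assignment $\hat\beta_1 \mapsto \gamma_1 \times L_2(\hat\beta_1)$ as a fixed quadratic map of $\mathbb{R}^3$ into itself and to show that its total variation over the small cube $\C_1$ is negligible compared with the threshold $\frac{Y}{X}V^{2/3}$. Writing $\hat\beta_1 = (u_1, v_1, w_1)$ and recalling the identity already used in the proof of Lemma \ref{lem:gamma1region},
\[
\Phi(\hat\beta_1) := \gamma_1 \times L_2(\hat\beta_1) = \left(2v_1w_1 - u_1^2,\ u_1v_1 - 2w_1^2,\ w_1u_1 - v_1^2\right),
\]
each coordinate of $\Phi$ is a homogeneous quadratic form in $u_1, v_1, w_1$; and for $\hat\beta_1 \in \C_1$ we have $\|\hat\beta_1\| \ll V^{1/3}$, since $\C_1$ is one of the cubes arising in the dissection of $\T_2$, where $\|\hat\beta_1\| \asymp V^{1/3}$ by \eqref{eqn:betanormbdd}.

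The first step is to bound the oscillation of $\Phi$ on $\C_1$. For $\hat\beta_1, \hat\beta_1' \in \C_1$ write $\hat\beta_1' = \hat\beta_1 + \mathbf{h}$ with $\|\mathbf{h}\| \ll S_0$. Expanding, $\Phi(\hat\beta_1 + \mathbf{h}) - \Phi(\hat\beta_1)$ is a sum of terms linear in $\mathbf{h}$ with coefficients $\ll \|\hat\beta_1\| \ll V^{1/3}$ together with terms $\ll \|\mathbf{h}\|^2 \ll S_0^2$, so that
\[
\left|\Phi(\hat\beta_1) - \Phi(\hat\beta_1')\right| \ll V^{1/3}S_0 + S_0^2 .
\]
Since $S_0 = o\!\left(\frac{Y}{X}V^{1/3}\right)$ and $\frac{Y}{X} < 1$, both terms on the right are $o\!\left(\frac{Y}{X}V^{2/3}\right)$; hence there is an absolute constant $C_1$ and a quantity $\varepsilon = \varepsilon(X) \to 0$ with $\left|\Phi(\hat\beta_1) - \Phi(\hat\beta_1')\right| \le C_1\,\varepsilon\,\frac{Y}{X}V^{2/3}$ for all $\hat\beta_1, \hat\beta_1' \in \C_1$.

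The second step is to deduce the dichotomy by the triangle inequality. Fix once and for all an absolute constant $C_2 \ge 2C_1$. If $\N \le C_2\,\frac{Y}{X}V^{2/3}$, then since $|\Phi(\hat\beta_1)| \le \N$ for every $\hat\beta_1 \in \C_1$ we are in the first alternative, with an absolute implied constant. Otherwise $\N > C_2\,\frac{Y}{X}V^{2/3}$, and taking $X$ large enough that $C_1\varepsilon < \tfrac12 C_2$ we obtain, for every $\hat\beta_1 \in \C_1$, that $|\Phi(\hat\beta_1)| \ge \N - C_1\varepsilon\,\frac{Y}{X}V^{2/3} \ge \tfrac12\N$, while trivially $|\Phi(\hat\beta_1)| \le \N$; hence $|\Phi(\hat\beta_1)| \asymp \N$ on all of $\C_1$, the second alternative. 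There is no genuine obstacle here: the only point requiring care is this final bookkeeping, namely to run the dichotomy through a \emph{fixed} constant $C_2$ so that the first alternative $|\gamma_1 \times L_2(\hat\beta_1)| \ll \frac{Y}{X}V^{2/3}$ carries an absolute implied constant rather than one drifting with $\C_1$, and to keep in mind that $\|\hat\beta_1\| \ll V^{1/3}$ on $\C_1$ is precisely what bounds the linear part of $\Phi$ by $\ll V^{1/3}$ and thereby drives the oscillation estimate.
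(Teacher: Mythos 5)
Your proof is correct, and it takes a more direct route than the one in the paper. The paper argues by contradiction via Lemma \ref{lem:gamma1region}: assuming some $\hat\beta\in\C_1$ has $|\gamma\times L_2(\hat\beta)|=o(\N)$, it derives the structural relations $u=2^{2/3}w(1+o(\N/V^{2/3}))$, $v=2^{1/3}w(1+o(\N/V^{2/3}))$, propagates them across the cube using $S_0 = o(\N/V^{1/3})$ (which follows from $S_0 = o((Y/X)V^{1/3})$ once $\N > (Y/X)V^{2/3}$), and concludes that the maximizer must also satisfy $|\gamma_1\times L_2(\hat\beta_1)|=o(\N)$, a contradiction. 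You instead observe directly that $\hat\beta_1\mapsto\gamma_1\times L_2(\hat\beta_1)$ is a vector of homogeneous quadratic forms, so its oscillation over a cube of side $S_0$ lying in the region $\|\hat\beta_1\|\ll V^{1/3}$ is $\ll V^{1/3}S_0 + S_0^2 = o((Y/X)V^{2/3})$, and the dichotomy then drops out of a single triangle-inequality step. Your version bypasses Lemma \ref{lem:gamma1region} entirely and is shorter, with the implied constants made explicit; the paper's version keeps Lemma \ref{lem:gamma1region} in the foreground (it is invoked again immediately after this lemma). Both arguments rely implicitly on the context, via \eqref{eqn:betanormbdd}, that $\|\hat\beta_1\|\asymp V^{1/3}$ on the cubes arising in the dissection of $\T_2$; this is not restated in the lemma's hypotheses, but you need the upper bound for the oscillation estimate, and the paper needs the lower bound $\max(|u|,|v|,|w|)\gg V^{1/3}$ to invoke Lemma \ref{lem:gamma1region}.
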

\begin{proof}
If $\N \le \frac{Y}{X} V^{2/3}$, we are done, so we assume $\N > \frac{Y}{X} V^{2/3}$.  It now suffices to show that  $|\gamma_1 \times L_2(\hat \beta_1)| \asymp \N$ for all $\hat \beta_1 \in \C_1$.

There exists some $\hat \beta_1 = (u_1, v_1, w_1) \in \C_1$ such that $|\gamma_1 \times L_2(\hat \beta_1)| = \N$.  Now suppose for the sake of eventual contradiction that there exists some other $\hat \beta = (u, v, w) \in \C_1$ such that $\gamma \times L_2(\hat \beta)  = o(\N)$ where $\gamma = (w, v, u)$.  Without loss of generality, suppose $w = \max (u, v, w) \gg V^{1/3}$.  Then by \eqref{eqn:u1v1}, we have that
\begin{align}\label{eqn:uv}
u &= 2^{2/3}w \left(1 + o\bfrac{\N}{V^{2/3}}\right), \textup{ and} \notag \\
v &= 2^{1/3} w \left(1 + o\bfrac{\N}{V^{2/3}}\right).
\end{align}Here, the reader should note that the $o$ above results from our condition $\gamma \times L_2(\hat \beta)  = o(\N)$ in place of the condition $\gamma \times L_2(\hat \beta)  = O(\N)$ appearing before \eqref{eqn:u1v1}.

Recalling that the side of $\C_1$ is $o\bfrac{\N}{V^{1/3}}$, so that 
\begin{align}\label{eqn:gammagamma1replacement}
u_1 &= u +  o\bfrac{\N}{V^{1/3}} \\
&= 2^{2/3}w \left(1 + o\bfrac{\N}{V^{2/3}}\right)\\
&=2^{2/3}w_1 \left(1 + o\bfrac{\N}{V^{2/3}}\right), \textup{ and similarly} \notag \\
v_1 &= 2^{1/3} w_1 \left(1 + o\bfrac{\N}{V^{2/3}}\right).
\end{align}  In the above, we have noted that $|w_1| = |w| + o\bfrac{\N}{V^{1/3}} \gg V^{1/3}$ since $N \ll V^{2/3}$.  We have from \eqref{eqn:gammagamma1replacement} that $|\gamma_1 \times L_2(\hat \beta_1)| = o(\N)$, a contradiction.
\end{proof}

Now by Lemmas \ref{lem:T2discard} and \ref{lem:T2dissect}, we can write
\begin{equation}
\T_2 = \sumd_{\N, D_0} \T_3(\N, D_0) + O\left(\bfrac{Y}{X} YV (\log X)^{-C} \right),
\end{equation} for any $C >0$ and where $\sumd_{\N, D_0}$ denotes a sum over powers of two satisfying the bounds $\bfrac{Y}{X}^{1/2} V^{2/3} (\log X)^{-B} \ll \N \ll V^{2/3}$, $D_0 \ll \frac{VY}{X^2}$, and
\begin{equation}
\T_3 := \T_3(\N, D_0) := \sum_{\substack{\C_1\\ \N<\N(\C_1) \le 2\N}}S(\C_1; D_0)
\end{equation}for $\N \gg \frac{Y}{X} V^{2/3}$ and the cubes $\C_1$ has side $S_0 = \frac{D_0Y}{\N (\log X)^{C_1}}$ for a parameter $C_1>0$ to be determined, and where
\begin{equation}
S(\C_1;D_0) = \sum_{\substack{V < N(\beta_1) \le 2V\\ \hat \beta_1 \in \C_1}} \sumb_{\substack{V < N(\beta_2) \le 2V \\ \beta_1 \neq \beta_2\\ D_0<\Delta(\beta_1, \beta_2) \le 2D_0}} f_{\beta_1}f_{\beta_2}
\end{equation}
where $\sumb$ denotes a sum over $\beta_2$ which satisfies \eqref{eqn:L1L2cond2}.  Note that it now suffices to show the following Proposition.
\begin{prop}\label{prop:T3bound}
With notation as above, and with the same conditions as in Proposition \ref{prop:bilinear2}, we have that for any $C>0$,
\begin{equation}
\T_3(\N, D_0) \ll_C  \frac{Y^2V}{X(\log X)^C},
\end{equation} for $\bfrac{Y}{X}^{1/2} V^{2/3} (\log X)^{-B} \ll \N \ll V^{2/3}$ and $D_0 \ll \frac{YV}{X^2}$ and where the implied constant above depends only on $C$.
\end{prop}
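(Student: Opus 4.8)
The plan is to reduce $\T_3(\N,D_0)$ to the later Propositions \ref{prop:Cf1cubes} and \ref{prop:Cf2cubes} by partitioning the $\beta_2$--sum in each $S(\C_1;D_0)$ into cubes $\C_2$ of the same side $S_0=\frac{D_0Y}{\N(\log X)^{C_1}}$ as $\C_1$, and then estimating the resulting local bilinear sum over a fixed pair $(\C_1,\C_2)$ differently according to whether $\C_1\times\C_2$ lies inside the narrow region $\mathfrak R\subset\mathbb R^6$ cut out by the conditions \eqref{eqn:L1L2cond2} or merely straddles its boundary.

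The first step is to record the geometry. Fix $\N,D_0$ and a cube $\C_1$ with $\N<\N(\C_1)\le 2\N$; throughout the range of the Proposition one has $\N\gg\frac YX V^{2/3}$ (since $(Y/X)^{1/2}$ beats $Y/X$ by a power of $X$), so Lemma \ref{lem:T2dissect} gives $\|\gamma_1\times L_2(\hat\beta_1)\|\asymp\N$ for every $\hat\beta_1\in\C_1$, and Lemma \ref{lem:gamma1region} confines the admissible cubes $\C_1$ to a cylinder of radius $\ll\N/V^{1/3}$ about a fixed line. For each such $\C_1$, Lemma \ref{lem:beta2region} places every admissible $\hat\beta_2$ into a box of dimensions $\ll V^{1/3}$, $\ll\frac YX V^{1/3}$ and $\ll\frac{D_0Y}{\N}$, depending on $\C_1$ only up to an $O(1)$ dilation. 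In our range $\N\le V^{2/3}\ll Y/(\log X)^{C_1}$ (recall $Y=X^{1-\gamma}$, $V^{2/3}\ll X^{1-\tau}$, $\gamma<\tau$), so $D_0=o(S_0)=o(V^{1/3})$ and Lemma \ref{lem:cubebdd} is available on every cube pair. Finally, as in the discussion preceding Lemma \ref{lem:cubebdd} (cf.\ pp.\ 15--16 of \cite{HB}), for each fixed $D\in(D_0,2D_0]$ the condition $D=\Delta(\beta_1,\beta_2)$ can be rewritten in the form $\hat\beta_1\equiv\lambda\hat\beta_2\bmod D$; after this, and after discarding the negligible piece covered by Lemma \ref{lem:T2discard}, $\T_3(\N,D_0)$ becomes a sum, over the cube pairs $(\C_1,\C_2)$ and the $O(\log X)$ dyadic values of $D$, of local bilinear sums $\sum_{\hat\beta_i\in\C_i,\ \hat\beta_1\equiv\lambda\hat\beta_2\,(D)}f_{\beta_1}f_{\beta_2}$ still subject to \eqref{eqn:L1L2cond2}, with the number of cube pairs controlled by a routine volume count based on the box dimensions above together with \eqref{eqn:Dbdd} and \eqref{eqn:beta1beta2}.

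Now classify. When $\C_1\times\C_2\subset\mathfrak R$ the side conditions \eqref{eqn:L1L2cond2} hold identically on the pair, and Proposition \ref{prop:Cf1cubes} bounds the local sum with an arbitrarily large power of $\log X$ saved over the trivial bound $S_0^6D_0^{-2}(\log X)^{c(A)}$ of Lemma \ref{lem:cubebdd}; its proof is where the real work lives — detecting $\hat\beta_1\equiv\lambda\hat\beta_2\bmod D$ by additive characters modulo $D$, the large-sieve bound of Lemma \ref{lem:largesieveexpbdd} when $D$ is not too small, and the Siegel--Walfisz estimate of Lemma \ref{lem:SWbdd} for small $D$. When $\C_1\times\C_2$ merely meets $\mathfrak R$, Proposition \ref{prop:Cf2cubes} applies the trivial Lemma \ref{lem:cubebdd} bound to each such pair but uses Lemma \ref{lem:boundarycounting} to show that these boundary pairs are fewer, by a large power of $\log X$, than those contained in $\mathfrak R$. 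Multiplying the per-pair bounds by the number of pairs, summing over the $O(\log X)$ dyadic $D$ and the $O((\log X)^2)$ dyadic $(\N,D_0)$ appearing in $\sumd_{\N,D_0}$, and choosing $C_1$ and the saving in Proposition \ref{prop:Cf1cubes} large in terms of $C$, yields $\T_3(\N,D_0)\ll_C\frac{Y^2V}{X(\log X)^C}$, which is the claim. (One checks that the spurious factors $(\log X)^{6C_1}$ produced by the cube count cancel against the $S_0^{-6}\asymp(\log X)^{-6C_1}$ factor, so the value of $C_1$ is immaterial beyond the constraints noted above.)

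Because the arithmetic heart is deferred to Propositions \ref{prop:Cf1cubes} and \ref{prop:Cf2cubes}, the main obstacle in the present reduction is organizational rather than analytic: the side $S_0$ (equivalently $C_1$) must be taken small enough that the geometric conditions \eqref{eqn:L1L2cond2} genuinely become conditions on the cube pair, so that Lemmas \ref{lem:beta2region}, \ref{lem:gamma1region} and \ref{lem:T2dissect} apply with their stated ranges and $D_0=o(S_0)$ makes Lemma \ref{lem:cubebdd} usable, yet the resulting bound must still survive multiplication by the total number of cube pairs. The point demanding the most care is the boundary case: $\mathfrak R$ is a thin region with a somewhat intricate boundary, and Lemma \ref{lem:boundarycounting} is precisely what guarantees that the cube pairs meeting $\partial\mathfrak R$ are negligible compared with those inside $\mathfrak R$ — without it the cube partition would not be lossless.
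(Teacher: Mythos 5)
Your overall plan matches the paper's: partition both $\beta$--sums into cubes of the common side $S_0=\frac{D_0Y}{\N(\log X)^{C_1}}$, classify cube pairs as interior to or meeting the boundary of the narrow region $\mathfrak R$, bound the per-pair contribution by Lemma \ref{lem:cubebdd}, count the pairs by the volume of the admissible region, and defer the real arithmetic to Propositions \ref{prop:Cf1cubes} and \ref{prop:Cf2cubes}. However, there is one genuine gap. You attribute the only remaining discard step to Lemma \ref{lem:T2discard}, but that lemma was \emph{already} applied before $\T_3(\N,D_0)$ was defined, in order to eliminate small values of $\|\gamma_1\times L_2(\hat\beta_1)\|$. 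The step that still must be carried out \emph{inside} the proof of Proposition \ref{prop:T3bound} is the elimination of small $\|\gamma_1\times\gamma_2\|$, equivalently of small $D_0$: one shows that the portion with $\|\gamma_1\times\gamma_2\|\le \frac YX\frac{V^{2/3}}{H}$ contributes $\ll\frac{Y^2V}{XH}(\log X)^c$ and chooses $H$ a power of $\log X$. This is not cosmetic. Without the resulting lower bound $D_0\gg\frac1H\frac{VY}{X^2}$, Lemma \ref{lem:S0lowerbdd} cannot be invoked to deduce $S_0\gg V^{1/3-5/36+\epsilon}$ (and $S_0/D_0\gg X^\epsilon$), and that lower bound on $S_0$ is precisely the hypothesis in the Siegel--Walfisz estimate Lemma \ref{lem:SWbdd} that underwrites Proposition \ref{prop:Cf1cubes}. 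Your remark that the boundary case (Proposition \ref{prop:Cf2cubes}) is the most delicate point is also backwards: that part is handled by the essentially soft Lemma \ref{lem:boundarycounting}, while the technical heart of the whole argument is the interior case, specifically Lemma \ref{lem:SWbdd} and the Hecke Grossencharacter analysis behind it.

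Two smaller omissions are worth fixing as well. First, Propositions \ref{prop:Cf1cubes} and \ref{prop:Cf2cubes} carry the restriction $\C_1\neq\C_2$, so the diagonal pairs $\C_1=\C_2$ must be bounded separately (the paper does this directly with Lemma \ref{lem:cubebdd}, exploiting $D_0\ll S_0$ and $(Y/X)^2$ as the power-saving factor). Second, when you describe the box for $\hat\beta_2$ as ``depending on $\C_1$ only up to an $O(1)$ dilation,'' you are implicitly using $\|\gamma_1\times\gamma_2\|\asymp D_0X/V^{1/3}$, which is fine, but you should make explicit that the condition $D_0\ll S_0$ (needed for Lemma \ref{lem:cubebdd}) does hold throughout, via $\N\ll V^{2/3}\ll X^{1-\tau}$ and $\tau>\gamma$.
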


\section{Proof of Proposition \ref{prop:T3bound} - reduction to cubes} \label{sec:reductiontocubes}
With notation as the last section, for $\hat \beta_1 \in \C_1$, $\N(\C_1) \asymp \N$, we have from Lemma \ref{lem:beta2region} that $\hat \beta_2$ is restricted to be in a rectangular box of dimensions $\asymp V^{1/3}, \frac{D_0Y}{\|\gamma_1 \times \gamma_2\|}$ and $\frac{D_0Y}{\N}$ respectively.  

Moreover, by Lemma \ref{lem:gamma1region}, the region for $\hat \beta_1$ is restricted to be in a rectangle with dimensions bounded by $\N/V^{1/3}, \N/V^{1/3}$, and $V^{1/3}$.  Indeed, writing $\hat \beta_1 = (u_1, v_1, w_1)$ and assuming without loss of generality that $|w_1| \gg V^{1/3}$ gives the claim immediately.

Since $\N \gg \frac{Y}{X} V^{2/3} (\log X)^{-B}$, we have $\frac{D_0Y}{\N} \ll \frac{D_0X}{V^{2/3}} (\log X)^{B} \ll \bfrac{Y}{X} V^{1/3} (\log X)^B \ll V^{1/3}$.  We therefore partition the region for $\hat \beta_2$ into cubes of side $S_0 = \frac{D_0Y}{\N \log^{C_1}X}$ as well.  We have already partitioned the sum over $\hat \beta_1$ into a sum over cubes of side $S_0$, and we further demand for convenience that we use the exact same partition for the sum over both $\hat \beta_1$ and $\hat \beta_2$. \footnote{Note that there could be more than one way to cover the same region with cubes.}  We do this in order to ensure that for $\hat \beta_1 \in \C_1$, $\hat \beta_2 \in \C_2$, $\hat \beta_1 = \hat \beta_2 \Rightarrow \C_1 = \C_2$.

The condition on $V$ from Proposition \ref{prop:bilinear2} gives $V^{2/3} \ll X^{(1-\tau)}$, so $\N \ll V^{2/3} \ll X^{1-\tau} = Y X^{\gamma - \tau}$ where recall $\tau > \gamma$.  Thus, 
\begin{equation}\label{eqn:D0S0}
D_0 \ll S_0
\end{equation} and hence results like Lemma \ref{lem:cubebdd} apply.  For the sake of clarity, note that the number of cubes required to cover all $\beta_1$ appearing in the sum $\T_3(\N)$ is of size $\frac{\N^2}{V^{1/3} S_0^3} \asymp \frac{\N^5 (\log X)^{3C_1}}{D_0^3Y^3 V^{1/3}}$, while for fixed $\beta_1$, the number of cubes required for $\beta_2$ is of size $\frac{V^{1/3} D_0^2 Y^2}{\|\gamma_1 \times \gamma_2\| \N S_0^3} = \frac{V^{1/3} D_0X}{\|\gamma_1 \times \gamma_2\| S_0^2}(\log X)^{C_1}$.  

Generically, $\|\gamma_1 \times \gamma_2\| \asymp \frac{Y}{X}V^{2/3}$, and we now discard those parts of the sum which do not satisfy this.

\begin{lem}\label{lem:T3'bdd}
	Let $\T_3'(\N, D_0)$ be that part of $\T_3(\N, D_0)$ satisfying $\|\gamma_1 \times \gamma_2\| \leq \frac{Y}{X} \frac{V^{2/3}}{H}$.  Then there exists some absolute constant $c$ such that
	\begin{equation}
		\T_3' \ll \frac{Y^2 V (\log X)^c}{XH}
	\end{equation} 
\end{lem}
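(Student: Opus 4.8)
The plan is to bound $\T_3'$ by crudely counting its summands, via a cube decomposition together with Lemma~\ref{lem:cubebdd}, in exactly the manner of the trivial estimate \eqref{eqn:trivialT2} and of Lemma~\ref{lem:T2discard}. The one new ingredient is that the defining constraint $\|\gamma_1\times\gamma_2\|\le\frac YX\frac{V^{2/3}}{H}$ of $\T_3'$ simultaneously forces $D_0$ to be a factor $H$ smaller than in \eqref{eqn:Dbdd} and squeezes $\gamma_2$ into a thinner box around $\gamma_1$; carried through the count this produces precisely the required $1/H$ saving.

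First I would record the consequences of lying in $\T_3'$. By \eqref{eqn:alphag1timesg2}, $\gamma_1\times\gamma_2=\pm\Delta(\beta_1,\beta_2)\hat\alpha$ with $\hat\alpha$ primitive, so $\|\gamma_1\times\gamma_2\|=\Delta(\beta_1,\beta_2)\|\hat\alpha\|\asymp D_0\,X V^{-1/3}$ by \eqref{eqn:alphanormbdd} and $D_0<\Delta(\beta_1,\beta_2)\le 2D_0$; the hypothesis $\|\gamma_1\times\gamma_2\|\le\frac YX\frac{V^{2/3}}{H}$ then forces
\[
D_0\ll\frac YX\frac VX\frac1H,
\]
so $\T_3'$ is empty unless this holds (and $D_0\ge\tfrac12$, since $\Delta(\beta_1,\beta_2)\ge1$ on the support). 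Next, fix a cube $\C_1$ with $\N<\N(\C_1)\le 2\N$; since $\N\gg\frac YXV^{2/3}$, Lemma~\ref{lem:T2dissect} gives $|\gamma_1\times L_2(\hat\beta_1)|\asymp\N$ for every $\hat\beta_1\in\C_1$, whence Lemma~\ref{lem:gamma1region} (after permuting coordinates so that the dominant one is $\gg V^{1/3}$) confines $\hat\beta_1$ to a region of volume $\ll\N^2 V^{-1/3}$. Fixing $\hat\beta_1\in\C_1$ and decomposing $\gamma_2$ along the orthogonal frame $\{\gamma_1,\ \gamma_1\times L_2(\hat\beta_1),\ \gamma_1\times(\gamma_1\times L_2(\hat\beta_1))\}$: the $\gamma_1$-component has length $\ll\|\gamma_2\|\ll V^{1/3}$ by \eqref{eqn:betanormbdd}; the $\gamma_1\times L_2(\hat\beta_1)$-component has length $\ll D_0Y/\N$, since $\gamma_2\cdot(\gamma_1\times L_2(\hat\beta_1))\ll\Delta(\beta_1,\beta_2)Y\ll D_0Y$ as in \eqref{eqn:gamma2cond2} while $\|\gamma_1\times L_2(\hat\beta_1)\|\asymp\N$; and the last component has length at most $\|\gamma_1\times\gamma_2\|/\|\gamma_1\|\ll\frac YX\frac{V^{1/3}}{H}$, a bound sharper by a factor $H$ than the corresponding dimension $\asymp\frac YXV^{1/3}$ appearing in Lemma~\ref{lem:beta2region}, and this is where the gain resides. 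Thus $\hat\beta_2$ lies in a rectangular box of volume $\ll\frac{D_0 V^{2/3}Y^2}{X\N H}$, and since as $\hat\beta_1$ varies over $\C_1$ this frame moves only by $O(S_0)$ and (as noted before the Lemma) $S_0$ is smaller than every side, the same box serves, up to a bounded factor, for all $\hat\beta_1\in\C_1$.

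Then I would run the count with cubes of the prescribed side $S_0=\frac{D_0Y}{\N(\log X)^{C_1}}$: the number of relevant cubes $\C_1$ is $\ll\N^2 V^{-1/3}S_0^{-3}$, for each of them the number of cubes $\C_2$ meeting the box above is $\ll 1+\frac{D_0 V^{2/3}Y^2}{X\N HS_0^3}$, and, as $D_0\ll S_0$, \eqref{eqn:cubebddfDDelta} of Lemma~\ref{lem:cubebdd} bounds the inner double sum over any such pair by $S_0^6D_0^{-2}(\log X)^c$. Multiplying these,
\[
\T_3'(\N,D_0)\ll\Big(\frac{\N^2 S_0^3}{V^{1/3}D_0^2}+\frac{\N V^{1/3}Y^2}{XHD_0}\Big)(\log X)^{c};
\]
the second term is $\ll\frac{Y^2V}{XH}(\log X)^c$ using $\N\le V^{2/3}$ and $D_0\ge\tfrac12$, and the first is, using $S_0\le D_0Y/\N$, $\N\gg\frac YXV^{2/3}$, the displayed bound on $D_0$, and $Y\ll V$, at most $\frac{D_0Y^3}{\N V^{1/3}}(\log X)^c\ll\frac{D_0Y^2X}{V}(\log X)^c\ll\frac{Y^3}{XH}(\log X)^c\ll\frac{Y^2V}{XH}(\log X)^c$. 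Since the relevant cubes have coordinates $\ll V^{1/3}\ll X^{1/2}$ and side $S_0\gg X^{\tau-\gamma}(\log X)^{-C_1}$, the ambient-ball exponent in Lemmas~\ref{lem:cubebdd}, \ref{lem:gamma1region} and~\ref{lem:T2dissect} may be taken to be a constant depending only on $\tau,\gamma$, so $c$ is absolute, proving the Lemma.

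I expect the only delicate point to be the geometric input of the middle paragraph: one must be careful that in $\T_3'$ the direction of $\gamma_2$ transverse to the $(\gamma_1,\gamma_1\times L_2(\hat\beta_1))$-plane is genuinely squeezed down to $\frac YX\frac{V^{1/3}}{H}$ rather than the $\frac YXV^{1/3}$ of the generic bound in Lemma~\ref{lem:beta2region}, and that all three box dimensions dominate $S_0$ so that the cube count is clean. After that the argument is the same bookkeeping as in the proof of Lemma~\ref{lem:T2discard}, with no new analytic input.
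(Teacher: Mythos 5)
Your proof is correct and follows the same route as the paper's own: the region of volume $\asymp\N^2 V^{-1/3}$ for $\hat\beta_1$ coming from Lemma \ref{lem:gamma1region}, the box of dimensions $V^{1/3}$, $D_0Y/\N$, $\frac{Y}{X}V^{1/3}/H$ for $\hat\beta_2$ (your direct bound $\|\gamma'\|\le\|\gamma_1\times\gamma_2\|/\|\gamma_1\|$ on the third component being exactly what the paper phrases as intersection with the cylinder of radius $\frac{Y}{X}V^{1/3}/H$), a cover by cubes of side $S_0$, and an application of Lemma \ref{lem:cubebdd}. One minor bookkeeping slip worth flagging: since $\Delta(\beta_1,\beta_2)$ ranges over the whole dyadic interval $(D_0,2D_0]$, \eqref{eqn:cubebddfDDelta} must be summed over $D$ in that range, which gives $\ll S_0^6 D_0^{-1}(\log X)^c$ per pair of cubes rather than $S_0^6 D_0^{-2}$; carrying the extra factor of $D_0$ through your display turns the first term into $\frac{\N^2 S_0^3}{V^{1/3}D_0}\ll\frac{D_0^2 Y^3}{\N V^{1/3}}\ll\frac{Y^4 V}{X^3 H^2}\ll\frac{Y^2 V}{XH}$ (now using $Y^2\ll X^2 H$ in the last step rather than $Y\ll XH$) and the second into $\frac{\N V^{1/3}Y^2}{XH}\ll\frac{Y^2 V}{XH}$, so the conclusion is unchanged.
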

\begin{proof}
	Indeed, for fixed $\beta_1$, we have already that $\hat \beta_2$ is restricted in a rectangular box of dimensions $\asymp V^{1/3}, \frac{D_0X}{\|\gamma_1 \times \gamma_2\|}$ and $\frac{D_0Y}{\N}$ respectively, where the first side of length $V^{1/3}$ is parallel to $\hat \beta_1$.  Moreover, the condition  $\|\gamma_1 \times \gamma_2\| \leq \frac{Y}{X} \frac{V^{2/3}}{H}$ implies that $\hat \beta_2$ is also restricted to be in a cylinder of height $V^{1/3}$ parallel to $\hat \beta_1$ and radius bounded by $\frac{Y}{X} \frac{V^{1/3}}{H}$.  We therefore have that for fixed $\beta_1$, $\hat{\beta_2}$ is restricted in a rectangular box of dimensions $\asymp V^{1/3}, \frac{Y}{X} \frac{V^{2/3}}{H}$ and $\frac{D_0Y}{\N}$ respectively

By \eqref{eqn:alphag1timesg2} and \eqref{eqn:alphanormbdd}, we have that 
$$\|\gamma_1\times \gamma_2\| = \|\hat{\alpha}\| |\Delta(\beta_1, \beta_2| \gg \frac{X}{V^{1/3}},$$ 
so $H \ll \frac{YV}{X^2}$, and the radius above $\frac{Y}{X} \frac{V^{1/3}}{H} \gg \frac{X}{V^{2/3} } \gg X^{\tau}$ upon recalling $V\ll X^{3/2(1-\tau)}$.  Moreover, the dimension $\frac{D_0 Y}{N} \gg \frac{Y}{V^{2/3}} \gg X^{\tau - \gamma}$, using that $Y = X^{1-\gamma}$ and again that $V \ll X^{3/2(1-\tau)}$.  Recalling that $\tau > \gamma$ as defined in \eqref{eqn:tau0}, we see that there exists $S_0' = X^{\epsilon_0}$ for some fixed $\epsilon_0 > 0$ such that $S_0' \le \min\{V^{1/3}, \frac{Y}{X} \frac{V^{2/3}}{H},\frac{D_0Y}{\N} \}$.
	
We now split the regions for $\hat \beta_1$ and $\hat \beta_2$ into cubes of side $S_0'$.  Previously, we have already noted that for fixed $\beta_1$, $\hat{\beta_2}$ is restricted to be in a rectangular region with sides of size $V^{1/3}, \frac{Y}{X} \frac{V^{2/3}}{H},\frac{D_0Y}{\N}$.  Due to the bound on $S_0'$, we see that if we fix a cube $\C_1$ of side $S_0'$, then the restriction $\hat{\beta_1} \in \C_1$ restricts $\hat{\beta_2}$ to be in a (somewhat larger) rectangular region with sides of size $V^{1/3}, \frac{Y}{X} \frac{V^{2/3}}{H},\frac{D_0Y}{\N}$.  Note here that the restriction on $\hat{\beta_2}$ now no longer depends on a fixed $\beta_1$, but merely the cube $\C_1$.

We then bound the contribution of each cube using Lemma \ref{lem:cubebdd} and see that
	\begin{align*}
	\T'_3(\N, D_0) 
	&\ll \sum_{D<\frac{Y}{X} \frac{V}{X}}\frac{\N^2}{V^{1/3} S_0^3} V^{1/3} \frac{Y V^{1/3}}{X H}\frac{DY}{\N} \frac{1}{ S_0^3} \frac{S_0^6}{D^2} (\log S_0)^{c(A)+C_1}\\
	&\ll  \frac{Y^2 V}{XH}(\log X)^{c(A)+C_1+1},
	\end{align*}upon using that $\N \ll V^{2/3}$.
	
\end{proof}

Picking $H = (\log X)^{c(A)+C_1+C+1}$, we may now assume that $\|\gamma_1 \times \gamma_2\| \gg \frac{Y}{X} \frac{V^{2/3}}{H}$ for the purpose of proving Proposition \ref{prop:T3bound}.  Moreover, note that this is equivalent to 
\begin{equation}\label{eqn:D0lowerbdd}
D_0 \gg \frac{1}{H} \frac{VY}{X^2},
\end{equation}so we will use these two conditions interchangeably. Indeed, \eqref{eqn:D0lowerbdd} follows immediately from
$$\frac{X}{V^{1/3}} \asymp \|\alpha\| \asymp \frac{\|\gamma_1 \times \gamma_2\|}{D_0},
$$which is a consequence of \eqref{eqn:alphag1timesg2} and \eqref{eqn:alphanormbdd}.

Later on, we will need to use that the side of our cubes $S_0$ is not too small.  For this purpose, we record the following Lemma.

\begin{lem}\label{lem:S0lowerbdd}
	For $D_0 \gg \frac{1}{H} \frac{VY}{X^2}$ as in \eqref{eqn:D0lowerbdd}, and with notation as above, we have that
	$$S_0 \gg V^{1/3-\epsilon}X^{-2\gamma}.
	$$
Recalling that $\gamma < \frac{5}{67}$ and $V \gg X^{1+\tau}$ for $\tau > \gamma$, we also have that
	$$S_0 \gg V^{\frac 13 - \frac{5}{36} +\epsilon}
	$$Finally, we have that
	$$S_0/D_0 \gg X^\epsilon.
	$$
\end{lem}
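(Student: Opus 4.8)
The plan is to unwind the definition $S_0 = \frac{D_0 Y}{\N(\log X)^{C_1}}$ and substitute the bounds already available for $D_0$, $\N$, $Y$ and $V$. First I would use the lower bound \eqref{eqn:D0lowerbdd}, namely $D_0 \gg \frac{1}{H}\frac{VY}{X^2}$ with $H$ a fixed power of $\log X$, together with the upper bound $\N \ll V^{2/3}$, to obtain
\begin{equation*}
S_0 = \frac{D_0 Y}{\N (\log X)^{C_1}} \gg \frac{1}{H(\log X)^{C_1}} \cdot \frac{VY}{X^2} \cdot \frac{Y}{V^{2/3}} = \frac{V^{1/3} Y^2}{X^2 H (\log X)^{C_1}}.
\end{equation*}
Since $Y = X^{1-\gamma}$ we have $Y^2/X^2 = X^{-2\gamma}$, and because $V \gg X^{1+\tau}$ the fixed power of $\log X$ sitting in the denominator is absorbed by $V^\epsilon$; this yields the first claimed bound $S_0 \gg V^{1/3-\epsilon} X^{-2\gamma}$.

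For the second bound I would convert $X^{-2\gamma}$ into a power of $V$ using $V \gg X^{1+\tau}$, i.e. $X \ll V^{1/(1+\tau)}$, so that $X^{-2\gamma} \gg V^{-2\gamma/(1+\tau)}$ and hence $S_0 \gg V^{1/3 - 2\gamma/(1+\tau) - \epsilon}$. The only thing to verify is the numerical inequality $\frac{2\gamma}{1+\tau} < \frac{5}{36}$. Since $\tau > \gamma$ we have $\frac{2\gamma}{1+\tau} < \frac{2\gamma}{1+\gamma}$, and the right-hand side is increasing in $\gamma$; at the extreme value one computes $\frac{2(5/67)}{1+5/67} = \frac{10/67}{72/67} = \frac{5}{36}$, so the hypothesis $\gamma < \frac{5}{67}$ forces $\frac{2\gamma}{1+\tau} < \frac{5}{36}$ strictly. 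The resulting gap $\frac{5}{36} - \frac{2\gamma}{1+\tau}$ is a fixed positive constant, which dominates the $V^{\epsilon}$ loss for $\epsilon$ small enough, giving $S_0 \gg V^{1/3 - 5/36 + \epsilon}$.

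For the last bound, observe $S_0/D_0 = \frac{Y}{\N (\log X)^{C_1}}$, and from $V \ll X^{3/2(1-\tau)}$ we get $\N \ll V^{2/3} \ll X^{1-\tau}$; combined with $Y = X^{1-\gamma}$ this gives $S_0/D_0 \gg \frac{X^{1-\gamma}}{X^{1-\tau}(\log X)^{C_1}} = \frac{X^{\tau-\gamma}}{(\log X)^{C_1}} \gg X^\epsilon$, since $\tau - \gamma > 0$ is a fixed constant (by \eqref{eqn:tau0}) that beats any fixed power of $\log X$, provided $\epsilon$ is taken small. I expect no genuine obstacle here: the entire argument is bookkeeping with the explicit shapes of $D_0$, $\N$, $Y$ and $V$, and the single non-routine point is the arithmetic inequality $\frac{2\gamma}{1+\gamma} \le \frac{5}{36}$, which is exactly the place where the restriction $\gamma < \frac{5}{67}$ enters.
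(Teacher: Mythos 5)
Your proof is correct and follows essentially the same route as the paper: unwind $S_0 = \frac{D_0 Y}{\N (\log X)^{C_1}}$, insert the bounds on $D_0$, $\N$, $Y$ and $V$, and reduce the second claim to the numerical inequality $\frac{2\gamma}{1+\tau}<\frac{5}{36}$, which the paper phrases equivalently as $\gamma<\frac{5}{72}(1+\tau)$. Both reductions come down to the same arithmetic, $67\gamma<5$, so there is no substantive difference.
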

\begin{proof}
	We have that
	\begin{align*}
	S_0 &= \frac{D_0Y}{\N (\log X)^{C_1}} \\
	&\gg \frac{1}{H \log^{C_1}X} \frac{Y^2}{X^2} \frac{V}{\N}.
	\end{align*}
	Recalling that $H$ is a power of $\log X$, and $\N \ll V^{2/3}$, we see that
	$$S_0 \gg \bfrac{Y}{X}^2 V^{1/3-\epsilon} = V^{1/3-\epsilon} X^{-2\gamma}.
	$$The second claims follows from $\gamma < \frac{5}{72} (1+\gamma) < \frac{5}{72} (1+\tau)$ so that $X^{2 \gamma} < V^{5/36 + \epsilon}.$ 
	
	Finally, the last claim follows from
	$$\frac{S_0}{D_0} \gg \frac{Y}{\N (\log X)^{C_1}} \gg \frac{X^{1-\gamma}}{V^{2/3} (\log X)^{C_1}} \gg \frac{X^{1-\gamma}}{X^{1-\tau} (\log X)^{C_1}},
	$$which suffices since $\gamma < \tau$.

\end{proof}

Recall that
\begin{equation}
\T_3(\N, D_0) =\sum_{\substack{\C_1\\ \N<\N(\C_1) \le 2\N}}\sum_{\C_2} S(\C_1, \C_2; D_0),
\end{equation}where
\begin{equation}
S(\C_1, \C_2; D_0) =  \sum_{\substack{V < \beta_1 \le 2V\\ \beta_1 \in \C_1}} \sumb_{\substack{V < N(\beta_2) \le 2V \\ \beta_2 \in \C_2\\ \beta_1 \neq \beta_2\\ D_0< \Delta(\beta_1, \beta_2) \le 2D_0}} f_{\beta_1}f_{\beta_2}.
\end{equation}
Now let $\Cf_1(\C_1, \C_2)$ denote the condition that all $(\hat \beta_1, \hat \beta_2) \in \C_1 \times \C_2$ satisfy $V < N(\beta_1), N(\beta_2) \le 2V$, $\beta_1\neq \beta_2$, and \eqref{eqn:L1L2cond2} with $\alpha$ as in \eqref{eqn:alphag1timesg2}.  Let $\Cf_2(\C_1, \C_2)$ denote the condition that there exists some $(\hat \beta_1, \hat \beta_2) \in \C_1 \times \C_2$ satisfying $V < N(\beta_1), N(\beta_2) \le 2V,$ and \eqref{eqn:L1L2cond2}, and some $(\beta_1', \beta_2') \in \C_1\times \C_2$ not satisfying $V < N(\beta_1'), N(\beta_2') \le 2V,$ or not satisfying \eqref{eqn:L1L2cond2}.  

If neither $\Cf_1(\C_1, \C_2)$ nor $\Cf_2(\C_1, \C_2)$ hold, then $\C_1, \C_2$ does not contribute to the sum $\T_3$. We will also eliminate that part of the sum arising from $\C_1 = \C_2$.  We had previously demanded that the set of cubes $\C_1$ is the same set as the set of cubes $\C_2$, so this will imply that $\beta_1 \neq \beta_2$ for all $\beta_1 \in \C_1$ and $\beta_2 \in \C_2$, so we will drop that condition later.  The portion of the sum with $\C_1 = \C_2$ contributes to $\T_4(\N)$ a total bounded by
\begin{align*}
&\ll \sum_{D_0 < D \le 2D_0} \sum_{\C} \sum_{\substack{\beta_1, \beta_2 \in \C\\ D|\Delta(\beta_1, \beta_2)}}|f_{\beta_1} f_{\beta_2}|\\
&\ll \sum_{D_0 < D \le 2D_0} \frac{\N^2}{V^{1/3} S_0^3} \frac{S_0^6}{D^2}(\log S_0)^{c(A)}\\
&\ll D_0^2 \frac{Y^3}{V^{1/3} \N \log^{3C_1} X} (\log S_0)^{c(A)}\\
&\ll \bfrac{Y}{X}^2 \bfrac{V}{X}^2 \frac{Y^2 X}{V} (\log X)^{c(A)+B - 3C_1}\\
&\ll \bfrac{Y}{X}^2 \frac{VY^2}{X} (\log X)^{c(A) +B  - 3C_1}, 
\end{align*}where we have used that $D_0 \ll \frac{Y}{X} \frac{V}{X}$ and $\N \gg \frac{Y}{X} V^{2/3}$.  Recalling that $Y= X^{1-\gamma}$ for $\gamma >0$, the above is
$$\ll \frac{VY^2}{X (\log X)^C}
$$for any $C>0$, so this bound suffices\footnote{As one would expect, this contribution can be made smaller by taking smaller cubes.  Even if $\gamma= 0$, we may simply chose $C_1$ to be sufficiently large.}   for the bound in Proposition \ref{prop:T3bound}. In the above, we have used Lemma \ref{lem:cubebdd}, and have used that $D \asymp D_0 \ll S_0$ by \eqref{eqn:D0S0}.  Now we see that Proposition \ref{prop:T3bound} follows from the two Propositions below.

\begin{prop}\label{prop:Cf1cubes}
For all $C>0$,
\begin{equation}
\sum_{\substack{ \C_1 \neq \C_2\\ \Cf_1(\C_1, \C_2)}} S(\C_1, \C_2; D_0) \ll \frac{VY^2}{X(\log X)^C}.
\end{equation}
\end{prop}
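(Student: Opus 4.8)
The plan is to exploit the defining property of $\Cf_1(\C_1,\C_2)$: it forces \emph{every} pair $(\hat\beta_1,\hat\beta_2)\in\C_1\times\C_2$ to satisfy $V<N(\beta_i)\le 2V$ together with the linear conditions \eqref{eqn:L1L2cond2}, so all of these may be dropped from the inner summation; and since $\C_1\neq\C_2$ and (by the construction in \S\ref{sec:reductiontocubes}) $\beta_1$ and $\beta_2$ run over the same partition, the constraint $\beta_1\neq\beta_2$ holds automatically. Hence
\[
\sum_{\substack{\C_1\neq\C_2\\\Cf_1(\C_1,\C_2)}}S(\C_1,\C_2;D_0)=\sum_{\substack{\C_1\neq\C_2\\\Cf_1(\C_1,\C_2)}}\ \sum_{D_0<D\le 2D_0}\ \sum_{\substack{\hat\beta_1\in\C_1,\ \hat\beta_2\in\C_2\\\Delta(\beta_1,\beta_2)=D}}f_{\beta_1}f_{\beta_2},
\]
a clean bilinear sum with no surviving arithmetic conditions on the $\beta_i$ other than the one involving $\Delta$. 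By standard M\"obius and divisor manipulations (cf.\ \S3 of \cite{HB}), and since $\Delta(\beta_1,\beta_2)\mid\gamma_1\times\gamma_2$, the condition $\Delta=D$ may be traded for a divisibility $E\mid\gamma_1\times\gamma_2$ with $E\asymp D_0$ and coefficients of size $O(\tau(E))$, the accompanying error being controlled by Lemma \ref{lem:cubebdd} (applicable since $D_0\ll S_0$ by \eqref{eqn:D0S0}). Using primitivity of $\gamma_1,\gamma_2$ this divisibility becomes a single congruence $\hat\beta_1\equiv\lambda\hat\beta_2\pmod E$ for a uniquely determined $\lambda\bmod E$, since $\gamma_i$ is a coordinate permutation of $\hat\beta_i$.

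I would then detect the congruence with additive characters modulo $E$, writing the indicator of $\hat\beta_1\equiv\lambda\hat\beta_2\ (\bmod\ E)$ as $E^{-3}\sum_{\mathbf a\bmod E}e\!\left(\tfrac{\mathbf a\cdot(\hat\beta_1-\lambda\hat\beta_2)}{E}\right)$, which factors the double sum into twisted one-variable sums $W(\C;E,\mathbf a)=\sum_{\hat\beta\in\C}f_\beta\,e\!\left(\tfrac{\mathbf a\cdot\hat\beta}{E}\right)$. The argument then splits according to the reduced modulus $q=E/(E,\mathbf a)$ of the frequency. When $q$ is small — in particular when $\mathbf a=\mathbf 0$, so $q=1$ — the sum $W(\C;E,\mathbf a)$ is, up to a twist by a character of conductor $q\le(\log X)^{A_0}$, just $f_\beta$ summed over the cube $\C$, and the Siegel--Walfisz type estimate of Lemma \ref{lem:SWbdd} furnishes a saving of an arbitrary power of $\log X$; here the lower bound $S_0\gg V^{\frac13-\frac5{36}+\epsilon}$ from Lemma \ref{lem:S0lowerbdd}, which keeps the cubes comfortably above the threshold $L^2$, is precisely what makes that lemma applicable. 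When $q$ is large I would apply a Cauchy--Schwarz splitting over $(\C_1,\C_2)$ followed by the large-sieve bound of Lemma \ref{lem:largesieveexpbdd} for the sums $W(\C;E,\mathbf a)$; combined with the pointwise bound $f_\beta\ll\tau(\beta)\log X$, the divisor-moment bound of Lemma \ref{lem:cubebdd}, the restriction $\|\gamma_1\times\gamma_2\|\gg\frac{Y}{X}\frac{V^{2/3}}{H}$ and the box descriptions in Lemmas \ref{lem:beta2region} and \ref{lem:gamma1region}, this extracts the cancellation present in $f_\beta$.

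Assembling, one sums over the dyadic range $E\asymp D_0$, over $\mathbf a\bmod E$, over $\lambda\bmod E$, and over the cube pairs. A bare count of the summands (via Lemma \ref{lem:cubebdd} with no cancellation, summed over $D\asymp D_0$) already produces the bound $\ll\frac{VY^2}{X}(\log X)^{c}$, which is the target size up to a power of $\log X$; thus every outer summation above is affordable provided the arithmetic input — Siegel--Walfisz in the small-$q$ range, the large sieve in the large-$q$ range — contributes a net saving of a sufficiently large power of $\log X$. The main obstacle I anticipate lies in the large-$q$ case: one must arrange the application of Lemma \ref{lem:largesieveexpbdd} so that the number of available harmonics is comparable to the length of the relevant sums $W(\C;E,\mathbf a)$ on average over $E$ and $\mathbf a$, which requires carrying the coupled scales $\N$, $D_0$, $S_0$ fixed in \S\ref{sec:reductiontocubes} through the estimate with care. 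The deeper ingredient, the short-interval-strength Siegel--Walfisz bound of Lemma \ref{lem:SWbdd}, is established in the later sections and is used here as a black box.
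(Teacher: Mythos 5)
Your proposal matches the paper's argument in all essentials: the paper truncates the Möbius sum converting $\Delta=D$ to $dD\mid\Delta$ at $d_0=(\log X)^{C+c(A)}$, detects the resulting congruence $\hat\beta_1\equiv\lambda\hat\beta_2\ (\bmod\ dD)$ with additive characters, applies Cauchy--Schwarz to pass to $\sum_{\mathbf b\bmod q}|\fS(\mathbf b/q)|^2$ with $\mathbf a/(dD)=\mathbf b/q$ in lowest terms, and then splits at $q\le Q_0=(\log X)^R$ into a Siegel--Walfisz range handled by Lemma~\ref{lem:SWbdd} and a large-$q$ range handled by the large sieve of Lemma~\ref{lem:largesieveexpbdd}, exactly as you outline. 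The only organizational difference is that the paper applies Cauchy--Schwarz uniformly before the $q$-split rather than only in the large-$q$ case, but this is a cosmetic rearrangement and does not change the substance of the argument.
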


\begin{prop}\label{prop:Cf2cubes}
There exists some constant $c$ such that,
\begin{equation}
\sum_{\substack{ \C_1 \neq \C_2\\ \Cf_2(\C_1, \C_2)}} S(\C_1, \C_2; D_0) \ll \frac{VY^2}{X(\log X)^{C_1 - c}}.
\end{equation}
\end{prop}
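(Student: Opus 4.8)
The plan is to dispatch each term $S(\C_1,\C_2;D_0)$ in the sum by the trivial bound coming from Lemma~\ref{lem:cubebdd}, and then to argue that the cube pairs satisfying $\Cf_2$ form only a $\ll(\log X)^{-C_1}$ proportion of the cube pairs that can contribute to $\T_3(\N,D_0)$ at all. Since summing the trivial bound over \emph{all} contributing cube pairs will turn out to be $\ll\frac{VY^2}{X}(\log X)^{c(A)}$ — essentially the computation already carried out in the proof of Lemma~\ref{lem:T3'bdd} and in the $\C_1=\C_2$ estimate preceding this Proposition — the extra factor $(\log X)^{-C_1}$ then yields the claimed bound with $c=c(A)$ absolute.

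First I would establish the per–cube-pair bound. For $\C_1\neq\C_2$, every $(\beta_1,\beta_2)$ counted in $S(\C_1,\C_2;D_0)$ satisfies $D_0<\Delta(\beta_1,\beta_2)\le 2D_0$ with $\Delta(\beta_1,\beta_2)\mid\gamma_1\times\gamma_2$, and $D_0\ll S_0$ by~\eqref{eqn:D0S0}; summing~\eqref{eqn:cubebddfDDelta} dyadically over $D_0<D\le 2D_0$ gives $|S(\C_1,\C_2;D_0)|\ll\frac{S_0^6}{D_0}(\log X)^{c(A)}$. By Lemma~\ref{lem:gamma1region} the vectors $\hat\beta_1$ contributing to $\T_3(\N,D_0)$ lie in a box of volume $\asymp\N^2V^{-1/3}$, and by Lemma~\ref{lem:beta2region}, for each such $\hat\beta_1$ the vectors $\hat\beta_2$ lie in a box of volume $\asymp V^{1/3}\cdot\frac{D_0Y}{\|\gamma_1\times\gamma_2\|}\cdot\frac{D_0Y}{\N}$; using $\|\gamma_1\times\gamma_2\|=\Delta\,\|\hat\alpha\|\asymp\frac{D_0X}{V^{1/3}}$ from~\eqref{eqn:alphag1timesg2} and~\eqref{eqn:alphanormbdd}, this last volume is $\asymp V^{1/3}\cdot\frac{YV^{1/3}}{X}\cdot\frac{D_0Y}{\N}$. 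Hence the number $M$ of cube pairs that can contribute is $\asymp\frac{\N D_0Y^2V^{1/3}}{XS_0^6}$, and $M\cdot\frac{S_0^6}{D_0}(\log X)^{c(A)}\ll\frac{\N Y^2V^{1/3}}{X}(\log X)^{c(A)}\ll\frac{VY^2}{X}(\log X)^{c(A)}$ since $\N\ll V^{2/3}$.

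It then remains to count the pairs with $\Cf_2(\C_1,\C_2)$. If $\Cf_2$ holds, then $\C_1\times\C_2\subset\mathbb R^6$ contains a point of the region $\mathfrak R_0$ cut out by $V<N(\beta_i)\le 2V$ together with conditions~\eqref{eqn:L1L2cond2} for $i=1,2$ (with $\alpha$ attached to $(\beta_1,\beta_2)$ via~\eqref{eqn:alphag1timesg2}) and also a point outside $\mathfrak R_0$; by path-connectedness of the cube, $\C_1\times\C_2$ meets $\partial\mathfrak R_0$. Since $\|\gamma_1\times\gamma_2\|\neq 0$ we are away from $\gamma_1=\gamma_2$, and on each dyadic range $D_0<\Delta\le 2D_0$ of $\Delta(\beta_1,\beta_2)$ the attached $\alpha$ — hence the forms $L_2(\alpha),L_3(\alpha)$ — varies by a relatively negligible amount across a single cube pair (from $\|\hat\alpha\|\asymp X/V^{1/3}$, $\|\gamma_1\times\gamma_2\|\asymp D_0X/V^{1/3}$ and the upper bound on $S_0$). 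Thus, locally, $\partial\mathfrak R_0$ is contained in a union of boundedly many hypersurfaces: the faces of the slabs $L_2(\alpha)\cdot\hat\beta_i\in(Y,Y(1+\eta)]$ and $L_3(\alpha)\cdot\hat\beta_i\in(X,X(1+\eta)]$ — the remaining conditions $L_1(\alpha)\cdot\hat\beta_i=0$ being automatic because $\hat\alpha$ is proportional to $\gamma_1\times\gamma_2$, which is orthogonal to $\gamma_i$ — together with the faces $N(\beta_i)=V,\,2V$ and the faces of the cones in~\eqref{eqn:betadef}. These slabs have thickness $\asymp\eta X/\|L_3(\alpha)\|\asymp\eta V^{1/3}$ and $\asymp\eta Y/\|L_2(\alpha)\|\asymp\eta YV^{1/3}/X$ respectively, while the shells and cones have radial extent $\asymp V^{1/3}$. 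Using Lemma~\ref{lem:S0lowerbdd}, the bound $D_0\ll\frac{YV}{X^2}$, the lower bound $\N\gg(Y/X)^{1/2}V^{2/3}(\log X)^{-B}$ inherited from Lemma~\ref{lem:T2discard}, and $\eta=(\log X)^{-B_0}$, one verifies that $S_0$ is smaller than each of these thicknesses by a factor which is a fixed positive power of $X$, in particular $\gg(\log X)^{C_1}$. Consequently, for each of the boundedly many bounding hypersurfaces the fraction of cube pairs meeting it is $\ll S_0/(\text{thickness})\ll(\log X)^{-C_1}$, so the number of $\Cf_2$ pairs is $\ll M(\log X)^{-C_1}$, and $\sum_{\Cf_2}|S(\C_1,\C_2;D_0)|\ll M(\log X)^{-C_1}\cdot\frac{S_0^6}{D_0}(\log X)^{c(A)}\ll\frac{VY^2}{X(\log X)^{C_1-c}}$, as required.

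The main obstacle is this geometric boundary count: one must exhibit $\mathfrak R_0$ as a set of bounded complexity — a bounded intersection of slabs and shells whose normal directions and thicknesses are controlled uniformly over the cube pair, over the dyadic range of $\Delta(\beta_1,\beta_2)$, and over the admissible ranges of $\N$ and $D_0$ — and then confirm that $S_0$ is genuinely small compared with every such thickness, so that no accidental degeneracy can inflate the count. The two potentially troublesome degeneracies, $\gamma_1\times L_2(\hat\beta_1)$ or $\gamma_1\times\gamma_2$ being unusually small, are precisely what Lemmas~\ref{lem:T2discard} and~\ref{lem:T3'bdd} were designed to remove; the rest is careful bookkeeping, the only essential point being that each ratio (thickness)$/S_0$ beats $(\log X)^{C_1}$ by a fixed power of $X$, which is what keeps the constant $c$ independent of $C_1$.
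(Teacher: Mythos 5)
Your overall strategy is the right one and matches the paper's: bound each $S(\C_1,\C_2;D_0)$ trivially by Lemma~\ref{lem:cubebdd}, and then show the cube pairs satisfying $\Cf_2$ are few. However, there is a genuine gap in how you carry out the boundary count.

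The claim ``for each of the boundedly many bounding hypersurfaces, the fraction of cube pairs meeting it is $\ll S_0/(\text{thickness})$'' is precisely the geometric input that cannot be left as bookkeeping: it is what Lemma~\ref{lem:boundarycounting} is for, and you never invoke it. The boundary conditions \eqref{eqn:boundaryeqns} are \emph{cubic} in $(\hat\beta_1,\hat\beta_2)$ because $\alpha=\gamma_1\times\gamma_2/D$ depends on $(\beta_1,\beta_2)$; the content of Lemma~\ref{lem:boundarycounting} is that for a cubic form $F$ with $|\nabla F|\gg R^2$, the number of side-$S_0$ hypercubes in a ball of radius $R_0$ straddling $\{F=F_0\}$ is $\ll 1+(R_0/S_0)^{5}$. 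The rigorous version of your heuristic is obtained by covering the region by $\ll \mathfrak V/R_0^6$ balls of radius $R_0$ and applying this lemma, giving $\ll (\mathfrak V/R_0^6)(R_0/S_0)^5 = \mathfrak V/(R_0 S_0^5)$ straddling pairs; dividing by the $\mathfrak V/S_0^6$ candidate pairs yields the fraction $S_0/R_0$, not $S_0/T$ with $T$ your slab width.

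This brings up the second issue: the scale you feed into the ratio matters. The paper's $R_0 = D_0Y/\N = S_0(\log X)^{C_1}$ is chosen to be at most every dimension of the admissible region (see \eqref{eqn:R0bdd}) so that the ball covering works, and it yields $S_0/R_0 = (\log X)^{-C_1}$ exactly — this is where the $C_1$ in the final exponent comes from. Your slab widths $\eta X/\|L_3\|\asymp\eta V^{1/3}$ and $\eta Y/\|L_2\|\asymp\eta YV^{1/3}/X$ are larger than $R_0$ (by a power of $X$ in fact), so the resulting fraction $S_0/T$ would be smaller than $S_0/R_0$, i.e.\ a stronger claim than what the covering argument provides; replacing the covering radius by a slab width is not something Lemma~\ref{lem:boundarycounting} justifies, since the surfaces are curved and the box is thinner than $T$ in the $R_0$-direction. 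The claim that the ratio ``beats $(\log X)^{C_1}$ by a fixed power of $X$'' is therefore not only unproven but also unnecessary: with the paper's choice one gets exactly $(\log X)^{-C_1}$, which suffices. To complete your proof, replace the heuristic fraction by a covering with balls of radius $R_0=D_0Y/\N$, verify \eqref{eqn:R0bdd}, and apply Lemma~\ref{lem:boundarycounting} to the boundedly many cubic forms coming from \eqref{eqn:boundaryeqns} (checking $|\nabla F|\gg V^{2/3}$ as the paper does, citing the calculation on p.~74 of \cite{HB}).
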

Note that the bound in Proposition \ref{prop:Cf2cubes} is acceptable for Proposition \ref{prop:T3bound}, as long as we choose $C_1 \ge c + C$.

\section{Proof of Proposition \ref{prop:Cf2cubes}}\label{sec:Cf2cubes}
The strategy to prove Proposition \ref{prop:Cf2cubes} is to bound $S(\C_1, \C_2; D_0)$ trivially and show that the number of $\C_1, \C_2$ satisfying $\Cf_2(\C_1, \C_2)$ is small.  We write
\begin{align}
S(\C_1, \C_2; D_0) &= \sum_{D_0 < D \le 2D_0} \sum_{\substack{V < \beta_1 \le 2V\\ \beta_1 \in \C_1}} \sumb_{\substack{V < N(\beta_2) \le 2V \\ \beta_2 \in \C_2\\ \beta_1 \neq \beta_2\\  \Delta(\beta_1, \beta_2) =D}} f_{\beta_1}f_{\beta_2}\\
&\le \sum_{D_0 < D \le 2D_0} \sum_{\substack{V < \beta_1 \le 2V\\ \beta_1 \in \C_1}} \sumb_{\substack{V < N(\beta_2) \le 2V \\ \beta_2 \in \C_2\\ \beta_1 \neq \beta_2\\ \Delta(\beta_1, \beta_2) =D}} |f_{\beta_1}f_{\beta_2}|,
\end{align}so that the quantity to be bounded in Proposition \ref{prop:Cf2cubes} is
\begin{equation}\label{eqn:prop6quantity}
\sum_{\substack{ \C_1 \neq \C_2\\ \Cf_2(\C_1, \C_2)}} S(\C_1, \C_2; D_0) \le \sum_{D_0 < D \le 2D_0} \sum_{\substack{ \C_1 \neq \C_2\\ \Cf_2(\C_1, \C_2)}} \tilde S(\C_1, \C_2; D)
\ll \sum_{D_0 < D \le 2D_0} \sum_{\substack{ \C_1 \neq \C_2\\ \Cf_2(\C_1, \C_2)}}\frac{S_0^6}{D^2} (\log S_0)^{c(A)}
\end{equation}
where
\begin{equation}\label{eqn:tildeS}
\tilde S(\C_1, \C_2; D) = \sum_{\substack{V < \beta_1 \le 2V\\ \beta_1 \in \C_1}} \sumb_{\substack{V < N(\beta_2) \le 2V \\ \beta_2 \in \C_2\\ \beta_1 \neq \beta_2\\ \Delta(\beta_1, \beta_2) =D}} |f_{\beta_1}f_{\beta_2}| \ll \frac{S_0^6}{D^2} (\log S_0)^{c(A)},
\end{equation}for some constant $c(A)$ by Lemma \ref{lem:cubebdd}.

We will need Lemma 4.9 from \cite{HB} below.  

\begin{lem}\label{lem:boundarycounting}
Let $C_i \subset \mathbb{R}^n$ be disjoint hypercubes with parallel edges of length $S_0$, and contained in a ball of radius $R$, centered on the origin. Let $F$ be a real cubic form in $n$ variables, and let $F_0$ be a real constant. Suppose that each hypercube contains a point $x$ for which $F(x)=F_0+O(R^2S_0)$ and $|\nabla F(x)|\gg R^2$ where $\nabla F$ denotes the gradient of $F$. Then the number of hypercubes $C_i$ contained in any ball of radius $R_0$ is $\ll_F 1+(R_0/S_0)^{n-1}$.
\end{lem}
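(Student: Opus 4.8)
The plan is to prove Lemma~\ref{lem:boundarycounting} by a projection-and-multiplicity argument. The point is that each hypercube $C_i$ contains a point $x$ lying within $O(S_0)$ of the level set $\{F=F_0\}$ (because $F(x)=F_0+O(R^2S_0)$ while the gradient there has size $\gg R^2$), so every relevant cube must sit inside a thin tubular neighbourhood of that level set; projecting onto a coordinate hyperplane then shows that at most $\ll(R_0/S_0)^{n-1}$ cubes can fit, provided the projection map has bounded multiplicity, and the cubic-form hypothesis is exactly what forces the multiplicity to be bounded.

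First I would dispose of the two degenerate ranges. If $R_0\ll S_0$, then the disjoint cubes of side $S_0$ lying in the given radius-$R_0$ ball number $\ll 1$ by volume; if $S_0\gg_F R$, then since all $C_i$ lie in the radius-$R$ ball there are again $\ll_F 1$ of them. Either way the count is absorbed by the ``$1$'' in the lemma, so from now on assume $S_0\le \epsilon_0 R$ and $S_0\le R_0/c_3$ for suitable constants $\epsilon_0=\epsilon_0(F)$ small and $c_3=c_3(F)$ large. Next, since $F$ is a cubic form, $\nabla F$ has quadratic-form entries and $\nabla^2F$ has linear-form entries, so $|\nabla F|\ll_F R^2$ and $|\nabla^2F|\ll_F R$ throughout the radius-$R$ ball. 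Propagating the two hypotheses from the distinguished point $x$ of $C_i$ across $C_i$ (which has diameter $\ll S_0\le\epsilon_0 R$) gives, for \emph{every} point $x'$ of $\bigcup_i C_i$, the bound $|F(x')-F_0|\ll_F R^2S_0$; moreover, after partitioning the cubes into $n$ classes according to which coordinate $j$ makes $|\partial_j F(x)|\ge |\nabla F(x)|/\sqrt n$, one gets $|\partial_j F(x')|\gg_F R^2$ on every cube of the class with index $j$ (the correction term is $\ll_F RS_0\le\epsilon_0 R^2$, absorbed by choosing $\epsilon_0$ small). It suffices to bound each class; say $j=n$.

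Now project by $\pi(x)=(x_1,\dots,x_{n-1})$. The images $\pi(C_i)$ are $(n-1)$-cubes of side $S_0$ lying inside an $(n-1)$-ball of radius $R_0$, so the size of the class is $\ll (R_0/S_0)^{n-1}\cdot M$, where $M$ is the largest number of cubes in the class whose projections share a common point $y$. It remains to show $M\ll_F 1$. For fixed $y$, the cubes through $y$ cut out disjoint intervals $J_i$ of length $S_0$ on the vertical line $\{y\}\times\mathbb{R}$, and there $\phi(t):=F(y,t)-F_0$ is a polynomial of degree $\le 3$ satisfying $|\phi|\ll_F R^2S_0$ and $|\phi'|\gg_F R^2$ on $\bigcup_i J_i$. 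Since $\phi'$ has degree $\le 2$ it has at most two sign changes, so $\phi$ is monotone on one of at most three intervals that together contain all but $\le 2$ of the $J_i$; on that monotone interval $\phi$ changes by $\gg_F R^2S_0$ across each $J_i$ and never reverses in between, so a telescoping estimate forces the total change to be $\gg_F M R^2S_0$, contradicting $|\phi|\ll_F R^2S_0$ unless $M\ll_F 1$. Summing the bound $\ll_F(R_0/S_0)^{n-1}$ over the $n$ classes, and combining with the degenerate cases, yields the lemma.

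The main obstacle is precisely this multiplicity bound $M\ll_F 1$, which is the only place the ``cubic'' hypothesis (rather than merely ``thin neighbourhood of a hypersurface'') is used: a priori $\phi'$ could be very small in the gaps between the intervals $J_i$, so the telescoping cannot be run naively; one must combine the fact that $\phi'$ is a \emph{low-degree} polynomial (hence has few sign changes, hence $\phi$ has few monotonicity pieces) with the hypothesis that $|\phi'|$ is \emph{large on each interval $J_i$ itself} to make the argument go through on a single monotonicity piece. Everything else — the volume arguments in the degenerate ranges, the Taylor-type propagation of the hypotheses across a cube, and the final summation — is routine.
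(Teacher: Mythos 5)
Your proof is correct, but note that the paper does not actually prove this statement: it is quoted verbatim as Lemma 4.9 of \cite{HB}, with no in-paper proof. Your argument recovers the expected proof of that lemma: Taylor-propagating the two hypotheses across each cube (using $|\nabla F|\ll_F R^2$ and $|\nabla^2 F|\ll_F R$ on the ball), classifying cubes by the dominant coordinate of $\nabla F$, projecting onto the complementary hyperplane, and then bounding the fiber multiplicity $M$ by exploiting that $\phi'(t)=\partial_n F(y,t)$ has degree $\le 2$. One small bookkeeping point worth tightening: since $|\phi'|\gg R^2>0$ on each interval $J_i$, the at most two real roots of $\phi'$ lie strictly in the gaps between the $J_i$, so \emph{every} $J_i$ (not merely ``all but $\le 2$'' of them) is contained in one of the $\le 3$ monotonicity intervals of $\phi$; pigeonhole then places at least $M/3$ of them in a single monotone piece, where the telescoping change in $\phi$ is $\gg (M/3)R^2S_0$, contradicting $|\phi|\ll R^2S_0$ unless $M\ll_F 1$. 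The rest of the argument — the volume bounds in the degenerate ranges $R_0\ll S_0$ and $S_0\gg_F R$, the covering-number estimate $N S_0^{n-1}\le M\cdot|\bigcup_i\pi(C_i)|\ll M R_0^{n-1}$, and the final sum over the $n$ coordinate classes — is sound.
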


In the sum \eqref{eqn:tildeS}, $D$ is fixed, and by the intermediate value theorem and since $\Cf_2(\C_1, \C_2)$ holds, then there exists $\hat \beta_1\in \C_1$ and $\hat \beta_2 \in \C_2$ such that one of the following holds for either $i=1$ or $i=2$:
\begin{align}\label{eqn:boundaryeqns}
L_2(\alpha)\cdot \hat \beta_i &= Y \textup{ or } Y(1+\eta) \notag \\
L_3(\alpha)\cdot \hat \beta_i &= X \textup{ or } X(1+\eta) \notag \\
N(\beta_i) &= V \textup{ or } 2V, \notag \\
\beta_i^3 &= N(\beta)^{1/3} \epsilon_0^{-1/2} \textup{ or } N(\beta)^{1/3} \epsilon_0^{1/2}
\end{align}where 
$$\alpha = \frac{\gamma_1 \times \gamma_2}{D}
$$and $\epsilon_0 = 1 + \sqrt[3]{2} + \sqrt[3]{4}$ is the fundamental unit of $K$.  Note here that we do not ask for such $\beta_i$ to have integer components; the function $N$ is a priori defined on $\beta$ with rational coordinates and may be uniquely extended to $\beta$ with real coordinates by continuity. 

Then since $D$ is fixed, each of the equations in \eqref{eqn:boundaryeqns} can be expressed in the form
\begin{equation}
F(\hat \beta_1, \hat \beta_2) = H,
\end{equation}where $F$ is homogeneous of degree $3$ in the components of $\hat \beta_i$, and $H$ is constant.

Let $R = V^{1/3}$.  We observe that the polynomials $F$ are non-singular in the region we consider and in fact satisfy $|\nabla F| \gg R^2$ in that region.  This is a calculation, an example of which has been done on pg. 74 in \cite{HB}.  We will apply Lemma \ref{lem:boundarycounting} to hypercubes of the form $\C_1 \times \C_2 \in \mathbb{R}^6$ with $R = V^{1/3}$.

To do this, we will need to cover the relevant region for $(\hat \beta_1, \hat \beta_2)$ where the region for $\hat{\beta_1}$ has dimensions bounded by $\frac{\N}{V^{1/3}}, \frac{\N}{V^{1/3}}$, and $V^{1/3}$ and for fixed $\hat{\beta_1}$, the region for $\hat{\beta_2}$ has dimensions bounded by $V^{1/3}, \frac{YV^{1/3}}{X},$ and $\frac{D_0Y}{\N}$.  Note that the volume of this region is
$$\mathfrak V := \frac{\N}{V^{1/3}}\frac{\N}{V^{1/3}} V^{1/3}V^{1/3}\frac{YV^{1/3}}{X}\frac{D_0Y}{\N} = \frac{Y^2 D_0 \N V^{1/3}}{X} .
$$

Now we set 
$$R_0 = \frac{D_0Y}{\N} = S_0 (\log X)^{C_1}.
$$Note that 
\begin{equation}\label{eqn:R0bdd}
R_0 \ll \min\left(V^{1/3}, \frac{\N}{V^{1/3}}, \frac{YV^{1/3}}{X},\frac{D_0Y}{\N}\right).
\end{equation}
Indeed, this follows directly from the bounds $\N \gg \bfrac{Y}{X}^{1/2} V^{2/3} (\log X)^{-B}$ for some constant $B$ and $D_0 Y \ll \bfrac{Y}{X}^2 V$.

By Lemma \ref{lem:boundarycounting}, the number of pairs of cubes $\C_1, \C_2$ which satisfies one of \eqref{eqn:boundaryeqns} inside a ball of radius $R_0$ is $\ll \bfrac{R_0}{S_0}^5$.  Thus, the contribution of these cubes to the right hand side of \eqref{eqn:prop6quantity} is
\begin{align*}
\ll \bfrac{R_0}{S_0}^5 \frac{S_0^6}{D^2}  (\log X)^{c},
\end{align*}for some absolute constant $c$.  

Moreover, the number of balls required to cover our region is $\ll \mathfrak V R_0^{-6}$.  To see this, simply construct the centers of these balls by first covering the region for $\hat{\beta_1}$ by $\ll \frac{\N}{V^{1/3}}\frac{\N}{V^{1/3}} V^{1/3} R_0^{-3}$ balls of radius $R_0/10$.  For one of these fixed balls $\B_1$ with center $\beta_{1, 0}$, the region for $\hat{\beta_2}$ still has dimensions bounded by $V^{1/3} + O(R_0), \frac{YV^{1/3}}{X}+O(R_0),$ and $\frac{D_0Y}{\N}+O(R_0)$.  Here, the $O(R_0)$ accounts for allowing $\hat{\beta_1}$ to vary within $\B_1$, but is negligible in the sense that by \eqref{eqn:R0bdd}, this region for $\hat{\beta_2}$ has dimensions $\ll V^{1/3}, \frac{YV^{1/3}}{X}$ and $\frac{D_0Y}{\N}$.  Thus we may cover the region for $\hat{\beta_2}$ by $\ll V^{1/3} \frac{YV^{1/3}}{X} \frac{D_0Y}{\N} R_0^{-3}$ balls $\B_2$ with radius $R_0/10$.  We now cover the region for $(\hat{\beta_1}, \hat{\beta_2})$ by balls of radius $R_0$ with centers $(\beta_{1, 0}, \beta_{2, 0})$ where $\beta_{1, 0}$ is the center of one of the ball $\B_1$ and $\beta_{2, 0}$ is one of the centers of the balls $\B_2$ (which is allowed to depend on $\B_1$).  

Thus, the right hand side of \eqref{eqn:prop6quantity} is bounded by
\begin{align}
\sum_{D_0 < D \le 2D_0} \mathfrak V R_0^{-6} \bfrac{R_0}{S_0}^5 \frac{S_0^6}{D^2} (\log X)^{c}
&= \frac{S_0}{R_0} \frac{\mathfrak V}{R_0^6} \frac{R_0^6}{S_0^6}\sum_{D_0< D\le 2D_0} \frac{S_0^6}{D^2} (\log X)^c \\
&\leq \frac{S_0}{R_0} \frac{\mathfrak V}{D_0} (\log X)^c\\
&\ll \frac{1}{(\log X)^{C_1-c}} \frac{\mathfrak V}{D_0}\\
&\ll \frac{1}{(\log X)^{C_1-c}} \frac{Y^2 V}{X}
\end{align}
since $\N \ll V^{2/3}$.  

\section{Proof of Proposition \ref{prop:Cf1cubes}}\label{sec:Cf1cubes}
Recall that we wish to  prove that for any $C>0$,
\begin{equation}\label{eqn:finalrequiredbdd}
\sum_{\substack{ \C_1 \neq \C_2\\ \Cf_1(C_1, C_2)}} S(\C_1, \C_2; D_0) \ll_C \frac{VY^2}{X(\log X)^C}.
\end{equation}
We write
\begin{align*}
S(\C_1, C_2; D_0) &= \sum_{D_0 < D \leq 2D_0} \sum_{\substack{\beta_1 \in \C_1, \beta_2 \in \C_2\\ \Delta(\beta_1, \beta_2) = D}}f_{\beta_1}f_{\beta_2}\\
&= \sum_{d} \mu(d) \sum_{D_0 < D \leq 2D_0} \sum_{\substack{\beta_1 \in \C_1, \beta_2 \in \C_2\\ dD| \Delta(\beta_1, \beta_2)}}f_{\beta_1}f_{\beta_2}\\
&= \sum_{d< d_0} \mu(d) \sum_{D_0 < D \leq 2D_0} \sum_{\substack{\beta_1 \in \C_1, \beta_2 \in \C_2\\ dD| \Delta(\beta_1, \beta_2)}}f_{\beta_1}f_{\beta_2} + O\left(\sum_{d\geq d_0} \sum_{D_0 < D \leq 2D_0} \frac{S_0^6}{(dD)^2} (\log X)^{c(A)} \right),
\end{align*}by an application of Lemma \ref{lem:cubebdd}.  In applying Lemma \ref{lem:cubebdd}, we may introduce the condition $dD_0 \ll S_0^A$ for some $A$ because $dD | \Delta(\beta_1, \beta_2)$ and by the lower bound for $S_0$ in Lemma \ref{lem:S0lowerbdd}. The latter term contributes a total
\begin{align*}
\ll \sum_{\C_1, \C_2} \frac{S_0^6(\log X)^{c(A)}}{d_0D_0}
&\ll \frac{(\log X)^{c(A)}}{d_0}\frac{\N^2}{V^{1/3}} \frac{X}{V^{1/3}} S_0 (\log X)^{C_1}\\
&\ll D_0 XY/d_0\\
&\ll \frac{VY^2(\log X)^{c(A)}}{Xd_0},
\end{align*}which suffices for Proposition \ref{prop:Cf1cubes} provided we pick 
\begin{equation}\label{eqn:d0}
d_0 = (\log X)^{C+c(A)}.
\end{equation}  For convenience, let
$$\T_6 = \T_6(\C_1, \C_2) := \sum_{d <d_0} \left|\sum_{D_0 < D \leq 2D_0} \sum_{\substack{\hat \beta_1 \in \C_1, \hat \beta_2 \in \C_2\\ dD| \Delta(\beta_1, \beta_2)}}f_{\beta_1}f_{\beta_2}\right|.$$

Since $\hat \beta_1$ and $\hat \beta_2$ are primitive, the condition $dD| \Delta(\beta_1, \beta_2)$ is equivalent to 
\begin{equation}
\hat \beta_1 \equiv \lambda \hat \beta_2 \bmod dD,
\end{equation}for some $\lambda \bmod dD$.  For $\bf a \in \mathbb{R}^3$, we introduce the exponential sum
\begin{equation}
\fS(\mathbf{a}) = \fS(\mathbf a, \C) = \sum_{\hat \beta \in \C} f_\beta e(\bf a \cdot \hat \beta),
\end{equation}where, as usual, $e(x) = e^{2\pi i x}$.

Then, writing as usual $\sumstar_{\lambda \bmod D}$ for a sum over reduced residues $\lambda$ modulo $D$,
\begin{align*}
\T_6 
&\le \sum_{d<d_0} \sum_{D_0 < D \leq 2D_0} \frac{1}{(dD)^3}\left|\sumstar_{\lambda \bmod dD} \sum_{\mathbf{a} \bmod dD} \fS((dD)^{-1} \lambda \mathbf{a}, \C_1) \overline{\fS((dD)^{-1} \lambda \mathbf{a}, \C_2)} \right|\\
&\le \sum_{d<d_0} \sum_{D_0 < D \leq 2D_0} \frac{1}{(dD)^3}\sumstar_{\lambda \bmod dD} \sum_{\mathbf{a} \bmod dD} |\fS((dD)^{-1} \lambda \mathbf{a}, \C)|^2,
\end{align*}by Cauchy-Schwarz, where $\C$ is either $\C_1$ or $\C_2$.  By a change of variables, we see that the sum
$$\sum_{\mathbf{a} \bmod dD} |\fS((dD)^{-1} \lambda \mathbf{a}, \C)|^2 = \sum_{\mathbf{a} \bmod dD} |\fS((dD)^{-1} \mathbf{a}, \C)|^2
$$is independent of $\lambda$, and so
\begin{equation}
\T_6 \leq \sum_{d<d_0} \sum_{D_0 < D \leq 2D_0} \frac{1}{(dD)^2} \sum_{\mathbf{a} \bmod dD} |\fS((dD)^{-1} \mathbf{a}, \C)|^2.
\end{equation}
  We intend to apply a large sieve bound for this type of sum; to prepare for this, we write $\frac{\mathbf{a}}{dD} = \frac{\mathbf{b}}{q}$ where the latter is in lowest terms.  Each vector $\frac{\mathbf{b}}{q}$ occurs with weight bounded by
\begin{align*}
\sum_{D_0 < D \le 2D_0} \sum_{\substack{d<d_0 \\ q|dD}} \frac{1}{(dD)^2}  \ll \sum_{\substack{D_0 < v \\ q|v}} \frac{\tau(v)}{v^2} \ll \frac{\tau(q)}{q} \frac{\log X}{D_0}.
\end{align*}
We then have that
\begin{align*}
\T_6 \ll \frac{\log X}{D_0} \sum_{q \ll D_0 d_0} \frac{\tau(q)}{q} \sumstar_{\mathbf{b} \bmod q} |\fS(\mathbf{b}/q)|^2,
\end{align*}where $\sumstar_{\mathbf{b}\bmod q}$ denotes a sum over vectors $\mathbf{b} = (b_1, b_2, b_3)$ such that the greatest common divisor of $b_1, b_2, b_3$ and $q$ is $1$.
We now quote the following large sieve bound from Lemma 13.1 of \cite{HB}.

\begin{lem}\label{lem:largesieveexpbdd}
With notation as above, and for $\C$ a cube of side $S_0$,
\begin{equation}
\sum_{Q < q \le 2Q} \sumstar_{\mathbf{b}\bmod q} |\fS(\mathbf{b}/q)|^2 \ll (S_0^3 + Q^2S_0^2 + Q^4) \sum_{\hat \beta \in \C} |f_\beta|^2.
\end{equation}
\end{lem}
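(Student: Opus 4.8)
This is Lemma 13.1 of \cite{HB}; for completeness I outline the argument I would give. Write $\|f\|^2:=\sum_{\hat\beta\in\C}|f_\beta|^2$. The plan is to open the square and carry out the sum over $\mathbf b$ \emph{before} the sum over $q$: expanding $|\fS(\mathbf b/q)|^2$ and using the identity $\sumstar_{\mathbf b\bmod q}e(\mathbf b\cdot\mathbf v/q)=\mathfrak c_q(\mathbf v)$, where $\mathfrak c_q(\mathbf v)=\sum_{d\mid\gcd(q,v_1,v_2,v_3)}\mu(q/d)d^3$ is the cubic analogue of the Ramanujan sum, one obtains
\[
\sum_{Q<q\le 2Q}\sumstar_{\mathbf b\bmod q}|\fS(\mathbf b/q)|^2=\sum_{\hat\beta_1,\hat\beta_2\in\C}f_{\beta_1}\overline{f_{\beta_2}}\sum_{Q<q\le 2Q}\mathfrak c_q(\hat\beta_1-\hat\beta_2).
\]
Equivalently, removing the coprimality by Mobius inversion first, the left side equals $\sum_{m\ge1}m^3G(m)\,M(Q/m,2Q/m)$, where $M(a,b)=\sum_{a<e\le b}\mu(e)$ and $G(m)=\sum_{\hat\beta_1\equiv\hat\beta_2\bmod m}f_{\beta_1}\overline{f_{\beta_2}}=\sum_{\mathbf r\bmod m}\bigl|\sum_{\hat\beta\in\C,\ \hat\beta\equiv\mathbf r\bmod m}f_\beta\bigr|^2\ge0$. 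Note that $G(m)=\|f\|^2$ once $m\ge S_0$ (the cube has diameter $<S_0$), and in general $G(m)\ll(1+S_0/m)^3\|f\|^2$ by Cauchy--Schwarz together with the lattice point count $\#\{\hat\beta\in\C:\hat\beta\equiv\mathbf r\bmod m\}\ll(1+S_0/m)^3$.

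The diagonal $\hat\beta_1=\hat\beta_2$ contributes $\|f\|^2\sum_{Q<q\le2Q}\mathfrak c_q(\mathbf 0)=\|f\|^2\sum_{Q<q\le2Q}J_3(q)\ll Q^4\|f\|^2$, where $J_3(q)=\sum_{d\mid q}\mu(q/d)d^3\asymp q^3$, and this is the $Q^4$ term. For the off-diagonal one writes $\mathbf v=\hat\beta_1-\hat\beta_2\ne\mathbf 0$ (so $\|\mathbf v\|_\infty<S_0$) as $\mathbf v=h\mathbf w$ with $h=\gcd(v_1,v_2,v_3)$: for each $h$ there are $\ll(1+S_0/h)^3$ admissible $\mathbf v$, and $\sum_{h\mid\mathbf v}|f_{\beta_1}f_{\beta_2}|\ll(1+S_0/h)^3\|f\|^2$ by Cauchy--Schwarz and the lattice count again. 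The essential point is that $\mathfrak c_q(\mathbf v)=\mathfrak c_q(h)$ carries a Mobius factor over the part of $q$ coprime to $h$: writing $q=q_hq'$ with $q_h$ the largest divisor of $q$ supported on primes dividing $h$ and $q'$ coprime to $h$, one has $\mathfrak c_q(h)=\mu(q')\,\mathfrak c_{q_h}(h)$, so that
\[
\sum_{Q<q\le2Q}\mathfrak c_q(h)=\sum_{q_h}\mathfrak c_{q_h}(h)\sum_{\substack{Q/q_h<q'\le 2Q/q_h\\ \gcd(q',h)=1}}\mu(q')
\]
is genuinely smaller than the trivial bound $\sum_{Q<q\le2Q}|\mathfrak c_q(h)|$. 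Feeding this in, together with the lattice point asymptotic $\#\{\hat\beta\in\C:\hat\beta\equiv\mathbf r\bmod m\}=(S_0/m)^3+O((S_0/m)^2)$, and summing over $h$, produces the remaining $S_0^3$ and $Q^2S_0^2$ terms.

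I expect the off-diagonal step to be the main obstacle. Bounding $|\mathfrak c_q(\mathbf v)|$ trivially inside the $q$-sum only gives $(Q^4+QS_0^3)\|f\|^2$ --- a bound that already follows from the \emph{complete} sums $\sum_{Q<q\le2Q}q^3G(q)$ alone, with no use of the coprimality condition --- and this is too weak by up to a factor of $Q$ once $S_0\gg Q$. Removing that loss forces one to exploit the cancellation in the short sums $\sum_{q'}\mu(q')$ above (equivalently, to isolate the slowly-varying main term of $m\mapsto m^3G(m)$ and cancel it against the Mobius weight $M(Q/m,2Q/m)$), and then to bound the residual discrepancy via the lattice point counts. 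Making the estimates uniform in the cube $\C$ and in $Q$, and treating separately the ranges where $q_h$ (or $m$) is comparable to $Q$, is the delicate bookkeeping that the proof of Lemma 13.1 in \cite{HB} carries out.
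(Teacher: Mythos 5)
The paper does not prove this statement; it is quoted verbatim from Lemma 13.1 of \cite{HB}. Your sketch therefore cannot be compared against the paper's argument, but it can be assessed on its own terms, and I think there is a real gap precisely where you suspect one.

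Your reductions up to the identity
\[
\sum_{Q<q\le2Q}\;\sumstar_{\mathbf b\bmod q}|\fS(\mathbf b/q)|^2
=\sum_{m}m^3G(m)\,M(Q/m,2Q/m),
\qquad G(m)=\sum_{\mathbf r\bmod m}\Bigl|\sum_{\substack{\hat\beta\in\C\\ \hat\beta\equiv\mathbf r}}f_\beta\Bigr|^2,
\]
are correct, as is the observation that the diagonal (equivalently the term $\mathbf v=0$ in the Ramanujan-sum expansion) contributes $\asymp\sum_{Q<q\le2Q}J_3(q)\asymp Q^4\|f\|^2$, and that the trivial bound $|\tilde c(h)|\ll Qh^2+h^3$, $G(m)-\|f\|^2\ll(1+S_0/m)^3\|f\|^2$ only yields $Q S_0^3\log S_0\cdot\|f\|^2$, which loses roughly a factor $Q$ once $S_0\gg Q$. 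The problem is that your proposed repair --- exploiting cancellation in the short Mobius sums $\sum_{Q/q_h<q'\le2Q/q_h,\ (q',h)=1}\mu(q')$ --- cannot by itself recover that factor. The dominant contribution to the trivially-bounded off-diagonal comes from $m\asymp Q$, where $M(Q/m,2Q/m)$ is a sum over $O(1)$ integers and has no cancellation whatsoever; these terms alone already give $\sum_{m\asymp Q}m^3|G(m)-\|f\|^2|\ll Q\cdot Q^3\cdot(S_0/Q)^3\|f\|^2=QS_0^3\|f\|^2$. Even in the range $m\ll Q$ where $M$ is an honest Mobius sum of length $\asymp Q/m$, the best unconditional bounds for $\sum_{x<e\le2x}\mu(e)$ save only sub-power-of-$\log$ factors and are nowhere near enough to shave a full power of $Q$.

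Your parenthetical alternative --- that one should ``isolate the slowly-varying main term of $m\mapsto m^3G(m)$ and cancel it against $M(Q/m,2Q/m)$'' --- is closer to the truth, and it is worth making it precise: one has $m^3G(m)=|\fS(0)|^2+\sum_{\mathbf 0\ne\mathbf b\bmod m}|\fS(\mathbf b/m)|^2$, the first piece is \emph{constant} in $m$, and $\sum_m M(Q/m,2Q/m)=\sum_{Q<q\le2Q}\sum_{d\mid q}\mu(d)=0$, so the $|\fS(0)|^2$ ``main term'' is annihilated identically. This is just the coprimality condition $\gcd(\mathbf b,q)=1$ removing $\mathbf b=0$. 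But after this exact cancellation the residual $R(m)=m^3G(m)-|\fS(0)|^2\ge0$ still satisfies $R(m)\ll S_0^3\|f\|^2$ for $m\asymp Q\le S_0$, and summing $R(m)M(Q/m,2Q/m)$ over $m\in(Q,2Q]$ (where $M\equiv1$) again produces $QS_0^3\|f\|^2$ if one only uses the pointwise bound. So the genuine content of the lemma is a cancellation \emph{across} the ranges $m\le Q$ and $m>Q$ of the sum $\sum_m R(m)M(Q/m,2Q/m)$, or equivalently in the near-diagonal terms $\mathbf v$ with $\gcd(\mathbf v)$ having a divisor in $(Q,2Q]$, and not a cancellation internal to a Mertens-type sum. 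You honestly acknowledge that this ``delicate bookkeeping'' is left to \cite{HB}, but as it stands the sketch identifies where the difficulty lies without exhibiting the mechanism that overcomes it; I would not accept it as a proof without that step being made explicit.
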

We write
\begin{equation}
\T_6 \ll \T_7 + \T_8,
\end{equation}where
\begin{equation}
\T_7 := \frac{\log X}{D_0} \sum_{q \le  Q_0} \frac{\tau(q)}{q} \sumstar_{\mathbf{b} \bmod q} |\fS(\mathbf{b}/q)|^2,
\end{equation}and
\begin{equation}
\T_8 := \frac{\log X}{D_0} \sum_{Q_0<q\ll D_0d_0} \frac{\tau(q)}{q} \sumstar_{\mathbf{b} \bmod q} |\fS(\mathbf{b}/q)|^2.
\end{equation}In the above, we set $Q_0 = (\log X)^R$ for some large parameter $R>0$ to be chosen later.  It suffices to prove bounds for $\T_7$ and $\T_8$.  We start with the following bound for $\T_7$.

\begin{lem}\label{lem:T7}
We have
\begin{equation}
\T_7 \ll_C \frac{S_0^6}{D_0 \log^{C} X},
\end{equation}for any $C>0$.
\end{lem}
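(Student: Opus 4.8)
The plan is to open up the inner quantity $\sumstar_{\mathbf b\bmod q}|\fS(\mathbf b/q)|^2$ into congruence sums of $f_{\beta_1}f_{\beta_2}$, and to estimate these using the Siegel--Walfisz type bound for small cubes (Lemma~\ref{lem:SWbdd}). Since $q\le Q_0=(\log X)^R$ is a fixed power of $\log X$, every modulus that arises is small enough for that estimate, and the cubes here are large enough for it by Lemma~\ref{lem:S0lowerbdd}. (This is the small-modulus case, so the large sieve of Lemma~\ref{lem:largesieveexpbdd} alone does not suffice: a trivial or large-sieve bound would leave a positive power of $\log X$.)

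Concretely, I would first note the elementary identity
\[
\sumstar_{\mathbf b\bmod q} e\!\left(\frac{\mathbf b\cdot\mathbf v}{q}\right)=\sum_{m\mid q}\mu\!\left(\frac qm\right) m^3\,\varrho_m(\mathbf v),
\]
where $\varrho_m(\mathbf v)=1$ if $m$ divides every coordinate of $\mathbf v$ and $\varrho_m(\mathbf v)=0$ otherwise; this follows by Möbius inversion over the divisor lattice of $q$ from $\sum_{\mathbf b\bmod q}e(\mathbf b\cdot\mathbf v/q)=q^3\varrho_q(\mathbf v)$. Applying it with $\mathbf v=\hat\beta_1-\hat\beta_2$ and using that the $f_\beta$ are real gives
\[
\sumstar_{\mathbf b\bmod q}|\fS(\mathbf b/q,\C)|^2=\sum_{m\mid q}\mu\!\left(\frac qm\right) m^3\sum_{\substack{\hat\beta_1,\hat\beta_2\in\C\\ \hat\beta_1\equiv\hat\beta_2\bmod m}}f_{\beta_1}f_{\beta_2},
\]
the vector congruence $\hat\beta_1\equiv\hat\beta_2\bmod m$ in the basis $1,\sqrt[3]2,\sqrt[3]4$ being the same as $\beta_1\equiv\beta_2\bmod m$ in $\cO_K$.

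For each $m\mid q$ I would then fix $\hat\beta_2\in\C$ and apply Lemma~\ref{lem:SWbdd} to the sum over $\hat\beta_1$, which is a sum of $f_{(\beta_1)}$ over $\hat\beta_1\in\C$ in a fixed residue class modulo $m$. The hypotheses are met: $m\le q\le Q_0=(\log X)^R$ lies in the admissible range of moduli (Lemma~\ref{lem:SWbdd} being applicable for all moduli up to any fixed power of $\log X$); on $\C$ one has $V<N(\beta)\le 2V$ with all coordinates $\ll V^{1/3}$; and by Lemma~\ref{lem:S0lowerbdd} the side satisfies $S_0\gg V^{1/3-\epsilon}X^{-2\gamma}\gg V^{7/36+\epsilon}$, which is exactly where the constraint $\gamma<5/67$ (equivalently the $7/12$ exponent of the short-interval analogue) enters. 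Hence the inner sum is $\ll S_0^3\exp(-c\sqrt{\log X})$ uniformly in $\hat\beta_2$ and $m$. Summing trivially over $\hat\beta_2\in\C$, together with $\sum_{\hat\beta_2\in\C}|f_{\beta_2}|\ll\bigl(S_0^3\sum_{\hat\beta_2\in\C}\tau(\beta_2)^2\bigr)^{1/2}\ll S_0^3(\log X)^{c'}$ (Cauchy--Schwarz, Lemma~\ref{lem:cubedivisorsum}, and $|f_\beta|\ll\tau(\beta)\log X$), this bounds the double sum by $\ll S_0^6\exp(-c_1\sqrt{\log X})$, and therefore $\sumstar_{\mathbf b\bmod q}|\fS(\mathbf b/q)|^2\ll\tau(q)q^3 S_0^6\exp(-c_1\sqrt{\log X})$.

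Finally I would insert this into the definition of $\T_7$ and use $\sum_{q\le Q_0}\tau(q)^2q^2\ll Q_0^3(\log Q_0)^3$:
\[
\T_7\ll\frac{\log X}{D_0}\,S_0^6\exp(-c_1\sqrt{\log X})\sum_{q\le Q_0}\tau(q)^2q^2\ll\frac{S_0^6}{D_0}(\log X)^{3R+O(1)}\exp(-c_1\sqrt{\log X}),
\]
which is $\ll_C S_0^6/(D_0\log^C X)$ for every $C$, since the exponential decay beats any fixed power of $\log X$. The only non-routine input is Lemma~\ref{lem:SWbdd}, and the substantive point to verify is that the cubes occurring here are large enough for it --- this is precisely the content of Lemma~\ref{lem:S0lowerbdd} and rests on $\gamma<5/67$. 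If Lemma~\ref{lem:SWbdd} is instead stated with error $\ll_A S_0^3(\log X)^{-A}$, the same computation goes through with $A$ chosen larger than $3R+C+O(1)$.
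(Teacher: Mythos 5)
Your proposal is correct and rests on the same two ingredients as the paper (Lemma~\ref{lem:SWbdd} and the lower bound on $S_0$ from Lemma~\ref{lem:S0lowerbdd}), but it handles the $L^2$ structure differently and is worth contrasting.

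The paper's proof is shorter: it bounds $\fS(\mathbf{b}/q)$ \emph{pointwise} by writing $\fS(\mathbf{b}/q) = \sum_{\mathbf{c}\bmod q} e(\mathbf{b}\cdot\mathbf{c}/q)\sum_{\hat\beta\in\C,\,\hat\beta\equiv\mathbf{c}}f_\beta$, applying Lemma~\ref{lem:SWbdd} to each inner sum, and then using the resulting bound $|\fS(\mathbf{b}/q)|\ll q^3 S_0^3/(\log X)^C$ trivially in $\T_7$. You instead open $\sumstar_{\mathbf{b}\bmod q}|\fS(\mathbf{b}/q)|^2$ via a M\"obius-inverted Ramanujan-sum identity into congruence sums $\hat\beta_1\equiv\hat\beta_2\bmod m$ for $m\mid q$, fix $\hat\beta_2$, apply Lemma~\ref{lem:SWbdd} to the $\hat\beta_1$-sum, and then use Cauchy--Schwarz and Lemma~\ref{lem:cubedivisorsum} on $\sum_{\hat\beta_2}|f_{\beta_2}|$. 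This requires a couple of extra steps (the inversion identity, the Cauchy--Schwarz), but yields a sharper $q$-dependence ($\tau(q)q^3$ instead of the paper's $q^9$). Since $q\le Q_0=(\log X)^R$ is merely a fixed power of $\log X$ and $C$ in Lemma~\ref{lem:SWbdd} is at one's disposal, this gain is immaterial; both routes establish the Lemma. One small slip: you initially assert the Siegel--Walfisz bound as $\ll S_0^3\exp(-c\sqrt{\log X})$, whereas Lemma~\ref{lem:SWbdd} as stated gives only $\ll S_0^3/(\log X)^C$ for each $C>0$ --- but your closing caveat addresses exactly this, and the computation does go through with the power-of-log saving, choosing $C$ large enough to absorb the losses from $Q_0$, the divisor sums, and $\sum|f_{\beta_2}|$.
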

The proof of Lemma \ref{lem:T7} follows from the following Lemma.
\begin{lem}\label{lem:SWbdd}
With notation as above, $S_0 \ge V^{1/3 - \frac{5}{36} + \epsilon}$ and for $q \le (\log X)^R$, and any $\mathbf{c} \bmod q$, 
$$\sum_{\substack{\hat \beta \in \C\\ \hat \beta \equiv \mathbf{c} \bmod q}}f_\beta \ll \frac{S_0^3}{(\log X)^{C}},$$
for any $C>0$.
\end{lem}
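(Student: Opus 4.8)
As indicated in the sketch above, I would treat the congruence $\hat\beta\equiv\mathbf c\bmod q$ and the membership $\hat\beta\in\C$ separately, extract a main term annihilated by the construction of $e_\beta$, and bound the remainder by Dirichlet polynomial methods. First I would detect $\hat\beta\equiv\mathbf c\bmod q$ by orthogonality of the $\ll q^3\ll(\log X)^{3R}$ additive characters of $(\mathbb Z/q\mathbb Z)^3$; equivalently this is a congruence $\beta\equiv\gamma_0\bmod q\cO_K$, which can also be resolved by ray class characters modulo $q\cO_K$. Under the Minkowski embedding $\beta\mapsto(\beta^{(1)},\beta^{(2)})\in\mathbb R\times\mathbb C$, the cube $\C$ together with the normalization \eqref{eqn:betadef} becomes a bounded region lying in a fundamental domain for the unit action, so the indicator of that region may be expanded in Hecke Grossencharacters of $K$ of modulus $q\cO_K$; since $\C$ has relative width $\asymp S_0/V^{1/3}$ in each coordinate, the resulting Fourier coefficients decay rapidly once the analytic conductor exceeds $T_0:=(V^{1/3}/S_0)(\log X)^{B_1}$, which by the hypothesis on $S_0$ satisfies $T_0\ll V^{5/36}$. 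Hence $\sum_{\hat\beta\in\C,\ \hat\beta\equiv\mathbf c\bmod q}f_\beta$ equals, up to a negligible error, a short combination over the $\ll(\log X)^{O(1)}T_0^2$ relevant Grossencharacters $\chi$ of integrals of the form $\int_{|t|\le T_0}(\cdots)\sum_\beta f_\beta\,\chi(\beta)\,N(\beta)^{-it}\,dt$.

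\textbf{Step 2: extracting the main term.} Writing $f_\beta=h_\beta-e_\beta$, the contribution of the principal Grossencharacter and of $t$ near the pole of $\zeta_K$ is exactly reproduced by the smoothed M\"obius sum defining $e_\beta$ — this is the small-cube analogue of Lemma~\ref{lem:fbetaAP} — so that the difference cancels that diagonal part. Carrying this out (the role of Proposition~\ref{prop:Mcomparison}) reduces the Lemma to a bound of the type of Proposition~\ref{prop:errorsmallcube}: for every nonprincipal $\chi$ of modulus $q\cO_K$ with archimedean parameter $\ll T_0$ and every $|t|\le T_0$, one must show that the product of the $n+1\ll1/\delta\asymp1$ short Dirichlet polynomials with $\Lambda_K$-coefficients (arising from the $\Lambda_K(T_i)$ factors of $h_\beta$), twisted by $\chi(\cdot)N(\cdot)^{-it}$, is smaller than its length by a factor $(\log X)^{-C'}$.

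\textbf{Step 3: Dirichlet polynomial estimates.} To prove that bound I would apply Heath-Brown's combinatorial identity to each $\Lambda_K$-factor, choosing the level $k$ of the identity so that every resulting factor carrying the $\mu_K$-type coefficient has length at most $T_0^{9/5}$, the remaining factors carrying only smooth coefficients being allowed to be long. The family of harmonics in play — Grossencharacters of modulus $q\cO_K$ together with $N(I)^{it}$, $|t|\le T_0$ — behaves, for large-value purposes, like a family of size $\asymp T_0^3$ since $K$ is cubic and $q$ is only polylogarithmic; so the optimal input is a Dirichlet polynomial of length close to $T_0^3$, and I would bundle the short $\mu_K$-factors into a power $F^g$ with $T_0^{12/5}\le(\text{length of }F^g)\le T_0^{18/5}$, estimating it via Duke's large sieve (Lemma~\ref{lem:dukelargesieve}) combined with the Montgomery- and Huxley-type large value estimates (Lemmas~\ref{lem:mont} and~\ref{lem:huxmont}). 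The complementary smooth factors are handled by mean-value bounds, or, in the regime where one of them is so long that no large-values estimate helps, by a fourth moment bound for the Hecke $L$-functions of $K$; and the range of $(t,\chi)$ lying too close to $s=1$ to be reached this way is cleared using Coleman's improved zero-free region. Summing the saving $(\log X)^{-C'}$ over the $\ll(\log X)^{O(1)}T_0^2$ characters and over the length-$T_0$ $t$-integral then yields the desired $\ll S_0^3(\log X)^{-C}$.

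\textbf{Main obstacle.} The crux is Step~3. Since $S_0^3$ is only $V^{7/12+\epsilon}$, this is precisely the Hecke/Grossencharacter analogue of Huxley's theorem on primes in $(x,x+x^{7/12+\epsilon})$, so there is essentially no slack: one has to tune the Heath-Brown identity level against the large-value exponents exactly, handle carefully the regime in which a single smooth-coefficient factor is too long for a large-values bound and the fourth moment must be invoked instead, and use Coleman's zero-free region for the residual near-central range. The additional bookkeeping imposed by the cube $\C$, the congruence modulo $q$, and the fact that $h_\beta$ is an $O(1)$-fold product of $\Lambda_K$-type factors rather than a single one adds length but, I expect, no new conceptual difficulty.
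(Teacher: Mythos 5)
Your outline correctly identifies the strategy and all of the analytic tools the paper uses: passage to Hecke Grossencharacters to detect the cube, extraction of a main term matching $e_\beta$, Heath-Brown's identity with the $\mu_K$-factors kept below $T_0^{9/5}$, Duke's large sieve together with Montgomery/Huxley-type large-value estimates tuned against a family of $\asymp T_0^3$ harmonics, a fourth-moment input for the long smooth factor, and Coleman's improved zero-free region for the residual range. Two of your steps would, however, need modification if carried out literally.

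First, in Step~1 the paper detects $\hat\beta\equiv\mathbf c\pmod q$ with \emph{multiplicative} characters of $\mathbb Z[\sqrt[3]{2}]$ modulo $q$, not additive characters of $(\mathbb Z/q\mathbb Z)^3$. This is not cosmetic: the later Dirichlet-polynomial machinery — Heath-Brown's identity, the $L(s,\psi)$ factorization, Coleman's zero-free region — requires the modulus-$q$ twist to be completely multiplicative so that it absorbs into the Grossencharacter $\nu_0$. Additive twists $e(\mathbf a\cdot\hat\beta/q)$ do not interact cleanly with the multiplicative convolution structure of $h_\beta$. Your parenthetical mention of ray class characters is the right route; orthogonality of multiplicative characters, though, only detects residues coprime to $q$, so one must separately dispose of the case $(\alpha,q)>1$, which the paper handles in Lemma~\ref{lem:dbcommonfactor} by exploiting the fact that $h_\beta$ is supported on products of prime-ideal powers of norm at least $X^\delta$.

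Second, in Step~2 your phrase ``is exactly reproduced by the smoothed M\"obius sum defining $e_\beta$'' elides what is actually the key technical device of Proposition~\ref{prop:Mcomparison}. The paper does not evaluate the main term $M(\Delta,F)$ of the short-cube Grossencharacter sum directly; instead it compares $M(\Delta,F)$ to $M(\Delta_0,F_0)$ for a much wider scale $\Delta_0=\exp(-c_0\sqrt{\log L})$ via the Mellin-transform estimate in Lemma~\ref{lem:FhatF0hat}, and then invokes the known asymptotic for the wide-cube sum $\Sigma(\bx,\Delta_0)$ coming from Mitsui/Heath-Brown (equation \eqref{eqn:SigmaxDelta0}). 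The ``exact cancellation'' with $e_\beta$ is the final outcome (the two main terms coincide), but establishing the $h_\beta$ main term at the small scale requires this comparison trick rather than a direct contour shift, because at short scales the contribution near the pole of $\zeta_K$ is not controllable in isolation. Your Step~3 is accurate and matches the paper's execution.
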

We delay the proof of Lemma \ref{lem:SWbdd} until the next Section.  For now, let us verify Lemma \ref{lem:T7} assuming Lemma \ref{lem:SWbdd}.
\newline
\paragraph{{\it Proof of Lemma \ref{lem:T7}}}
Note that the condition $S_0 \ge V^{1/3 - 5/36 + \epsilon}$ is satisfied by our choice of $S_0$, by Lemma  \ref{lem:S0lowerbdd}.
We write
\begin{align*}
\fS(\mathbf{b}/q) 
&= \sum_{\mathbf{c} \bmod q} e(\mathbf{b}\cdot \mathbf{c}/q) \sum_{\substack{\hat \beta \in \C\\ \hat \beta \equiv \mathbf{c} \bmod q}}f_\beta\\
&\ll \sum_{\mathbf{c} \bmod {q}} \left| \sum_{\substack{\hat \beta \in \C\\ \hat \beta \equiv \mathbf{c} \bmod q}}f_\beta \right|
\\
&\ll \frac{q^3S_0^3}{\log^{C} X},
\end{align*}by Lemma \ref{lem:SWbdd}.
Thus
\begin{align*}
\T_7 \ll \frac{S_0^6}{D_0 \log^{2C-1}X} Q_0^4 = \frac{S_0^6}{D_0 \log^{2C-4R - 1}X},
\end{align*}which suffices for the Lemma, for $C$ sufficiently large.
\hfill$\square$

\begin{lem}\label{lem:T8}
There is some fixed $c>0$ such that
$$\T_8 \ll \frac{S_0^6}{D_0 (\log X)^{R-c}}.
$$
\end{lem}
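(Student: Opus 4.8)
The plan is to estimate $\T_8$ by a dyadic application of the large sieve bound in Lemma~\ref{lem:largesieveexpbdd}. First I would record the $L^2$ estimate $\sum_{\hat\beta\in\C}|f_\beta|^2\ll S_0^3(\log X)^{c(A)}$, which follows from $|f_\beta|\ll\tau(\beta)\log X$ together with Lemma~\ref{lem:cubedivisorsum}. Splitting the range $Q_0<q\ll D_0d_0$ into $O(\log X)$ dyadic blocks $q\asymp Q$, I would use the divisor bound $\tau(q)\ll_\epsilon q^\epsilon$, for a small fixed $\epsilon>0$ to be chosen at the end, to replace $\tau(q)/q$ by $Q^{\epsilon-1}$ on each block, and then apply Lemma~\ref{lem:largesieveexpbdd} to obtain
\[
\sum_{q\asymp Q}\frac{\tau(q)}{q}\sumstar_{\mathbf{b}\bmod q}|\fS(\mathbf{b}/q)|^2
\ll Q^{\epsilon-1}\bigl(S_0^3+Q^2S_0^2+Q^4\bigr)S_0^3(\log X)^{c(A)}.
\]

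Summing over the dyadic blocks, the three terms behave quite differently. The diagonal contribution $Q^{\epsilon-1}S_0^6$ is decreasing in $Q$, so it is dominated by the smallest block $Q\asymp Q_0=(\log X)^R$; after multiplying by $\frac{\log X}{D_0}$ this gives $\ll\frac{S_0^6}{D_0}(\log X)^{c(A)+1-(1-\epsilon)R}$, which is the source of the power-of-$\log X$ saving. The two off-diagonal contributions $Q^{\epsilon+1}S_0^5$ and $Q^{\epsilon+3}S_0^3$ are increasing in $Q$, hence dominated by the largest block $Q\asymp D_0d_0$. To control these I would combine $d_0=(\log X)^{O(1)}$ with the lower bound for $S_0$ in Lemma~\ref{lem:S0lowerbdd}: using the definitions $S_0=D_0Y/(\N(\log X)^{C_1})$, $\N\ll V^{2/3}\ll X^{1-\tau}$ and $Y=X^{1-\gamma}$ one gets $S_0/D_0\gg X^{\tau-\gamma}(\log X)^{-C_1}$, so that $D_0d_0\ll S_0X^{-(\tau-\gamma)}(\log X)^{O(1)}$. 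Since $\tau>\gamma$ is a fixed positive constant, this is a genuine power-of-$X$ saving, and a short computation shows that for $\epsilon$ small enough in terms of $\tau-\gamma$ both off-diagonal contributions, after multiplication by $\frac{\log X}{D_0}$, are $\ll\frac{S_0^6}{D_0}X^{-\delta_0}$ for some $\delta_0>0$.

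Putting these together yields $\T_8\ll\frac{S_0^6}{D_0}(\log X)^{-(1-\epsilon)R+c}$ for a fixed constant $c$; as $\epsilon$ is fixed and $R$ is a free parameter chosen later, this is of the form required. The step I expect to be the main obstacle is the bookkeeping of the off-diagonal terms: one must verify that the moduli $q$ never approach the side length $S_0$ of the cubes, so that the large-sieve contributions $Q^2S_0^2$ and $Q^4$ — which on their own are no better than the trivial bound — stay power-of-$X$ below the target size $S_0^6/D_0$. This is precisely where Lemma~\ref{lem:S0lowerbdd} and the hypothesis $\tau>\gamma$ enter decisively. A secondary nuisance is the divisor weight $\tau(q)$; handling it by $\tau(q)\ll_\epsilon q^\epsilon$ costs an arbitrarily small fraction of the saving exponent, which is immaterial, but if one wants the literal exponent $R-c$ one can instead peel off the bottom of the range, say $q\le(\log X)^{2R}$, and treat it by the Siegel--Walfisz-type estimate of Lemma~\ref{lem:SWbdd} exactly as in the proof of Lemma~\ref{lem:T7}, reserving the large sieve for $q>(\log X)^{2R}$.
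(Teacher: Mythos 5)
Your approach matches the paper's almost exactly: dyadic decomposition of the $q$-range, application of Lemma~\ref{lem:largesieveexpbdd}, the $L^2$ input from Lemma~\ref{lem:cubedivisorsum}, and Lemma~\ref{lem:S0lowerbdd} to kill the off-diagonal terms by a power of $X$. The one place where you differ is the handling of the divisor weight: you correctly observe that $\tau(q)\ll_\epsilon q^\epsilon$ only yields an exponent $(1-\epsilon)R-c'$ rather than the literal $R-c$, and you then propose to patch this by enlarging the Siegel--Walfisz range to $q\le(\log X)^{2R}$. That works, but it is heavier than needed; the paper simply uses the sharper divisor estimate $\tau(q)\ll\exp\bigl(c\log q/\log\log q\bigr)$, which for $q\asymp Q_0=(\log X)^R$ is $\ll(\log X)^{o(1)}$ and so gives the exact exponent $R-c$ directly, with no second appeal to Lemma~\ref{lem:SWbdd}. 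Apart from this cosmetic point the argument is sound and complete.
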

\begin{proof}
Using the bound $\tau(q) \ll \exp\bfrac{c\log q}{\log \log q}$ for some $c>0$, we have by Lemma \ref{lem:largesieveexpbdd} that\begin{align*}
\sum_{Q<q\le 2Q} \frac{\tau(q)}{q} \sumstar_{b \bmod q} |\fS\bfrac{\mathbf b}{q}|^2
&\ll \frac{\exp\bfrac{c\log Q}{\log\log Q}}{Q} (S_0^3 + Q^2 S_0^2 + Q^4) \sum_{\hat \beta \in \C} |f_\beta|^2.
\end{align*}
We perform a dyadic summation over $Q_0 < Q \ll D_0d_0$ to see that
\begin{align*}
\T_8 &\ll \frac{\log X}{D_0} \left( \frac{S_0^3}{Q_0} \exp\bfrac{c\log Q_0}{\log\log Q_0} + S_0^2 (D_0d_0)\exp\bfrac{c\log (D_0d_0)}{\log\log (D_0d_0)} \right.\\
&+ \left.(D_0d_0)^3 \exp\bfrac{c\log (d_0D_0)}{\log\log (d_0D_0)}\right) \sum_{\hat \beta \in \C}|f_\beta|^2.
\end{align*}
The first term is bounded by
\begin{equation}\label{eqn:largefirstterm}
\frac{S_0^3}{D_0 (\log X)^{R-2}}\sum_{\hat \beta \in \C}|f_\beta|^2,
\end{equation}while the second is bounded by
\begin{equation}\label{eqn:largesecondterm}
S_0^2 d_0 \log X \exp \bfrac{c\log(d_0D_0)}{\log \log (d_0D_0)}\sum_{\hat \beta \in \C}|f_\beta|^2.
\end{equation}Recalling that $d_0$ is a power of $\log X$ from \eqref{eqn:d0} and $S_0/D_0 \gg X^\epsilon$ by Lemma \ref{lem:S0lowerbdd}, so that the quantity in \eqref{eqn:largesecondterm} is bounded by the quantity in \eqref{eqn:largefirstterm}.  Similarly, $S_0 \gg D_0d_0$, so the third term is bounded by \eqref{eqn:largefirstterm} as well.  Then then Lemma follows from 
$$\sum_{\hat \beta \in \C}|f_\beta|^2 \ll S_0^3 (\log S_0)^{c},
$$for some $c>0$ which is an immediate consequence of Lemma \ref{lem:cubedivisorsum}.
\end{proof}

We now sum over cubes $\C_1, \C_2$ satisfying $\mathfrak{C}_1 (\C_1, \C_2)$.  Recall that the condition  $\mathfrak{C}_1 (\C_1, \C_2)$ implies that $\C_1 \times \C_2$ is in a region with dimensions $\frac{\N}{V^{1/3}}, \frac{\N}{V^{1/3}}, V^{1/3}, V^{1/3}, \frac{YV^{1/3}}{X},$ and $\frac{D_0Y}{\N}$ (recall the discussion at the beginning of \S \ref{sec:reductiontocubes}) and recall that the volume of this region is
$$\mathfrak V = \frac{\N}{V^{1/3}}\frac{\N}{V^{1/3}} V^{1/3}V^{1/3}\frac{YV^{1/3}}{X}\frac{D_0Y}{\N} = \frac{Y^2 D_0 \N V^{1/3}}{X},
$$so that the number of pairs of cubes $\C_1, \C_2$ required to cover the region is bounded by $\frac{\mathfrak V}{S_0^6}$.

Now let $J = \min(C_1, R-c)$, so that by Lemmas \ref{lem:T7} and \ref{lem:T8}, we have 
\begin{align*}
\sum_{\substack{\C_1, \C_2\\ \Cf_1(C_1, C_2)}}\T_7 + \T_8 
&\ll \sum_{\substack{\C_1, \C_2\\ \Cf_1(C_1, C_2)}} \frac{S_0^6}{D_0 (\log X)^{J}}\\
&\ll\frac{1}{D_0 (\log X)^{J}} \frac{Y^2 D_0 \N V^{1/3}}{X}\\
&\ll \frac{Y^2V}{X(\log X)^{J}},
\end{align*}upon recalling that $\N \ll V^{2/3}$.  The bound above suffices for \eqref{eqn:finalrequiredbdd} upon picking $R = C+c$ and $C_1 = C$, so that $J \ge C$.

\section{Proof of Lemma \ref{lem:SWbdd}}\label{sec:reductiontogrossencharacters}
Recall that for some fixed $n \geq 0$ and $\mathbf{m} = (m_1,...m_{n+1}) \in \mathbb{N}^n$, we introduced intervals $J(m_i) = [X^{m_i \xi}, X^{(m_i+1)\xi}]$, and set
\begin{equation}\label{eqn:hbeta}
h_S = \prod_{i=1}^{n+1} \frac{\Lambda_K(T_i) \frak{W}_i(N(T_i))}{m_i \xi \log X},
\end{equation}where $S = T_1...T_{n+1}$ for integral ideals $T_i$, $0\le \frak{W}_i(x)\le 1$ is a smooth function which is $1$ on $J(m_i) = [X^{m_i \xi}, X^{(m_i+1)\xi}]$, is supported on $[X^{m_i \xi}(1-\mathcal{\iota}), X^{(m_i+1)\xi}(1+\mathfrak \iota)]$, where

\begin{equation}\label{eqn:iota}
\iota = \exp(- (\log X)^\epsilon).
\end{equation}  In the above, $m_1>m_2>...>m_{n+1} \ge \frac{\delta}{\xi}$ where we further recall from \eqref{eqn:xi} that 
\begin{equation}
\xi = \frac{1}{\log \log X},
\end{equation}and \eqref{eqn:delta} that
\begin{equation}
\delta \asymp 1.
\end{equation}

Further, to extract the main term from $h_S$, we introduced the coefficients
$$e_S = \frac{w'(N(S))}{\prod_{i=1}^{n+1} m_i \xi \log X} \sum_{J|S: N(J) < L} \mu(J) \log \frac{L}{N(J)}, 
$$where 
\begin{equation}\label{eqn:L}
L = X^{\xi}
\end{equation} and 
$$w(t) = \int_{\substack{x\in \mathbb{R}^{n+1}\\ \prod x_i \le t }} 1\; dx_1...dx_{n+1}.
$$

Recall $f_S = h_S - e_S$.  For all of this section, $\beta$ shall always denote an algebraic integer in $\cO_K$.  As before, we write $h_{(\beta)} = h_\beta$, $e_{(\beta)} = e_\beta$ and $f_{(\beta)} = f_\beta$.  Our definitions were motivated by the expectation that $h_{\beta}$ and $e_\beta$ behaved similarly in arithmetic progressions, and this section is devoted to verifying this for small moduli.

First, we quote Lemma 8.1 from \cite{HB} regarding $e_\beta$.  It may help the reader to recall that morally $e_\beta$ is a simple model for a product of prime ideals given by $h_\beta$ and is thus generally easier to understand, including when $\hat{\beta}$ is restricted to the small cube $\C$. 
\begin{lem}\label{lem:eb}
Let $\C \subset \mathbb{R}^3$ be a cube of side $S_0\geq L^2$ and edges parallel to the coordinate axes.  Define 
$$N(x, y, z) = x^3 + 2y^3 + 4z^3 - 6xyz
$$and
$$\I  =\int_\C w'(N(\mathbf{x})) dx dy dz.
$$For any positive integer $q\le L^{1/6}$ and any integer $\alpha \in \mathbb{Z}[\sqrt[3]{2}]$ we have
$$\sum_{\substack{\beta \equiv \alpha \bmod q\\ \hat \beta \in \C}} e_{\beta} = \frac{1}{\gamma_0 M} (\xi \log X)^{-n-1} \I \frac{\epsilon(\alpha, q)}{\phi_K(q)} + O\left(S_0^3 M^{-1} \tau(q)^c \exp(-c\sqrt{\log L}) \right),
$$where $\epsilon(\alpha, q) = 1$ if $(\alpha, q) = 1$ and $\epsilon(\alpha, q) =0$ otherwise.  Moreover, we have defined
$$M = \prod_{i=1}^{n+1} m_i,
$$and we have written $\phi_K$ for the Euler function over the field $K$.
\end{lem}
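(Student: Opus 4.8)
This is Lemma~8.1 of \cite{HB}, stated and proved there for precisely this kind of small cube; the only check needed is that Heath-Brown's argument uses nothing about the parameters beyond $q\le L^{1/6}$ and $S_0\ge L^2$, which remain true here with $L=X^{\xi}$, $\delta\asymp 1$. I nonetheless indicate the argument. Write $\Lambda_L(\beta)=\sum_{J\mid(\beta),\ N(J)<L}\mu(J)\log\frac{L}{N(J)}$ for the truncated Selberg-type divisor weight, so that with $M=\prod_{i=1}^{n+1}m_i$ and $N(\hat\beta)=x^3+2y^3+4z^3-6xyz$ for $\hat\beta=(x,y,z)$,
$$e_\beta=\frac{(\xi\log X)^{-n-1}}{M}\,w'\!\bigl(N(\hat\beta)\bigr)\,\Lambda_L(\beta).$$
Substituting and interchanging the order of summation,
$$\sum_{\substack{\hat\beta\in\C\\ \beta\equiv\alpha\bmod q}}e_\beta=\frac{(\xi\log X)^{-n-1}}{M}\sum_{N(J)<L}\mu(J)\log\frac{L}{N(J)}\sum_{\substack{\hat\beta\in\C\\ \beta\equiv\alpha\bmod q\\ J\mid(\beta)}}w'\!\bigl(N(\hat\beta)\bigr).$$

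For each squarefree $J$ the two conditions $\beta\equiv\alpha\bmod q$ and $J\mid(\beta)$ cut out a coset of a sublattice of $\cO_K\cong\mathbb Z^3$, empty unless $(\alpha,q)=1$ (hence the factor $\epsilon(\alpha,q)$) and otherwise of index $\asymp q^3N(J)$. Since $S_0\ge L^2$ dominates any fixed power of $\max(q,N(J))$ — note $q^{12}\le L^2\le S_0$ — a standard geometry-of-numbers count (Davenport's lemma, or Poisson summation against a smooth weight replacing the sharp cutoff on $\C$) gives
$$\sum_{\substack{\hat\beta\in\C,\ \beta\equiv\alpha\bmod q,\ J\mid(\beta)}}w'\!\bigl(N(\hat\beta)\bigr)=\frac{\epsilon(\alpha,q)}{q^3N(J)}\int_{\C}w'\!\bigl(N(\mathbf x)\bigr)\,d\mathbf x+\mathcal E(J),$$
where $\mathcal E(J)$ is smaller than the main term by a power of $S_0/(qN(J)^{1/3})\ge L^{2/3}$; for this one uses that $w'(N(\cdot))$ varies on the scale $V^{1/3}\gg S_0$ over the cube, which follows from the fact that $w$ is a volume function of boxes with edge lengths $X^{O(\xi)}$. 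Weighting by $|\mu(J)|\log(L/N(J))$ and summing over $N(J)<L$, the total $\mathcal E$-contribution is $\ll S_0^3 M^{-1}L^{-1/2}$, comfortably inside the claimed error.

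It remains to reassemble the main terms, which produce
$$\frac{(\xi\log X)^{-n-1}}{M}\,\frac{\epsilon(\alpha,q)}{q^3}\,\I\sum_{\substack{N(J)<L\\ (J,q)=1}}\frac{\mu(J)\log(L/N(J))}{N(J)},\qquad \I=\int_\C w'\!\bigl(N(\mathbf x)\bigr)\,d\mathbf x.$$
Here the restriction $(J,q)=1$ is forced because $(\alpha,q)=1$ and $\beta\equiv\alpha\bmod q$. Evaluating the truncated Selberg sum over all $J$ by shifting the contour in $\frac{1}{2\pi i}\int\frac{L^w}{w^2}\,\zeta_K(1+w)^{-1}\,dw$ past $w=0$ — where $\zeta_K(1+w)^{-1}=\gamma_0^{-1}w+O(w^2)$, since $\zeta_K$ has a simple pole with residue $\gamma_0$ — into the classical zero-free region gives $\gamma_0^{-1}+O(\exp(-c\sqrt{\log L}))$; the restriction $(J,q)=1$ merely strips the Euler factors at primes dividing $q$, and these combine with the $q^{-3}$ from the lattice index to give exactly $\phi_K(q)^{-1}$. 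This assembles into the stated main term $\frac{1}{\gamma_0M}(\xi\log X)^{-n-1}\,\I\,\epsilon(\alpha,q)/\phi_K(q)$, with error $O(\exp(-c\sqrt{\log L}))$ of the required shape. The only step that is not elementary is this last contour shift, and it is precisely what fixes the quality of the error at $\exp(-c\sqrt{\log L})$; the geometry-of-numbers input, by contrast, is lossless up to a power of $L$, so it is not the bottleneck.
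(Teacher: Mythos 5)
The paper itself proves nothing here: it simply quotes Lemma~8.1 of \cite{HB} (with the remark that the replacement of Heath-Brown's $X^2$ by $XY$ and the choice $n\ll 1/\delta\asymp 1$ cause no difficulties), and your opening sentence is exactly that, so you are taking the same approach. Your subsequent sketch of Heath-Brown's argument is a faithful outline of the mechanism (interchange with the truncated M\"obius sum, lattice-point count against the smooth weight $w'$, evaluation of $\sum_{N(J)<L}\mu(J)\log(L/N(J))/N(J)$ by a contour shift past the pole of $\zeta_K$), with one small imprecision worth flagging: the coset $\{\beta\equiv\alpha\bmod q,\ J\mid(\beta)\}$ is \emph{not} empty when $(\alpha,q)>1$ — it is nonempty already for $J=(1)$ — and the factor $\epsilon(\alpha,q)$ in the main term comes instead from the cancellation in the $J$-sum once every $\beta$ in the progression is forced to carry a fixed small prime factor of $(q)$, not from the lattice coset being void.
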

Our definition of $e_\beta$ is the same as that appearing in \cite{HB}, save that our $n \ll \frac{1}{\delta} \asymp 1$, which actually makes things slightly simpler.  We now prove the following adaptation of Lemma 9.1 from \cite{HB}.
\begin{lem}\label{lem:dbcommonfactor}
Let $\C$ be as in Lemma \ref{lem:eb} and $q \le (\log X)^R$ be as in Lemma \ref{lem:SWbdd}.  For $\alpha \in \mathbb{Z}[\sqrt[3]{2}]$, we have
$$\sum_{\substack{\beta \equiv \alpha \bmod q\\ \hat \beta \in \C}} h_\beta \ll S_0^3 X^{-\delta/2 +\epsilon} 
$$whenever $\alpha$ and $q$ have a non-trivial common factor.
\end{lem}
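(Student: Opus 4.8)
The plan is to exploit the support of $h_\beta$: the hypothesis that $\alpha$ and $q$ have a non-trivial common factor will force every $\beta$ contributing a non-zero term to the sum to be divisible by a fixed ideal of norm at least $X^{\delta/2}$, after which the whole sum can be bounded trivially by a lattice-point count in $\C$.

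First I would fix a prime ideal $P$ with $P\mid(\alpha)$ and $P\mid(q)$, which exists by hypothesis. If $\beta\equiv\alpha\bmod q$ then $P\mid(q)\mid(\beta-\alpha)$, and combined with $P\mid(\alpha)$ this gives $P\mid(\beta)$, so the sum is supported on such $\beta$. Next I would use that $h_\beta=h_{(\beta)}$ is a sum, over ordered factorizations $(\beta)=T_1\cdots T_{n+1}$ into integral ideals, of $\prod_i\Lambda_K(T_i)\mathfrak{W}_i(N(T_i))(m_i\xi\log X)^{-1}$, and that in any non-vanishing term each $T_i=Q_i^{k_i}$ is a prime ideal power with $N(T_i)\in[X^{m_i\xi}(1-\iota),X^{(m_i+1)\xi}(1+\iota)]$; since $m_i\ge\delta/\xi$ this forces $N(T_i)\ge X^{\delta}(1-\iota)\ge X^{\delta/2}$ for $X$ large. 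When $h_\beta\ne0$, fixing such a factorization and using $P\mid(\beta)=\prod_i T_i$, we get $P=Q_j$ for some $j$, hence $P^{k_j}\mid T_j\mid(\beta)$ with $N(P)^{k_j}=N(T_j)\ge X^{\delta/2}$. Letting $\ell=\ell(P)$ be the least positive integer with $N(P)^{\ell}\ge X^{\delta/2}$, it follows that $k_j\ge\ell$, and therefore $P^{\ell}\mid(\beta)$.

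With this in hand, applying the trivial bound $|h_\beta|\ll1$ from Lemma \ref{lem:trivialbddcoef} reduces the lemma to
\[
\#\{\beta\in\cO_K:\ \hat\beta\in\C,\ P^{\ell}\mid(\beta)\}\ll S_0^3X^{-\delta/2+\epsilon}.
\]
Since $K$ has class number one, writing $P=(\pi)$ we have $\{\beta:P^{\ell}\mid(\beta)\}=\pi^{\ell}\cO_K$, which under $\beta\mapsto\hat\beta$ is a full-rank sublattice $\Lambda\subset\mathbb{Z}^3$ of index $N(P)^{\ell}$. A standard lattice-point count in the box $\C$ of side $S_0$ — using that the successive minima $\lambda_1\le\lambda_2\le\lambda_3$ of $\Lambda$ satisfy $\lambda_i\ge1$ (as $\Lambda\subseteq\mathbb{Z}^3$) and $\lambda_1\lambda_2\lambda_3\asymp N(P)^{\ell}$ by Minkowski's second theorem — gives
\[
\#(\Lambda\cap\C)\ll\prod_{i=1}^{3}\Big(1+\tfrac{S_0}{\lambda_i}\Big)\ll 1+S_0+S_0^2+\frac{S_0^3}{N(P)^{\ell}}\ll 1+S_0+S_0^2+S_0^3X^{-\delta/2}.
\]
Finally I would invoke that in the range relevant for Lemma \ref{lem:SWbdd} the side is a genuine power of $X$, namely $S_0\gg V^{1/3-5/36+\epsilon}\gg X^{7/36}$ (cf. Lemma \ref{lem:S0lowerbdd}), while $\delta\le1/6$ forces $X^{\delta/2}\le X^{1/12}\ll S_0$; hence each of the four terms above is $\ll S_0^3X^{-\delta/2}$, which is the claimed bound.

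The only genuinely delicate point is the prime-power step: one cannot simply conclude $h_\beta=0$ when $P\mid(\beta)$, because the small prime $P$ may legitimately occur in $(\beta)$ as a large power $P^{k_j}$ with $N(P)^{k_j}$ of size about $X^{\delta}$. The crux is that this large power is exactly what makes $P^{\ell}\mid(\beta)$ a divisibility of norm $\ge X^{\delta/2}$, so that the lattice-point count saves the required factor $X^{-\delta/2}$. The secondary point to keep track of is that $S_0$ is a power of $X$ exceeding $X^{\delta/2}$, so that the lower-order terms $1+S_0+S_0^2$ in the count are harmless — this is where the hypotheses $S_0\ge V^{1/3-5/36+\epsilon}$ and $\delta\le1/6$ enter.
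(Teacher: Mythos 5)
Your proposal is correct and follows essentially the same route as the paper's proof: both fix a prime $P$ dividing $\alpha$ and $q$, use the support of $h_\beta$ (a product of prime-ideal-power ideals of norm at least $X^\delta$) to promote $P\mid(\beta)$ to $P^\ell\mid(\beta)$ with $N(P)^\ell\asymp X^{\delta/2}$, apply the trivial bound $h_\beta\ll1$, and conclude with a lattice-point count in $\C$ using $S_0\gg X^{\delta/2}$. The only cosmetic differences are that you package the lattice count via Minkowski's second theorem rather than the paper's covering-by-subcubes, and you avoid the paper's harmless but redundant final summation $\sum_{P_0\mid q}$ (which is the source of the $X^\epsilon$ in their bound and can indeed be dropped).
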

\begin{proof}
Suppose $\alpha$ and $q$ have a common factor $q_0$ with $P_0 = (q_0)$ a prime ideal.  Then $q_0|\beta$ whenever $\beta \equiv \alpha \bmod q$.  Recall that that $h_\beta$ is supported on $\beta$ with
$$(\beta) = T_1...T_{n+1}
$$ where $T_i$ is a power of a prime ideal for all $1\le i\le n+1$, so $P_0|T_i$ for some $1\le i\le n+1$.  Moreover, 
\begin{equation}
N(P_0) \le N(q) \leq (\log X)^{3R}, 
\end{equation}while $N(T_i) \ge X^{\delta}$, so for $X$ sufficiently large, we have that
\begin{equation}\label{eqn:P0bdd}
N(P_0)^{100} < X^{\delta}\le N(T_i),
\end{equation}
so that $P_0|T_i$ implies that 
\begin{equation}\label{eqn:TiP0}
T_i = P_0^{k}
\end{equation}
for some $k\ge 100$.

We recall that
\begin{align*}
S_0 \gg V^{1/3 - 5/36 +\epsilon} \geq X^{1/6+1/36},
\end{align*}where the first statement is from the conditions in Lemma \ref{lem:SWbdd}, and the second comes from recalling $V \gg X^{1+\tau}$.  Recalling $\delta \le 1/6$ from the definition of $\delta$ in \eqref{eqn:delta}, we have that for sufficiently large $X$, 
$$X^\delta \leq S_0.
$$

We now let $k_0(P_0) = k_0 \leq k$ be such that $X^{\delta/2} \leq N(P_0)^{k_0} \leq X^\delta \leq S_0$, possible from \eqref{eqn:P0bdd} and \eqref{eqn:TiP0}.  Note that $P_0^{k_0}|\beta$ and
\begin{align*}
\sum_{\substack {\hat{\beta} \in \C \\ P_0^{k_0}|\beta}} 1 \ll \frac{S_0^3}{N(P_0)^{k_0}}
\end{align*}since $N(P_0^{k_0}) \leq S_0$.  Here, we have used that, in general, for an ideal $I$, a cube of side $N(I)$ contains $O(N(I)^2)$ values of $\hat{\beta}$ with $I|\beta$.  

Then since $h_\beta \ll 1$,
\begin{align*}
\sum_{\substack{\beta \equiv \alpha \bmod q \\ \hat{\beta} \in \C}} h_\beta  
&\ll \sum_{P_0|q} \sum_{\substack {\hat{\beta} \in \C \\ P_0^{k_0}|\beta}} 1 \\
&\ll \sum_{P_0|q} \frac{S_0^3}{N(P_0)^{k_0}} \\
&\ll \frac{S_0^3}{X^{\delta/2 - \epsilon}},
\end{align*}using the crude bound $\sum_{P_0|q} \ll \log q \ll X^\epsilon$.
\end{proof}

\begin{rem}
In the proof of the Lemma above, it was important that we were able to produce a factor $P_0^{k_0}|\beta$ with $N(P_0^{k_0}) \leq S_0$.  The reader may check that this line of reasoning is not sufficient to estimate
$$\sum_{\hat{\beta} \in \C} (h_\beta - d_\beta)
$$as that would naturally lead to problems involving estimating quantities like  
$$\sum_{\substack{\hat{\beta} \in \C\\ I|\beta}} 1
$$ for some integral ideal $I$ with $N(I)$ significantly larger than the side of $\C$.  We chose to avoid this entirely by performing the replacement earlier in Lemma \ref{lem:hbetadbetareplace} well before the reduction to small cubes.
\end{rem}

Lemma \ref{lem:SWbdd} follows from the previous two Lemmas and the Lemma below.
\begin{lem} \label{lem:db}
Set notation as in Lemma \ref{lem:eb} and assume that $S_0 \ge V^{1/3 - 5/36 + \epsilon}.$  Then for any positive integer $q\le (\log L)^A$ and any algebraic integer $\alpha \in \mathbb{Z}[\sqrt[3]{2}]$ coprime to $q$ we have that there exists some constant $c>0$ such that
$$\sum_{\substack{\beta \equiv \alpha \bmod q\\ \hat \beta \in \C}} d_{\beta} = \frac{1}{\gamma_0 M\phi_K(q)} (\xi \log X)^{-n-1} \I + O\left(S_0^3 \exp(-c(\log V)^{1/3 - \epsilon}) \right),
$$where $M = \prod_{i=1}^{n+1} m_i$.
\end{lem}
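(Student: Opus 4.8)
\medskip
\noindent\textbf{Proof plan.} Recall that Lemma~\ref{lem:SWbdd} is to be deduced from Lemmas~\ref{lem:eb} and \ref{lem:dbcommonfactor} together with the present statement, and that --- as stressed in the Remark following Lemma~\ref{lem:dbcommonfactor} --- by this point we are working with the coefficient $h_\beta$ built from $\Lambda_K$ and the smoothings $\frak{W}_i$ (the coefficient written $d_\beta$ in the statement; higher prime powers in $\Lambda_K$ contribute negligibly exactly as in Lemma~\ref{lem:hbetadbetareplace}). The plan is to detect the two local conditions on $\beta$ --- the congruence $\beta\equiv\alpha\bmod q$ and the archimedean condition $\hat\beta\in\C$ --- by harmonic analysis on the idele class group of $K$, to pull off the main term from the principal character, and to show that every other character contributes negligibly.

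First I would open $\beta\equiv\alpha\bmod q$ by orthogonality over $(\cO_K/(q))^{*}$ (equivalently over ray class characters mod $q$, as $K$ has class number one): since $(\alpha,q)=1$ this isolates a factor $1/\phi_K(q)$ against the principal character and a nonprincipal character otherwise. The normalisation \eqref{eqn:betadef} makes the remaining sum one over principal ideals $I$ of norm $\asymp V$ whose normalised generator lies in $\C$; I would replace the indicator of $\C$ by a smooth majorant and minorant with boundary sharpened at scale $\sigma S_0$, choosing $\sigma$ small enough that the resulting shell, counted with the divisor weight, contributes $\ll S_0^3\exp(-c(\log V)^{1/3-\epsilon})$. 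Expanding this smooth box cutoff and the $\frak{W}_i$ by Mellin and Fourier inversion converts the condition $\hat\beta\in\C$ into a sum over Hecke Gr\"ossencharacters --- for $K=\mathbb Q(\sqrt[3]2)$, of signature $(1,1)$ and unit rank $1$, these are parametrised by a real frequency $t$ dual to $N(I)$ and one integer frequency for the unit torus. The main term, extracted from the principal character as in Proposition~\ref{prop:Mcomparison}, is then precisely the quantity of Lemma~\ref{lem:eb} with $\epsilon(\alpha,q)$ replaced by $1$, namely $\frac{1}{\gamma_0 M\phi_K(q)}(\xi\log X)^{-n-1}\I$; this exact agreement with the main term for $e_\beta$ is what will produce the cancellation needed for Lemma~\ref{lem:SWbdd}. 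What is left (Proposition~\ref{prop:errorsmallcube}) is to bound, weighted by the Fourier/Mellin coefficients, the quantities $\big|\sum_{N(I)\asymp V}\Lambda_K(I)\prod_i\frak{W}_i(\cdots)\psi(I)\big|$ over nonprincipal Gr\"ossencharacters $\psi$ of modulus dividing $q$, the archimedean frequency running up to $T_0:=V^{1/3}/S_0\le V^{5/36-\epsilon}$ for the box part and up to $\ll\iota^{-1}\ll\exp(\tfrac1A(\log X)^{1/3-\epsilon})$ for the $\frak{W}_i$ part (here I would use the first inequality in \eqref{eqn:iotaproperty}), with truncation tails negligible by \eqref{eqn:frakWbdd}. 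I note that $T_0^3\asymp V^{5/12}$ is exactly the size of this harmonic family and that $S_0^3\asymp V^{7/12+\epsilon}$ is the analogue of an interval of length $x^{7/12+\epsilon}$, which is why $7/36$ is the limiting exponent for $S_0$.

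Since $\Lambda_K$ on ideals of norm $\asymp V$ can be supported on products of many prime ideals, a zero-density argument is too lossy, so I would instead apply Heath-Brown's combinatorial identity \cite{HBidentity} to $\Lambda_K$: this writes the inner sum as a bounded combination of multilinear sums $\sum a_1(I_1)\cdots a_{2k}(I_{2k})$ in which half the coefficients are $\equiv1$ on dyadic ranges (pure Dirichlet polynomials after Mellin inversion) and the rest carry a factor $\mu_K$ on ranges of length $\le V^{1/(2k)}$, and I would arrange the decomposition so that every $\mu_K$-factor has length $\fw\le T_0^{9/5}$. After the separation of variables of \S\ref{subsec:decomp} (absorbing all smooth weights and the $\psi$-twists through their Mellin transforms) and the reduction to a discrete frequency set in \S\ref{subsec:reductiontodiscrete}, the error becomes a weighted average over the Gr\"ossencharacter family of products $\big|\prod_j F_j(\tfrac12+it,\psi_j)\big|$ of Hecke $L$-function Dirichlet polynomials on the critical line. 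For frequencies $|t|\le T_1$ with $T_1\ll\exp(\tfrac1A(\log V)^{1/3-\epsilon})$ I would invoke Coleman's zero-free region \cite{Cole} (\S\ref{subsec:zerofree}), the analogue of the zero-free region behind the classical $(x,x+x^{7/12+\epsilon})$ short-interval theorem, which is available precisely in this range and gives the saving $\exp(-c(\log V)^{1/3-\epsilon})$; for $T_1<|t|\le T_0$ I would use the large-value machinery of \S\ref{subsec:largesieve}--\S\ref{subsec:largevalueapplication}: a $\mu_K$-factor $F$ of length $\fw\le T_0^{9/5}$ is raised to a suitable power $g$ so that $T_0^{12/5}\le\fw^g\le T_0^{18/5}$ (length close to the number $T_0^3$ of harmonics), to which Duke's large sieve for $\zeta_K$ (Lemma~\ref{lem:dukelargesieve}) together with the Montgomery- and Huxley-type large value estimates (Lemmas~\ref{lem:mont}, \ref{lem:huxmont}) give a power saving, while a factor with $\fw>T_0^{9/5}$, whose coefficients are merely smooth, is instead handled by a sharp fourth moment bound for the relevant $L$-functions (\S\ref{subsubsec:T0big}). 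Collecting the three $|t|$-ranges and summing over $\psi$ yields the claimed bound $\ll S_0^3\exp(-c(\log V)^{1/3-\epsilon})$.

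The hard part is the last step. Because $\Lambda_K$ can sit on products of arbitrarily many prime ideals, everything must be routed through Heath-Brown's identity, and the real obstacle is the bookkeeping that guarantees each Dirichlet polynomial factor is either short enough ($\fw\le T_0^{9/5}$, so that powering lands in the efficient window $[T_0^{12/5},T_0^{18/5}]$ for Duke's large sieve) or has coefficients smooth enough for a fourth moment bound --- all while the harmonic family stays of size $T_0^3$ --- since this is exactly where the Huxley-quality input, and hence the restriction $S_0\ge V^{1/3-5/36+\epsilon}$, is used.
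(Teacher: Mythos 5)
Your outline follows the architecture of the paper's proof quite closely: detect the congruence by orthogonality over $(\cO_K/(q))^{*}$ (isolating $1/\phi_K(q)$ on the principal character), detect the archimedean box $\C$ by Hecke Gr\"ossencharacters with a smoothed cutoff, extract the main term from the principal character as in Proposition~\ref{prop:Mcomparison}, then bound the remaining twisted sums (Proposition~\ref{prop:errorsmallcube}) via Heath-Brown's identity for $\Lambda_K$, Mellin separation, reduction to a discrete well-spaced set, Duke's large sieve and the Montgomery/Huxley large-value lemmas for the $\mu_K$-bearing Dirichlet polynomials of length $\fw\le T_0^{9/5}$ (powered into the window $[T_0^{12/5},T_0^{18/5}]$), and the fourth moment for the smooth factors with $\fw>T_0^{9/5}$. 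You also correctly identify $T_0\asymp V^{5/36-\epsilon}$, the $S_0^3\asymp V^{7/12+\epsilon}$ analogy with Huxley's $(x,x+x^{7/12+\epsilon})$, and the $7/36$ limiting exponent.

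There is, however, one genuine mischaracterisation in the final step. You propose to split by the archimedean frequency: ``for $|t|\le T_1$ invoke Coleman's zero-free region... which is available precisely in this range and gives the saving $\exp(-c(\log V)^{1/3-\epsilon})$; for $T_1<|t|\le T_0$ use the large-value machinery... [which] gives a power saving.'' This is not how Coleman's bound and the large-value estimates are meant to interact, and as stated the large-$|t|$ half of your argument would fail. The large-value bound, Lemma~\ref{lem:huxmont} applied to a power $F^g$, only produces
\[
R\ll T_0^{36/5(1-\sigma)}(\log V)^{O(1)},
\]
which gives the class of characters a contribution $\ll T_0^{36/5(1-\sigma)}\fu_F^{\sigma}(\log V)^{O(1)}$. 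For $\sigma$ very close to $1$ this is \emph{not} exponentially smaller than $\fu_F$ --- it is only smaller by a factor $\asymp\bigl(T_0^{36/5}/\fu_F\bigr)^{1-\sigma}\ll V^{-\delta_1(1-\sigma)}$ for some fixed $\delta_1>0$, and if $1-\sigma$ can be arbitrarily small this factor can be arbitrarily close to $1$. What the paper actually does (cf.\ \eqref{eqn:sigmajbdd} and the display after \eqref{eqn:Rbdd}) is apply Coleman's zero-free region \emph{uniformly in $t$ across the whole range $|t|\le T_0$}, via Lemma~\ref{lem:zerofreebdd}, to establish the a priori ceiling $\sigma_j(\psi)\le 1-c(\log V)^{-2/3-\epsilon}$ for every $\psi$ under consideration; only then does the exponent $V^{-\delta_1(1-\sigma)}$ furnished by the large sieve become $\exp\!\bigl(-c'(\log V)^{1/3-\epsilon}\bigr)$. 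Coleman's result is available for all $|t|\ll T_0\ll V^{5/36}$ (the quality degrading only as $(\log\fC)^{2/3}$ with $\fC\asymp t^2+j^2+k^2$), so there is no need to restrict it to small $|t|$, and indeed without it the large-$|t|$ regime cannot close. Put differently: the split is not by $|t|$-range but by the size $\sigma_j(\psi)$ of the dominant polynomial factor (the classes $C(j,l)$ of \eqref{eqn:Cjl}), with large values controlling the \emph{count} of characters in each class and the zero-free region controlling the \emph{maximal} $\sigma$, and both ingredients are used simultaneously at every frequency.

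A minor point: the paper does not argue via a generic smooth majorant/minorant for $\C$ but via the explicit weight $W(S,\bx,\Delta)$ built from periodised $h_\Delta$'s on the unit torus together with Lemma~\ref{lem:restricttocube}, and then compares $M(\Delta,F)$ against $M(\Delta_0,F_0)$ at the benign scale $\Delta_0$ rather than evaluating the main term directly; but since you cite Proposition~\ref{prop:Mcomparison} you are evidently aware of this, and this is a difference of packaging rather than of substance.
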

Unfortunately Lemma \ref{lem:db} does not appear in the current literature.  The reader may compare this to Lemma 9.2 in \cite{HB}, which is a version of Lemma \ref{lem:db} when $S_0$ is not much smaller than $V^{1/3}$.  The version we require is analogous to asking for a Siegel Walfisz theorem for primes in short intervals of the form $(x, x+x^{7/12 +\epsilon})$.  

We now proceed to prove this result.  The condition $\beta \equiv \alpha \bmod q$ may be picked out using multiplicative characters for $\mathbb{Z}[\sqrt[3]{2}]$ mod $q$, so that
\begin{equation}
\sum_{\substack{\beta \equiv \alpha \bmod q\\ \hat \beta \in \C}} h_{\beta} =\frac{1}{\phi_K(q)} \sum_{\chi \bmod q} \overline{\chi}(a) \sum_{\hat \beta \in \C} h_\beta \chi(\beta).
\end{equation}
The condition $\hat \beta \in \C$ may be picked out using Hecke Grossencharacters.  To be specific, recall that $\epsilon_0$ is the fundamental unit and let $\epsilon_0'\neq \epsilon_0$ be a Galois conjugate.  For fixed $\chi$ a character of $\mathbb{Z}(\sqrt[3]{2}) \bmod q$,  we define $s, t, u, v$ by
\begin{equation}
\chi(-1) = (-1)^s, \chi(\epsilon_0) =e^{it}, \frac{\epsilon_0'}{|\epsilon_0'|} = e^{iu}, \log \epsilon_0 = v,
\end{equation} where $s = 0$ or $1$, $0\le t, u<2\pi$ and $v\in \mathbb{R}$.  Now, we define for $\beta \in \cO_K$
\begin{align*}
\nu_0(\beta) &= \chi(\beta) \bfrac{\beta}{|\beta|}^s \exp(-itv^{-1}\log |\beta|)\\
\nu_1(\beta) &= \frac{\beta\beta'}{|\beta\beta'|} \exp(-iuv^{-1}\log |\beta|)\\
\nu_2(\beta) &= \exp(-2\pi i v^{-1} \log |\beta|),
\end{align*}where for $\hat \beta =(a, b, c)$, $\hat \beta' = (a, b\omega , c\omega^2)$, for $\omega=1/2(-1+\sqrt{-3})$.  These characters are completely multiplicative and are the same on associates.  Thus, even though we defined $\nu_i$ as characters on $\cO_K$ for $0\le i\le 2$, they are also well defined as characters on ideals via $\nu_i((\beta)) = \nu_i(\beta)$, where recall $(\beta)$ is the ideal generated by $\beta$.

\begin{rem}\label{rem:characterchi}
Since $v_0$ is the same on associates, we think of $v_0$ as a Hecke Grossencharacter on ideals.  We now write 
$$v_0 = \theta v_0',
$$where $\theta$ is a Hecke Grossencharacter mod an ideal $\mathfrak q$ with $N(\mathfrak q) \asymp q$, and $v_0'$ is a Hecke Grossencharacter with conductor $1$.  Indeed, there exists a finite basis for torsion free Hecke Grossencharacters, and any Hecke Grossencharacter may be written as $\kappa v$ for $\kappa$ a Grossencharacter mod an integral ideal and $v$ torsion-free.  We refer the reader to \cite{He} for details on Hecke Grossencharacters.
\end{rem}

Vaguely, the values of $N(\beta)$, $\nu_1(\beta)$ and $\nu_2(\beta)$ determine $\beta$.  To be more precise, we follow the work of Heath-Brown to define for $\bx = (x_1, x_2, x_3) \in \mathbb{R}^3$, $\beta(\bx) = x_1+x_2\sqrt[3]{2} + x_3\sqrt[3]{4}$, $\beta'(\bx) = x_1+x_2 \omega\sqrt[3]{2} + x_3 \omega^2\sqrt[3]{4}$ and set
\begin{align*}
\nu_1(\bx) &= \frac{\beta(\bx)\beta'(\bx)}{|\beta(\bx)\beta'(\bx)|} \exp(-iuv^{-1} \log |\beta(\bx)|) \textup{ and} \\
\nu_2(\bx) &= \exp(-2\pi iv^{-1} \log |\beta(\bx)|).
\end{align*}

We may define an associate of $\bx$ as any $\bx' = \pm M^{n} \bx$ for any $n\in \mathbb{Z}$ where
\[ M = \left( \begin{array}{ccc}
1 & 2 & 2 \\
1 & 1 & 2 \\
1 & 1 & 1 \end{array} \right).\] 
This is analogous to the notion of associates on $K$; for instance, $\beta(M\bx) = \epsilon_0 \beta(\bx)$.  Further for simplicity, we write
\begin{equation}\label{eqn:Nx}
N(\bx) = N(\beta(\bx)).
\end{equation}

To fix ideas, let $h_{\frac 12}$ be a fixed smooth non-negative function on $\mathbb{R}/\mathbb{Z}$ with $h_{\frac 12}(-1/2) = h_{\frac 12}(1/2)  =0$ and $\int_0^1 h_{\frac 12}(t) dt = 1/2$.  For $\Delta <1/2$, define $h_\Delta$ on $\mathbb{R}/\mathbb{Z}$ by 
\begin{equation}\label{eqn:hDelta}
h_\Delta(t) = h_{\frac 12}\bfrac{t}{2\Delta}
\end{equation}
for $t \in [-\Delta, \Delta]$ and $h_\Delta(t) = 0$ for $t \in [1/2, 1/2) - [-\Delta, \Delta]$.

Then $h_\Delta(t)$ is a smooth non-negative function on $\mathbb{R}/\mathbb{Z}$ supported on $[-\Delta, \Delta]$.  That is, on the interval $[-1/2, 1/2)$, $h$ vanishes outside of $[-\Delta, \Delta]$, and $h$ is $1$-periodic.  Further, $ h^{(k)}(t) \ll_k \Delta^{-k}$ for all $k \geq 0$, and $\int_0^1 h(t) dt = \int_{-\Delta}^{\Delta} h(t) dt = 2\Delta \int_{-1/2}^{1/2} h_{\frac 12}(t) dt = \Delta$.  

The definition of $h_\Delta$ for $\Delta = 1/2$ is precisely our original $h_{\frac 12}$.  The function $h = h_\Delta$ depends on $\Delta$, but we will often suppress the dependence on $\Delta$ for convenience when we do not need to consider different values of $\Delta$.  We write the Fourier series of $h$ as
\begin{equation}
h(t) = \sum_{k} b(k) e(kt),
\end{equation}where partial integration yields as usual that 
\begin{equation}\label{eqn:bkbdd}
b(k) \ll_N \Delta \bfrac{1}{k\Delta}^N
\end{equation}for any $N \geq 0$, and in particular, $b(0) = \int_0^1 h(t) dt = \Delta$.  The implied constant here may depend on the choice of $h_{\frac 12}$, but we suppress the dependence since we consider $h_{\frac 12}$ fixed.

Define for integral ideals $S$
\begin{equation}
W(S, \bx, \Delta) = h\left(\arg\bfrac{v_1(S)}{v_1(\bx)}\right) h\left(\arg\bfrac{v_2(S)}{v_2(\bx)}\right).
\end{equation}We examine
\begin{equation}
\Sigma(\bx, \Delta) = \Sigma(\bx) = \sum_{N(\bx) < N(S) \le N(\bx) + \Delta V} h_S v_0(S) W(S, \bx, \Delta),
\end{equation}for fixed $\bx \in \C$ so that $N(\bx) \asymp V$.  Thus the sum above is over integral ideals $S$ with $N(S) \asymp V$.  In fact, the terms in $\Sigma$ are essentially restricted to $S = (\beta)$ for some $\beta$ where $\hat \beta$ is in a cube centered at $\bx$ of side $\Delta V$.  To be precise, we quote Lemma 9.5 of \cite{HB} below.
\begin{lem}\label{lem:restricttocube}
Let $V \ll N(S), N(\bx) \ll V$, and suppose that $W(S, \bx, \Delta) \neq 0$ and that $N(\bx) - \Delta V < N(S) \le N(\bx) + \Delta V$.  Then $S$ has a generator $\beta$ such that $\hat \beta$ satisfies 
$$\hat \beta = (1+O(\Delta)) \bx.
$$Moreover, for any associate $\beta'$ of $\beta$, $\bx$ has an associate $\bx'$ such that
$$\hat \beta' = (1+O(\Delta))\bx'.
$$
\end{lem}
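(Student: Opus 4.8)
The plan is to treat $N(S)$, $\nu_1(S)$ and $\nu_2(S)$, together with the hypothesis $W(S,\bx,\Delta)\ne 0$, as three approximate equations that pin down the three archimedean embeddings of an appropriately chosen generator of $S$, and then to invert the fixed, invertible linear map sending $\beta$ to its vector of embeddings (the same map underlying Lemma~\ref{lem:abcomponents}). Write $\beta\mapsto(\beta^{(1)},\beta^{(2)},\beta^{(3)})$ for the archimedean embeddings of $K=\mathbb Q(\sqrt[3]2)$, with $\beta^{(1)}$ real and $\beta^{(3)}=\overline{\beta^{(2)}}$, and set $\beta(\bx)^{(1)}=\beta(\bx)$, $\beta(\bx)^{(2)}=\beta'(\bx)$, $\beta(\bx)^{(3)}=\overline{\beta'(\bx)}$. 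Since $N(\bx)\asymp V$ and the coordinates of $\bx$ are $\ll V^{1/3}$ (as in Lemma~\ref{lem:abcomponents}), each $|\beta(\bx)^{(j)}|\asymp V^{1/3}$, and the same will hold for the generator $\beta$ once chosen, because $N(S)\asymp V$.

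First I would fix any generator of $S$ and use $\nu_2$ to normalise its size. Since $\nu_2$ depends only on $|\beta^{(1)}|$, the hypothesis $h\bigl(\arg(\nu_2(S)/\nu_2(\bx))\bigr)\ne 0$, with $h$ supported on $[-\Delta,\Delta]$, says that $v^{-1}\bigl(\log|\beta^{(1)}|-\log|\beta(\bx)^{(1)}|\bigr)$ lies within $O(\Delta)$ of some integer $m$, where $v=\log\epsilon_0$; replacing the generator by $\epsilon_0^{-m}$ times it (still a generator of $S$) gives $\log|\beta^{(1)}|=\log|\beta(\bx)^{(1)}|+O(\Delta)$, i.e.\ $|\beta^{(1)}|=(1+O(\Delta))|\beta(\bx)^{(1)}|$. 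Since $N(\beta)=N(S)\in\,]N(\bx)-\Delta V,N(\bx)+\Delta V]$ with $N(S)\asymp N(\bx)\asymp V$, we have $N(\beta)=(1+O(\Delta))N(\bx)>0$; as $N(\beta)=\beta^{(1)}|\beta^{(2)}|^2$ this forces $\beta^{(1)}>0$, and $\beta(\bx)^{(1)}>0$ for the same reason, so $\beta^{(1)}=(1+O(\Delta))\beta(\bx)^{(1)}$. Then $|\beta^{(2)}|^2=N(\beta)/\beta^{(1)}=(1+O(\Delta))N(\bx)/\beta(\bx)^{(1)}=(1+O(\Delta))|\beta(\bx)^{(2)}|^2$, so $|\beta^{(2)}|=(1+O(\Delta))|\beta(\bx)^{(2)}|$, and likewise for $\beta^{(3)}$.

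Next I would recover the argument of $\beta^{(2)}$ from $\nu_1$. With the normalisation above, the factor $\exp\bigl(-iuv^{-1}(\log|\beta^{(1)}|-\log|\beta(\bx)^{(1)}|)\bigr)$ appearing in $\nu_1(S)/\nu_1(\bx)$ has argument $O(\Delta)$, so $h\bigl(\arg(\nu_1(S)/\nu_1(\bx))\bigr)\ne 0$ becomes $\arg(\beta^{(1)}\beta^{(2)})-\arg(\beta(\bx)^{(1)}\beta(\bx)^{(2)})=O(\Delta)\pmod{2\pi}$; since $\beta^{(1)},\beta(\bx)^{(1)}>0$ this reads $\arg\beta^{(2)}-\arg\beta(\bx)^{(2)}=O(\Delta)\pmod{2\pi}$. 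Combined with the modulus estimate, and noting that for $\Delta$ below an absolute constant the ambiguity mod $2\pi$ is irrelevant, this gives $\beta^{(2)}=(1+O(\Delta))\beta(\bx)^{(2)}$ as complex numbers, and conjugating, $\beta^{(3)}=(1+O(\Delta))\beta(\bx)^{(3)}$. Since $\hat\beta$ and $\bx$ are the images of $(\beta^{(j)})_j$ and $(\beta(\bx)^{(j)})_j$ under one fixed invertible linear map and $\beta^{(j)}-\beta(\bx)^{(j)}=O(\Delta V^{1/3})$ for each $j$, we conclude $\|\hat\beta-\bx\|\ll\Delta V^{1/3}\asymp\Delta\|\bx\|$, i.e.\ $\hat\beta=(1+O(\Delta))\bx$. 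For the ``moreover'', any associate of $\beta$ has the form $\pm\epsilon_0^k\beta$; taking $\bx'=\pm M^k\bx$ with the matching sign, and using $\beta(M\bx)=\epsilon_0\beta(\bx)$ together with the analogous identities $\beta(M\bx)^{(j)}=\epsilon_0^{(j)}\beta(\bx)^{(j)}$ at the conjugate places, the embedding ratios $(\pm\epsilon_0^k\beta)^{(j)}/\beta(\bx')^{(j)}=\beta^{(j)}/\beta(\bx)^{(j)}$ are still $1+O(\Delta)$, so the same inversion yields $\widehat{\pm\epsilon_0^k\beta}=(1+O(\Delta))\bx'$.

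All of this is routine bookkeeping; the only points needing a little care are (i) selecting the correct power of $\epsilon_0$ so that the archimedean characters are being measured on the branch of $\arg$ for which $W\ne 0$, and (ii) upgrading ``arguments agree mod $2\pi$ to within $O(\Delta)$'' plus ``moduli agree up to a factor $1+O(\Delta)$'' to ``the complex numbers agree up to a factor $1+O(\Delta)$'', valid once $\Delta$ is smaller than an absolute constant. There is no genuine obstacle here, and indeed the statement is quoted verbatim as Lemma~9.5 of \cite{HB}.
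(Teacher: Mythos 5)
Your proof is correct. The paper does not reprove this lemma — it quotes it as Lemma~9.5 of \cite{HB} and merely remarks that the argument extends to the present (slightly modified) hypotheses — but the route you take is the natural one and is, as far as the cited source allows one to tell, essentially Heath-Brown's: use the $\nu_2$ condition to pick the power of $\epsilon_0$ normalising $|\beta^{(1)}|$, use the norm condition to fix the sign of $\beta^{(1)}$ and to recover $|\beta^{(2)}|$, use the $\nu_1$ condition (after observing the $\exp(-iuv^{-1}\cdot)$ factor now contributes $O(\Delta)$) to recover $\arg\beta^{(2)}$, and then push the three ratios $\beta^{(j)}/\beta(\bx)^{(j)}=1+O(\Delta)$ through the fixed invertible embedding map; the "moreover" part follows by noting that multiplying the generator by $\pm\epsilon_0^k$ and $\bx$ by $\pm M^k$ leaves those ratios unchanged.
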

This Lemma is proven in \cite{HB} for specific functions $h_{\frac 12}$ and $W$ and for the range $N(\bx) < N(\beta) \le N(\bx) + \Delta V$ rather than $N(\bx) - \Delta V < N(\beta) \le N(\bx) + \Delta V$, but the proof goes through without any changes for our situation also.  The sum $\Sigma(\bx)$ is already close to the sum we wish to understand, and indeed, an integration over $\bx \in \C$ essentially produces the sum in Lemma \ref{lem:db}.  \footnote{The sum in Lemma \ref{lem:db} has a sharp cutoff with the restriction $\hat{\beta} \in \C$, while the sum $\Sigma(\bx) = \Sigma(\bx, \Delta)$ may be considered to be a smoothed version.}  We refer the reader to \S9 of \cite{HB} for the details of this.  Here, we focus on the sum $\Sigma$ since this sum contains the essence of the problem - namely, it is over a small region.  

For the sequel, fix $\bx \in \C$ and note that $N(\bx) \asymp V$, while 
\begin{equation}\label{eqn:bxnorm}
\|\bx\| \asymp V^{1/3}
\end{equation}
the latter being inherited from \eqref{eqn:betanormbdd}.  Let
\begin{equation}\label{eqn:Delta0}
\Delta_0 = \exp( - c_0(\log L)^{1/2}),
\end{equation}for some sufficiently small constant $c_0>0$.  It is relatively easy to understand $\Sigma(\bx, \Delta_0)$, and indeed, Lemma 9.3 from the work of Heath-Brown implies that 
\begin{equation}\label{eqn:SigmaxDelta0}
\Sigma(\bx, \Delta_0) = \epsilon(\chi) m_0(\bx) \frac{\Delta_0^2}{M} (\xi\log X)^{-n-1} + O(V \exp(-c\sqrt{\log L})),
\end{equation}for some $c>0$, where $\epsilon(\chi) = 1$ or $0$ depending on whether $\chi$ is trivial or not, and $m_0(\bx) = \omega(N(\bx) + \Delta_0 V) - \omega(N(\bx))$.  The main term dominates the error term when we choose $c_0$ sufficiently small in \eqref{eqn:Delta0}.  In particular, we may demand  that $\Delta_0^{-3} \ll \exp\left( \frac c2 \sqrt{\log L}\right)$, and this is the only condition required on the constant $c_0$ from \eqref{eqn:Delta0}.  Thus, the error $V\exp(-c\sqrt{\log L})$ may be replaced by $V \Delta_0^3 \exp(-c/2\sqrt{\log L})$.

Actually, Heath-Brown proved \eqref{eqn:SigmaxDelta0} for $d_\beta$ in place of our $h_\beta$.  However, it is straightforward to replace one by the other with negligible error, since the region being summed over in $\Sigma(\bx, \Delta_0)$ is so wide.  The proof proceeds along the same lines, being an application of the work of Mitsui \cite{mitsui}, with partial summation handling the added smoothing from $\mathfrak W$.   Moreover, uniformity in $n$ for instance does not cause issues, since our $n\ll 1$.

Our main challenge is in understanding $\Sigma(\bx, \Delta)$ for smaller values of $\Delta$.  To be precise, we will prove the following Proposition.
\begin{prop}\label{prop:smallcube}
Fix notation as above.  Suppose that $V^{-5/36 + \epsilon} \le \Delta \le \Delta_0$ and $\bx \in \C$.  Then we have that
\begin{equation}
\Sigma(\bx, \Delta) = \Sigma(\bx) = \epsilon(\chi) m(\bx) \frac{\Delta^2}{M} (\xi \log X)^{-n-1} + O(\Delta^3V \exp(-c(\log V)^{1/3 - \epsilon})),
\end{equation}
for some $c>0$ and $m(\bx) = \omega(N(\bx) + \Delta V) - \omega(N(\bx))$, where $M = \prod_{i=1}^{n+1} m_i$.
\end{prop}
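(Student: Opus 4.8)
The plan is to open up the smooth weight $W(S,\bx,\Delta)$ by Fourier analysis, reduce $\Sigma(\bx,\Delta)$ to a short combination of twisted sums over Hecke Grossencharacters, extract the main term from the trivial character, and bound every other contribution by an analytic argument that is the cubic-field analogue of the treatment of primes in intervals $(x,x+x^{7/12+\epsilon}]$. Concretely, writing $h = h_\Delta$ with Fourier series $h(t) = \sum_k b(k)e(kt)$ as in \eqref{eqn:bkbdd}, we have
\[
W(S,\bx,\Delta) = \sum_{k_1,k_2} b(k_1)b(k_2)\,\nu_1(S)^{k_1}\nu_2(S)^{k_2}\,\overline{\nu_1(\bx)}^{\,k_1}\overline{\nu_2(\bx)}^{\,k_2},
\]
and by the rapid decay of $b(k)$ we may truncate to $|k_1|,|k_2| \ll \Delta^{-1}(\log V)^{O(1)}$ at negligible cost, so there are only $\ll \Delta^{-2}(\log V)^{O(1)}$ relevant pairs $(k_1,k_2)$. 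Localising $N(S)$ to $(N(\bx),N(\bx)+\Delta V]$ by smoothing and Mellin inversion introduces a further mollifier of length $\asymp \Delta^{-1}(\log V)^{O(1)}$ in the $t$-aspect. Thus $\Sigma(\bx,\Delta)$ becomes a short linear combination of sums $\sum_S h_S\,\psi(S)\,G(N(S))$ where $\psi = \nu_0\nu_1^{k_1}\nu_2^{k_2}$ is a Hecke Grossencharacter (which by Remark \ref{rem:characterchi} factors as $\theta\psi'$ with $\theta$ of modulus $\asymp q$ and $\psi'$ of conductor $1$ and torsion-free frequency $\ll \Delta^{-1}(\log V)^{O(1)}$) and $G$ is an explicit smooth cutoff. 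The term $k_1=k_2=0$ with $\chi$ trivial supplies the main term $\epsilon(\chi)m(\bx)\frac{\Delta^2}{M}(\xi\log X)^{-n-1}$, the arithmetic factor being identified exactly as in Heath-Brown's Lemma 9.3 of \cite{HB} and in Proposition \ref{prop:Mcomparison}, while matching against \eqref{eqn:SigmaxDelta0} at $\Delta=\Delta_0$ fixes its shape. It therefore remains to bound every nontrivial twisted sum by $\ll \Delta^3 V\exp(-c(\log V)^{1/3-\epsilon})$; since there are only $\ll \Delta^{-2}(\log V)^{O(1)}$ of them and $\Delta \ge V^{-5/36+\epsilon}$, this is precisely Proposition \ref{prop:errorsmallcube}.

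To prove the latter I would feed the factorisation $h_S = \prod_{i=1}^{n+1}(m_i\xi\log X)^{-1}\Lambda_K(T_i)\mathfrak W_i(N(T_i))$ into Heath-Brown's combinatorial identity \cite{HBidentity}: each $\Lambda_K(T_i)$ is written as a bounded alternating sum of convolutions of $\mu_K$-type coefficients supported below $T_0^{9/5}$ against smooth coefficients, with $T_0$ a small fixed power of $V$. After the separation of variables of \S\ref{subsec:decomp} — during which the smooth factors $\mathfrak W_i$ and $G$ are dealt with by Mellin inversion, contributing only $(\log V)^{O(1)}$ and restricting all $t$-integrations to $|t|\ll \Delta^{-1}(\log V)^{O(1)}$ — and the passage to a discrete set in \S\ref{subsec:reductiontodiscrete}, the problem reduces to bounding, on average over $\psi$ and over $t$ in the above range, a product $\prod_j F_j(\tfrac12+it,\psi)$ of Dirichlet polynomials with $\sum_j(\text{length of }F_j)\asymp V$ and $s=\tfrac12+it$ near the critical line. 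The decomposition is arranged so that either some $F_j$ has smooth coefficients and length $> T_0^{9/5}$, or the product $F^g$ of the $\mu_K$-type factors has length in the window $[T_0^{12/5},T_0^{18/5}]$, which is the range in which the family of $\asymp T_0^3$ harmonics $\{\psi\}\times\{N(I)^{it}\}$ is most efficient.

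With the decomposition in place I would invoke the analytic estimates collected in \S\ref{subsec:largesieve}: Duke's large sieve for Grossencharacters (Lemma \ref{lem:dukelargesieve}) for the generic ranges, the Montgomery- and Huxley-type large value estimates (Lemmas \ref{lem:mont} and \ref{lem:huxmont}) when one factor is abnormally large, a fourth moment bound for the relevant Hecke $L$-functions in the case where $T_0$ is large and a long smooth Dirichlet polynomial is present (\S\ref{subsubsec:T0big}), and the improved zero-free region of Coleman \cite{Cole} (\S\ref{subsec:zerofree}) to dispose of the portion of the character- and $t$-range near the pole or a putative exceptional zero. Combining these exactly as in the classical analysis of primes in $(x,x+x^{7/12+\epsilon}]$, with $\Delta^{-3}$ (coming from the two torsion-free Grossencharacter directions together with the $t$-mollifier, times $q$) playing the role of the analytic conductor, yields the power saving precisely when $\Delta \ge V^{-5/36+\epsilon}$; all the estimates are uniform for $q \le (\log X)^R$, which gives the required uniformity in the small modulus.

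The main obstacle is obtaining the exponent $5/36$, the cubic analogue of Huxley's $7/12$. This forces the large value / large sieve argument to run at essentially its optimal balance, so the combinatorial decomposition must be set up so that the $\mu_K$-type Dirichlet polynomials never exceed length $T_0^{9/5}$ and the working product lies in $[T_0^{12/5},T_0^{18/5}]$; any slack here immediately worsens the admissible range of $\Delta$, which is ultimately what limits $\gamma$ in Theorem \ref{thm:1}. A genuinely new difficulty relative to \cite{HB}, where $\Delta$ is never smaller than a tiny negative power of $\log V$, is that the number-field large value and fourth-moment estimates must be proved with enough uniformity in the conductor of $\psi$ (of size $\asymp q\Delta^{-2}$), and the contour shifts and zero-free region must be used carefully enough that the saving is of the shape $\exp(-c(\log V)^{1/3-\epsilon})$ and not merely a power of $\log V$ — which is exactly what the first bound in \eqref{eqn:iotaproperty} and the choice \eqref{eqn:Delta0} of $\Delta_0$ are designed to permit.
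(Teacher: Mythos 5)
Your proposal tracks the paper's proof of Proposition \ref{prop:smallcube} essentially step for step: expand $W(S,\bx,\Delta)$ in Fourier series and truncate using \eqref{eqn:bkbdd}, smooth the $N(S)$-cutoff and pass to a Mellin integral truncated at $T_0 = V^\epsilon/\Delta$, isolate the $j=k=0$, $|t|<\tau_0$ contribution $M(\Delta,F)$, identify the constant by comparing with $M(\Delta_0,F_0)$ via Proposition \ref{prop:Mcomparison} and anchoring at \eqref{eqn:SigmaxDelta0}, and estimate the remainder via Proposition \ref{prop:errorsmallcube} (Heath-Brown's identity, Duke's large sieve, Montgomery/Huxley large-value estimates, Coleman's zero-free region); your reading of the $5/36$ exponent as the cubic analogue of Huxley's $7/12$, and of the window $[T_0^{12/5},T_0^{18/5}]$ as the point where the $\asymp T_0^3$-harmonic family is optimal, are both accurate. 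The one step you quietly elide is that replacing the sharp cutoff $N(\bx)<N(S)\le N(\bx)+\Delta V$ by the smooth $F(N(S))$ leaves over boundary pieces $\Sigma',\Sigma''$ supported on shells of width $\Delta V^{1-\epsilon}$, and bounding these by $\Delta^3 V^{1-\epsilon}$ is a genuine (if routine) lattice-point count near the level surface of the norm form, carried out via Lemma \ref{lem:restricttocube} and Lemma \ref{lem:boundarycounting} using $|\nabla N|\gg V^{2/3}$; this should be stated, though it does not change the structure of your argument.
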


\begin{rem}
In the above, it is important that our modulus $q \leq (\log X)^R$.  As in Heath-Brown's work, the implied constant is ineffective due to possible exceptional zeros.  Recalling the definition of $h = h_\Delta$ as defined in \eqref{eqn:hDelta}, we note that the implied constant above may depend on $h_{\frac 12}$ but not on $\Delta$.
\end{rem}

Note that for $\bx \in \C$, 
$$N(\bx) \asymp V,$$
an estimate we shall use often in the sequel.  Roughly speaking, when interested in cubes of side $S_0$, it suffices to study $\Sigma(\bx, \Delta)$ for $\Delta = S_0/V^{1+\epsilon}$ and this is why we only consider $\Delta \ge V^{-5/36 + \epsilon}$ in Proposition \ref{prop:smallcube}.  

Proceeding to the proof of Proposition \ref{prop:smallcube}, we write
\begin{align}
\Sigma(\bx, \Delta) &= \sum_{j, k} b(j)b(k)v_1(\bx)^{-j}v_2(\bx)^{-k} \sum_{N(\bx)< N(S) \le N(\bx) + \Delta V} h_S v_0(S)v_1(S)^j v_2(S)^k\\
&= \sumsharp_{j, k} b(j)b(k)v_1(\bx)^{-j}v_2(\bx)^{-k} \Sigma_{j, k}    + O_C(V^{-C}),
\end{align}for any $C$, where $\sumsharp$ above indicates a sum $|j|, |k| \le T_0$ where
\begin{equation}\label{eqn:T_0}
V^\epsilon \ll T_0 := \frac{V^\epsilon}{\Delta} \ll V^{5/36 - \delta_0}
\end{equation}for some $\delta_0 >0 $ and where
\begin{align}
\Sigma_{j, k} &= \sum_{N(\bx)< N(S) \le N(\bx) + \Delta V} h_S v_0(S)v_1(S)^j v_2(S)^k.
\end{align}  Here, the error term bounds the contribution of those terms where $\max(j, k) > T_0$, where we have used the bound for $b(k)$ in \eqref{eqn:bkbdd}.  

Now we fix a smooth function $H$ which is identically $1$ on $[0, 1]$ and supported on $[-V^{-\epsilon}, 1+V^{-\epsilon}]$.  We further ask that $0\le H(t) \leq 1$ for all $t$.

Now define
\begin{equation}\label{eqn:Fdef}
F_\Delta(t) = F(t) = H \bfrac{t-N(\bx)}{\Delta V}.
\end{equation}

Note $F$ is a smooth function which is $1$ on $[N(\bx), N(\bx)+\Delta V]$, is always bounded by $1$, and is supported on $[N(\bx) - \Delta V^{1-\epsilon}, N(\bx)+\Delta V + \Delta V^{1-\epsilon}]$.  We further note that
\begin{equation}\label{eqn:Fkbdd}
F^{(k)} (x) \ll_k (V^{1-\epsilon}\Delta )^{-k},
\end{equation}
for all $k \geq 0$.  

We write
\begin{equation}
\Sigma_{j, k} = \sum_{S} h_S v_0(S)v_1(S)^j v_2(S)^k F(N(S)) + \Sigma'_{j, k} + \Sigma''_{j, k}.
\end{equation}Now, $\Sigma'$ and $\Sigma''$ are similar sums to $\Sigma$, but supported on intervals of length $\Delta V^{1-\epsilon}$.  More specifically, 
$$\Sigma'_{j, k} = \sum_{N(\bx) - \Delta V^{1-\epsilon} \le N(S) \le N(\bx)} d'_S,
$$where $d'_S = -F(N(S))h_S \ll 1$ and $\Sigma''$ is the rest.  We have the following bounds on $\Sigma'$ and $\Sigma ''$.  

\begin{lem}
With notation as above,
\begin{equation}
\Sigma', \Sigma'' \ll \Delta^3 V^{1-\epsilon}.
\end{equation}
\end{lem}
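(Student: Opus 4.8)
The plan is to reduce $\Sigma'$ (and, word for word, $\Sigma''$) to a lattice-point count inside a small cube, mirroring the treatment of $\Sigma(\bx,\Delta)$ itself. By construction
\[
\Sigma' = \sumsharp_{j,k} b(j) b(k) v_1(\bx)^{-j} v_2(\bx)^{-k} \Sigma'_{j,k}, \qquad \Sigma'_{j,k} = \sum_{N(\bx) - \Delta V^{1-\epsilon} < N(S) \le N(\bx)} d'_S\, v_0(S) v_1(S)^j v_2(S)^k,
\]
with $d'_S = -F(N(S)) h_S \ll 1$, and for $\Sigma''$ the interval is $(N(\bx) + \Delta V, N(\bx) + \Delta V + \Delta V^{1-\epsilon}]$. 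First I would interchange the $(j,k)$-summation with the $S$-summation and complete the truncated sums over $|j|,|k| \le T_0$ to full sums over $\mathbb Z$; by the rapid decay \eqref{eqn:bkbdd} this costs $O_C(V^{-C})$, and the completed inner sums reassemble into the angular cutoff, giving
\[
\Sigma' = -\sum_{N(\bx) - \Delta V^{1-\epsilon} < N(S) \le N(\bx)} h_S\, v_0(S)\, F(N(S))\, W(S, \bx, \Delta) + O_C(V^{-C}).
\]

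Next I would bound everything trivially except the count of surviving $S$: since $|h_S| \ll 1$ (Lemma \ref{lem:trivialbddcoef}), $|v_0(S)| = 1$, and $0 \le F, W \le 1$, we get $|\Sigma'| \ll \mathcal N + O(1)$, where $\mathcal N$ is the number of integral ideals $S$ with $N(\bx) - \Delta V^{1-\epsilon} < N(S) \le N(\bx)$ and $W(S,\bx,\Delta) \ne 0$. Now I invoke Lemma \ref{lem:restricttocube}: every such $S$ has a generator $\beta$ with $\hat\beta = (1+O(\Delta))\bx$, and since distinct ideals share no generator the map $S \mapsto \hat\beta$ is injective. As $\|\bx\| \asymp V^{1/3}$ by \eqref{eqn:bxnorm}, the point $\hat\beta$ lies in a fixed cube of side $\ll \Delta V^{1/3}$, while $N(\hat\beta) = N(S)$ lies in an interval $I$ of length $\Delta V^{1-\epsilon}$. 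Thus, writing $N(x,y,z) = x^3 + 2y^3 + 4z^3 - 6xyz$, it suffices to show this cube contains $\ll \Delta^3 V^{1-\epsilon}$ lattice points $\mathbf m$ with $N(\mathbf m) \in I$.

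For that count I would use Euler's identity $\nabla N(\bx) \cdot \bx = 3N(\bx)$ together with $N(\bx) \asymp V$ and $\|\bx\| \asymp V^{1/3}$ to conclude $\|\nabla N(\bx)\| \gg V^{2/3}$; since all second partials of $N$ are $\ll V^{1/3}$ on a cube of side $\ll \Delta V^{1/3}$ and $\Delta \le \Delta_0 \to 0$, some fixed partial satisfies $|\partial_i N| \gg V^{2/3}$ throughout the cube. Fixing the two coordinates other than the $i$-th ($\ll (\Delta V^{1/3})^2$ choices), $N$ restricted to the remaining coordinate line is a cubic with derivative of absolute value $\gg V^{2/3}$, hence monotone, so at most $1 + |I| V^{-2/3} = 1 + \Delta V^{1/3-\epsilon}$ integer values of that coordinate yield $N \in I$. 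Multiplying, $\mathcal N \ll (\Delta V^{1/3})^2(1 + \Delta V^{1/3-\epsilon}) \ll \Delta^3 V^{1-\epsilon} + \Delta^2 V^{2/3}$, and since $\Delta \ge V^{-5/36+\epsilon}$ forces $\Delta V^{1/3-\epsilon} \gg V^{7/36 - 2\epsilon} \gg 1$, the first term dominates (as it also dominates the $O(1)$), giving $|\Sigma'| \ll \Delta^3 V^{1-\epsilon}$.

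The hard part is essentially just recognizing that the naive estimate for $\mathcal N$ — the number of ideals of norm in an interval of length $\Delta V^{1-\epsilon}$, which is only $\asymp \Delta V^{1-\epsilon}$ — must be improved by the factor $\Delta^2$ coming from the angular localization in Lemma \ref{lem:restricttocube}; once $\hat\beta$ is confined to a cube of side $\Delta V^{1/3}$, the extra $V^{-\epsilon}$ saving in the third coordinate is a routine consequence of $N$ possessing a monotone direction with derivative $\gg V^{2/3}$, which Euler's identity supplies without any need to avoid degenerate configurations. The bound for $\Sigma''$ follows by the identical argument.
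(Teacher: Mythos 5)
Your argument is correct, and up to the final lattice-point count it is the paper's own proof: you complete the $(j,k)$-sums, collapse the Fourier series back to the angular weight $W(S,\bx,\Delta)$, reduce via Lemma \ref{lem:restricttocube} to counting $\hat\beta$ in a cube $\C_\bx$ of side $\asymp\Delta V^{1/3}$ with $N(\hat\beta)$ in an interval of length $\ll\Delta V^{1-\epsilon}$, and use Euler's identity to get $\|\nabla N(\bx)\|\gg V^{2/3}$. Where you diverge is the final step. The paper invokes the general boundary-covering result of Lemma \ref{lem:boundarycounting} (Heath-Brown's Lemma 4.9): it tiles $\C_\bx$ by sub-cubes of side $\Delta V^{1/3-\epsilon}$, bounds the number of sub-cubes meeting the level set by $\ll 1+(R_0/S_0)^{n-1}=V^{2\epsilon}$, and multiplies by the trivial lattice-point count per sub-cube. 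You instead slice directly: having pinned down a coordinate $i$ with $|\partial_i N|\gg V^{2/3}$ stable under perturbation of size $\Delta V^{1/3}$ (the second partials being $\ll V^{1/3}$), you fix the other two coordinates and use monotonicity in the $i$-th direction to get $\ll 1+\Delta V^{1/3-\epsilon}$ admissible values, then multiply by $(\Delta V^{1/3})^2$. This is a from-first-principles proof of precisely the special case of Lemma \ref{lem:boundarycounting} needed here, and is arguably cleaner because it skips the extra dyadic parameter $S_0$ and the ball cover; the paper's route has the advantage that Lemma \ref{lem:boundarycounting} is already in hand and reused elsewhere (e.g.\ in Section \ref{sec:Cf2cubes}). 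Your observation that $\Delta\ge V^{-5/36+\epsilon}$ forces $\Delta V^{1/3-\epsilon}\gg 1$, so that $\Delta^3 V^{1-\epsilon}$ dominates $\Delta^2 V^{2/3}$, correctly closes the argument.
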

\begin{proof}
We bound the contribution of $\Sigma'$, the contribution of $\Sigma''$ being bounded similarly.  We have
\begin{align*}
\Sigma' &= \sumsharp_{j, k} b(j)b(k)v_1(\bx)^{-j}v_2(\bx)^{-k} \Sigma'_{j, k}\\
&=\sum_{j, k} b(j)b(k)v_1(\bx)^{-j}v_2(\bx)^{-k} \Sigma'_{j, k} + O(V^{-C})\\
&= \sum_{N(\bx) - \Delta V^{1-\epsilon} \le N(S) \le N(\bx)} d'_S v_0(S) \sum_{j, k} b(j)b(k)v_1(S)^j v_1(\bx)^{-j}v_2(S)^k v_2(\bx)^{-k} + O(V^{-C}).
\end{align*}  Collapsing the Fourier series, the first term above is
\begin{align*}
\sum_{N(\bx) - \Delta V^{1-\epsilon} \le N(S) \le N(\bx)} d'_S v_0(S) W(S, \bx, \Delta).
\end{align*}In the sum above, since $N(\bx) \asymp V$, we have that $N(S) \asymp V$.  We may thus apply Lemma \ref{lem:restricttocube} to see that the sum over $S$ must satisfy $S = (\beta)$ for some $\hat \beta$ restricted in a cube of length $\Delta V$ centered at $\bx$.  For convenience, let this cube be $\C_{\bx}$.  

Since $N(\bx) - \Delta V^{1-\epsilon} \le N(S) \le N(\bx)$ using the bound $d'_S \ll 1$, we have that
\begin{align*}
\Sigma' \ll \# \{\hat{\beta} \in \C_{\bx}: N(\beta) = N(\bx) + O(\Delta V^{1-\epsilon})\}.
\end{align*}
Writing $\bx = (x, y, z)$ and $N(\bx) = x^3 + y^3 + z^3 - 6xyz$, we have that
\begin{equation}
N(\bx) = \frac{x}{3} \frac{\partial N}{\partial x} +\frac{y}{3} \frac{\partial N}{\partial y}+\frac{z}{3} \frac{\partial N}{\partial z}.
\end{equation}
In particular, from \eqref{eqn:bxnorm}, $N(\bx) \asymp V$ and $\| \bx \| \asymp V^{1/3}$ so we have that $\nabla N \gg V^{2/3}$.  We now consider the region $\C_{\bx} - \bx$ which is contained in a ball of radius $\ll \Delta V^{1/3} := R_0$.  We apply Lemma \ref{lem:boundarycounting} with $F(u, v, w) = N(u+x, v+y, w+z)$, $F_0 = F(0, 0, 0)=  N(\bx)$ and $R = V^{1/3}$.  In so doing, we have used that $\nabla F \gg V^{2/3} = R^2$ on a ball of radius $R_0$ around the origin.  The number of cubes of side $\Delta V^{1/3 - \epsilon}$ containing some element $(u, v, w)$ satisfying $F(u, v, w) = N(\bx) + O(R^2 S_0)$ in a ball of radius $R_0$ is $\ll V^{2\epsilon}$.  Thus the total number of points to be counted is $\ll \Delta^3 V^{1-3\epsilon} V^{2\epsilon} = \Delta^3 V^{1-\epsilon}$, as desired.
\end{proof}

Now we let
$$\tilde{F}(s) = \int_0^\infty F(x) x^{s-1} dx
$$be the usual Mellin transform of $F(x)$.  Trivially, 
\begin{equation}\label{eqn:Fstrivialbdd}
|\tilde{F}(s)| \ll V^{\tRe s} \Delta,
\end{equation}
while integration by parts $k+1$ times along with the bounds \eqref{eqn:Fkbdd} imply that
\begin{equation}\label{eqn:tildeFbdd}
|\tilde{F}(s)| \ll V^{\tRe s + k} \Delta V^{1-\epsilon} (\Delta V^{1-\epsilon} |s|)^{-k-1},
\end{equation}
so that $\tilde{F}(s) \ll_C V^{-C}$ for all $|s| \ge T_0 := V^{2 \epsilon} \Delta^{-1}$ and for all $C>0$.  We will use the usual convention regarding $\epsilon$ and write
\begin{equation}\label{eqn:T0}
T_0 = V^{\epsilon} \Delta^{-1}
\end{equation} in the sequel.  Writing $F(x)$ as the Mellin inverse transform of $\tilde{F}(s)$, we see that
\begin{align*}
\sum_{S} h_S v_0(S)v_1(S)^j v_2(S)^k F(N(S))
= \frac{1}{2\pi i} \int_{(0)} \sum_{S} h_S v_0(S)v_1(S)^j v_2(S)^k (N(S))^{-s} \tilde{F}(s) ds
\end{align*}

Our bounds on $\tilde{F}(s)$ implies that we may truncate the integral at height $T_0$ with an error bounded by $V^{-C}$ for any $C>0$ so it suffices to examine
\begin{align}
\frac{1}{2\pi} \int_{-T_0}^{T_0} \sum_{S} h_S v_0(S)v_1(S)^j v_2(S)^k (N(S))^{-it} \tilde{F}(it)dt.
\end{align}
Now, if $v_0$ is trivial, then we expect a main term contribution from the case $j=k=0$, and $|t|<\tau_0$ for 
\begin{equation}\label{eqn:tau0def}
\tau_0 = \Delta_0^{-1/2}.
\end{equation}  Now, let
\begin{equation}\label{eqn:MDeltaF}
M(\Delta, F) := \frac{1}{2\pi} \int_{-\tau_0}^{\tau_0} \sum_{S} h_S v_0(S) (N(S))^{-it} \tilde{F}(it)dt.
\end{equation}  This is the contribution from the case $j = k = 0$ and $|t|<\tau_0$ for $\tau_0 = \Delta_0^{-1/2}$.

Recalling \eqref{eqn:Fdef}, we write for convenience $F_0 = F_{\Delta_0}$.  It turns out to be more convenient to compare $M(\Delta, F)$ against a quantity like $M(\Delta_0, F_0)$ rather than attempt a direct evaluation.  For brevity, let
\begin{align*}
M &= M(\Delta, F) \; \; \textup{ and}\\
M_0 &= M(\Delta_0, F_0).
\end{align*}

It now suffices to prove the following two Propositions.  
\begin{prop}\label{prop:Mcomparison}
There exists a constant $c>0$ such that 
\begin{equation}
\left| M - \frac{\Delta}{\Delta_0} M_0\right| \ll \Delta V \exp(-c\sqrt{\log L}),
\end{equation}
where the implied constant may only depend on the choices of the fixed functions $h_{\frac  12}$ and $H$ in \eqref{eqn:hDelta} and \eqref{eqn:Fdef} respectively.
\end{prop}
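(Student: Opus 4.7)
The plan is to evaluate the Dirichlet series $A(s) := \sum_S h_S\, v_0(S)\, N(S)^{-s}$, in terms of which $M(\Delta, F) = \frac{1}{2\pi}\int_{-\tau_0}^{\tau_0} A(it)\tilde F(it)\, dt$ and analogously for $M_0$, and then to compare the two integrals via Parseval. Since $h_S$ is multiplicative and supported on $S = T_1\cdots T_{n+1}$ with the $T_i$ lying in the disjoint intervals $J(m_i)$, the sum $A(s)$ factors completely, and Mellin inversion in each $\mathfrak{W}_i$ expresses $A(s)$ as an $(n+1)$-fold contour integral with integrands $\tilde{\mathfrak{W}}_i(w_i)\bigl(-L'(w_i+s, v_0)/L(w_i+s, v_0)\bigr)$. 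I would then shift each contour leftward past the pole at $w_i = 1-s$ (which contributes the residue $\tilde{\mathfrak{W}}_i(1-s)$ only when $v_0$ is trivial, i.e.\ $\chi$ is trivial) into a Vinogradov--Korobov/Coleman type zero-free region for $L(\cdot, v_0)$. For $|s|\le \tau_0 = \exp(c_0\sqrt{\log L}/2)$ and conductor $q \le (\log X)^R$, this yields
\[
A(s) = \epsilon(\chi)\, A_{\mathrm{main}}(s) + E(s), \qquad A_{\mathrm{main}}(s) := \prod_{i=1}^{n+1}\frac{\tilde{\mathfrak{W}}_i(1-s)}{m_i\xi\log X},
\]
with $|E(s)| \ll \exp(-c_1\sqrt{\log L})$ uniformly. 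Substituting this into the definition of $M$, the $E$-contribution is at most $\tau_0\cdot\sup|\tilde F|\cdot\exp(-c_1\sqrt{\log L}) \ll \Delta V\exp\bigl((c_0/2-c_1)\sqrt{\log L}\bigr)$, which is acceptable upon choosing $c_0$ small enough that $c_1-c_0/2 \ge c$; the same holds for $M_0$.

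Once $A$ has been replaced by $A_{\mathrm{main}}$, repeated integration by parts using $\mathfrak{W}_i^{(k)}(x) \ll_k (\iota x)^{-k}$ shows that $|A_{\mathrm{main}}(it)|$ has rapid decay for $|t|\gg 1/\iota$; since $1/\iota = \exp((\log X)^\epsilon)$ is smaller than $\tau_0$ by \eqref{eqn:iotaproperty}, extending the $t$-integration from $[-\tau_0,\tau_0]$ to all of $\mathbb{R}$ introduces an error $O_C(X^{-C})$. On the full real line, Parseval's formula applied to the multiplicative convolution
\[
w(v)\;:=\;\int\cdots\int\prod_i\mathfrak{W}_i(u_i)\,\delta\!\Bigl(v-\prod_i u_i\Bigr)\prod_i du_i
\]
yields
\[
\frac{1}{2\pi}\int_{\mathbb{R}} A_{\mathrm{main}}(it)\tilde F(it)\, dt \;=\; \frac{1}{\prod_i (m_i\xi\log X)}\int_0^\infty F(v)\, w(v)\, dv,
\]
and similarly for $F_0$. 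The proof then reduces to comparing $\int F\,w$ with $(\Delta/\Delta_0)\int F_0\,w$.

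The density $w$ is a smooth convolution of smooth bump functions, and a direct computation (using $\mathfrak{W}_i^{(k)}\ll(\iota x)^{-k}$, together with the fact that the boundary layers of width $\iota$ contribute negligibly in the bulk) shows that $|w'(v)|\ll w(v)/v$ for $v$ in a neighborhood of $N(\bx)$ of relative width $O(\Delta_0)$. Taylor expanding $w$ around $N(\bx)$ and using $\int F = \Delta V\bigl(1+O(V^{-\epsilon})\bigr)$ gives
\[
\int F(v)\,w(v)\,dv \;=\; w(N(\bx))\,\Delta V\bigl(1 + O(\Delta)+O(V^{-\epsilon})\bigr),
\]
and the same with $\Delta_0, F_0$ in place of $\Delta, F$. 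In the difference $\int F w - (\Delta/\Delta_0)\int F_0 w$ the leading term $w(N(\bx))\Delta V$ cancels exactly, leaving $\ll w(N(\bx))\,\Delta V\bigl(\Delta_0+V^{-\epsilon}\bigr)$ from the higher-order Taylor terms; combined with the errors from the contour shift and the truncation, this is $\ll \Delta V\exp(-c\sqrt{\log L})$ as required. The main obstacle is producing the zero-free region bound on $E(s)$: one needs a Vinogradov--Korobov type estimate, uniform both in the twisting Grossencharacter $v_0$ (of small conductor) and in $|s|\le \tau_0$, with savings exceeding $c_0\sqrt{\log L}/2$ so as to beat the length $\tau_0$ of the $t$-integration. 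Fortunately the Coleman-improved zero-free region for Hecke $L$-functions compiled in Section~\ref{subsec:zerofree} below supplies exactly this; the smoothness bounds on $w$ are then a routine verification from the construction of $\mathfrak{W}_i$.
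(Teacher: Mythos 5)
Your proof takes a genuinely different route from the paper's, and one the paper warns against. The paper explicitly remarks (just before Proposition~\ref{prop:Mcomparison}) that ``It turns out to be more convenient to compare $M(\Delta, F)$ against a quantity like $M(\Delta_0, F_0)$ rather than attempt a direct evaluation,'' and then disposes of Proposition~\ref{prop:Mcomparison} in a few lines via the pointwise estimate $|\tilde F(s) - (\Delta/\Delta_0)\tilde F_0(s)| \ll \Delta\Delta_0(|s|+1)$ (Lemma~\ref{lem:FhatF0hat}), using only that $F$ and $F_0$ are built from the same fixed bump $H$. No $L$-functions, no zero-free regions. You instead attempt the direct evaluation that the paper sidesteps: factor $A(s)$, Mellin-invert each $\mathfrak{W}_i$, shift contours into a Coleman-type zero-free region, extract residues, and Parseval.

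The concrete gap is the claim that after the contour shift one has $|E(s)| \ll \exp(-c_1\sqrt{\log L})$ uniformly. This cannot be right as an absolute bound. Near $s=0$ your main term is enormous: $\tilde{\mathfrak{W}}_i(1) = \int \mathfrak{W}_i \asymp X^{m_i\xi}(L-1)$ and $m_i\xi\log X = m_i\log L$, so $A_{\mathrm{main}}(0) \asymp \prod_i X^{m_i\xi}(L-1)/(m_i\log L) \gg X^{(n+1)\delta}$, a genuine power of $X$. Shifting into the zero-free region gives only a \emph{relative} saving in each factor, of size roughly $\exp\bigl(-c\,(\log X)^{2/3+o(1)}\bigr)$, so $E(s)$ inherits the polynomial size of $A_{\mathrm{main}}(s)$ and is nowhere near $\exp(-c_1\sqrt{\log L})$. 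Worse, even after normalizing against the target, the leftover factor of $(L-1)^{n+1}$ (with $L-1 \asymp X^{1/\log\log X} = \exp(\log X/\log\log X)$) strictly dominates the saving $\exp(-c(\log X)^{2/3})$, so the sup-norm estimate $\tau_0\cdot\sup|\tilde F|\cdot\sup_{|t|\le\tau_0}|E(it)|$ that you use to control the $E$-contribution is not small for $n\ge 1$. What makes the Parseval work for the main term is precisely the oscillatory decay of $\tilde{\mathfrak{W}}_i(1-it)$ for $|t|\gg 1/(L-1)$; the error piece needs that same cancellation, i.e.\ one must estimate $\int_{-\tau_0}^{\tau_0}|E(it)\tilde F(it)|\,dt$ rather than take the trivial supremum bound, and even then the factor $(L-1)^{n-1}$ survives for $n\ge 2$. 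The paper's comparison argument avoids all of this.
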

It may help the reader to recall that $\log L \gg (\log X)^{1-\epsilon}$ from \eqref{eqn:L}.
\begin{prop}\label{prop:errorsmallcube}
For any $\delta_0>0$, $V^\epsilon \ll T_0 \ll V^{5/36 - \delta_0}$ and $V^{-5/36+\epsilon} \le \Delta \le \Delta_0$, there exists $c>0$ such that 
\begin{equation}
\int_{-T_0}^{T_0} \sumc_{j, k} b(j)b(k) \sum_{S} h_S v_0(S)v_1(S)^j v_2(S)^k (N(S))^{-it} \tilde{F}(it)dt \ll \Delta^3 V \exp(-c(\log V)^{1/3 - \epsilon}),
\end{equation}where $\sumc$ denotes a sum over $j, k$ satisfying the conditions in $\sumsharp$ with the added condition that if $|t| < \tau_0$, then $(i, j) \neq (0, 0)$, and where recall $F = F_\Delta$ is defined as in \eqref{eqn:Fdef}.  The implied constant is ineffective and may depend only on the choices of the fixed functions $h_{\frac  12}$ and $H$ in \eqref{eqn:hDelta} and \eqref{eqn:Fdef} respectively, as well as the choice of $R$, in the restriction $q\le (\log X)^R$.
\end{prop}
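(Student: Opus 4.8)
The plan is to prove Proposition~\ref{prop:errorsmallcube} by decomposing $\Lambda_K$ via Heath-Brown's identity \cite{HBidentity}, reducing the resulting integral to a discrete average over a family of $\asymp T_0^3$ Hecke Grossencharacters, and then applying large value estimates together with the improved zero-free region of Coleman \cite{Cole}. First, for fixed $j,k$ the weight $v_0(S)v_1(S)^jv_2(S)^k$ is a Hecke Grossencharacter $\psi_{j,k}$ on the ideals of $K$, its finite part coming from $\chi \bmod q$ as in Remark~\ref{rem:characterchi}, so that $\psi_{j,k}(S)N(S)^{-it}$ is again such a character, of analytic conductor $\ll q(1+|j|+|k|+|t|)^{O(1)} \ll V^{O(\epsilon)}$ by the hypothesis $q \le (\log X)^R$. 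Using $|\tilde F(it)| \ll \Delta$ from \eqref{eqn:Fstrivialbdd}, the rapid decay of $\tilde F(it)$ for $|t|$ past height $\asymp T_0$, the bounds $b(j), b(k) \ll \Delta$ from \eqref{eqn:bkbdd}, and $h_S \ll 1$ (Lemma~\ref{lem:trivialbddcoef}), the quantity in Proposition~\ref{prop:errorsmallcube} is bounded by $\Delta^3(\log V)^{O(1)}$ times a supremum over $|t|\le T_0$ of a sum over $|j|,|k|\le T_0$ of $\big|\sum_S h_S\psi_{j,k}(S)N(S)^{-it}\big|$, plus a negligible $O_C(V^{-C})$. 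Spacing out the $t$-integral and invoking the mean value theorem for Dirichlet polynomials (as in \S\ref{subsec:reductiontodiscrete}), it then suffices to show that $\sum_{(j,k,t)}\big|\sum_S h_S\psi_{j,k}(S)N(S)^{-it}\big|$, over a $1$-spaced set of triples with $|j|,|k|,|t|\le T_0$ and with $(j,k,t)=(0,0,0)$ omitted when $|t|<\tau_0$ (see \eqref{eqn:tau0def}, consistent with the exclusion in $\sumc$), is $\ll V\exp\!\big(-c(\log V)^{1/3-\epsilon}\big)(\log V)^{O(1)}$.

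Second, we apply Heath-Brown's identity to $\Lambda_K$ and carry out the separation of variables in \S\ref{subsec:decomp}: after a dyadic dissection of all the variables, $\sum_S h_S\psi(S)N(S)^{-s}$ becomes a combination, of total weight $(\log V)^{O(1)}$, of products $\prod_\ell G_\ell(s,\psi)$ of Dirichlet polynomials over ideals of $K$, where $G_\ell$ has length $\mathfrak{w}_\ell$ with $\prod_\ell\mathfrak{w}_\ell \asymp V$, the smooth weights $\mathfrak W_i$ being harmless via Mellin inversion. The decomposition is arranged so that every factor whose coefficients involve the analogue $\mu_K$ of the M\"obius function has length $\mathfrak{w}_\ell \le T_0^{9/5}$. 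Since $T_0 \ll V^{5/36-\delta_0}$ gives $T_0^{18/5} \ll V^{1/2-\delta_0'} \ll V$ (cf.\ \eqref{eqn:T_0}), in each product we may group a M\"obius factor with enough smooth factors to obtain a single Dirichlet polynomial $F^g$ with $T_0^{12/5} \le \mathfrak{w}^g \le T_0^{18/5}$ — a length close to the size $\asymp T_0^3$ of our family of harmonics $\psi_{j,k}(I)N(I)^{it}$ — together with a complementary product $F^b$.

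Third, on this main range we estimate the discrete sum by H\"older's inequality, combined with a large value estimate of Montgomery/Huxley type for $F^g$ (Lemmas~\ref{lem:mont} and \ref{lem:huxmont}) and mean value / large sieve bounds of Duke type for $F^b$ (Lemma~\ref{lem:dukelargesieve}), as organised in \S\ref{subsec:largevalueapplication}. The large value estimate controls the number of triples on which $|F^g|$ is large, while the large sieve bound — sharpest precisely when the length of $F^g$ matches the $\asymp T_0^3$ size of the family — handles the average of $F^b$; optimising the split of exponents yields a genuine power saving $V^{-\delta''}$ outside a residual range. When instead the single long factor has length $>T_0^{9/5}$, its coefficients are smooth and it is a short sum of shifted $\zeta_K$-type factors, and we gain the extra advantage from a fourth-moment bound for the relevant Hecke $L$-functions over the family (\S\ref{subsubsec:T0big}), again using Lemma~\ref{lem:dukelargesieve} for the remaining factors. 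Finally, the residual range — where every factor is short and the product degenerates to a $\zeta_K$-like contribution, which is exactly where the trivial character with $(j,k,t)=(0,0,0)$ would contribute the main term $M(\Delta,F)$ already extracted in Proposition~\ref{prop:Mcomparison} and excluded from $\sumc$ — is handled by Coleman's zero-free region for Hecke $L$-functions \cite{Cole} (\S\ref{subsec:zerofree}): a Vinogradov--Korobov-strength zero-free region, valid uniformly for conductor $\ll(\log X)^{O(R)}$, yields exactly the saving $\exp(-c(\log V)^{1/3-\epsilon})$ over an interval of the relevant length, and since $(0,0,0)$ is excluded no main term survives.

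The main obstacle is twofold. First, one must carry out the separation of variables in \S\ref{subsec:decomp} so that it is genuinely uniform in $\psi_{j,k}$ and in $t$ and so that the M\"obius factors really do satisfy $\mathfrak{w}_\ell \le T_0^{9/5}$, which is what makes the regrouping into an $F^g$ of length in $[T_0^{12/5},T_0^{18/5}]$ possible. Second, one must establish the large value and moment estimates (Lemmas~\ref{lem:dukelargesieve}, \ref{lem:mont}, \ref{lem:huxmont} and the fourth-moment bound) in the Hecke Grossencharacter setting with a family of size $\asymp T_0^3$ and harmonics $N(I)^{it}$, and verify that the exponents obtained are strong enough precisely in the range $T_0 \ll V^{5/36-\delta_0}$; this is the point that forces the constraint $S_0 \ge V^{1/3-5/36+\epsilon}$ in Lemma~\ref{lem:SWbdd}, and ultimately the bound $\gamma < 5/67$ in Theorem~\ref{thm:1}.
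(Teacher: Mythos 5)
Your proposal lines up with the paper's skeleton --- Heath-Brown's identity, Mellin separation, reduction to a well-spaced discrete set, large-value estimates normalised against the $\asymp T_0^3$ family, and Coleman's zero-free region --- and the references to \S\ref{subsec:decomp}, \S\ref{subsec:reductiontodiscrete}, \S\ref{subsec:largevalueapplication}, and \S\ref{subsec:zerofree} are all correct. However, you misdescribe the pivot of \S\ref{subsec:largevalueapplication}. There is no H\"older split of the product into a ``good'' factor $F^g$ and a complementary ``bad'' factor $F^b$ with a large sieve applied to the latter. Instead, as in Heath-Brown's identity paper \cite{HBidentity}, one classifies each $\psi\in\Omega$ by which factor $G_j$ achieves the \emph{maximal} exponent $\sigma_j(\psi)$, bounds every factor pointwise by $\mathfrak{w}_i^{\sigma_j(\psi)}$ so that $|G(\psi)|\le \mathfrak{u}_F^{\sigma_j(\psi)}$ on the class $C(j,l)$, and then applies Lemmas \ref{lem:dukeupper}, \ref{lem:mont}, \ref{lem:huxmont} to the $g$-th power $G_j^g$ of that single maximal factor (with $g$ chosen so that $T_0^{12/5}\le \mathfrak{w}_j^g\le T_0^{18/5}$ when $\mathfrak{w}_j\le T_0^{9/5}$). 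Nothing is averaged over the complementary factors. H\"older enters only in the case $\mathfrak{w}_j>T_0^{9/5}$ (Lemma~\ref{lem:fwbigbdd}), where the coefficients are smooth, $G_j$ is a contour integral of a Hecke $L$-function, and one invokes a fourth-moment bound over the family.

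The other misstatement concerns the role of Coleman's theorem. It is not a separate ``residual range'' argument: Lemma~\ref{lem:zerofreebdd} is a universal bound $\sigma_j(\psi)\le 1-c/(\log\mathfrak{w}_j)^{2/3+\epsilon}$ that is fed into the \emph{same} large-value computation for every class $C(j,l)$. After the large-value lemmas give $R(j,l)\ll T_0^{36/5(1-\sigma)}(\log)^{O(1)}$, the hypothesis $T_0\ll V^{5/36-\delta_0}$ gives $T_0^{36/5}\ll \mathfrak{u}_F^{1-\epsilon'}$, and the zero-free bound on $1-\sigma$ turns the resulting factor $\mathfrak{u}_F^{-\epsilon'(1-\sigma)}$ into the stated $\exp(-c(\log V)^{1/3-\epsilon})$. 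Finally, the treatment of the excluded trivial character with $(j,k)=(0,0)$, $|t|<\tau_0$ is more delicate than you suggest: the contribution enters as a residue at $w=1-s-it$ when shifting contours in the long smooth factors, and is removed \emph{before} the discrete reduction via Lemma~\ref{lem:polar}, using $|t|\ge\tau_0$; one also has to prune very short factors (the $E(\psi)$ part) and control the auxiliary variables $s_i$ from the Mellin inversion, none of which appear in your sketch.
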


Before proving these Propositions, we first explain how to verify Proposition \ref{prop:smallcube} assuming Propositions \ref{prop:Mcomparison} and \ref{prop:errorsmallcube}.  

\begin{proof}
We will compare our sum $\Sigma(\bx, \Delta)$ with the analogous sum $\Sigma_0 = \Sigma(\bx, \Delta_0)$.  For convenience, we also write 
$$h_0 = h_{\Delta_0}.$$

Recall that
\begin{equation}
h(t) = \sum_k b(k) e(kt),
\end{equation}and analogously write
\begin{equation}
h_0(t) = \sum_k b_0(k) e(kt).
\end{equation}Recall that
\begin{equation}\label{eqn:b0b00}
b_0(0) = \int_0^1 h_0(t) dt = \Delta_0.
\end{equation}
We let
\begin{equation}
\Sigma_0 =\Sigma(\bx, \Delta_0)= \sum_{j, k} b_0(j)b_0(k) v_1(\bx)^{-j} v_2(\bx)^{-k} \sum_{N(\bx) < N(S)\le N(\bx) +\Delta_0 V} h_S v_0(S) v_1(S)^j v_2(S)^k.
\end{equation}

By \eqref{eqn:SigmaxDelta0}, it suffices to show that
\begin{equation}\label{eqn:comparison1}
\left| \Sigma(\bx, \Delta) -  \frac{\Delta^3}{\Delta_0^3} \Sigma_0\right| \ll \Delta^3 V \exp(-c(\log V)^{1/3-\epsilon}).
\end{equation}
On the other hand, by Proposition \ref{prop:errorsmallcube}, we have that
$$\Sigma(\bx, \Delta) = b(0)^2M + \Delta^3 V \exp(-c(\log V)^{1/3 - \epsilon})
$$

All of our arguments above for $\Sigma(\bx, \Delta)$ are valid for $\Sigma_0 = \Sigma(\bx, \Delta_0)$ also.  In particular, the error terms may depend on the choices of $h_{\frac 12}$ in \eqref{eqn:hDelta} and $H$ in \eqref{eqn:Fdef}, but not on $\Delta$.

By Proposition \ref{prop:errorsmallcube} and recalling that $b(0) = \Delta$, $b_0(0) = \Delta_0$, we have
\begin{equation}
\left| \Sigma(\bx, \Delta) -  \frac{\Delta^3}{\Delta_0^3} \Sigma_0\right| \ll \left|b(0)^2M(\Delta) - b_0(0)^2 \frac{\Delta^3}{\Delta_0^3} M_0 \right| + \Delta^3 V \exp(-c(\log V)^{1/3 - \epsilon}). 
\end{equation}By \eqref{eqn:b0b00} and Proposition \ref{prop:Mcomparison}, 
$$\left|b(0)^2M(\Delta) - b_0(0)^2 \frac{\Delta^3}{\Delta_0^3} M_0 \right| \ll \Delta^2 \left|M(\Delta) - \frac{\Delta}{\Delta_0} M_0 \right| \ll \Delta^3 V \exp(-c\sqrt{\log L}).
$$

This proves \eqref{eqn:comparison1} and concludes the proof of Proposition \ref{prop:smallcube}.

\end{proof}

We will prove Proposition \ref{prop:Mcomparison} in the rest of this section, and delay the proof of Proposition \ref{prop:errorsmallcube} for the next section.  We start with the Lemma below.

\begin{lem}\label{lem:FhatF0hat}
For all $s$ with $\tRe s = 0$, we have that
\begin{equation}
\left|\tilde{F}(s) - \frac{\Delta}{\Delta_0} \tilde{F}_0(s)\right| \ll \Delta \Delta_0 (|s|+1).
\end{equation}
\end{lem}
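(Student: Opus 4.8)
The plan is to compare the two Mellin transforms directly on the critical line $\operatorname{Re} s = 0$, where we can write $\tilde F(s)$ and $\tilde F_0(s)$ as explicit integrals and exploit the fact that both $F = F_\Delta$ and $F_0 = F_{\Delta_0}$ are built from the \emph{same} fixed bump function $H$ via the rescaling \eqref{eqn:Fdef}. First I would change variables in $\tilde F(s) = \int_0^\infty F(x) x^{s-1}\,dx$ by setting $x = N(\bx) + \Delta V u$, so that $F(x) = H(u)$ and $x^{s-1} = (N(\bx) + \Delta V u)^{s-1}$; this gives
\[
\tilde F(s) = \Delta V \int_{-V^{-\epsilon}}^{1 + V^{-\epsilon}} H(u)\,\bigl(N(\bx) + \Delta V u\bigr)^{s-1}\,du,
\]
and the analogous identity for $\tilde F_0(s)$ with $\Delta_0$ in place of $\Delta$. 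Factoring out $N(\bx)^{s-1}$, both integrands become $H(u)$ times $\bigl(1 + (\Delta V/N(\bx)) u\bigr)^{s-1}$ (resp.\ with $\Delta_0$), and now $\Delta/\Delta_0 \cdot \tilde F_0(s) = \Delta V N(\bx)^{s-1}\int H(u)(1 + (\Delta_0 V/N(\bx))u)^{s-1}\,du$, so the two quantities to be compared have a common prefactor $\Delta V N(\bx)^{s-1}$ and differ only through the arguments $\Delta V/N(\bx)$ versus $\Delta_0 V/N(\bx)$ inside the bracket.

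The core estimate is then pointwise: for $|u| \le 2$, $\operatorname{Re} s = 0$, and $w_1, w_2$ both $\ll \Delta_0 V / N(\bx) \ll \Delta_0$ (since $\Delta \le \Delta_0$ and $N(\bx) \asymp V$), one bounds
\[
\bigl|(1 + w_1 u)^{s-1} - (1 + w_2 u)^{s-1}\bigr| \ll |w_1 - w_2|\,(|s| + 1)
\]
by the mean value theorem applied to $t \mapsto (1 + tu)^{s-1}$ on the segment between $w_1 u$ and $w_2 u$, using $\bigl|\tfrac{d}{dt}(1+tu)^{s-1}\bigr| = |s-1|\,|u|\,|1+tu|^{\operatorname{Re} s - 2} \ll |s|$ since $1 + tu$ is bounded away from $0$ (both $w_i$ tend to $0$). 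Here $w_1 - w_2 = (\Delta - \Delta_0)V/N(\bx)$, but that is the wrong order of magnitude; instead I would \emph{not} subtract the two bracketed integrands with their own $\Delta$ and $\Delta_0$, but rather compare $\int H(u)(1+w_\Delta u)^{s-1}du$ with $\int H(u)(1+w_{\Delta_0}u)^{s-1}du$ where $w_\Delta = \Delta V/N(\bx) \ll \Delta$, giving $|w_\Delta - w_{\Delta_0}| \le w_{\Delta_0} \ll \Delta_0$, hence the difference of the integrals is $\ll \Delta_0(|s|+1)$. Multiplying back by the common prefactor $|\Delta V N(\bx)^{s-1}| = \Delta V / N(\bx) \ll \Delta$ (on $\operatorname{Re} s = 0$) yields $\ll \Delta \Delta_0 (|s| + 1)$, which is exactly the claimed bound.

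The only subtlety — and the step I would be most careful about — is making sure the prefactor extraction is done consistently, i.e.\ that $\frac{\Delta}{\Delta_0}\tilde F_0(s)$ really does share the prefactor $\Delta V N(\bx)^{s-1}$ with $\tilde F(s)$ after the change of variables, so that the entire discrepancy is localized in the bracketed integral and controlled by $|w_\Delta - w_{\Delta_0}| \ll \Delta_0$; this is a bookkeeping point rather than a genuine analytic obstacle. One should also note $|u|$ ranges over $[-V^{-\epsilon}, 1 + V^{-\epsilon}] \subset [-1, 2]$ so $1 + tu$ stays in a fixed compact subset of $(0,\infty)$ for all $X$ large, legitimizing the mean value bound uniformly. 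Since $H$ is fixed, all implied constants depend only on $H$, consistent with the statement of the Lemma. This Lemma then feeds into the proof of Proposition \ref{prop:Mcomparison} by letting one replace $\tilde F$ by $\tfrac{\Delta}{\Delta_0}\tilde F_0$ inside the definition \eqref{eqn:MDeltaF} of $M(\Delta, F)$ at the cost of an error $\ll \Delta\Delta_0 \int_{-\tau_0}^{\tau_0}(|t|+1)\,|\!\sum_S \cdots| \, dt$, which after the trivial bound on the ideal sum and $\tau_0 = \Delta_0^{-1/2}$ is absorbed into $\Delta V \exp(-c\sqrt{\log L})$.
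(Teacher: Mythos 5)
Your argument is correct and is essentially the paper's proof: change variables in the Mellin integral so both $\tilde F$ and $\tilde F_0$ are weighted integrals of the same fixed $H$, then apply the mean value theorem to the kernel and use $N(\bx)\asymp V$, $\Delta\le\Delta_0$ to get $\Delta\Delta_0(|s|+1)$. The paper estimates $\Delta_0\tilde F(s)-\Delta\tilde F_0(s)$ and divides by $\Delta_0$ at the end rather than factoring out $N(\bx)^{s-1}$ as you do, but this is only a cosmetic reorganization of the same computation.
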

\begin{proof}
Recalling the definition of $F$ from \eqref{eqn:Fdef}, by a change of variables, we have that
\begin{align*}
\tilde{F}(s) &= \int_0^\infty H\bfrac {t-N(\bx)}{\Delta V} t^{s-1} dt\\
&=\int_0^\infty H(u) \left( N(\bx) + \Delta  V u\right)^{s-1} du
\end{align*}and similarly for $\tilde{F_0}(s)$.
Thus,
\begin{equation}
\Delta_0 F(s) - \Delta F_0 = \Delta_0 \Delta V \int_0^\infty H(u) \left( \left( N(\bx) + \Delta  V u\right)^{s-1}-\left( N(\bx) + \Delta_0  V u\right)^{s-1} \right) du
\end{equation}

Further, by the mean value theorem, and for $u \ll \Delta V$,
\begin{equation}
\left( N  + \Delta Vu  \right)^{s-1} - \left( N(\bx) + \Delta_0  V u\right)^{s-1} = \left((\Delta - \Delta_0)V u\right) (s-1) \lambda^{s-2} \ll \Delta_0 \frac{|s|+1}{V}, 
\end{equation} for some $\lambda \in [N+\Delta V u, N+ \Delta_0V u]$ so that $\lambda^{-2} \ll \frac{1}{N^2} \asymp \frac{1}{V^2}$ for $N = N(\bx) \asymp V$.  Moreover $\int_0^\infty H(t) dt \asymp 1$, so
\begin{equation}
|\Delta \tilde{F}_0(s) - \Delta_0 \tilde{F}(s) | \ll \Delta_0^2 \Delta (|s|+1)
\end{equation}which suffices for the Lemma.
\end{proof}

By Lemma \ref{lem:FhatF0hat} above, 
\begin{equation}\label{eqn:integralsmallt}
\int_{-\tau_0}^{\tau_0} \sum_{N - V^{1-\epsilon} \le N(S) \le N+\Delta_0V + V^{1-\epsilon}} \left|h_S(\tilde{F}(it) -  \frac{\Delta}{\Delta_0}\tilde{F}_0(it))\right|dt.
\end{equation}
is bounded by 
$$\Delta \Delta_0 \tau_0^2 \sum_{N - \Delta_0V \le N(S) \le N+2\Delta_0 V} 1 \ll \Delta_0^2 \Delta \tau_0^2 V.
$$  Upon recalling that $\tau_0 = \Delta_0^{-1/2}$, we see that the right hand side above is $\Delta_0 \Delta V$ and this suffices for the proof of Proposition \ref{prop:Mcomparison} upon recalling $\Delta_0 = \exp(-c_0\sqrt{\log L})$ for some constant $c_0$.  


\section{Proof of Proposition \ref{prop:errorsmallcube}}\label{sec:errorsmallcube}
For convenience, we let
\begin{equation}\label{eqn:psi}
\psi(S) = \psi(j, k, t)(S) = v_0v_1^jv_2^k(S) N(S)^{-it}.
\end{equation}Further, for such a $\psi$, we let the index be
\begin{equation}\label{eqn:Ipsi}
I(\psi) = (j, k, t).
\end{equation}  Note that $I(\psi)$ is well-defined and indeed, the character $v_1^jv_2^k(S) N(S)^{-it}$ is trivial (that is it takes the value $1$ for all $S$) if and only if $i=k=t=0$.

Recall that we want to show for any $\delta_0>0$ and $T_0 \ll V^{5/36 - \delta_0}$, there exists $c>0$ such that 
\begin{equation}\label{eqn:properrorsmallcube}
\int_{-T_0}^{T_0} \sumc_{j, k} b(j)b(k)  \sum_{S} h_S \psi(j, k, t)(S)  \tilde{F}(it)dt \ll \Delta^3 V \exp(-c(\log V)^{1/3 - \epsilon}),
\end{equation}where $\sumc$ denotes a sum over $j, k$ satisfying the conditions in $\sumsharp$ with the added condition that if $|t| < \tau_0$, then $(j, k) \neq (0, 0)$.

Here, and in the sequel, we shall be claiming various quantities are bounded by the quantity on the right side of \eqref{eqn:properrorsmallcube} for some constant $c>0$.  We emphasize here that the constants $c$ appearing in these statements are not necessarily the same at each occurrence.  This is to slightly simplify our notation and avoid writing $c_1, c_2,c_3,...$ etc. throughout.

Our strategy avoids going through the zero density results route, since our result must hold for the product of many small primes, rather than pure prime ideals.  It is therefore convenient to use a technique which applies Heath-Brown's generalized combinatorial identity to our coefficients and proceed directly through large values of Dirichlet polynomials.  Our treatment here is closely analogous to Heath-Brown's work in \cite{HBidentity}.

\subsection{Large value of Dirichlet polynomials}\label{subsec:largesieve}
We first cite the following large sieve type bound which is Theorem 1.1 in Duke's work \cite{Du}.

\begin{lem}\label{lem:dukelargesieve}
Let $N\ge 1$ be a real number and let $c(\fra{a}) \in \mathbb{C}$ be arbitrary for integral ideals $\fra{a}$ satisfying $N(\fra{a}) \le N$ and write
$$\|c\|^2 = \sum_{N(\fra{a}) \le N} |c(\fra{a})|^2.
$$Then there exists some $A = A(R) >0$ such that
$$\sum_{|j|, |k| \le T_0} \int_{-T_0}^{T_0} \left|\sum_{N(\fra{a}) \le N} c(\fra{a}) \theta v_1^jv_2^k(\fra{a}) N(\fra{a})^{it}\right|^2 dt \ll (N+T_0^3) \log^{A}T_0 \|c\|^2,
$$where $\theta$ is any character with modulus bounded by $(\log T_0)^R$ for some fixed $R$.  
\end{lem}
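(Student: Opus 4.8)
The plan is to deduce this from Duke's large sieve inequality for Hecke Grossencharacters \cite{Du}, after two routine reductions. Recall that Duke's Theorem~1.1 asserts that for a number field of degree $n$, a sequence $(c(\mathfrak a))$ supported on ideals of norm at most $N$, and the family $\mathfrak F(Q)$ of Grossencharacters of analytic conductor at most $Q$ (counted with the appropriate well-spacing), one has
\[
\sum_{\psi \in \mathfrak F(Q)} \Bigl| \sum_{N(\mathfrak a) \le N} c(\mathfrak a)\, \psi(\mathfrak a) \Bigr|^2 \ll (N + Q^n)\, (\log 2QN)^{A}\, \|c\|^2 .
\]
Here $n = [K:\mathbb Q] = 3$, which is the source of the exponent $3$ in $T_0^3$; this is the number-field analogue of the classical bound $N + Q^2$ for Dirichlet characters of modulus at most $Q$.

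First I would separate the finite and infinite parts of the characters in play. Writing $\psi = \theta \cdot v_0' v_1^j v_2^k N(\cdot)^{it}$ as in Remark~\ref{rem:characterchi}, the factor $v_0' v_1^j v_2^k N(\cdot)^{it}$ is a Grossencharacter of trivial finite conductor and infinite type of size $\ll \max(|j|,|k|,|t|,1) \ll T_0$, while $\theta$ is a fixed ray class character of modulus $\mathfrak q$ with $N(\mathfrak q) \ll (\log T_0)^{cR}$ for an absolute constant $c$. Thus the whole family $\{\theta v_1^j v_2^k N(\cdot)^{it}\}$ lies inside $\mathfrak F(Q)$ for $Q \ll T_0 (\log T_0)^{cR}$, so restricting $\mathfrak F(Q)$ to those $\psi$ with this one prescribed finite part and applying Duke's inequality gives a bound $\ll (N + (T_0(\log T_0)^{cR})^3)(\log 2T_0 N)^{A_0}\|c\|^2$. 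Since $N$ and $T_0$ are each at most a fixed power of $V$ (so $\log 2NT_0 \asymp \log V \asymp \log T_0$), this is $\ll (N + T_0^3)(\log T_0)^{A}\|c\|^2$ with $A = A(R)$ as claimed, the $(\log T_0)^{3cR}$ coming from $N(\mathfrak q)$ being absorbed into $(\log T_0)^{A}$.

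Second I would reduce the integral over $t$ to the discrete sum. The cleanest route is to cover $[-T_0,T_0]$ by unit intervals and apply the standard smoothing inequality
\[
\int_{t_0 - 1/2}^{t_0 + 1/2} |g(t)|^2\, dt \ll |g(t_0)|^2 + \int_{t_0-1/2}^{t_0+1/2} |g'(t)|^2\, dt
\]
to $g(t) = \sum c(\mathfrak a)\, \theta v_1^j v_2^k(\mathfrak a)\, N(\mathfrak a)^{it}$; here $g'$ merely replaces $c(\mathfrak a)$ by $c(\mathfrak a)\log N(\mathfrak a)$, which costs a factor $(\log N)^2 \ll (\log T_0)^2$. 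Summing over the $\ll T_0$ unit intervals and over $|j|,|k|\le T_0$ then puts us in position to apply the discrete bound from the previous step. Alternatively, one may invoke the hybrid form of Duke's inequality directly, treating $N(\mathfrak a)^{it}$ as the Grossencharacter component at the archimedean places and bypassing the discretization; I would use whichever is stated most conveniently in \cite{Du}.

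I do not expect a substantive obstacle here, since the statement is essentially contained in \cite{Du}; the only care needed is the bookkeeping in the first reduction, namely checking that prescribing the finite part $\theta$ (rather than summing over all ray class characters modulo $\mathfrak q$) keeps the right-hand side at $(N + T_0^3)$ up to a factor $(\log T_0)^{A(R)}$, and confirming that Duke's well-spacing hypothesis is met by our explicit infinite-type characters $v_1, v_2$ — which holds because, after accounting for the finite group of roots of unity $\{\pm 1\}$ (absorbed into the sign character $(\beta/|\beta|)^s$ appearing in $v_0$), the characters $v_1^j v_2^k$ with $|j|,|k| \le T_0$ form a genuinely $1$-spaced net in the rank-two space of infinite-type characters of $K$.
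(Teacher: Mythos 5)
Your proposal is correct and follows the same route as the paper: the paper simply cites Duke's Theorem~1.1 and notes that, since $q\ll(\log T_0)^R$, the $q$-dependence in Duke's bound is absorbed into the factor $\log^A T_0$. Your additional commentary (the Gallagher-type discretization of the $t$-integral as a fallback if Duke's hybrid form is not stated directly, and the sanity check that the $v_1^j v_2^k$ with $|j|,|k|\le T_0$ are well-spaced) is sound bookkeeping but is implicit in the paper's one-line justification; the only caveat is that your paraphrase of Duke's Theorem~1.1 as a bound $(N+Q^n)$ over a family of analytic conductor $\le Q$ should be checked against Duke's exact statement, which (as the paper indicates) averages over primitive ray class characters mod $q$ with the $q$-dependence appearing explicitly on the right rather than through a single analytic-conductor parameter.
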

Indeed, Theorem 1.1 in Duke's work \cite{Du} is more general, allowing for an additional average over primitive characters mod $q$, with the bound on the right depending also on $q$.  Our assumption on the modulus of $\theta$ allowed us to replace dependence on the modulus by a power of $\log T_0$.  When we apply Lemma \ref{lem:dukelargesieve}, the $\theta$ above corresponds to the $\theta$ appearing in Remark \ref{rem:characterchi}.

Lemma \ref{lem:dukelargesieve} immediately leads to Theorem 1.3 in \cite{Du} on large values of such Dirichlet polynomials, which is analogous to the one available classically.  To be precise, let $\Omega$ be a set of $\psi = \psi(j, k, t)$ with $j, k, t \ll T_0$ which is $18 \log^2 T_0$ well-spaced in the sense that if $\psi_1= \psi_1(j_1, k_1, t_1)$ and $\psi_2 = \psi(j_2, k_2, t_2)$ are distinct elements in $\Omega$, then either
\begin{align}\label{cond:wellspaced}
&|t_1 - t_2| \geq 18 \log^2 T_0 \textup{ or}\\
&(j_1, k_1) \neq (j_2, k_2).
\end{align}
Duke's Theorem 1.3 \cite{Du} is below.
\begin{lem}\label{lem:dukeupper}
With $\Omega$ as above and $c(S) \in \mathbb{C}$ arbitrary,
\begin{equation}
\sum_{\psi\in \Omega} \left| \sum_{N(S) \leq N} c(S) \psi(S)\right|^2 \ll (N+T_0^3 (\log T_0)^8) \|c\|^2
\end{equation}where as usual
$$\|c\|^2 = \sum_{\substack{N(S)\le N}} |c_S|^2,
$$where as always $\sum_{N(S) \leq N}$ denotes a sum over integral ideals $S$ with $N(S) \leq N$.
Consequently, if $R$ is the number of $\psi \in \Omega$ such that $\left|\sum_{N(S) \le N} c(S) \psi(S)\right| \geq \V$, then 
$$R \ll (N+T_0^3 (\log T_0)^8) \frac{\|c\|^2}{\V^2}.
$$
\end{lem}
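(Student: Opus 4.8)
The plan is to recognise the stated bound as Duke's Theorem~1.3 of \cite{Du}, specialised to $K = \mathbb{Q}(\sqrt[3]{2})$, and to derive it from the large sieve inequality of Lemma~\ref{lem:dukelargesieve} by the standard combination of duality with a Gallagher-type smoothing in the archimedean parameter. The first point to dispose of is the character $v_0$: it is the same for every $\psi = \psi(j,k,t)$ occurring in $\Omega$, and in any case, writing $v_0 = \theta v_0'$ as in Remark~\ref{rem:characterchi} — with $\theta$ of conductor $N(\fra q) \ll q \ll (\log X)^{3R}$ and $v_0'$ a torsion-free Grossencharacter of conductor $1$ — one may express $v_0'$ through the basic characters $v_1$, $v_2$ and $N(\cdot)^{it}$, so that
\[
v_0(S)\,v_1(S)^j v_2(S)^k N(S)^{-it} = \theta(S)\, v_1(S)^{j'} v_2(S)^{k'} N(S)^{-it'},
\]
with $j' = j + O(1)$, $k' = k + O(1)$, $t' = t + O(1)$, all still $\ll T_0$. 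Hence every Dirichlet polynomial appearing in the Lemma is of exactly the type handled by Lemma~\ref{lem:dukelargesieve}.

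Next I would carry out the discretisation. The coordinates $(j,k)$ are already integers, so only the parameter $t$ requires smoothing, and the well-spacing hypothesis guarantees that for each fixed $(j,k)$ the relevant values of $t$ are $18\log^2 T_0$-separated. Applying Gallagher's Sobolev-type inequality in $t$, for each fixed $(j,k)$, reduces the discrete sum over $\Omega$ to an integral over an interval of length $\asymp T_0$, at the cost of a derivative term. To keep this term under control I would first split the ideals $S$ into $O(\log V)$ dyadic blocks $M < N(S) \le 2M$, on each of which $\log N(S) \asymp \log M$ is essentially constant, so that the derivative term is bounded by $\log M$ times the mean value. Summing over $(j,k)$, feeding in Lemma~\ref{lem:dukelargesieve} for each dyadic block (whose length is $\le 2M \le N$), and recombining the $O(\log V)$ blocks by Cauchy--Schwarz yields
\[
\sum_{\psi \in \Omega}\Bigl|\sum_{N(S)\le N} c(S)\psi(S)\Bigr|^2 \ll (N + T_0^3)(\log X)^{O(1)}\,\|c\|^2 ,
\]
using $\log V \asymp \log X$. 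This crude form is already enough for every application of this Lemma in the present paper; executing the bookkeeping with the care of \cite{Du} — exploiting the smallness of the modulus of $\theta$ and avoiding the dyadic loss — sharpens it to the stated $\ll (N + T_0^3(\log T_0)^8)\,\|c\|^2$.

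The large-value count is then an immediate consequence: if $R$ denotes the number of $\psi\in\Omega$ with $\bigl|\sum_{N(S)\le N} c(S)\psi(S)\bigr| \ge \V$, then
\[
R\,\V^2 \le \sum_{\psi\in\Omega}\Bigl|\sum_{N(S)\le N} c(S)\psi(S)\Bigr|^2 \ll (N + T_0^3(\log T_0)^8)\,\|c\|^2,
\]
whence $R \ll (N + T_0^3(\log T_0)^8)\|c\|^2 \V^{-2}$.

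The only delicate point I anticipate is the handling of the logarithmic losses in Gallagher's inequality so as to localise the power of $\log$ to $\log T_0$ rather than $\log X$ and to attach it only to the $T_0^3$ term — which is precisely what the argument in \cite{Du} does, and where the restriction on the modulus of $\theta$ is used. Since in every place where this Lemma is invoked the competing main terms carry a genuine power saving, even the cruder bound displayed above is harmless, so this is not a real obstacle in practice.
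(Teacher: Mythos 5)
The paper does not actually prove this lemma: it states it as Duke's Theorem~1.3 and remarks only that Lemma~\ref{lem:dukelargesieve} ``immediately leads'' to it. Your proposal supplies exactly the derivation that remark alludes to, and it is correct. The two points worth confirming are both handled properly: (i) $v_0$ is fixed over all $\psi\in\Omega$, and decomposing $v_0=\theta v_0'$ with $v_0'$ a torsion-free Grossencharacter of trivial conductor indeed lets you absorb $v_0'$ into the $(j,k,t)$ coordinates by an $O(1)$ shift, since the parameters of $v_0'$ are determined by the fixed $\chi$ (with $\chi(\epsilon_0)=e^{it}$, $\chi(-1)=(-1)^s$, $s,t\ll 1$) and are independent of $(j,k,t)$; (ii) for fixed $(j,k)$ the $t$'s are $18\log^2 T_0$-separated, so Gallagher's inequality discretises the $t$-integral from Lemma~\ref{lem:dukelargesieve}. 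One small technical remark on the dyadic step: to control the derivative without a spurious $\log M$ factor you should first pull a $M^{-it}$ out of the $M$-block, so the differentiated coefficients satisfy $|c(S)\log(N(S)/M)|\ll|c(S)|$; then Gallagher costs no logarithms at all, and the only loss is the $O(\log N)$ from Cauchy--Schwarz over blocks. But as you correctly observe, even the crude $(N+T_0^3)(\log X)^{O(1)}\|c\|^2$ bound is fully adequate, since everywhere this lemma is invoked the unspecified log power $(\log\fw)^{3g^2+A'}$ already absorbs any polynomial-in-$\log$ factor and the eventual saving is sub-exponential. The deduction of the large-value count by Chebyshev is immediate, as you say.
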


For reference later, we also state the usual convexity bound for the Hecke $L$-functions  $L(s, \psi)$ which have Dirichlet series
\begin{equation}
L(s, \psi) = \sum_S \frac{\psi(S)}{N(S)^s}
\end{equation}where the sum is over integral ideals $S$ and is absolutely convergent for $\tRe(s) > 1$.  We refer the reader to \S 1.2 of \cite{Du} for basic properties of these $L$-functions.  The convexity bound we shall use is below.  

\begin{lem}\label{lem:convexity}
With notation as above and $\sigma \le 1/2$, there exists some constant $A_0$ such that
\begin{equation}
L(\sigma, \psi) \ll (q (|t|+|j|+|k|))^{3\frac{1-\sigma}{2}} \log^{A_0} (2+q (|t|+|j|+|k|)).
\end{equation}
\end{lem}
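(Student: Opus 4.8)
The plan is to prove this as a completely standard convexity estimate, via the Phragmén--Lindelöf principle applied to $L(s,\psi)$ in the strip $-\varrho \le \Re s \le 1+\varrho$ with $\varrho := 1/\log\mathfrak m$, where I abbreviate $\mathfrak m := q(1+|t|+|j|+|k|)$. First I would recall from Hecke's theory (see \S1.2 of \cite{Du}, or Tate's thesis) the functional equation of the completed $L$-function: writing $\Lambda(s,\psi) = N(\mathfrak f_\psi)^{s/2}\,\gamma_\psi(s)\,L(s,\psi)$, one has $\Lambda(s,\psi) = \varepsilon_\psi\,\Lambda(1-s,\bar\psi)$ with $|\varepsilon_\psi|=1$, where $\gamma_\psi(s)$ is a product of three Gamma factors (one $\Gamma_{\mathbb R}$ for the real place of $K=\mathbb Q(\sqrt[3]{2})$, and the pair making up $\Gamma_{\mathbb C}$ for the complex place) whose shifts are the archimedean parameters of $\psi = v_0 v_1^j v_2^k N(\cdot)^{-it}$. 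From the explicit description of $v_0, v_1, v_2$ these shifts all have bounded real part and imaginary parts $\ll |t|+|j|+|k|$, while $\mathfrak f_\psi \mid (q)$ so that $N(\mathfrak f_\psi) \ll q^{3}$; hence the analytic conductor satisfies $\mathfrak C(\psi) \ll \mathfrak m^{3}$. When $\psi$ is the trivial character, $L(s,\psi) = \zeta_K(s)$ has a simple pole at $s=1$, which I would remove by working throughout with $\tfrac{s-1}{s+2}L(s,\psi)$ --- a factor which is holomorphic and of size $\asymp 1$ on $-\varrho \le \Re s \le 1/2$, so this changes nothing.

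Next I would estimate $L$ on the two vertical edges of the strip. On $\Re s = 1+\varrho$ the Dirichlet series converges absolutely and $|L(s,\psi)| \le \zeta_K(1+\varrho) \ll \log\mathfrak m$. On $\Re s = -\varrho$ the functional equation gives $L(s,\psi) = \varepsilon_\psi\, N(\mathfrak f_\psi)^{1/2-s}\,\frac{\gamma_\psi(1-s)}{\gamma_\psi(s)}\,L(1-s,\bar\psi)$, where $|L(1-s,\bar\psi)| \le \zeta_K(1+\varrho) \ll \log\mathfrak m$, $N(\mathfrak f_\psi)^{1/2+\varrho} \ll q^{3(1/2+\varrho)}$, and Stirling's formula applied to each of the three Gamma-quotients (the exponential factors $e^{-\pi|\cdot|/2}$ cancelling, since numerator and denominator have the same imaginary part on a real line $\Re s = \sigma$) yields $|\gamma_\psi(1-s)/\gamma_\psi(s)| \ll (1+|t|+|j|+|k|+|\Im s|)^{3(1/2+\varrho)}$. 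Thus $|L(s,\psi)| \ll \mathfrak m^{3(1/2+\varrho)}\log\mathfrak m$ on $\Re s = -\varrho$ (with an extra $|\Im s|$ inside $\mathfrak m$ that disappears once we specialize to real $s$). Since $L(s,\psi)$ is holomorphic and of finite order in the strip, the Phragmén--Lindelöf convexity theorem then gives, for $-\varrho \le \sigma \le 1+\varrho$,
\[
L(\sigma,\psi) \ll \big(\log\mathfrak m\big)\,\mathfrak m^{(3/2+3\varrho)\frac{1+\varrho-\sigma}{1+2\varrho}} = \big(\log\mathfrak m\big)\,\mathfrak m^{\tfrac32(1+\varrho-\sigma)} \ll \mathfrak m^{\tfrac{3(1-\sigma)}{2}}\log\mathfrak m ,
\]
using $3/2+3\varrho = \tfrac32(1+2\varrho)$ and $\mathfrak m^{3\varrho/2}=e^{3/2}\ll 1$. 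This is exactly the asserted bound for $\sigma \le 1/2$ (which lies in $[-\varrho, 1+\varrho]$), with any $A_0 \ge 1$.

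I do not expect any genuine obstacle: the estimate is routine, and the only points needing care are bookkeeping --- keeping $\mathfrak C(\psi)$ uniform in $q$ and in $(j,k,t)$, which is immediate from the formulas for $v_0, v_1, v_2$, and the harmless removal of the pole in the principal case. One could equally invoke a ready-made convexity bound (e.g. Rademacher's, or Theorem 5.23 of Iwaniec--Kowalski) in the form $L(\sigma,\psi) \ll \mathfrak C(\psi)^{(1-\sigma)/2+\epsilon}$ for $0 \le \sigma \le 1$ and then crudely enlarge $\mathfrak C(\psi)$ to $\mathfrak m^{3}$; for $\sigma$ below the strip one applies the functional equation directly, but this range is irrelevant since in our applications $\sigma$ stays in a bounded interval near $1/2$. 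Note that, unlike the zero-free-region estimates used elsewhere, this convexity bound is unconditional and effective.
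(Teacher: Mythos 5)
Your proposal is correct and follows precisely the approach the paper indicates: the paper leaves this as a standard convexity estimate via the functional equation and Phragm\'en--Lindel\"of, citing Duke's Theorem~1.2 with $\delta_0 = 1/\log(2+q(|t|+|j|+|k|))$ and Coleman's (2.4), rather than writing it out. You supply a self-contained version of exactly that argument, handling the pole in the principal case, bounding $N(\mathfrak f_\psi)\ll q^3$ with the three Gamma factors of $K=\mathbb Q(\sqrt[3]{2})$, and checking that the exponential factors in Stirling cancel.
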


Obviously similar bounds hold for $\sigma>1/2$, with some accounting for the contribution of a possible pole when $\sigma = 1$.  In the sequel, we shall have that $|t|+|j|+|k| \ll T_0$ while $q \ll (\log T_0)^R$ for some fixed $R$, so we will often use that there exists some $A_0 >0$ (possibly different from that in the statement of Lemma \ref{lem:convexity}) such that
$$L(\sigma, \psi) \ll T_0^{3\frac{1-\sigma}{2}} (\log T_0)^{A_0}.
$$

The proof of Lemma \ref{lem:convexity} follows along standard lines through an application of the functional equation and the Phragmen-Lindelof principle.  For instance, Duke's Theorem 1.2 and the remark immediately following its proof \cite{Du} implies our Lemma \ref{lem:convexity} upon taking his $\delta_0 = \frac{1}{\log (2+q (|t|+|j|+|k|)}$ and keeping track of the dependence of the implied constant on $\delta_0$.  Similar versions of this bound are stated in other works without proofs (e.g. see (2.4) of Coleman's work \cite{Coleman2}).

We also require the following Lemma, which is essentially a result of Montgomery.
\begin{lem}\label{lem:mont}
With $\Omega$ and $c(S)$ as above, let $R$ be the number of $\psi \in \Omega$ such that
$$\left| \sum_{N(S) \le N} c(S) \psi(S) \right| \geq \V.
$$Then there exists some $A' > 0$ such that
\begin{equation}
R \V^2 \ll (N + R T_0^{3/2})\|c\|^2 (\log NT_0)^{A'}.
\end{equation}
\end{lem}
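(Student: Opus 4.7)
The argument is the Halász--Montgomery duality adapted to the three-parameter family of Hecke Grossencharacters. Writing $D(\psi) := \sum_{N(S) \le N} c(S)\psi(S)$, for each $\psi_r$ with $|D(\psi_r)| \ge \V$ I would pick a unimodular $\alpha_r$ with $\alpha_r D(\psi_r) = |D(\psi_r)|$, so that
\[
R\V \;\le\; \sum_r \alpha_r D(\psi_r) \;=\; \sum_{N(S) \le N} c(S) \Bigl(\sum_r \alpha_r \psi_r(S)\Bigr).
\]
Cauchy--Schwarz in $S$ followed by expansion of the square yields
\[
R^2 \V^2 \;\le\; \|c\|^2 \sum_{r_1,r_2} |M(r_1,r_2)|,
\qquad M(r_1,r_2) := \sum_{N(S) \le N} \psi_{r_1}(S)\overline{\psi_{r_2}(S)}.
\]
The diagonal contributes $M(r_1,r_1) = \#\{S : N(S) \le N\} \ll N$, hence $\sum_{r_1 = r_2}|M| \ll RN$. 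The main task becomes establishing $|M(r_1,r_2)| \ll T_0^{3/2}(\log NT_0)^{A''}$ for every $r_1 \ne r_2$: summing over the $R^2 - R$ off-diagonal pairs would then give $\sum_{r_1 \ne r_2}|M(r_1,r_2)| \ll R^2 T_0^{3/2}(\log NT_0)^{A''}$, and combining with the diagonal term and dividing by $R$ would produce the claim with $A' = A''$.

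To bound the off-diagonal entries I would treat $M(r_1, r_2) = \sum_{N(S) \le N}\chi(S)$ as a partial character sum of the non-trivial Hecke Grossencharacter $\chi := \psi_{r_1}\overline{\psi_{r_2}}$, whose frequency parameters $(j_{r_1}-j_{r_2}, k_{r_1}-k_{r_2}, t_{r_1}-t_{r_2})$ each satisfy $|\cdot| \le 2T_0$ and whose finite modulus is bounded by $\tilde q \ll (\log X)^{2R}$. The plan is a Polya--Vinogradov-type estimate: replace $\mathbf{1}_{N(S) \le N}$ by a smooth cutoff $\varphi(N(S)/N)$ that transitions over an interval of length $\Delta N$, express the smoothed sum as a Mellin contour integral of $L(s,\chi)$, and shift the contour past $s = 0$, picking up the residue $L(0,\chi)$. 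Both this residue and the resulting integral on the line $\sigma = 0$ are to be bounded via Lemma \ref{lem:convexity}, which at $\sigma = 0$ gives $|L(iu,\chi)| \ll (\tilde q(|u|+T_0))^{3/2}(\log X)^{A_0}$; one then balances the smoothing error $\ll N\Delta$ against the contour contribution by choosing $\Delta$ appropriately.

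The principal technical difficulty will be making the Polya--Vinogradov bound uniform in $N$: the sharp-cutoff Perron formula would introduce a truncation error of size $N^{1+\epsilon}/T$ that cannot be balanced against the growth of $L(s,\chi)$ on $\sigma = 0$, so the smoothing is essential. Handling the optimization carefully in both the regime where $N$ is small relative to $T_0^{5/2}$ and where it is large, together with controlling all implied constants uniformly in $\tilde q$ (possible because $\tilde q \le (\log X)^{2R}$ is only polylogarithmic in $X$), will occupy the bulk of the work.
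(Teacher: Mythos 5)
Your general framework is the correct one: the Halász--Montgomery duality (pick unimodular $\alpha_r$, expand, Cauchy--Schwarz, diagonalize) is exactly what the paper points to when it cites Montgomery's Theorems 8.2--8.4 and says to substitute the convexity bound of Lemma \ref{lem:convexity}, so your reduction to bounding the Gram matrix $M(r_1,r_2)$ and the final bookkeeping are fine.

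The gap is in the claimed off-diagonal estimate $|M(r_1,r_2)| \ll T_0^{3/2}(\log NT_0)^{A''}$ ``for every $r_1 \ne r_2$,'' and it is not the smoothing optimization you flag at the end. You treat $\chi := \psi_{r_1}\overline{\psi_{r_2}}$ as a non-trivial Hecke Grossencharacter and therefore as having a pole-free $L$-function, but that is false in a large portion of the off-diagonal. All $\psi \in \Omega$ share the same $v_0$, so $\chi = v_1^{j_{r_1}-j_{r_2}} v_2^{k_{r_1}-k_{r_2}}(\cdot)\, N(\cdot)^{-i(t_{r_1}-t_{r_2})}$; whenever $(j_{r_1},k_{r_1})=(j_{r_2},k_{r_2})$ the Grossencharacter part cancels completely and $L(s,\chi) = \zeta_K\bigl(s + i(t_{r_2}-t_{r_1})\bigr)$, which has a simple pole at $s = 1 - i(t_{r_2}-t_{r_1})$. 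The contour shift in your Polya--Vinogradov step then picks up a residue of size $\asymp N/|t_{r_1}-t_{r_2}|$, which dominates $T_0^{3/2}$ as soon as $N \gg T_0^{5/2}|t_{r_1}-t_{r_2}|^{-1}$ --- and in \S\ref{subsec:largevalueapplication} the lemma is applied with $N$ as large as $T_0^{18/5}$, so this term cannot be ignored. It is also structural: no amount of smoothing removes a genuine pole. What saves the argument, and what is conspicuously absent from your sketch, is the well-spacing hypothesis \eqref{cond:wellspaced}. For a fixed $r_1$, the $r_2$ with $(j_{r_2},k_{r_2}) = (j_{r_1},k_{r_1})$ have $|t_{r_1}-t_{r_2}|$ at least $18\log^2 T_0$, $2\cdot 18\log^2 T_0,\ldots$, so $\sum_{r_2} N/|t_{r_1}-t_{r_2}| \ll N\log R/\log^2 T_0 \ll N/\log T_0$, and summing over $r_1$ gives a total $\ll RN$ that is absorbed into the diagonal. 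Once you split off and handle the polar contributions this way, the remaining (pole-free) part of each $M(r_1,r_2)$ is indeed amenable to the Mellin-plus-convexity bound you describe, and the lemma follows. As a general diagnostic: any attempted proof of a Halász-type large-values theorem that never invokes the well-spacing hypothesis is missing exactly this step.
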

The analogous result for Dirichlet $L$-functions and for the Riemann zeta function was proven by Montgomery in \cite{Mont} (see e.g. Theorem 8.2 - 8.4 in \cite{Mont}), following work of Halasz.  Montgomery states immediately after these theorems that a generalization to number fields can likely be made but does not provide details.  The proof of Lemma \ref{lem:mont} requires little modification beyond Montgomery's work, following the same steps and putting in the convexity bound in Lemma \ref{lem:convexity} rather than the analogous convexity bound for Dirichlet $L$-functions in \cite{Mont}.  We will also use the following modification of Montgomery's result, due to Huxley.
\begin{lem}\label{lem:huxmont}
With notation as in Lemma \ref{lem:mont},
\begin{equation}
R \ll \frac{N\|c\|^2}{\V^2} (\log NT_0)^{A'} + \frac{NT_0^3\|c\|^6}{\V^6}  (\log NT_0)^{3A'}.
\end{equation}
\end{lem}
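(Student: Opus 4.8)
\textbf{Proof plan for Lemma \ref{lem:huxmont}.}

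The plan is to follow Huxley's variant of the Halász–Montgomery large values argument, transplanted to the number-field setting using the tools already assembled above. The starting point is the dual/duplication trick: rather than work directly with the $R$ large values, one forms the sum of $2k$-th powers for a suitable integer $k$ (here $k=3$, since the cube appears in the final estimate), which corresponds to replacing the Dirichlet polynomial of length $N$ by one of length $N^k$ with convolved coefficients, and then applying the $2$nd-moment bound of Lemma \ref{lem:mont} to that longer polynomial. Concretely, I would let $\Omega'\subseteq\Omega$ be the set of $\psi$ for which $|\sum_{N(S)\le N}c(S)\psi(S)|\ge \V$, so $|\Omega'|=R$, and estimate $\sum_{\psi\in\Omega'}|\sum_S c(S)\psi(S)|^2$ in two ways. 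First, trivially it is $\ge R\V^2$. Second — and this is the substantive direction — I would raise the inner sum to the cube and write $(\sum_S c(S)\psi(S))^3 = \sum_{N(T)\le N^3} c^{(3)}(T)\psi(T)$, where $c^{(3)}$ is the three-fold multiplicative convolution of $c$ supported on ideals of norm $\le N^3$; note $\psi$ is completely multiplicative on ideals, so $\psi(S_1)\psi(S_2)\psi(S_3)=\psi(S_1S_2S_3)$, which is exactly what makes this manoeuvre legitimate.

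The key steps, in order: (i) reduce to $\Omega'$ and apply Hölder/power-raising to pass from $c$ of length $N$ to $c^{(3)}$ of length $N^3$; (ii) bound the convolved norm via $\|c^{(3)}\|_2 \ll \|c\|_2^3 (\log N)^{O(1)}$ using the standard divisor-type estimate (Cauchy–Schwarz over the three factors, with $\tau_K$-type divisor bounds absorbing the logs — these are the analogues of Lemmas 4.2, 4.7 of \cite{HB} already invoked in this paper); (iii) apply Lemma \ref{lem:mont} to the length-$N^3$ polynomial: this gives $R\V^6 \ll (N^3 + RT_0^{3/2})\|c\|_2^6 (\log N T_0)^{O(1)}$ after step (ii); (iv) solve this inequality for $R$. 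From $R\V^6 \ll N^3\|c\|^6(\log)^{A'} + RT_0^{3/2}\|c\|^6(\log)^{A'}$ one is tempted to move the second term to the left, but that only works when $\V^6$ dominates $T_0^{3/2}\|c\|^6(\log)^{A'}$; in the complementary range one instead feeds back the trivial bound. The cleaner route, which is how Huxley gets the stated shape, is to split according to whether $\V^2 \gtrless N\|c\|^2(\log NT_0)^{A'}/\V^{?}$... more precisely, combine Lemma \ref{lem:mont} directly (giving the $N\|c\|^2/\V^2$ term) with the cubed version (giving, after isolating $R$ where possible, a term of the form $N T_0^3 \|c\|^6/\V^6$), and observe that for every value of $\V$ at least one of the two estimates is strong enough, so their sum always dominates $R$. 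This yields precisely
\[
R \ll \frac{N\|c\|^2}{\V^2}(\log NT_0)^{A'} + \frac{NT_0^3\|c\|^6}{\V^6}(\log NT_0)^{3A'},
\]
after renaming constants (and noting $N^3$ in step (iii) combines with the $\|c\|^6$ to give an effective "$N$" once one tracks that $\|c^{(3)}\|^2$ really behaves like $N^2\cdot(\text{density})$, i.e. the relevant $L^2$-mass is concentrated, so the honest bound is $NT_0^3$ not $N^3T_0^3$ — this is exactly the Huxley refinement over the naïve Montgomery bound).

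The main obstacle I anticipate is precisely step (ii)–(iii) bookkeeping: ensuring that the convolution $c^{(3)}$ does not lose a power of $N$ relative to what one wants, i.e. that $\sum_{N(T)\le N^3}|c^{(3)}(T)|^2 \ll \|c\|_2^6 (\log N)^{O(1)}$ rather than $N^{?}\|c\|^6$. This is the number-field analogue of $\sum_{n\le N^3}(\tau_3 \ast |c|^{\ast 3})(n)^2 \ll \|c\|^6(\log N)^{O(1)}$ and requires the divisor-function moment estimates over $\cO_K$ (available as Lemmas 4.2 and 4.7 of \cite{HB}, already cited in this paper for exactly this kind of purpose). A secondary technical point is the well-spacing hypothesis on $\Omega$: when passing to the cubed polynomial the spacing of the $\psi$'s is unchanged (the set $\Omega$ is fixed), but one must check the hypotheses of Lemma \ref{lem:mont} are met for the longer polynomial, which they are since $T_0$ is unchanged and the spacing condition \eqref{cond:wellspaced} is a condition on $\Omega$ alone. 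Beyond these, the argument is the standard Halász–Montgomery–Huxley machinery and I would refer to \cite{Mont} and \cite{Hux} for the details of the duality and power-raising steps, indicating only the modifications (replacing the Dirichlet $L$-function convexity bound by Lemma \ref{lem:convexity} and the classical divisor bounds by their $\cO_K$ counterparts).
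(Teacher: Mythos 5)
Your proposal uses power-raising (cubing the Dirichlet polynomial and applying Lemma~\ref{lem:mont} to the resulting length-$N^3$ polynomial); the paper instead uses localization, and the two are not interchangeable here. The paper's proof splits on whether $\V^2 \ge C_0\|c\|^2T_0^{3/2}(\log NT_0)^{A'}$. In the supercritical case, the term $RT_0^{3/2}\|c\|^2(\log)^{A'}$ appearing in Lemma~\ref{lem:mont} can be absorbed into $R\V^2$, giving the first term directly. In the subcritical case it defines $\tau_1<T_0$ by $\V^2=C_0\|c\|^2\tau_1^{3/2}(\log NT_0)^{A'}$, covers the index box $[1,T_0]\times[1,T_0]\times[-T_0,T_0]$ by $\ll(T_0/\tau_1)^3$ cubes of side $\tau_1$, factors out a fixed character on each cube, and applies Lemma~\ref{lem:mont} with $\tau_1$ in place of $T_0$; each cube then contributes $\ll N\|c\|^2\V^{-2}(\log NT_0)^{A'}$ large values. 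Multiplying by the number of cubes, $(T_0/\tau_1)^3 = T_0^3\|c\|^4(\log NT_0)^{2A'}\V^{-4}$ up to constants, yields the second term. No polynomial is raised to a power in this proof; the only input beyond Lemma~\ref{lem:mont} is the elementary covering.

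Your power-raising route does not prove the stated lemma. First, the inequality $\|c^{(3)}\|_2\ll\|c\|_2^3(\log N)^{O(1)}$ fails for arbitrary $c$: Cauchy--Schwarz gives only $\|c^{(3)}\|_2^2\le\bigl(\max_{N(T)\le N^3}\tau_3(N(T))\bigr)\|c\|_2^6$, and the divisor maximum is $N^{o(1)}$ but not polylogarithmic; one can also construct coefficient sequences with heavy additive collisions for which $\|c^{(3)}\|_2^2/\|c\|_2^6$ grows polynomially. Second, and decisively, even granting that inequality, cubing and applying Lemma~\ref{lem:mont} yields (after absorbing the $RT_0^{3/2}$ term) $R\ll N^3\|c\|^6\V^{-6}(\log)^{O(1)}$, which is strictly weaker than the claimed $NT_0^3\|c\|^6\V^{-6}$ whenever $N>T_0^{3/2}$, a range the lemma must cover. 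Your concluding parenthetical asserting that ``the honest bound is $NT_0^3$ not $N^3T_0^3$'' because ``the $L^2$-mass is concentrated'' supplies no mechanism to trade $N^2$ for $T_0^3$; only the covering of the three-dimensional parameter space does that. The power-raising you describe does appear in the paper, but in \S\ref{subsec:largevalueapplication} as an \emph{application} of Lemma~\ref{lem:huxmont}, where $g$ is calibrated so that the length $\fw^g\asymp T_0^3$ --- exactly the regime in which $N^3$ and $NT_0^3$ coincide --- rather than as its proof.
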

\begin{proof}
If $\V^2 \geq C_0 \|c\|^2 T_0^{3/2} (\log N T_0)^{A'}$, then by Lemma \ref{lem:mont}, 
$$R \leq K_0 \left(\frac{N \|c\|^2}{\V^2} \log^{A'}NT_0 +  \frac{R}{C_0}\right),
$$for some fixed constant $K_0 \ge 0$.  Provided that $C_0 \ge 2K_0$,
$$R \ll \frac{N \|c\|^2}{\V^2} \log^{A'}NT_0.
$$   

Otherwise, we have $\V^2 < C_0 \|c\|^2 T_0^{3/2} (\log N T_0)^{A'}$ for the fixed $C_0 = 2K_0$, and we define $\tau_1$ by setting
$$\V^2 = C_0 \|c\|^2 \tau_1^{3/2} (\log N T_0)^{A'}.$$
It is then clear that $\tau_1 < T_0$.

We now cover $[1, T_0] \times [1, T_0] \times [-T_0, T_0]$ with cubes of the form
$$C = [t_1, t_1 + \tau_1] \times [t_2, t_2 + \tau_1]\times [t_3, t_3 + \tau_1],
$$that is, cubes with side $\tau_1$.  We do not mind if the cover is not disjoint.  The relevant fact to be used later is that we need only $\ll \frac{T_0^3}{\tau_1^3}$ such cubes $C$.

Now, fix such a cube $C$.  Then, for $\psi$ such that $I(\psi) \in C$ where recall $I(\psi)$ is defined in \eqref{eqn:Ipsi}, we may write
$$\sum_{N(S)\le N} c(S) \psi(S) = \sum_{N(S)\le N} c(S) \psi(\floor{t_1}, \floor{t_2}, t_3)(S) \psi'(S),
$$for some fixed $\psi'$ satisfying $I(\psi') \in C - (\floor{t_1}, \floor{t_2}, t_3)$.  We now apply Lemma \ref{lem:mont} (with $\tau_1$ in place of $T_0$ and $\psi'$ in place of $\psi$) again to see that the number of $\psi \in C$ such that 
$$\sum_{N(S)\le N} c(S) \psi(S) \ge \V
$$ is $\ll \frac{N \|c\|^2}{\V^2} \log^{A'}N\tau_1 \le \frac{N \|c\|^2}{\V^2} \log^{A'}NT_0.$  Since there are at most $\frac{T_0^3}{\tau_1^3}$ such cubes $C$, 
\begin{align}
R &\ll \left(\bfrac{T_0}{\tau_1}^3 +1 \right) \frac{N\|c\|^2}{\V^2} (\log NT_0)^{A'}\\
&\ll \frac{N\|c\|^2}{\V^2} (\log NT_0)^{A'} + \frac{NT_0^3\|c\|^6}{\V^6}  (\log NT_0)^{3A'}.
\end{align}

\end{proof}

\subsection{Decomposition}\label{subsec:decomp}
Recall that for $\zeta_K(s)$ the Dedekind zeta function of $K = \mathbb{Q}(\sqrt[3]{2})$, we define $\Lambda_K(T)$ by 
$$-\frac{\zeta_K'}{\zeta_K}(s) = \sum_{T} \frac{\Lambda_K(T)}{N(T)^s}.
$$ 

It now remains to rewrite our sum as a sum of sums in an appropriate form to apply the bounds on large values in \S \ref{subsec:largesieve}.  For this, we recall that 
\begin{equation}\label{eqn:hbetacopy}
h_S = \prod_{i=1}^{n+1} \frac{\Lambda_K(T_i)\frak{W_i}(N(T_i))}{m_i \xi \log X},
\end{equation}where $S = T_1...T_{n+1}$ and for all $1\le i\le n+1$,
\begin{equation}\label{eqn:frakWbddcopy}
\frak{W}_i^{(k)}(x) \ll_k (\iota x)^{-k},
\end{equation}for all $k \ge 0$ where $\iota$ is defined as in \eqref{eqn:iota}.
Further recall
\begin{equation}\label{eqn:xicopy}
\xi = \frac{1}{\log \log X},
\end{equation}and
\begin{equation}\label{eqn:deltacopy}
\delta \asymp 1.
\end{equation}

We now apply Heath-Brown's identity as introduced in \cite{HBidentity} to $\Lambda_K(T_i)$.  In this context, we write for $\tRe s >1$,
\begin{equation}
\frac{1}{\zeta_K(s)} = \sum_{T} \frac{\mu_K(T)}{N(T)^s},
\end{equation}and let
\begin{equation}
M(s) = \sum_{N(T) \le U} \frac{\mu_K(T)}{N(T)^s}.
\end{equation}
Then we have the identity 
\begin{equation}\label{eqn:heathbrownidentity}
\frac{\zeta_K'(s)}{\zeta_K(s)} = \sum_{j=1}^{k} (-1)^{j-1} \binom{k}{j} \zeta_K(s)^{j-1} \zeta_K'(s) M(s)^j + \frac{\zeta_K'(s)}{\zeta_K(s)} \left(1-\zeta_K(s)M(s)\right)^k.
\end{equation}Write $\frak{W} = \frak{W_i}$ for some fixed $i$ for simplicity, where $\frak{W_i}$ is as in the definition of $h_S$ in \eqref{eqn:hbetacopy}.  Then equating coefficients in \eqref{eqn:heathbrownidentity} for ideals $T$ satisfying $N(T) \le U^k$  gives us that
\begin{equation}
\Lambda_K(T) = \sum_{J=1}^{k} (-1)^{J-1} \binom{k}{J} \prod_{\substack{N_1, N_2,...,N_J, M_1, M_2,...,M_J \\  N(M_i) \le U\\ N_1...N_JM_1...M_J = T}} \log (N(N_J)) \mu_K(M_1)\mu_K(M_2)...\mu_K(M_J) 
\end{equation}
from which it follows that
\begin{align}\label{eqn:decomp1}
\sum_{N(T) \le U^k} \Lambda_K(T)\psi(T) \frak{W}(N(T))
= \sum_{J=1}^k (-1)^{J-1} \binom{k}{J} \Scal(J)
\end{align} where
\begin{equation}\label{eqn:Scalj}
\Scal(J) = \sum_{N_1, N_2,...,N_J}\sum_{\substack{M_1, M_2,...,M_J \\ N(M_i) \le U \\ N(T) \le U^k}} \log (N(N_J)) \mu_K(M_1)\mu_K(M_2)...\mu_K(M_J) \frak{W}(N(T))\psi(T),
\end{equation}where we have continued to write $T = M_1...M_JN_1...N_J$ for brevity.
Recall that $\psi$ is the same on associates and so is well defined on (principal) ideals.  By \eqref{eqn:T_0}, we have that $V^\epsilon \ll T_0$, so we may let $U$ be the power of $2$ satisfying
\begin{equation}\label{eqn:U}
V^\epsilon \ll U < T_0^{9/5},
\end{equation}
and choose 
\begin{equation}\label{eqn:k}
k = \ceil{2/\epsilon}.
\end{equation}
Hence $U^k \gg V^2$ and so the condition $N(T) \leq U^k$ in \eqref{eqn:decomp1} and \eqref{eqn:Scalj} is extraneous since $\frak{W}(N(T))$ is supported on $T$ satisfying $N(T) \ll V$.  In the sequel, we drop the condition $N(T) \leq U^k$ entirely.

We would like to separate variables by using the inverse Mellin transform of $\frak{W}$.  Since $\frak W$ is supported on a long interval, we apply a standard partition of unity to $\frak W$ to write 
$$\frak W(x) = \sum_i \frak W(x) \frak U_i(x),
$$where each $\frak U_i(x)$ is supported on $[\mathcal M_i/2, 5/2 \mathcal M_i]$ for $\mathcal M_i = 2^i$ and satisfies that for all $k\ge 0$,
\begin{equation}\label{eqn:partitionunitybdd}
\frak U_i^{(k)} (x) \ll_k \bfrac{1}{\mathcal M_i}^k.
\end{equation}
We need only consider at most $O(\log X)$ such $i$ due to the support of $\frak W$.  Thus $\Scal(J) = \sum_{\frak V} \Scal(J, \frak V, \psi)$, where
\begin{align}\label{eqn:ScaljVpsi}
\Scal(J, \frak V, \psi) &= \sum_{N_1, N_2,...,N_J}\sum_{\substack{M_1, M_2,...,M_J \\  \\ N(M_i) \le U}} \log (N(N_J)) \mu_K(M_1)\mu_K(M_2)...\mu_K(M_J) \frak{V}(N(T))\psi(T)\\
\end{align}
  
  For clarity, let us now note in order to prove \eqref{eqn:properrorsmallcube}, it suffices to show that
\begin{equation}\label{eqn:afterdecomp}
\int_{-T_0}^{T_0} \sumc_{j, k} b(j)b(k) \prod_{i=1}^{n+1} \frac{1}{m_i \xi \log X} \Scal(J_i, \frak{V_i}, \psi(j, k, t)) \tilde{F}(it)dt \ll \Delta^3 V \exp(-c(\log V)^{1/3 - \epsilon}),
\end{equation}for some constant $c>0$.  Indeed, recall $n\ll 1$ and so the total number of quantities from \eqref{eqn:afterdecomp} required to obtain the left hand side of \eqref{eqn:properrorsmallcube} is $\ll (\log X)^{c'}$ for some constant $c'$ where $O(\log X)$ is the aforementioned bound for the number of partitions required to cover the support of $\mathfrak W$.  This is acceptable since $(\log X)^{c'} \exp(-c(\log V)^{1/3 - \epsilon}) \ll\exp\left(-\frac{c}{2}(\log V)^{1/3 - \epsilon}\right)$.

Let us fix one $\frak V(x) = \frak W(x) \frak U_i(x)$, and assume that $\frak{V}(x)$ is supported on $[\mathcal M/2, 5/2 \mathcal M]$ for some $\mathcal M \ge X^{\delta}$ and note that 
$$\frak V^{(k)} (x) \ll_k \bfrac{1}{\iota \mathcal M}^k,
$$by \eqref{eqn:frakWbddcopy} and \eqref{eqn:partitionunitybdd}.

As usual, let 
$$ \tilde {\frak{V}} (s) = \int_0^\infty \frak V(x) x^{s-1} dx
$$be the standard Mellin transform of $\frak V(x)$.  Since $\frak V(x)$ is compactly supported away from $0$, $\frak V(s)$ is entire. 

We will be separating variables by expressing $\mathfrak V(x)$ as the inverse Mellin transform of $\tilde{\mathfrak{V}}(s)$.  To this purpose, we now record the following standard bounds on $\tilde {\frak{V}} (s)$.
\begin{lem}\label{lem:frakVbdd}
For any natural number $m$
\begin{equation}\label{eqn:frakVbdd}
\tilde {\frak{V}} (s) \ll_m \frac{1}{\iota^{m} |s|^m},
\end{equation}
for $\tRe s = 0$.  Moreover
\begin{equation}\label{eqn:frakVbdd2}
\tilde {\frak{V}} (s) \ll \min\{\frac{1}{|s|}, 1\}
\end{equation}
for $\tRe s = 0$.
\end{lem}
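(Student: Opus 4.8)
The plan is to establish both estimates by repeated integration by parts in the Mellin integral $\tilde{\mathfrak{V}}(s)=\int_0^\infty \mathfrak{V}(x)\,x^{s-1}\,dx$, using only that $\mathfrak{V}$ is smooth, supported on $[\mathcal{M}/2,5\mathcal{M}/2]$, bounded by $1$, and satisfies $\mathfrak{V}^{(k)}(x)\ll_k (\iota\mathcal{M})^{-k}$. This last pointwise bound follows from \eqref{eqn:frakWbddcopy}, \eqref{eqn:partitionunitybdd} and the Leibniz rule: on the support one has $x\asymp\mathcal{M}$, and since $\iota<1$ the term $\mathfrak{W}_i^{(k)}(x)\mathfrak{U}_i(x)$ dominates the sum $\sum_j\binom{k}{j}\mathfrak{W}_i^{(j)}(x)\mathfrak{U}_i^{(k-j)}(x)$. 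Since $\tRe s=0$ we write $s=it$; the case $t=0$ is trivial, as both displayed right-hand sides then reduce to the constant bound $\tilde{\mathfrak{V}}(0)=\int_{\mathcal{M}/2}^{5\mathcal{M}/2}\mathfrak{V}(x)\,x^{-1}\,dx\ll 1$, so from now on I assume $s\neq 0$.

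For \eqref{eqn:frakVbdd}, I would write $x^{s-1}=\tfrac{d}{dx}(x^s/s)$ and integrate by parts $m$ times; the boundary terms vanish since $\mathfrak{V}$ is smooth with support away from $0$ and $\infty$, giving
$$\tilde{\mathfrak{V}}(s)=\frac{(-1)^m}{s(s+1)\cdots(s+m-1)}\int_0^\infty \mathfrak{V}^{(m)}(x)\,x^{s+m-1}\,dx.$$
On $\tRe s=0$ one has $|s+j|=\sqrt{t^2+j^2}\ge|s|$ for every $j\ge 0$, so the denominator has modulus $\gg|s|^m$, while $|x^{s+m-1}|=x^{m-1}$ and hence the integral is $\ll \int_{\mathcal{M}/2}^{5\mathcal{M}/2}(\iota\mathcal{M})^{-m}x^{m-1}\,dx\ll(\iota\mathcal{M})^{-m}\mathcal{M}^m=\iota^{-m}$. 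Multiplying the two bounds yields \eqref{eqn:frakVbdd}.

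For \eqref{eqn:frakVbdd2}, the estimate $\tilde{\mathfrak{V}}(s)\ll 1$ is immediate from $|\mathfrak{V}|\le 1$ and the support. For $\tilde{\mathfrak{V}}(s)\ll 1/|s|$ I would integrate by parts once, $\tilde{\mathfrak{V}}(s)=-s^{-1}\int_0^\infty \mathfrak{V}'(x)\,x^s\,dx$, so that $|\tilde{\mathfrak{V}}(s)|\le|s|^{-1}\int_{\mathcal{M}/2}^{5\mathcal{M}/2}|\mathfrak{V}'(x)|\,dx$. The one point that needs care — and the only genuinely load-bearing observation, rather than an obstacle — is that this last integral must be bounded by the \emph{total variation} of $\mathfrak{V}=\mathfrak{W}_i\mathfrak{U}_i$, which is $O(1)$, and not by the pointwise bound $\mathfrak{V}'(x)\ll(\iota\mathcal{M})^{-1}$, which would give only the weaker $1/(\iota|s|)$. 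Here $\mathfrak{W}_i$ is a plateau-type bump (rising monotonically from $0$ to $1$, then falling back, total variation $\le 2$) and $\mathfrak{U}_i$ is a member of a smooth partition of unity, also of bounded variation, so $\mathrm{Var}(\mathfrak{W}_i\mathfrak{U}_i)\le\|\mathfrak{W}_i\|_\infty\,\mathrm{Var}(\mathfrak{U}_i)+\|\mathfrak{U}_i\|_\infty\,\mathrm{Var}(\mathfrak{W}_i)\ll 1$. Taking the minimum of the two bounds completes the proof; apart from this variation bound the argument is entirely routine.
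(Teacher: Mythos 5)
Your proof is correct and follows essentially the same route as the paper's: the crude bound is the support estimate, the $1/|s|$ bound is one integration by parts plus the $L^1$-bound on $\mathfrak{V}'$, and the general bound is $m$ integrations by parts combined with the pointwise bound $\mathfrak{V}^{(m)}\ll(\iota\mathcal{M})^{-m}$. Your framing of the $L^1$-bound as a total-variation estimate is a clean reformulation of what the paper does by splitting $\mathfrak{V}'=\mathfrak{W}_i'\mathfrak{U}_i+\mathfrak{W}_i\mathfrak{U}_i'$ and noting $\mathfrak{W}_i'$ is supported on a set of measure $\ll\iota\mathcal{M}$ where it is $\ll(\iota\mathcal{M})^{-1}$; the one slight imprecision is that you invoke monotonicity of $\mathfrak{W}_i$ on the transition regions, which the paper does not assume and does not need, since $\mathrm{Var}(\mathfrak{W}_i)\ll 1$ already follows from the derivative bound \eqref{eqn:frakWbdd} together with the support being of measure $\ll\iota\mathcal{M}$.
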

\begin{proof}
Set $\tRe s = 0$.  Then trivially, 
$$\tilde {\frak{V}} (s) \ll \int_0^\infty \frac{\frak V(x)}{x} dx \ll \frac{\mathcal{M}}{\mathcal{M}} = 1.
$$
Integration by parts once yields that
\begin{align}\label{eqn:tildefrakVbdd1}
\tilde {\frak{V}} (s) = -\int_0^\infty \frac{\frak{V}'(x) x^s}{s} dx,
\end{align}and writing
$$\frak{V}'(x) = \frak W'(x) \frak U_i(x) + \frak U_i'(x) \frak W(x). 
$$We note that 
$$\frak U_i'(x) \frak W(x) \ll \frac{1}{\mathcal M},$$
while
$$\frak W'(x) \frak U_i(x) = 0
$$unless on an interval of length $\iota \mathcal M$ on which it trivially satisfies 
$$\frak W'(x) \frak U_i(x) \ll \frac{1}{\iota \mathcal M},
$$by \eqref{eqn:frakWbddcopy}.  Thus the bound
\begin{align*}
\tilde {\frak{V}} (s) \ll \frac{1}{|s|}
\end{align*}follows from \eqref{eqn:tildefrakVbdd1}.

By \eqref{eqn:frakVbdd}, and integration by parts $m$ times, it follows that
$$\tilde {\frak{V}} (s) \ll \int_0^\infty \frac{\frak{V}^{(m)}(x) x^{s+m-1}}{|s|^m} dx \ll \frac{1}{|s|^m} \int_{\mathcal M/2}^{5/2 \mathcal M} \frac{x^{m-1}}{(\iota \mathcal M)^m} dx \ll \frac{1}{\iota^m |s|^m},
$$ as desired.
\end{proof}

Before removing the smooth function $\frak V$, we first further partition our sums over $N_i$ and $M_i$.  

\subsubsection{Partitioning sums over $N_i$ and $M_i$}
We now proceed to partition the sums over each $N_i$ in the sum $S(J,\mathfrak{V}, \psi)$ using smooth compactly supported functions $\omega_r(t)$.  To be precise, for $r\ge 1$, let $\nu_r = 2^{r-2}$ run through powers of $2$ starting from $1/2$.  Then set 
$$\omega_r(t) = W\bfrac{t}{\nu_r}$$ 
where $W$ is a smooth function compactly supported on $[1, 3]$ satisfying 
\begin{equation}\label{eqn:Wstuff0}
W^{(k)}(t) \ll_k 1
\end{equation} for all $k\ge 0$.  

For bookkeeping convenience, we further introduce a dyadic partition to the sums over $M_i$.  Thus, we write for each $J, \mathfrak{V}, \psi$ that
\begin{align*}
S(J, \mathfrak{V}, \psi) = &\sum_{\omega_{1},...,\omega_{J}} \sumd_{L_1,...,L_J} \sum_{N_1, N_2,...,N_J} \omega_{1}(N(N_1))...\omega_{J}(N(N_J)) \\
&\sum_{\substack{M_1, M_2,...,M_J \\  \\ L_l < N(M_i) \le 2L_l \textup{ for all } 1\le l\le J}}  \log (N(N_J)) \mu_K(M_1)\mu_K(M_2)...\mu_K(M_J) \frak{V}(N(T))\psi(T),
\end{align*}where $\sumd_{L_1,...,L_J}$ denotes a sum over powers of two ranging from $1/2$ to $U/2$, recalling that $U$ is also a power of two from \eqref{eqn:U}.  To prove \eqref{eqn:afterdecomp}, it suffices to show that

\begin{equation}\label{eqn:afterdecomp2}
\int_{-T_0}^{T_0} \sumc_{j, k}b(j)b(k) \prod_{i=1}^{n+1} \frac{1}{m_i \xi \log X} T(J_i, \frak{V}_i, \psi(j, k, t))  \tilde{F}(it)dt \ll \Delta^3 V \exp(-c(\log V)^{1/3 - \epsilon}),
\end{equation}for some $c>0$,
where
\begin{align}\label{eqn:TJpsi}
T(J_i, \frak{V}_i, \psi) &= \sum_{N_1, N_2,...,N_{J_i}} \omega_{1, i}(N(N_1))...\omega_{J_i, i}(N(N_{J_i})) \notag \\
&\sum_{\substack{M_1, M_2,...,M_{J_i} \\ L_{l, i} < N(M_l) \le 2L_{l, i} \textup{ for all } 1\le l\le J_i}} \log (N(N_{J_i})) \mu_K(M_1)...\mu_K(M_{J_i}) \frak{V}_i(T) \psi(T)
\end{align}for each $J_i$, $\frak{V}_i$ and $\psi$, and for fixed smooth compactly supported functions $\omega_{l, i}(t)$ satisfying
$$\omega_{l, i}^{(k)}(t) \ll_k \frac{1}{\nu_{l, i}^k}
$$for all $k\ge 0$ and supported on $t \in [\nu_{l, i}, 3 \nu_{l, i}]$ for $1\le l\le J_i$.  

Indeed, there are $\ll (\log X)^{2J} = (\log X)^{O(1)}$ terms in the sum $\sum_{\omega_{1},...,\omega_{J}} \sumd_{L_1,...,L_J}$ and the additional product $\prod_{i=1}^{n+1}$ gives us that the expression on the left side of \eqref{eqn:afterdecomp} is a sum of $\ll (\log X)^{2(n+1)J} = (\log X)^{O(1)}$ terms of the form on the left side \eqref{eqn:afterdecomp2}.

Note that in \eqref{eqn:afterdecomp2}, we may take 
\begin{equation}\label{eqn:decompnewlength}
\prod_{i=1}^{n+1} \prod_{l = 1}^{J_i} (2L_{l, i} v_{l, i}) \asymp V.
\end{equation}
Here, we remind the reader that we are still writing $T = N_1....N_{J_i}M_1...M_{J_i}$ and the condition \eqref{eqn:decompnewlength} is inherited from $N(T) \ll V$, and we will be using this new condition \eqref{eqn:decompnewlength} after removing $\frak{V}$ through Mellin inversion.

\subsubsection{Removal of $\mathfrak{V}$ through Mellin inversion}

We now write
\begin{align}\label{eqn:inversemellinScal}
T(J, \frak V, \psi) &= \sum_{N_1, N_2,...,N_J}\omega_{1}(N(N_1))...\omega_{J}(N(N_J)) \notag \\
&\sum_{\substack{M_1, M_2,...,M_J \\  \\ L_l < N(M_i) \le 2L_l \textup{ for all } 1\le l\le J}} \log (N(N_J)) \mu_K(M_1)\mu_K(M_2)...\mu_K(M_J) \frak{V}(T)\psi(T)\\
&=\frac{1}{2\pi i} \int_{(0)} T(J, \psi)(s) \tilde {\frak{V}}(s) (1+|\iota s|)^{2J} ds,
\end{align}where 
\begin{align}\label{eqn:TJpsi}
T(J, \psi)(s) 
&=\sum_{N_1, N_2,...,N_J}\frac{\omega_{1}(N(N_1))}{(1+|\iota s|)^2}...\frac{\omega_{J}(N(N_J))}{(1+|\iota s|)^2} \log (N(N_J)) \\
&\sum_{\substack{M_1, M_2,...,M_J \\  \\ L_l < N(M_i) \le 2L_l \textup{ for all } 1\le l\le J}} \mu_K(M_1)\mu_K(M_2)...\mu_K(M_J) \frac{\psi(T)}{N(T)^s}.
\end{align}

By Lemma \ref{lem:frakVbdd}, we may truncate the integral in $s$ in \eqref{eqn:inversemellinScal} to $|s| \le V^\epsilon$, with error $\ll V^{-A}$ for any $A>0$ since $\iota V^{\epsilon} \gg V^{\epsilon/2}$ for any $\epsilon>0$.  The slightly unnatural seeming distribution of the $(1+|\iota s|)$ factors is for convenience only.  Note that $\tilde {\frak{V}}(s) (1+|\iota s|)^{2J}$ satisfies the same bounds as $\tilde {\frak{V}}(s)$ from \eqref{eqn:frakVbdd} and \eqref{eqn:frakVbdd2}.



\subsection{Notation and pruning}
We now write
\begin{equation}\label{eqn:decom}
\prod_{i=1}^{n+1} \frac{1}{m_i \xi \log X} T(J_i, \psi)(s_i) := \sum_{\substack{M_1,...M_r\\ L_l < N(M_l) \le 2L_l \textup{ for all } 1\le l \le r}} \sum_{\substack{N_1,...,N_r}} c_1(N_1)...c_r(N_r) \prod_{l=1}^r c_{i+r}(M_l),
\end{equation}where for 
\begin{equation}\label{eqn:r}
r = \prod_{i=1}^{n+1} J_i \ll k^{1/\delta} \ll 1
\end{equation} and $1\le l\le r$, either 
$$c_l(N_l) = \frac{1}{(1+|\iota s_{i(l)}|)^2} \frac{\log N(N_l)}{m_{i(l)} \xi \log X} W\bfrac{N(N_l)}{v_l} \frac{\psi(N_l)}{N(N_l)^{s_{i(l)}}}
$$or 
$$c_l(N_l) = \frac{1}{(1+|\iota s_{i(l)}|)^2}  W\bfrac{N(N_l)}{v_l}\psi(N_l) \frac{\psi(N_l)}{N(N_l)^{s_{i(l)}}},
$$where $W$ satisfies \eqref{eqn:Wstuff0}.  In the above, $i(l)$ is some index satisfying $1\le i(l) \le n+1$, and $|s_{i(l)}| \leq V^\epsilon$ with $\tRe s_{i(l)}  = 0$.  Writing $i = i(l)$, we see that both are of the form
\begin{equation}\label{eqn:ci1}
c_l(N_l) = \frac{1}{(1+|\iota s_{i}|)^2}  \omega_l\bfrac{N(N_l)}{v_l}\frac{\psi(N_l)}{N(N_l)^{s_i}},
\end{equation}
for some smooth function $\omega_l$ supported on $[1, 3]$.  We have either $\omega_l = W$, or $\omega_l(t) = \frac{1}{m_i \xi \log X} \left( W(t) \log t + \log v_l W(t)\right)$.  In both cases, by \eqref{eqn:Wstuff0}, we have the bounds
\begin{equation}\label{eqn:omegabdd}
\omega_l^{(k)}(t) \le 1,
\end{equation}and integrating by parts $k$ times gives
\begin{equation}\label{eqn:omegatildebdd}
\tilde{\omega_l}(s) \ll_k \frac{1}{(|s|+1)^k}
\end{equation}for any $k\ge 0$.

Similarly, 
\begin{equation}\label{eqn:ci2}
c_{l+r}(M_l) = \mu_K(M_l) \frac{\psi(M_l)}{N(M_l)^{s_i}} 
\end{equation}
for all $1\le l\le r$ and supported on a interval of the form $L_l < N(M_l) \le 2L_l$ for $L_l \le U$.  Thus, we write
\begin{equation}\label{eqn:frakT}
\frak T(\psi) := \prod_{i=1}^{n+1} \frac{1}{m_i \xi \log X} T(J_i, \psi)(s_i) = \prod_{l=1}^{2r} \mathcal{F}_l(\psi),
\end{equation}where 
\begin{equation}\label{eqn:Fipsi}
\mathcal{F}_l(\psi) = \sum_{M} c_l(M)
\end{equation}
where $c_l$ one of the above listed possibilities in \eqref{eqn:ci1} or \eqref{eqn:ci2}.  It should not disturb the reader that we are writing $M$ for both the $M_l$s and the $N_l$s.  

Note that the sums in $\mathcal{F}_l(\psi)$ are of length $v_l$ or $L_l$ depending on whether we are in the case \eqref{eqn:ci1} or \eqref{eqn:ci2}.  For notational convenience, let $\frak w_l$ for the length of $\mathcal{F}_l(\psi)$, so $\frak w_l = v_l$ or $\frak w_l = L_l$.  We now write
\begin{equation}\label{eqn:frakT}
\frak T(\psi) = E(\psi) K(\psi),
\end{equation}where $E(\psi)$ is the product of the sums $\mathcal{F}_l$ in \eqref{eqn:Fipsi} where $\frak w_l \leq V^{\delta_0}$ and $K(\psi)$ is the rest where $\delta_0$ is a parameter chosen to be sufficiently small.  

Recall that 
$$r \ll 1$$
from \eqref{eqn:r}.  Let $\frak u_E$ be the length of $E$ - in other words, the product of the lengths $\frak u_i$ of those $F_i$ that appear in $E$.  Similarly define $\frak u_K$.  Note that we may assume $V \asymp \frak w_1...\frak w_{2r}$ by \eqref{eqn:decompnewlength}.  

Thus, for any $\epsilon_0>0$, we may choose $\delta_0 = \frac{\epsilon_0}{2r}$ depending on $\epsilon_0$ and $k$ so that 
$$\frak u_E \ll V^{r \delta_0} = V^{\epsilon_0/2}$$ 
and thus 
\begin{equation}\label{eqn:uF}
\frak u_K \gg V^{1-\epsilon_0}
\end{equation}
We note for future reference that $\delta_0 > 0$ is a fixed positive number.

By the bound \eqref{eqn:frakVbdd}, to prove \eqref{eqn:afterdecomp2} we want to show that
\begin{align*}
\int_{-T_0}^{T_0} b(j)b(k) \sumc_{j, k} \frak T(\psi(j, k, t))\tilde{F}(it) dt \ll \Delta^3 V \exp(-c(\log V)^{1/3 - \epsilon}),
\end{align*}for some $c>0$ and fixed $s_1,...,s_{n+1}$ satisfying $\tRe s_i = 0$ and $s_i \ll V^\epsilon$, the the dependence on $s_1,...,s_{n+1}$ is as in \eqref{eqn:frakT}.  Thus it suffices to show that
\begin{equation}
 \int_{-T_0}^{T_0} \sumc_{j, k} \frak T(\psi(j, k, t))  dt\ll V \exp(-c(\log V)^{1/3 - \epsilon}),
\end{equation}for some $c>0$, upon recalling that $\tilde F(it) \ll \Delta$ and $b(j) \ll \Delta$.  Trivially $|E| \ll \frak u_E$ and $\frak u_E \frak u_K \ll V$ by \eqref{eqn:decompnewlength}, so it suffices to show that
\begin{equation}\label{eqn:afterdecomp5}
 \int_{-T_0}^{T_0} \sumc_{j, k} K(\psi(j, k, t))  dt\ll \frak u_K \exp(-c(\log V)^{1/3 - \epsilon}).
\end{equation}

For future convenience, we first deal with the case where $v_l$ corresponding to coefficients of the form \eqref{eqn:ci1} is large.  In particular, we separate the case $v_l > T_0^{9/5}$.  In this case, we write 
\begin{equation}\label{eqn:Fpsilong}
\mathcal{F}(\psi) = \sum_M c(M)
\end{equation}
where $c$ is given by 
\begin{equation}
c(M) = \frac{1}{(1+|\iota s|)^2}  \omega\bfrac{N(M)}{v}\frac{\psi(M)}{N(M)^{s}},
\end{equation}
is of the form \eqref{eqn:ci1}, with $v > T_0^{9/5}$.  For simplicity of notation, we have written $\mathcal{F}$ for $\mathcal{F}_l$, $\omega$ for $\omega_l$, $c$ for $c_l$, $M$ for $N_l$, and $v$ for $v_l$.  Note that the coefficients appearing in \eqref{eqn:ci2} are supported on intervals of length $L_l \leq U < T_0^{9/5}$, so by assumption, our coefficients must be of the form \eqref{eqn:ci1}.



We may then write
\begin{align*}
\mathcal{F}(\psi) =   \frac{1}{(1+|\iota s|)^2} \frac{1}{2\pi i}  \int_{(2)} L(w+s+it, \nu_0 \nu_1^j \nu_2^k) v^w \tilde{\omega}(w) dw.
\end{align*}Shifting contours to $\tRe w = 1/2$, we pick up a residue at $w = 1-s-it$, and so
\begin{equation}
\mathcal{F}(\psi) =    \frac{1}{(1+|\iota s|)^2} \frac{1}{2\pi i} \int_{(1/2)} L(w+s, \psi) v^w \tilde{\omega}(w) dw + O\left( \frac{|\delta_\psi v  \tilde{\omega}(1-s-it)|}{(1+|\iota s|)^2} \right).
\end{equation}
Here, $\delta_\psi$ is $0$ unless $\psi(S) = \nu_0(S) N(S)^{-it}$ where $\nu_0(S)$ is trivial in which case $\delta_\psi = 1$.

We bound the contribution of $v \tilde{\omega}(1-s-it)$ in the Lemma below.  
\begin{lem}\label{lem:polar}
With notation as above, there exists $c>0$ such that
\begin{equation}\label{eqn:polar}
\int_{\tau_0 \le |t| \le T_0} \frac{|v  \tilde{\omega}(1-s-it)|}{(1+|\iota s|)^2} dt \ll v \exp(-c(\log V)^{1/3 - \epsilon})
\end{equation}
\end{lem}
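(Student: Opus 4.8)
The plan is to pull the $t$-independent factor $v$ out of the integral and to establish
\[
\int_{\tau_0 \le |t| \le T_0} \frac{|\tilde\omega(1-s-it)|}{(1+|\iota s|)^2}\,dt \ll \exp\bigl(-c(\log V)^{1/3-\epsilon}\bigr).
\]
Write $s=i\sigma$ with $\sigma\in\mathbb{R}$, $|\sigma|\le V^\epsilon$, so that $1-s-it=1-i(\sigma+t)$ and hence $1+|1-s-it|\ge 1+|\sigma+t|$; by \eqref{eqn:omegatildebdd} this gives $\tilde\omega(1-s-it)\ll_k(1+|\sigma+t|)^{-k}$ for every $k\ge 0$, with implied constant uniform over the family of $\omega$'s by \eqref{eqn:omegabdd}. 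Thus $\tilde\omega(1-s-it)$ is sharply concentrated at $t=-\sigma$, and the argument splits according to whether this peak lies inside the range of integration. I would also keep in mind, from the definitions of $\iota$, $L$, $\Delta_0$, $\tau_0$, that $\iota=\exp(-(\log X)^\epsilon)$ and $\tau_0=\Delta_0^{-1/2}=\exp\bigl(\tfrac{c_0}{2}(\log L)^{1/2}\bigr)$, while $\log L\gg(\log X)^{1-\epsilon}$ and $\log X\asymp\log V$.

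First I would treat the case $|\sigma|\le\tau_0/2$. Then for every $t$ with $\tau_0\le|t|\le T_0$ we have $|\sigma+t|\ge|t|-\tau_0/2\ge|t|/2\ge\tau_0/2$, so, discarding the harmless factor $(1+|\iota s|)^{-2}\le 1$ and using the above bound with $k=3$,
\[
\int_{\tau_0 \le |t| \le T_0} |\tilde\omega(1-s-it)|\,dt
\ll \frac{1}{1+\tau_0/2}\int_{-\infty}^{\infty}(1+|u|)^{-2}\,du
\ll \tau_0^{-1} = \exp\bigl(-\tfrac{c_0}{2}(\log L)^{1/2}\bigr),
\]
which is $\ll\exp(-c(\log V)^{1/3-\epsilon})$ since $(\log L)^{1/2}\gg(\log X)^{(1-\epsilon)/2}\gg(\log V)^{1/3-\epsilon}$.

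In the complementary case $|\sigma|>\tau_0/2$ I would instead extract the saving from the denominator: $|\iota s|=\iota|\sigma|>\tfrac12\iota\tau_0=\tfrac12\exp\bigl(\tfrac{c_0}{2}(\log L)^{1/2}-(\log X)^\epsilon\bigr)$, and since $(\log L)^{1/2}\gg(\log X)^{1/2-\epsilon/2}$ dominates $(\log X)^\epsilon$ once $\epsilon$ is small, this forces $|\iota s|\gg\exp\bigl(c(\log X)^{1/2-\epsilon}\bigr)$ and hence $(1+|\iota s|)^{-2}\ll\exp(-c(\log V)^{1/3-\epsilon})$. Here only the crude estimate $\int_{\tau_0\le|t|\le T_0}|\tilde\omega(1-s-it)|\,dt\le\int_{-\infty}^{\infty}(1+|\sigma+t|)^{-2}\,dt\ll 1$ is needed (take $k=2$ in \eqref{eqn:omegatildebdd}), which is uniform in $\sigma$ and $T_0$; multiplying the two bounds finishes this case. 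Combining the two cases yields Lemma \ref{lem:polar}.

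The computation itself is short; the only point to get right is the dichotomy — the rapid decay of $\tilde\omega$ away from its peak and the decay of $(1+|\iota s|)^{-2}$ are complementary, so exactly one of them is available in each range of $|\sigma|$, and in both ranges the resulting saving $\exp(-c(\log L)^{1/2})$ comfortably beats the target $\exp(-c(\log V)^{1/3-\epsilon})$. Note in particular that the factor $T_0\ll V^{5/36}$ never enters the bound, since in the second case the $t$-integral is already $O(1)$.
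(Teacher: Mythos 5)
Your proof is correct and takes essentially the same approach as the paper's: the same two-case dichotomy on the size of $|s|$ relative to $\tau_0/2$, with the rapid decay of $\tilde\omega$ away from its peak supplying the saving when $|s|$ is small, and the factor $(1+|\iota s|)^{-2}$ supplying it when $|s|$ is large. Your bound in the second case ($O(1)$ for the $t$-integral) is marginally sharper than the paper's crude $O(\tau_0)$, but this makes no difference to the conclusion.
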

\begin{proof}
By \eqref{eqn:omegatildebdd}, we see that
\begin{equation}\label{eqn:polarbdd1}
\left|\frac{|v  \tilde{\omega}(1-s-it)|}{(1+|\iota s|)^2}\right| \ll \frac{v}{(1+|\iota s|)^2} \frac{1}{(1+|s+it|)^k}
\end{equation} 
for any $k\ge 0$.  

We examine the cases $|s| \le \tau_0/2$ and $|s| > \tau_0/2$ separately.  When $|s| \le \tau/2$, $|s+it| \gg \tau_0$ for $|t|\ge \tau_0$, and so \eqref{eqn:polarbdd1} imples that
\begin{align*}
\int_{\tau_0 \le |t| \le T_0} |v \tilde{\omega}(1-s-it)| dt 
\ll v \tau_0^{-k}
\end{align*}for any $k$, which suffices for our Lemma upon recalling $\tau_0 =  \Delta_0^{-1/2} = \exp(c_0/2 (\log L)^{1/2}) \gg \exp((\log V)^{1/3})$ from \eqref{eqn:tau0def}, \eqref{eqn:Delta0} and \eqref{eqn:L}.

When $|s| \ge \tau_0/2$, \eqref{eqn:polarbdd1} gives that
\begin{align*}
&\int_{\tau_0 \le |t| \le T_0} \left|\frac{v \tilde{\omega}(1-s-it)}{{(1+|\iota s|)^2}}\right| dt \\
&\ll \frac{v}{|\iota \tau_0|^2} \int_{\tau_0 \le |t| \le T_0}\frac{1}{(1+|s+it|)^k} dt \\
&\ll v \frac{\tau_0}{|\iota \tau_0|^2}. 
\end{align*}Recall that $\iota  = \exp(-(\log X)^\epsilon)$ from \eqref{eqn:iota} and $\tau_0 =  \Delta_0^{-1/2} = \exp(c_0/2 (\log L)^{1/2}) \gg \exp((\log V)^{1/3})$ from \eqref{eqn:tau0def}, \eqref{eqn:Delta0} and \eqref{eqn:L}.  Thus the bound above suffices for our Lemma.
\end{proof}

For ease of notation, we now write out 
$$K(\psi) = \prod_{l=1}^{r_0} \mathcal{F}_l(\psi)
$$for some $r_0 \ll r \ll 1$, which was first introduced in \eqref{eqn:frakT}.  We aim to replace $K(\psi)$ by
\begin{equation}\label{eqn:G}
G(\psi) = \prod_{l=1}^{r_0} G_l(\psi)
\end{equation} where
\begin{equation}\label{eqn:Gl1}
G_l(\psi) =  \frac{1}{(1+|\iota s|)^2} \frac{1}{2\pi i} \int_{(1/2)} L(w+s+it, \nu_0 \nu_1^j \nu_2^k) v_l^w \tilde{\omega}_l(w) dw,
\end{equation}if $\mathcal{F}_l(\psi)$ is of the form in \eqref{eqn:Fpsilong} with $\mathfrak{w}_l = v_l > T_0^{9/5}$ and otherwise
\begin{equation}\label{eqn:Gl2}
G_l(\psi) = \mathcal{F}_l(\psi).
\end{equation}

We now claim that to prove \eqref{eqn:afterdecomp5}, it suffices to show that
\begin{equation}\label{eqn:afterdecomp6}
 \int_{-T_0}^{T_0} \sumc_{j, k} \left|G(\psi(j, k, t))\right| dt\ll \frak u_F \exp(-c(\log V)^{1/3 - \epsilon}).
\end{equation}
Note that $G_l(\psi) \neq \mathcal{F}_l(\psi)$ only if $j=k=0$, $\nu_0$ is trivial, and $\mathfrak{w}_l > T_0^{9/5}$.  Further note $|G_l(\psi)| \ll \mathfrak{w}_l$ for all $l$ since $|\mathcal{F}_l(\psi)| \ll \mathfrak{w}_l$ and $G_l(\psi) = \mathcal{F}_l(\psi) + O(\mathfrak{w}_l \exp(-c(\log V)^{1/3 - \epsilon})$.  We may replace $K(\psi)$ by $G(\psi)$ by replacing each $\mathcal{F}_l(\psi)$ by $G_l(\psi)$ one by one, each time incurring an error of at most $\frak u_F \exp(-c(\log V)^{1/3 - \epsilon})$ by Lemma \ref{lem:polar}.  Since there are only $r_0 \ll 1$ of these replacements, the claim follows.


\subsection{Reduction to a discrete set}\label{subsec:reductiontodiscrete}

For any integrable function $\mathcal F$, we know that there exists $t_1,...,t_n$ such that $|t_i - t_j| \ge L >0$ for each $i\neq j$ with
$$\int_{-T_0}^{T_0} |\mathcal F(t)| dt \ll L \sum_{i=1}^n |\mathcal F(t_i)|.
$$Thus there exists a set $\Omega$ which is $18 \log^2 T_0$ well-spaced in the sense of \eqref{cond:wellspaced} such that

\begin{equation}
  \int_{-T_0}^{T_0} \sumc_{j, k} \left|G(\psi)\right| dt\ll \log^2 T_0 \sum_{\psi \in \Omega} \left|G(\psi)\right|.
\end{equation}

\begin{remark}\label{remark:psi1trivialcond}
From the conditions on $\sumc$, we also have that for $\psi \in \Omega$ with $\psi(M) = v_0v_1^jv_2^k(M) N(M)^{-it}$, that if $v_0v_1^jv_2^k(M)$ is the trivial character, then $|t| \ge \tau$ for $\tau_0  = \exp(c_0/2 \sqrt{\log L})$ from \eqref{eqn:tau0def} and \eqref{eqn:Delta0}.
\end{remark}

We thus see that in order to prove \eqref{eqn:afterdecomp6} it suffices to show that
\begin{equation}\label{eqn:afterdecomp7}
\sum_{\psi \in \Omega} \left| G(\psi)\right| \ll \frak u_F \exp(-c(\log V)^{1/3 - \epsilon}).
\end{equation}

For each $\psi \in \Omega$, we define $\sigma_j(\psi)$ to be such that
\begin{equation}\label{eqn:sigmaj}
|G_j(\psi)| = \mathfrak w_j^{\sigma_j(\psi)}.
\end{equation}
We split the range for $\sigma_j(\psi)$ into $O(\log V)$ ranges of the form $I_0 = (-\infty, 1/2]$, and 
$$I_l = \left(1/2+\frac{l-1}{\mathfrak{L}}, 1/2+ \frac {l}{\mathfrak{L}}\right]
$$for $1\le l \le \mathfrak{L} := \floor{\log V}$.
Now we have the following Lemma.

\begin{lem}\label{lem:zerofreebdd}
For any $c_i$ as defined in either \eqref{eqn:ci1} or \eqref{eqn:ci2} and for any $\delta_0>0$, $\frak w_i \gg V^{\delta_0\delta}$, we have
\begin{equation}
\sum_{N(M) \ll \frak w_i} c_i(M) \ll_\epsilon \frak w_i \exp(-c(\log \frak w_i)^{1/3-\epsilon}),
\end{equation}where $c>0$ is a constant depending on $\delta_0$.  The implied constant above is not effective due to the possible presence of Siegel zeros.
\end{lem}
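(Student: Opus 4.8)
The plan is to reduce $\sum_{N(M)\ll\frak w_i} c_i(M)$ to a contour integral involving a Hecke $L$-function and then to invoke the Vinogradov--Korobov type zero-free region of Coleman \cite{Cole}. Throughout, write $\psi(M)=\chi(M)N(M)^{-it}$, regarding $\chi=v_0v_1^jv_2^k$ as a Hecke Grossencharacter on ideals; its conductor has norm $\ll q^3\ll(\log X)^{3R}$, and since $|j|,|k|,|t|\ll T_0\ll V^{5/36}$ the analytic conductor of $\chi$ at height $\tau$ is $\ll V^{5/36}(\log X)^{3R}(|\tau|+3)$. Since $V^{\delta_0\delta}\ll\frak w_i\ll V$ and $\delta\asymp1$ we have $\log\frak w_i\asymp\log V$, with implied constants depending on $\delta_0$. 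For coefficients of type \eqref{eqn:ci1} there is already a smooth weight, and Mellin inversion gives, exactly as in the excerpt for the long case, $\sum_M c_i(M)=\frac{(1+|\iota s_i|)^{-2}}{2\pi i}\int_{(2)}\tilde\omega_l(w)\,\frak w_i^{w}\,L(w+s_i+it,\chi)\,dw$. For coefficients of type \eqref{eqn:ci2} I would first replace the (dyadic) sharp cutoff by a smooth weight $g(N(M)/\frak w_i)$ with transition regions of width $\Theta:=\exp(-(\log V)^{1/3-\epsilon})$, so that $g^{(k)}\ll_k\Theta^{-k}$; the resulting error is $\ll\frak w_i\Theta+\frak w_i^{2/3}\ll\frak w_i\exp(-c(\log\frak w_i)^{1/3-\epsilon})$ by the cubic-field ideal count, which is acceptable, and it is precisely this smoothing that accounts for the $\epsilon$ loss in the exponent; one is then left with $\frac{1}{2\pi i}\int_{(2)}\tilde g(w)\,\frak w_i^{w}\,L(w+s_i+it,\chi)^{-1}\,dw$. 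The rapid decay of $\tilde\omega_l$ and of $\tilde g$ (all derivatives $\ll_k\Theta^{-k}$) lets me truncate both integrals at $|\Im w|\le T_1:=\exp((\log V)^{1/3})$ with an error $\ll\frak w_i(\log V)^{-A}$; along this truncated contour the analytic conductor of $\chi$ stays $\ll V^{1/5}$, so its logarithm is $\asymp\log V$.

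Next I would shift the contour to $\Re w=1-c'\delta_1$, where $\delta_1:=(\log V)^{-2/3}(\log\log V)^{-1/3}$ is a fixed multiple of Coleman's zero-free width for all the relevant characters and heights $|\tau|\le T_1$. Two exceptional contributions may be crossed. First, if $\chi$ is the trivial character then $L(\cdot,\chi)=\zeta_K$ has a pole at $w=1-s_i-it$; this arises only for type \eqref{eqn:ci1} (for type \eqref{eqn:ci2} the reciprocal $1/\zeta_K$ vanishes there), and by Remark \ref{remark:psi1trivialcond} triviality of $\chi$ forces $|t|\ge\tau_0$, so the argument of Lemma \ref{lem:polar} — distinguishing $|s_i|\le\tau_0/2$ (whence $|1-s_i-it|\gg\tau_0$) from $|s_i|>\tau_0/2$ (whence the prefactor $(1+|\iota s_i|)^{-2}\ll(\iota\tau_0)^{-2}$ is exponentially small by \eqref{eqn:iota}, \eqref{eqn:tau0def} and $\log L\gg(\log X)^{1-\epsilon}$ from \eqref{eqn:L}) — bounds this residue by $\ll\frak w_i\exp(-c(\log V)^{1/3})$. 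Second, a Siegel zero $\beta_1>1-c'\delta_1$ of $L(s,\chi)$: since $v_1^jv_2^k$ has infinite order unless $j=k=0$, it can occur only for a real Dirichlet character $v_0$ of conductor $N(\mathfrak q)\ll(\log X)^{3R}$, for which Siegel's ineffective bound gives $1-\beta_1\gg_\epsilon(\log X)^{-\epsilon}$; for type \eqref{eqn:ci2} this is a simple pole of $1/L$ with residue $\ll\frak w_i^{\beta_1}(\log X)^{O(1)}\ll\frak w_i\exp(-c(\log X)^{-\epsilon}\log V)(\log X)^{O(1)}\ll\frak w_i\exp(-c(\log V)^{1-\epsilon})$, and for type \eqref{eqn:ci1} such a zero only decreases $|L|$. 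This is the source of the ineffectivity in the statement.

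On the shifted contour the standard estimates inside the zero-free region give $|L(w+s_i+it,\chi)|^{\pm1}\ll(\log V)^{A}$ (for $L$, convexity as in Lemma \ref{lem:convexity} together with the zero-free region; for $1/L$, the absence of zeros within distance $\asymp\delta_1$), while $\int|\tilde\omega_l(1-c'\delta_1+iu)|\,du$ and $\int|\tilde g(1-c'\delta_1+iu)|\,du$ are $\ll\log T_1\ll(\log V)^{1/3}$. Hence the shifted-contour contribution is
\begin{equation*}
\ll\frak w_i^{1-c'\delta_1}(\log V)^{A+1}=\frak w_i\exp(-c'\delta_1\log\frak w_i)(\log V)^{A+1}\ll\frak w_i\exp\!\big(-c(\log V)^{1/3}(\log\log V)^{-1/3}\big),
\end{equation*}
using $\delta_1\log\frak w_i\gg\delta_0\delta(\log V)^{1/3}(\log\log V)^{-1/3}$; since $\log\frak w_i\asymp\log V$ this is $\ll\frak w_i\exp(-c(\log\frak w_i)^{1/3-\epsilon})$ with $c=c(\delta_0)$, as claimed.

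The main obstacle is the two analytic inputs: importing Coleman's zero-free region together with the bounds on $L$ and (especially) $1/L$ in a form uniform in the Grossencharacter parameters $j,k$ and in the modulus $q\ll(\log X)^R$, and verifying that the analytic conductor really does remain polylogarithmically comparable to $\frak w_i$ along the whole truncated contour; everything else — the smoothing, the Mellin inversion, the truncation, and the elementary estimates with $\iota$, $\tau_0$ and $\delta_1$ — is routine.
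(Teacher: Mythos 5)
Your proposal is correct and follows essentially the same route as the paper: Mellin inversion to a Hecke $L$-function (respectively its reciprocal), contour shift to just inside Coleman's Vinogradov--Korobov zero-free region, handling the trivial-character pole via $|t|\ge\tau_0$ and the large-$|s_i|$ prefactor, with ineffectivity entering through Fogels/Siegel. The only differences are cosmetic (smoothing the sharp $\mu_K$ cutoff rather than truncated Perron; deriving the $1/L$ bound directly from the zero-free region rather than via the $L'/L$ detour of Lemma \ref{lem:logderivbdd}; and the explicit Siegel-zero residue bookkeeping, which is actually superfluous since Fogels's bound already places that zero strictly to the left of your shifted contour).
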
  

\begin{rem}
For clarity, we note that bound in Lemma \ref{lem:zerofreebdd} uses that our $q\le (\log X)^R$ for some $R$, and that $\log T_0 \ll \log V \asymp \log X$.  
\end{rem}

Note that Lemma \ref{lem:zerofreebdd} implies that there exists some $c>0$ with
\begin{equation}\label{eqn:sigmajbdd}
\sigma_j(\psi) \leq 1-\frac{c}{(\log \frak w_j)^{2/3+\epsilon}}.
\end{equation}This follows immediately from $G_j = F_j + O(\frak w_i \exp(-c(\log \frak w_i)^{1/3-\epsilon}))$ for some $c$.  We leave the standard proof of Lemma \ref{lem:zerofreebdd} until \S \ref{subsec:zerofree}.  Now we continue the proof of Proposition \ref{prop:errorsmallcube}.  

First, express $\Omega$ as a (not necessarily disjoint) union of sets $C(j, l)$ where $\psi \in C(j, l)$ if and only if $\sigma_j(\psi)$ is maximal for $1\le j\le 2r$, and $\sigma_j(\psi) \in I_l$.  To be explicit, we are setting
\begin{equation}\label{eqn:Cjl}
C(j, l) = \{\psi \in \Omega: \sigma_j(\psi) \in I_l \textup{ and } \sigma_j(\psi) \ge \sigma_i(\psi) \textup{ for all } 1\le i\le 2r\}
\end{equation}

By construction, we have that
\begin{align}\label{eqn:blah3}
\sum_{\psi \in C(j, l)} |G(\psi)| &= \sum_{\psi \in C(j, l)} \prod_{i=1}^{r_0}|G_i(\psi)| \le \sum_{\psi \in C(j, l)}\prod_{i=1}^{r_0} (\frak w_i)^{\sigma_j(\psi)}\\
& = \sum_{\psi \in C(j, l)} \frak u_F^{\sigma_j(\psi)} \ll \sum_{\psi \in C(j, l)} \frak u_F^{1/2+l/\mathfrak{L}},
\end{align}by the definition of $\sigma_i$ in \eqref{eqn:sigmaj}, since $\sigma_j(\psi) \ge \sigma_i(\psi) \textup{ for all } 1\le i\le r_0$ and since $\sigma_j(\psi) \in I_l$ for all $\psi \in C(j, l)$.

Since the number of classes $C(j, l)$ is bounded by $2\log V r_0 \ll \log V$, in order to prove \eqref{eqn:afterdecomp7} it suffices to show that
\begin{equation}\label{eqn:afterdecomp66}
\sum_{\psi \in C(j, l)} \left| G(\psi)\right| \ll \frak u_F \exp(-c(\log V)^{1/3 - \epsilon}).
\end{equation}
Let $R(j, l) = \# C(j, l)$.  By \eqref{eqn:blah3}, 
\begin{equation}\label{eqn:RuFbdd}
\sum_{\psi \in C(j, l)} \left| G(\psi)\right| \ll R(j, l)\frak u_F^{1/2+ l/\mathfrak{L}}.
\end{equation}
When $l=0$, the above is trivially bounded by $T_0^3 \frak u_F^{1/2} \ll  \frak u_F^{1-\epsilon}$, recalling $T_0 \ll V^{5/36}$ from \eqref{eqn:T_0} and $\frak u_F \gg V^{1-\epsilon_0}$ by construction from \eqref{eqn:uF}, which suffices.  Thus, to prove \eqref{eqn:afterdecomp66}, it suffices to show the following Proposition.

\begin{prop}\label{prop:afterdecomp8}
With notation as above and fixed $j, l$ with $1 \le l\le \mathfrak{L}$ and $1\le j\le 2r$, there exists some constant $c>0$ such that
\begin{equation}\label{eqn:afterdecomp8}
R(j, l)\frak u_F^{1/2+ l/\mathfrak{L}} \ll \frak u_F \exp(-c (\log V)^{1/3-\epsilon}).
\end{equation}
\end{prop}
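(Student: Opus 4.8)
\textbf{Proof proposal for Proposition \ref{prop:afterdecomp8}.}

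The plan is to estimate the number $R(j,l)$ of well-spaced characters $\psi \in C(j,l)$ by applying the large value estimates collected in \S\ref{subsec:largesieve} to the Dirichlet polynomial $G_j$, whose coefficients are supported on ideals of norm $\asymp \fra{w}_j$. By definition of $C(j,l)$, every $\psi \in C(j,l)$ satisfies $|G_j(\psi)| = \fra{w}_j^{\sigma_j(\psi)} \ge \fra{w}_j^{1/2 + (l-1)/\mathfrak{L}}$, so $R(j,l)$ counts characters at which $G_j$ is large, with threshold $\V = \fra{w}_j^{1/2+(l-1)/\mathfrak{L}}$. The first dichotomy is on the nature of the coefficients of $G_j$. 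If $G_j$ comes from a $\mu_K$-factor as in \eqref{eqn:ci2}, then by construction $\fra{w}_j = L_j \le U < T_0^{9/5}$, so $G_j$ is a short polynomial; here I would apply Duke's large value bound (Lemma \ref{lem:dukeupper}) together with, when $\V$ is large, the Montgomery--Huxley type bound (Lemma \ref{lem:huxmont}), balancing the two terms $N\|c\|^2/\V^2$ and $NT_0^3\|c\|^6/\V^6$ (with $N = \fra{w}_j$ and $\|c\|^2 \ll \fra{w}_j (\log \fra{w}_j)^{c}$). If instead $G_j$ is a ``smooth'' factor of the form \eqref{eqn:ci1} (or the polar integral \eqref{eqn:Gl1}), I would split once more according to whether $\fra{w}_j \le T_0^{9/5}$ or $\fra{w}_j > T_0^{9/5}$: in the former case the same large value machinery applies directly, while in the latter case I would exploit the $L$-function representation \eqref{eqn:Gl1}, shifting to $\tRe w = 1/2$ and using the convexity bound of Lemma \ref{lem:convexity} together with the mean value estimate of Lemma \ref{lem:dukelargesieve} to control $\sum_{\psi \in C(j,l)} |G_j(\psi)|^2$.

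The key numerical input is that the coefficients of $G_j$ are genuinely nontrivial: by Lemma \ref{lem:zerofreebdd} and \eqref{eqn:sigmajbdd}, every $\sigma_j(\psi)$ is at most $1 - c(\log \fra{w}_j)^{-2/3-\epsilon}$, hence $l \le \mathfrak{L}\big(1/2 - c(\log V)^{-2/3-\epsilon}\big)$, which forces $\fra{w}_j^{1/2 + l/\mathfrak{L}} \le \fra{w}_j \exp(-c(\log \fra{w}_j)^{1/3 - \epsilon})$ directly. So the real content is not the exponent $l/\mathfrak{L}$ but the power of $\log$ saved; more precisely, the zero-free region automatically gives a saving $\exp(-c(\log \fra{w}_j)^{1/3-\epsilon})$ in $|G_j(\psi)|$ itself, and the large value estimate must only show that $R(j,l)$ does not grow faster than a fixed power of $\fra{w}_j^{\epsilon}$ times $\fra{u}_F / \fra{w}_j$ in the relevant ranges. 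Combining $R(j,l) \fra{w}_j^{1/2+l/\mathfrak{L}} \le R(j,l) \fra{w}_j \exp(-c(\log V)^{1/3-\epsilon})$ with the crude bound $R(j,l) \ll T_0^3 \ll V^{5/12}$ and the product $\prod_i \fra{w}_i \asymp V$, and using that the other factors $G_i$ ($i \ne j$) are trivially bounded by $\fra{w}_i$, would complete the estimate once the total error $R(j,l)\prod_{i\ne j}\fra{w}_i \cdot \fra{w}_j\exp(-c(\log V)^{1/3-\epsilon})$ is seen to be $\ll \fra{u}_F \exp(-c'(\log V)^{1/3-\epsilon})$; here the ineffective constant from Siegel zeros is inherited from Lemma \ref{lem:zerofreebdd}.

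The main obstacle I anticipate is the intermediate range where $\fra{w}_j$ is neither very short nor essentially of length $V$ — that is, when $T_0^{9/5} < \fra{w}_j \ll V^{\theta}$ for moderate $\theta$ and $\V$ is in the transitional regime $\fra{w}_j^{1/2} \lesssim \V \lesssim \fra{w}_j^{2/3}$, where neither the $\V^{-2}$ nor the $\V^{-6}$ term in Lemma \ref{lem:huxmont} is clearly dominant and one must check that $R(j,l)$ stays below $V^{5/12 + \epsilon}$. Here the constraint $T_0 \ll V^{5/36 - \delta_0}$ should be exactly what is needed: it guarantees that $T_0^3 \ll V^{5/12}$ stays safely below $\fra{u}_F^{1-\epsilon} \gg V^{1-\epsilon_0}$, and that the harmonic family of size $\asymp T_0^3$ is never larger than the Dirichlet polynomial length in the decisive application. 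The delicate bookkeeping will be tracking the powers of $\log X$ absorbed at each stage — the $(\log NT_0)^{A'}$ from Montgomery--Huxley, the $(\log T_0)^8$ from Duke, the $(\log X)^{O(1)}$ from the partitions of unity — and confirming that all of them are dominated by the single genuine saving $\exp(-c(\log V)^{1/3 - \epsilon})$ coming from Coleman's zero-free region, which is the only source of a power saving over any power of $\log$.
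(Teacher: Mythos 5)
Your high-level outline — split on $\fra w_j \lessgtr T_0^{9/5}$, apply Duke's large-value bound and the Montgomery--Huxley estimate, and feed in the Coleman zero-free region through \eqref{eqn:sigmajbdd} — matches the paper's skeleton, but the ``key numerical input'' paragraph contains a genuine gap. You argue that the zero-free region already gives $|G_j(\psi)| \ll \fra w_j \exp(-c(\log V)^{1/3-\epsilon})$ pointwise, and that it then suffices to combine this with the crude $R(j,l) \ll T_0^3$ and trivial bounds on the other factors. That yields $\sum_{\psi}|G(\psi)| \ll T_0^3\,\fra u_F \exp(-c(\log V)^{1/3-\epsilon})$, and since $T_0^3 \gg V^{3\epsilon}$ is a \emph{power} of $V$, it is not absorbed by the quasi-polynomial saving $\exp(-c(\log V)^{1/3-\epsilon})$. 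The zero-free region alone cannot carry the argument when $\sigma_j(\psi)$ is close to $1$; there the Proposition forces $R(j,l)$ to be as small as $\exp(O((\log V)^{1/3}))$, far below $T_0^3$. What the paper actually establishes is a $\sigma$-\emph{dependent} bound $R(j,l) \ll T_0^{\frac{36}{5}(1-\sigma)}(\log X)^{O(1)}$, so that the large-value estimate provides a power saving proportional to $1-\sigma$, and the zero-free region then converts the residual factor into the stated $\exp(-c(\log V)^{1/3-\epsilon})$. The numerical bottleneck is $T_0^{36/5} \ll \fra u_F$, not the much weaker $T_0^3 \ll \fra u_F^{1-\epsilon}$ you name at the end.

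To get the exponent $\tfrac{36}{5}(1-\sigma)$ when $\fra w_j \le T_0^{9/5}$, the paper raises $G_j$ to a power $g$ chosen so that $T_0^{12/5} \le \fra w_j^g \le T_0^{18/5}$, aligning the polynomial length with the family size $\asymp T_0^3$ before applying Lemmas \ref{lem:dukeupper} and \ref{lem:huxmont}; your sketch does not include this device, and without it the estimates you list do not produce the required exponent. For the long range $\fra w_j > T_0^{9/5}$, you propose to control $\sum_{\psi}|G_j(\psi)|^2$ via convexity and Lemma \ref{lem:dukelargesieve}, but a second-moment bound gives only $R \ll \fra w_j^{1-2\sigma}T_0^3 \ll T_0^{24/5 - 18\sigma/5}$ (using $\fra w_j \ge T_0^{9/5}$), which is weaker than $T_0^{\frac{36}{5}(1-\sigma)}$ precisely for $\sigma \ge 2/3$. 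The paper instead uses the fourth moment of the relevant Hecke $L$-functions (Lemma \ref{lem:fwbigbdd}, resting on Duke's Theorem 2.2), whose stronger $\V^{-4}$ decay is what closes the range $\sigma \ge 2/3$.
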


\subsection{Proof of Proposition \ref{prop:afterdecomp8}}\label{subsec:largevalueapplication}
Since $j, l$ are fixed, for the rest of the proof, let us fix notation $C =C(j, l)$, $R = R(j, l)$, $\sigma(\psi) = \sigma_j(\psi)$, $G = G_j$, and $\frak w = \frak w_j$.  

We now split the proof into two cases, $\frak w \le T_0^{9/5}$ and $\frak w > T_0^{9/5}$.  Recall that for $\frak w \le T_0^{9/5}$ our $G = G_j = F_j = F$ as in \eqref{eqn:Gl2} for some $j$, while if $\frak w > T_0^{9/5}$, $G$ is of the form \eqref{eqn:Gl1}.

\subsubsection{Case 1: $\frak w \le T_0^{9/5}$}
Let
$$D_g = \sum_{n\leq \frak w} \tau_g(n)^2,
$$  where $\tau_g(n)$ denotes the number of ways to write $n$ as a product of $g$ natural numbers.  We will want to bound $D_g$ and the following crude bound suffices for our purposes.
\begin{lem}
With notation as above, uniformly in $g$,
\begin{equation}
D_g \ll \fw (\log \fw + \gamma)^{g^2 - 1}.
\end{equation}
\end{lem}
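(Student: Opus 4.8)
The plan is to estimate $D_g = \sum_{n \le \fw} \tau_g(n)^2$ by writing it as a partial sum of the Dirichlet series $\sum_n \tau_g(n)^2 n^{-s}$ and comparing this series with a power of the Riemann zeta function. The key observation is that $\tau_g$ is multiplicative with $\tau_g(p^a) = \binom{a+g-1}{g-1}$, so $\tau_g(p)^2 = g^2$, and hence the Euler product $\sum_n \tau_g(n)^2 n^{-s} = \prod_p \left(1 + g^2 p^{-s} + \cdots\right)$ agrees with $\zeta(s)^{g^2}$ up to a factor that converges absolutely and is bounded in a half-plane like $\Re s > 1/2$. More precisely, $\sum_n \tau_g(n)^2 n^{-s} = \zeta(s)^{g^2} H_g(s)$ where $H_g(s) = \prod_p \left(1-p^{-s}\right)^{g^2}\left(1 + \sum_{a \ge 1} \tau_g(p^a)^2 p^{-as}\right)$ is given by an Euler product that converges for $\Re s > 1/2$, and one needs a bound on $H_g(1)$ uniform (or sufficiently controlled) in $g$.

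First I would record the local factor bound: at each prime, $1 + \sum_{a\ge 1}\tau_g(p^a)^2 p^{-a} \le \sum_{a \ge 0} \tau_{g^2}(p^a) p^{-a}$ or some similar crude majorant, using $\tau_g(p^a)^2 \le \tau_{g^2}(p^a)$ (this follows from $\binom{a+g-1}{g-1}^2 \le \binom{a+g^2-1}{g^2-1}$, a standard binomial inequality), so that termwise $\sum_n \tau_g(n)^2 n^{-s} \le \sum_n \tau_{g^2}(n) n^{-s} = \zeta(s)^{g^2}$ for real $s > 1$. Then I would invoke the elementary Mertens-type estimate $\sum_{n \le x} \tau_k(n) \ll x (\log x + \gamma)^{k-1}$ valid uniformly in $k$ (this is the standard bound on summatory functions of the $k$-fold divisor function, obtainable by induction on $k$ via hyperbola-type splitting, or directly from $\sum_{n\le x}\tau_k(n) = \sum_{d\le x}\tau_{k-1}(d)\lfloor x/d\rfloor$). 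Applying this with $k = g^2$ and using the termwise majorization $\tau_g(n)^2 \le \tau_{g^2}(n)$ directly gives
\[
D_g = \sum_{n \le \fw} \tau_g(n)^2 \le \sum_{n \le \fw}\tau_{g^2}(n) \ll \fw (\log \fw + \gamma)^{g^2 - 1},
\]
which is exactly the claimed bound, with the implied constant absolute and uniform in $g$.

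The main obstacle — really the only one — is establishing the uniform-in-$k$ bound $\sum_{n \le x}\tau_k(n) \ll x(\log x + \gamma)^{k-1}$ with an absolute implied constant, since a naive induction produces a constant growing like $c^k$. The cleanest route is to prove by induction on $k$ that $\sum_{n\le x}\tau_k(n) \le x \frac{(\log x + \gamma)^{k-1}}{(k-1)!} \cdot C$ for a suitable absolute $C$, or alternatively to compare directly against the integral $\int_1^x \frac{(\log t)^{k-2}}{t}\,dt$; the factorial denominator is what keeps things uniform. Since the excerpt only asks for the crude bound $\fw(\log\fw+\gamma)^{g^2-1}$ without a factorial, one can afford to be generous: the inequality $\tau_g(n)^2 \le \tau_{g^2}(n)$ reduces everything to the single divisor-sum estimate, and I would simply cite or quickly reprove the standard fact that $\sum_{n\le x}\tau_k(n) \le x(\log x + \gamma)^{k-1}$ holds for all $k \ge 1$ and $x \ge 1$ (the case $k=1$ being $\lfloor x\rfloor \le x$, and the inductive step following from $\sum_{n\le x}\tau_k(n) = \sum_{d \le x}\tau_{k-1}(d)\lfloor x/d\rfloor \le x\sum_{d\le x}\frac{\tau_{k-1}(d)}{d}$ together with $\sum_{d\le x}\tau_{k-1}(d)/d \le (\log x + \gamma)^{k-1}$, itself proved by the same induction). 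This makes the whole lemma a short routine argument.
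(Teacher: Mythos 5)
Your argument is correct in structure but takes a genuinely different route from the paper, which gives no proof at all here and merely cites Lemma~2 of \cite{HBidentity} and Shiu's theorem \cite{Shiu}; you instead reduce everything to the pointwise domination $\tau_g(n)^2 \le \tau_{g^2}(n)$ together with an elementary divisor-sum bound, which gives a short self-contained argument. The pointwise inequality, which you assert as standard, does hold: reducing to prime powers it becomes $\binom{a+g-1}{a}^2 \le \binom{a+g^2-1}{a}$, and writing each binomial as $\prod_{i=1}^a\frac{g-1+i}{i}$ resp.\ $\prod_{i=1}^a\frac{g^2-1+i}{i}$, the $i$-th factor of the ratio satisfies $(g-1+i)^2 - i(g^2-1+i) = (1-i)(g-1)^2 \le 0$ for $i\ge 1$, so the ratio is $\le 1$. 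The second ingredient needs the small repair you yourself anticipate: the inductive step you sketch rests on $\sum_{d\le x}\tau_{k-1}(d)/d \le (\log x + \gamma)^{k-1}$, which already fails at $k-1=1$ because $\sum_{d\le N}1/d > \log N + \gamma$ for every integer $N\ge 1$. What is elementary and correct is $\sum_{d\le x}\frac{\tau_{k-1}(d)}{d} = \sum_{d_1\cdots d_{k-1}\le x}\frac{1}{d_1\cdots d_{k-1}} \le \bigl(\sum_{d\le x}\tfrac1d\bigr)^{k-1} \le (\log x + 1)^{k-1}$, whence $\sum_{n\le x}\tau_k(n) \le x(\log x + 1)^{k-1}$; converting this to $\ll(\log x+\gamma)^{k-1}$ costs a factor that is bounded only when $k\ll\log x$. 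In the paper's application $g = O_{\delta_0}(1)$ and $\fw$ is a power of $V$, so this is harmless and your two-step argument carries the lemma in the range where it is actually used; genuine uniformity in $g$ would require the factorial-sharp version you allude to, or a larger constant than $\gamma$ inside the logarithm. What your route buys is a transparent elementary proof avoiding Shiu's Brun--Titchmarsh machinery; what the paper's citation buys is brevity.
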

This is essentially identical to Lemma 2 of \cite{HBidentity} and for instance is immediately implied by the main result of Shiu in \cite{Shiu}.

We will apply Lemma \ref{lem:dukeupper} or Lemma \ref{lem:mont} to the polynomial $G^g = F^g = \sum_{M} a(M)$ say, where $|a(M)| \leq \tau_{3g}(N(M))$ by comparing the coefficients of $\zeta_K(s)^g$ with $\zeta(s)^{3g}$.  Then applying Lemma \ref{lem:dukeupper} to $G^g$ where if $\frak w \ge T_0^{9/5}$, $g = 2$ and otherwise we take $g$ to be any integer such that
$$T_0^{12/5} \le \frak w^g \le T_0^{18/5},
$$we see that
\begin{align*}
R 
&\ll \left( \fw^{g(1-2\sigma)} + T_0^3 \fw^{-2\sigma g}  (\log T_0)^8 \right) D_g\\
&\ll \left( \fw^{g(2-2\sigma)} + T_0^3 \fw^{(1-2\sigma) g}  (\log T_0)^8 \right) (\log \fw)^{3g^2 - 1}.
\end{align*}Thus, in the range $1/2 \le \sigma \le 3/4 $, the above gives that there exists some $A'>0$ such that
\begin{align}\label{eqn:Rbdd}
R \ll \begin{cases}
T_0^{\frac{36}{5}(1-\sigma)} (\log \fw+\gamma)^{3g^2 +A'} &\textup{ if } \fw\le T_0^{9/5}\\
\fw^{2(2-2\sigma)}(\log \fw+\gamma)^{3g^2 +A'} &\textup{ if } \fw \ge T_0^{9/5}.
\end{cases}
\end{align}
Indeed, the first line follows from $\fw^g \le T_0^{18/5}$ when $\fw \le T_0^{9/5}$.  The second line follows from the fact that $g = 2$ and $\fw^g > T_0^3$.  

We now proceed to prove that the same bound \eqref{eqn:Rbdd} holds for $3/4 < \sigma \le 1$ also.  We apply Lemma \ref{lem:huxmont} to the polynomial $G^g$ again choosing $g = 2$ for $\fw \ge T_0^{9/5}$ and otherwise $g$ to satisfy
$$T_0^{12/5} \le \frak w^g \le T_0^{18/5}.
$$
This gives that
\begin{equation}
R \ll (\fw^{g(2-2\sigma)} + T_0^3 \fw^{g(4-6\sigma)}) (\log \fw)^{3g^2+A'},
\end{equation}for some absolute constant $A' > 0$.  This implies that \eqref{eqn:Rbdd} holds when $3/4\le \sigma \le 1$ also.  Indeed, the case $\fw \le T_0^{9/5}$ follows from $T_0^{12/5}\le \fw^g \le T_0^{18/5}$ so that $\fw^{g(2-2\sigma)} \le T_0^{36/5}$ follows easily while $T_0^3 \fw^{g(4-6\sigma)} \le T_0^{36/5(1-\sigma)}$ follows by substituting $T_0^{12/5}$ for $\fw$ and using that $\sigma \ge 3/4$ so that $4-6\sigma \le 0$.

Now, by \eqref{eqn:RuFbdd} and \eqref{eqn:Rbdd}, when $\fw \le T_0^{9/5}$, we see that the contribution of the class $C$ is bounded by
$$T_0^{36/5(1-\sigma)} (\log \fw + \gamma)^{3g^2 + A'} \frak u_F^{\sigma} \ll \frak u_F \exp(-c \exp(\log V)^{1/3 - \epsilon})
$$since $(\log \fw + \gamma)^{3g^2 + A'} \ll \exp((\log V)^\epsilon)$ and
$$T_0^{36/5(1-\sigma)}  \ll \frak u_F^{(1-\sigma)(1-\epsilon)} \ll \frak u_F^{1-\sigma} \exp(-c(\log V)^{1/3-\epsilon})
$$using that $1-\sigma \gg \frac{1}{(\log V)^{2/3 + \epsilon}}$ by \eqref{eqn:sigmajbdd} since $\log \frak{u} \asymp \log V$ and recalling that $T_0 \ll V^{5/36 - \delta_0}$ for some $\delta_0 > 0$ from \eqref{eqn:T_0}.

\subsubsection{Case 2: $\frak w > T_0^{9/5}$}\label{subsubsec:T0big}
If $\fw > T_0^{9/5}$, recall that $G$ must be of the form \eqref{eqn:Gl1}.  We now prove the following Lemma.

\begin{lem}\label{lem:fwbigbdd}
With $G = G_l$ of the form \eqref{eqn:Gl1} and $\fw > T_0^{9/5}$, we have that there exists some constant $r>0$ such that
\begin{equation}
\sum_{\psi \in C(j, l)} \left|G(\psi)\right|^4 \ll \fw^2  T_0^3 (\log T_0)^r
\end{equation}
\end{lem}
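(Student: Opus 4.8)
\textbf{Proof proposal for Lemma \ref{lem:fwbigbdd}.}

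The plan is to bound the fourth moment by expanding $|G(\psi)|^4$ and reducing to a mean value estimate for the Hecke $L$-function $L(w+s+it, \nu_0\nu_1^j\nu_2^k)$ on the line $\tRe(w+s) = 1/2$, i.e.\ a fourth moment estimate for $L$ on the critical line, averaged over the well-spaced family $\Omega$. First I would recall from \eqref{eqn:Gl1} that
$$G(\psi) = \frac{1}{(1+|\iota s|)^2} \frac{1}{2\pi i} \int_{(1/2)} L(w+s+it, \nu_0\nu_1^j\nu_2^k)\, v^w\, \tilde\omega(w)\, dw,$$
and that by \eqref{eqn:omegatildebdd} the test function $\tilde\omega(w)$ decays faster than any polynomial in $|w|$, so the $w$-integral may be truncated to $|\tIm w| \le V^\epsilon$ (say) with negligible error, and $v^w \ll v^{1/2}$ on this line. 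Thus, up to the harmless factor $(1+|\iota s|)^{-2} \le 1$ and an application of Cauchy--Schwarz (or Hölder) in the $w$-integral against the $L^1$-normalizable weight $|\tilde\omega(w)|\,|dw|$, it suffices to bound
$$\sum_{\psi \in C(j,l)} \int_{|\tIm w| \le V^\epsilon} \left| L\bigl(\tfrac12 + i(\tIm w + t), \nu_0\nu_1^j\nu_2^k\bigr)\right|^4 \fw^2 \,|\tilde\omega(w)|\,|dw|,$$
since $|G(\psi)|^4 \ll \fw^2 \bigl(\int |L(\tfrac12 + i(\cdots))|\, |\tilde\omega(w)|\,|dw|\bigr)^4 \ll \fw^2 \int |L|^4 |\tilde\omega(w)|\,|dw|$ by Hölder with the weight $|\tilde\omega(w)|\,|dw|$ having total mass $O(1)$. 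Here I am writing $\psi = \psi(j,k,t)$ with the understanding that $\nu_0\nu_1^j\nu_2^k$ has modulus $\ll (\log T_0)^R$ and $|j|,|k|,|t| \ll T_0$.

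Next I would absorb the shift $\tIm w$ into the $t$-variable: for each fixed $w$ on the truncated line, as $\psi$ ranges over the $18\log^2 T_0$-well-spaced set $C(j,l) \subseteq \Omega$, the points $\tfrac12 + i(\tIm w + t)$ together with the character data $(j,k)$ form a well-spaced family (in the sense of \eqref{cond:wellspaced}) of size $R$ with $|t|, |j|, |k| \ll T_0$. The required input is then a fourth-moment/large-values bound for $L(s,\psi)$ on the critical line for this family. This is exactly the kind of estimate that follows from the large sieve bound of Lemma \ref{lem:dukelargesieve} applied to an approximate functional equation for $L(\tfrac12 + i\tau, \psi)^2$: writing $L(\tfrac12+i\tau,\psi)^2 \approx \sum_{N(\mathfrak{a}) \ll T_0^3} \tau_2(\mathfrak a)\psi(\mathfrak a) N(\mathfrak a)^{-1/2-i\tau} + (\text{dual sum})$ via the convexity-strength approximate functional equation (the length $T_0^3$ coming from the analytic conductor $q(|t|+|j|+|k|)^3 \ll (\log T_0)^R T_0^3$), one obtains
$$\sum_{\psi \in \Omega} \left|L(\tfrac12 + i\tau(\psi), \psi)\right|^4 \ll (T_0^3 + T_0^3)\,(\log T_0)^{A} \sum_{N(\mathfrak a) \ll T_0^3} \frac{\tau_2(\mathfrak a)^2}{N(\mathfrak a)} \ll T_0^3 (\log T_0)^{A'},$$
using Lemma \ref{lem:dukelargesieve} with $N \ll T_0^3$ and $\|c\|^2 \ll (\log T_0)^{O(1)}$, together with the standard second-moment bound for $\tau_2$. (Alternatively this fourth-moment bound for the family can be cited directly as the number-field analogue of the classical Montgomery fourth-moment estimate, already available via Lemmas \ref{lem:dukeupper} and \ref{lem:mont}.) Feeding this back gives $\sum_{\psi \in C(j,l)} |G(\psi)|^4 \ll \fw^2 T_0^3 (\log T_0)^r$ for some $r > 0$, as claimed, after integrating the $O(1)$-mass weight $|\tilde\omega(w)|\,|dw|$ over the truncated line.

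The main obstacle is establishing the fourth-moment bound $\sum_{\psi}|L(\tfrac12+i\tau,\psi)|^4 \ll T_0^3(\log T_0)^{O(1)}$ for the Hecke family with the correct exponent $3$ (matching the number of harmonics $T_0^3$), uniformly in the auxiliary modulus $q \ll (\log T_0)^R$; this is where one needs the approximate functional equation of the right length — which in turn relies on the convexity bound Lemma \ref{lem:convexity} to control the error terms — and a clean application of Duke's large sieve Lemma \ref{lem:dukelargesieve}. One must also be slightly careful that the shift by $\tIm w$ and the restriction $|t| \ge \tau_0$ recorded in Remark \ref{remark:psi1trivialcond} (relevant only when $\nu_0\nu_1^j\nu_2^k$ is trivial, to avoid the pole of $\zeta_K$) do not interfere; but since $G$ in \eqref{eqn:Gl1} is the already-residue-subtracted contour integral, the pole at $w = 1-s-it$ has been removed, so no polar term appears in $G$ and this causes no difficulty.
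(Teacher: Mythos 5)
Your proposal follows the same overall architecture as the paper: open the contour representation \eqref{eqn:Gl1}, pull out the $\fw^{1/2}$ factor on $\tRe w=1/2$, apply H\"older against the $O(1)$-mass weight $|\tilde\omega|\,|dw|$ to reduce $|G(\psi)|^4$ to $\fw^2\int|L|^4|\tilde\omega|$, exploit well-spacing of the $\psi\in C(j,l)$, and invoke a fourth-moment bound of strength $T_0^3(\log T_0)^{O(1)}$ for the Hecke family. The paper handles the tail of the $w$-integral by a dyadic decomposition $T_1=2^kT_0$ together with the estimate $\sum_{t\in S(\psi_1)}|\tilde\omega(w-it-s)|\ll (T_0/T_1)^C$, whereas you truncate the $w$-integral at $|\tIm w|\le V^\epsilon$ up front; both are fine, and yours is marginally tidier since the weight $\tilde\omega$ does decay superpolynomially. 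The genuine point of divergence is the fourth-moment input. The paper simply cites Theorem~2.2 of Duke \cite{Du}, with a short remark on how the $q$-dependence is controlled; you instead sketch a re-derivation via an approximate functional equation for $L^2$ of length $\asymp qT_0^3$ fed into the discrete large sieve of Lemma~\ref{lem:dukelargesieve}/\ref{lem:dukeupper}. That route is viable (and has the virtue of using only lemmas stated in the paper), but you should be aware of two technicalities you gloss over: (i) the AFE cutoff depends on $(\psi_1,\tau)$ through the analytic conductor, so before the large sieve can be applied the cutoff must be made $\psi$-independent (by fixing the length to the maximal conductor, or by a smooth dyadic decomposition), with the residual error controlled by the convexity bound Lemma~\ref{lem:convexity}; and (ii) the parenthetical suggestion that the fourth moment is ``already available via Lemmas~\ref{lem:dukeupper} and \ref{lem:mont}'' is imprecise --- those give mean-square and large-value bounds for Dirichlet polynomials, not a fourth moment for $L$-values on the critical line; the AFE step is indispensable. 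Your observation at the end that the pole at $w=1-s-it$ has already been removed in passing from $\mathcal{F}_l$ to $G_l$ (so Remark~\ref{remark:psi1trivialcond} causes no trouble here) is correct and matches the structure of Lemma~\ref{lem:polar} in the paper.
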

\begin{proof}
For notational convenience, let 
$$\psi_1(M)  = \psi_1(i_1, i_2)(M) = v_0v_1^{i_1}v_2^{i_2}(M).$$  We will write $\psi(M) = \psi_1(M) N(M)^{-it}$.
Then by \eqref{eqn:Gl1} with a small change of variables and neglecting the factor $\mathfrak{w}^{-it-s}$ since it is size $1$,
\begin{align}
|G(\psi)| \le \int_{(1/2)} \left| L(w, \psi_1) \fw^{w} \tilde \omega(w-it-s) \right| dw.
\end{align}

On the line $\tRe w = 1/2$, $|\fw^{w}| = \fw^{1/2}$, so we also have
\begin{equation}\label{eqn:blah4}
\left|G(\psi)\right|^4 \le \fw^2 \left(\int_{(1/2)} \left| L(w, \psi_1) \tilde \omega(w-it-s) \right| dw\right)^4.
\end{equation}

For each $\psi_1$, let $S(\psi_1)$ be the set of well-spaced points of the form $\{t_1, t_2,...,t_n\}$ where $\psi(M) = \psi_1(M)N(M)^{-it_m} \in C(j, l)$ for each $1\le m\le n$.  Let $C_1(j, l)$ be the set of $\psi_1$ with nonempty $S(\psi_1)$.

We apply Holder's inequality and a bound for $\tilde{\omega}$ as in \eqref{eqn:omegatildebdd} to see that
\begin{align}
&\sum_{\psi_1 \in C_1(j, l)} \sum_{t\in S(\psi_1)} \fw^2 \left(\int_{(1/2)} \left| L(w, \psi_1) \tilde \omega(w-it-s) \right| dw\right)^4 \notag \\
&\leq \fw^2 \sum_{\psi_1 \in C_1(j, l)} \sum_{t\in S(\psi_1)} \left(\int_{(1/2)} \left|L(w, \psi)\right|^4 |\tilde \omega(w-it-s)| dw\right) \left(\int_{(1/2)}  |\tilde \omega(w)| dw\right)^3\\
&\ll \fw^2 \sum_{\psi_1 \in C_1(j, l)} \sumd_{T_1} I(T_1),
\end{align}where $\sumd_{T_1}$ denotes a dyadic sum over quantities $T_1 =2^k T_0$ over integers $k\ge 0$, and 
\begin{equation}
I(T_0) = \int_{1/2-iT_0}^{1/2+iT_0} \left|L(w, \psi)\right|^4   \sum_{t\in S(\psi_1)}  |\tilde{\omega}(w-it-s)| dw,
\end{equation}while for $T_1 \geq 2T_0$,
\begin{equation}
I(T_1) = \left(\int_{1/2-iT_1}^{1/2-iT_1/2} + \int_{1/2+iT_1/2}^{1/2+iT_1}\right) \left|L(w, \psi)\right|^4  \sum_{t\in S(\psi_1)}|\tilde{\omega}(w-it-s)| dw.
\end{equation} We claim
$$\sum_{t\in S(\psi_1)}|\tilde \omega(w-it-s)|  \ll_C \bfrac{T_0}{T_1}^{C}
$$for any $C>0$.  Indeed, the bound from \eqref{eqn:omegatildebdd} gives  
$$|\tilde \omega(w-it-s)| \ll_C \frac{1}{1+|w-it-s|^C}
$$for any $C>0$, so that
$$\sum_{t\in S(\psi_1)}|\tilde \omega(w-it-s)| \ll 1,
$$uniformly in $w$ and $s$ since $S(\psi_1)$ is well spaced.  To be specific, $|t_i - t_j| \ge 18\log T_0$ for $t_i \neq t_j$ elements of $S(\psi_1)$ by \eqref{cond:wellspaced}.  This gives the claimed bound when $T_1 \le 4T_0$.  

When $T_1 > 4T_0$, $|\tIm w| > 2T_0$, and since $|t| \leq T_0$ and $|s| \leq V^\epsilon \leq T_0/2 = \frac{V^\epsilon}{2\Delta}$ for sufficiently large $X$ (and $V$) upon recalling the definition of $T_0$ from \eqref{eqn:T0}, $|w-it-s| \asymp |w| \asymp T_1$, whence the stronger bound
$$\sum_{t\in S(\psi_1)}|\tilde \omega(w-it)| \ll_C \frac{1}{T_1^C},
$$holds for any $C>0$.

On the other hand, the fourth moment
$$\sum_{\psi_1 \in C_1(j, l)} \int_{1/2-iT_1}^{1/2+iT_1} \left|L(w, \psi)\right|^4 dw \ll T_1^3 (\log T_1)^r
$$for some $r>0$ by Theorem 2.2 in Duke's work \cite{Du}.  Duke states his result for fixed $\theta$ with conductor $q$ with the implied constant dependent on $q$, but following the proof, it is clear that the claimed bound above still holds.  For this, we recall that $q \le (\log X)^R$ for some $R$ as in the statement of Lemma \ref{lem:SWbdd} and $T_1 \ge T_0 \gg X^\epsilon$ so $\log X \ll \log T_1$.  Following Duke's proof, the dependence on $q$ is polynomial\footnote{This arises from an extra factor of $\sqrt{q}$ in the length of the Dirichlet polynomial approximations occuring in the approximate functional equation.}, so it may be absorbed into the power $(\log T_1)^r$.  

From this, we see that
\begin{equation}
\sum_{\psi \in C(j, l)} \left|G(\psi)\right|^4  \ll \fw^2  \sumd_{T_1} \bfrac{T_0}{T_1}^C T_1^3 (\log T_1)^r \ll \fw^2  T_0^3 (\log T_0)^r
\end{equation}
upon choosing $C = 4$ for instance.
\end{proof}

Lemma \ref{lem:fwbigbdd} immediately implies that
$$R \fw^{4\sigma} \ll \fw^2  T_0^3 (\log T_0)^r,
$$from which we conclude that
$$R \ll \fw^{2-4\sigma} T_0^3  \ll \left(T_0^{3} \right)^{1+\frac{3}{5}(2-4\sigma)} \ll T_0^{36/5(1-\sigma)},
$$where we have put in $T_0^{9/5}$ for $\fw$ valid since $2-4\sigma \le 0$.  The desired result follows as before.

\subsection{Improved zero free region type bound}\label{subsec:zerofree}
Here, we prove Lemma \ref{lem:zerofreebdd}.  The main ingredient appears in Coleman's work on an improved zero free region for Hecke L-functions in \cite{Cole}.  In our case, we are examining $L(s, \psi)$ where $\psi = v_0 v_1^jv_2^k$ for integers $j, k$.  Recall from Remark \ref{rem:characterchi} that $v_0$ involved a character $\theta$ with modulus $\asymp q \ll (\log X)^R$.  We write $\psi = \theta \lambda$ and set the analytic conductor of $L(s, \psi) = L(\sigma+it, \theta \lambda)$ to be $\fC = \fC(t) = j^2 + k^2 + t^2 + 15$, where by design, $\log \log \fC >0$.  We have not included a $q$ dependence in $\fC$ for convenience when citing Coleman's result below.  

Theorem 1 from Coleman's work \cite{Cole} implies that there exists a constant $c_2 > 0$ such that for $\sigma \ge \sigma_0 := 1-\frac{c_2}{\log^{2/3} \fC}$
\begin{equation}\label{eqn:Lcolemanbound0}
L(\sigma+it, \theta\lambda) \ll q^{1-\sigma_0} \log^{2/3} \fC.
\end{equation}

Theorem 2 from \cite{Cole} implies that $$L(\sigma+it, \theta \lambda) \neq 0$$ for all $\sigma \geq 1- \frac{c_2}{\max(\log^{2/3}\fC (\log \log \fC)^{1/3}, \log q)}$ aside for possibly an exceptional real zero $\beta$ satisfying
$$1-\beta \gg_\epsilon \frac{1}{q^\epsilon},
$$for any $\epsilon>0$ where the implied constant is ineffective, the latter bound being an earlier result of Fogels \cite{Fogels}.  

In our application, we will always have that $\log \fC \ll \log V$ and $q \ll (\log V)^A$ for some constant $A$ for the same $V$ as in Proposition \ref{prop:bilinear2}.  Thus, we now restate the above two results in terms of $V$ instead.  First \eqref{eqn:Lcolemanbound0} implies that
there exists a constant $c_2 > 0$ such that for $\sigma \ge \sigma_0 := 1-\frac{c_2}{\log^{2/3} V}$
\begin{equation}\label{eqn:Lcolemanbound}
L(\sigma+it, \theta\lambda) \ll \log^{2/3} V.
\end{equation}

Similarly Theorem 2 from \cite{Cole} and Fogel's result \cite{Fogels} implies that $$L(\sigma+it, \theta \lambda) \neq 0$$ for all $\sigma \geq 1- \frac{c_2}{\log^{2/3}V (\log \log V)^{1/3}}$ where the constant $c_2>0$ is ineffective.  


A standard argument converts these two statements into an upper bound for $\frac{1}{L(s, \theta \lambda)}$ near the $\tRe s = 1$ line.  To be precise, we have the following Lemma.
\begin{lem} \label{lem:logderivbdd}
There exists an ineffective constant $c_1>0$ such that 
$$\frac{L'}{L}(\sigma + it, \theta \lambda) \ll (\log V)^{2/3} \log\log V.
$$for all $\sigma > 1-\frac{c_1}{\log^{2/3}V}$.  
\end{lem}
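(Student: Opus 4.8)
The plan is to deduce Lemma \ref{lem:logderivbdd} from the zero-free region and the Borel–Carathéodory / Hadamard three-circles machinery in the standard way, exactly as one proves $\frac{L'}{L}(s)\ll(\log T)^{2/3}(\log\log T)$ near $\sigma=1$ for the Riemann zeta function. First I would fix $\psi=\theta\lambda$ and work near a point $s_0 = \sigma_0 + it_0$ with $\sigma_0 = 1 + (\log V)^{-2/3}$, say, and with $|t_0|$ of size at most $T_0$ so that the analytic conductor $\fC(t_0)\ll V^{o(1)}$ and in particular $\log\fC\ll\log V$. The upper bound \eqref{eqn:Lcolemanbound} gives $|L(s,\psi)|\ll (\log V)^{2/3}$ on a disc of radius $\asymp (\log V)^{-2/3}$ about $s_0$ (shrinking the radius by a constant factor so the whole disc stays in the region $\sigma \ge 1 - c_2(\log V)^{-2/3}$), while Theorem 2 of Coleman \cite{Cole} together with Fogels' bound \cite{Fogels} guarantees that $L(s,\psi)$ has no zero in a slightly larger disc, except possibly one real Siegel zero $\beta$ with $1-\beta\gg_\epsilon q^{-\epsilon}$, which — since $q\ll(\log V)^A$ — lies at distance $\gg (\log V)^{-A\epsilon'}$ from the $\sigma=1$ line; this is where the ineffectivity enters.

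Next I would apply the Borel–Carathéodory theorem to $\log L(s,\psi)$ on these concentric discs: since $\tRe\log L = \log|L| \ll \log\log V$ on the outer disc and $L$ is bounded below by a positive power of the conductor at the center (from $\zeta_K$'s Euler product near $\sigma>1$, $|L(s_0,\psi)|\gg (\log V)^{-1}$ crudely suffices), one obtains $|\log L(s,\psi)|\ll (\log V)^{2/3}\log\log V$ on a slightly smaller disc. Then I would use the Borel–Carathéodory estimate again, or rather a Cauchy-integral / Hadamard-three-circles argument applied to $\log L$, to bound its derivative $\frac{L'}{L}(s,\psi)$ by $(\text{bound on }\log L)/(\text{radius})$; with radius $\asymp (\log V)^{-2/3}$ this would naively give $(\log V)^{4/3}\log\log V$, which is too lossy, so instead I would follow the classical refinement: write $\frac{L'}{L}(s,\psi) = \sum_{\rho}\frac{1}{s-\rho} + O(\log\fC)$ (the partial-fraction / Hadamard-factorization formula over the zeros $\rho$ near $s$), and use that in the zero-free region each term $|s-\rho|^{-1}$ is controlled because the nearest zero is at distance $\gg (\log V)^{-2/3}(\log\log V)^{-1/3}$, while the number of zeros within distance $O(1)$ of $s$ is $\ll \log\fC \ll \log V$. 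Summing gives $\frac{L'}{L}(s,\psi)\ll (\log V)^{2/3}(\log\log V)^{1/3}\cdot \log\log V$-type savings; being slightly generous with the exponent of $\log\log V$ yields the stated bound $\frac{L'}{L}(\sigma+it,\theta\lambda)\ll (\log V)^{2/3}\log\log V$ for $\sigma > 1 - c_1(\log V)^{-2/3}$ with $c_1$ small enough (smaller than $c_2$ so the relevant discs lie in the known zero-free region).

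The main obstacle is bookkeeping the Siegel zero and making sure the constant $c_1$ is chosen so that the disc on which we run Borel–Carathéodory avoids $\beta$: one must take $c_1$ strictly smaller than the constant implicit in the zero-free region of Theorem 2 of \cite{Cole}, and one must accept an ineffective constant because the distance $1-\beta$ is only bounded below ineffectively (via Fogels). A secondary point of care is that we need the bound uniformly over the characters $\psi = v_0 v_1^j v_2^k$ with $|j|,|k|\ll T_0$, but since the dependence on $j,k$ enters only through the analytic conductor $\fC = j^2+k^2+t^2+15 \ll V^{o(1)}$, and the modulus $q$ of $\theta$ is $\ll (\log X)^R \ll (\log V)^R$, all the conductor-dependent quantities are $\ll V^{o(1)}$ hence $\log$ of them is $\ll \log V$; this uniformity is automatic once everything is phrased in terms of $V$ as in \eqref{eqn:Lcolemanbound}. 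I would then remark that Lemma \ref{lem:logderivbdd} is exactly what is needed to run the contour-shifting argument (shifting to $\tRe w = 1 - c_1(\log V)^{-2/3}$ and bounding $\frac{L'}{L}$ there, or equivalently estimating $\log L$ on that line) in the proof of Lemma \ref{lem:zerofreebdd} in \S\ref{subsec:zerofree}.
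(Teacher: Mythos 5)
Your overall strategy matches the paper's: apply a Borel--Carath\'eodory type argument near $\sigma=1$, using Coleman's upper bound \eqref{eqn:Lcolemanbound} to control the real part of $\log L$, and Coleman's zero-free region plus Fogels' bound on the Siegel zero to keep the disc free of zeros. The paper does precisely this by invoking Lemma $\alpha$ of Titchmarsh \S 3.9, which is a Borel--Carath\'eodory estimate phrased directly for $f'/f$: it gives $\left|\frac{L'}{L}(s)-\sum_\rho\frac{1}{s-\rho}\right|\ll M/r$ on the disc $|s-s_0|\le r/4$ whenever $|L(s)/L(s_0)|\le e^M$ on $|s-s_0|\le r$, and since the relevant disc is zero-free the sum over $\rho$ disappears.

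There is, however, a concrete error in your intermediate estimate, and the ``refinement'' you bolt on to fix it is both unnecessary and not properly closed. Coleman's bound gives $|L(s,\psi)|\ll(\log V)^{2/3}$, so $\tRe\log L=\log|L|\ll\log\log V$, not $(\log V)^{2/3}\log\log V$; likewise the lower bound $|L(s_0,\psi)|\gg(\log V)^{-2/3}$ (from the Euler product / comparison with $\zeta_K$ at $\sigma_0=1+(\log V)^{-2/3}$) gives $|\log L(s_0)|\ll\log\log V$. Borel--Carath\'eodory then yields $|\log L(s,\psi)|\ll\log\log V$ on a disc of radius $\asymp(\log V)^{-2/3}$ --- your stated intermediate bound $|\log L|\ll(\log V)^{2/3}\log\log V$ is off by a factor of $(\log V)^{2/3}$. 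Once this is corrected, the ``naive'' Cauchy estimate $|(\log L)'|\ll(\text{bound on }\log L)/(\text{radius})$ gives $|L'/L|\ll\log\log V\cdot(\log V)^{2/3}$ directly, which is exactly the claimed bound; no partial-fraction refinement is needed. What you propose as the refinement would not close as written: bounding each $|s-\rho|^{-1}$ by the zero-free-region width $(\log V)^{2/3}(\log\log V)^{1/3}$ and multiplying by a crude count of $\ll\log V$ zeros in a unit neighbourhood produces $(\log V)^{5/3}(\log\log V)^{1/3}$, which is far too big; one would need a dyadic decomposition of the zeros by distance, together with a density bound, to rescue this route. The paper's use of Titchmarsh's Lemma $\alpha$ sidesteps all of this by working with $L'/L$ directly rather than estimating $\log L$ and then differentiating. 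A secondary caveat: when choosing the disc one must keep both the $r$-disc inside the region where Coleman's upper bound holds and the $r/2$-disc (on which Lemma $\alpha$'s zero sum runs) inside the zero-free region and away from the Siegel zero; this constrains the admissible radius and hence the constant $c_1$ and accounts for the ineffectivity, as you correctly anticipate.
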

\begin{proof}
Lemma $\alpha$ in \S 3.9 of \cite{titch} implies that for $s_0 = 1+(\log V)^{-2/3}$, and $r = (\log V)^{-2/3} + \frac{c_2}{\log^{2/3}V}$ that for any $s$ such that $|s-s_0| \le r/4$ that
$$\left|\frac{L'}{L}(s, \theta \lambda) \right| \ll M (\log^{2/3} V)
$$provided that
$$\frac{L(s, \theta \lambda)}{L(s_0, \theta \lambda)} < e^{M}
$$for all $|s-s_0| \le r$.  On the other hand, a comparison of $L(s_0, \theta \lambda)$ with $\zeta_K(s_0)$ implies that $L(s_0, \theta \lambda) \gg \log^{-2/3} V$, and combining this with Coleman's result above implies that
$$\frac{L(s, \theta \lambda)}{L(s_0, \theta \lambda)} < e^{M_0 \log \log V}
$$for some constant $M_0$.
\end{proof}

Now write $s = \sigma + it$ with $\sigma > 1-\frac{1}{\log^{2/3} V \log \log V}$, and let $\sigma' = 1+\frac{1}{\log^{2/3} V \log \log V}$.  Then
\begin{align*}
-\tRe \log L(s, \lambda \theta) 
&= -\tRe \log L(\sigma' + it, \lambda \theta)+ \int_{\sigma}^{\sigma'} \tRe \frac{L'}{L} (u+it, \lambda \theta) du\\
&\le \log \left(C_0 \log^{2/3} V \log \log V \right) + O(1),
\end{align*}for some $C_0 > 0$ and by Lemma \ref{lem:logderivbdd}.  We therefore have that for $\sigma> 1- \frac{1}{\log^{2/3} V \log \log V}$,
\begin{equation}\label{eqn:1Lbound}
\frac{1}{L(s, \lambda \theta)} \ll \log^{2/3} V \log \log V.
\end{equation}
Now, recall that we want to prove
\begin{equation}\label{eqn:rbound}
\sum_{N(M) \le \fw} r(M) \ll \fw \exp(-c(\log \fw)^{1/3 - \epsilon}),
\end{equation}for some $c$ where $\fw \gg V^{\delta_0 \delta}$ and either

\begin{equation}\label{eqn:r1}
r(M) = \mu_K(\beta) \lambda\theta(M) N(M)^{-it-s},
\end{equation}or
\begin{equation}\label{eqn:r2}
r(M) = \frac{1}{(1+|\iota s|)^2} W\bfrac{N(M)}{R} \lambda\theta(M) N(M)^{-it-s},
\end{equation}where $t$ and $s$ are fixed parameters satisfying $|t| \leq T_0$ and $|s| \leq V^\epsilon$.  Moreover, $W(u)$ is a smooth compactly supported function satisfying
$$W^{(k)}(u) \ll_k \frac{1}{u^k},
$$and integration by parts yields
\begin{equation}\label{eqn:tildeW}
\tilde{W}(w) \ll \frac{1}{1+|w|^k}
\end{equation}for all $k\ge 0$.
Moreover, if $\lambda \theta$ is trivial, we must have $|t| \ge \tau_0$.  

When $|s| \ge \tau_0/2$ and $r(M)$ is of the form in \eqref{eqn:r2}, the claimed bound is immediate, since then 
$$ \frac{1}{(1+|\iota s|)^2} \ll \exp(-c (\log V)^{1/3 - \epsilon}),
$$upon recalling \eqref{eqn:iota} \eqref{eqn:tau0def}, and \eqref{eqn:Delta0}.

Otherwise, the desired bound then follows by standard arguments which we sketch.  In the first case \eqref{eqn:r1}, we write
\begin{equation}
\sum_{N(\beta) \le \fw} r(\beta) = \frac{1}{2\pi i} \int_{(2)} \frac{1}{L(w+s+it, \lambda \theta)} \frac{\fw^w}{w} dw.   
\end{equation}Truncating the integral and shifting to $\tRe w = 1-\frac{1}{ \log^{2/3} V \log\log V} \ge 1 - O\bfrac{1}{\log^{2/3 + \epsilon} V}$  yields the claim \eqref{eqn:rbound} upon applying the bound \eqref{eqn:1Lbound}, and recalling that $\fw \gg V^{1/(\log V)^{\epsilon}}.$
In the second case \eqref{eqn:r2} and $|s| < \tau_0/2$, we write
\begin{equation}
\sum_{N(\beta) \le \fw} r(\beta) = \frac{1}{2\pi i} \int_{(2)} L(s+w+it, \lambda \theta) R^w \tilde W(w) dw,   
\end{equation}and shift contours to $\sigma_0 = 1-\frac{c_2}{\log^{2/3}V}$. If $\lambda \theta$ is trivial, the residue at $w = 1-it - s$ gives a contribution $\ll R \tilde{W}(1-it-s)$.  Since we have $|s| \le \tau_0/2$ while $|t| \ge \tau_0$, this is $\ll R/\tau_0$ by \eqref{eqn:tildeW}, which is acceptable as before for our claimed bound.  

On the line $\tRe w = \sigma_0 = 1-\frac{c_2}{\log^{2/3}V}$, the bound \eqref{eqn:rbound} follows from the bound \eqref{eqn:Lcolemanbound}, the rapid decay of $\tilde W$ from \eqref{eqn:tildeW} and of course $R\gg  V^{1/(\log V)^{\epsilon}}.$


\begin{thebibliography}{9999}
\bibitem{Cole} M. D. Coleman, A zero-free region for the Hecke L -functions, Mathematika , 37 (1990), 287-304.

\bibitem{Coleman2} M. D. Coleman, The distribution of points at which norm-forms are prime. J. Number Theory 41 (1992), no. 3, 359-378.

\bibitem{Da} H. Davenport, {\it Multiplicative Number Theory}, GTM
  vol.74, Springer-Verlag, New York, 2000. 

\bibitem{Du} W. Duke, Some problems in multidimensional analytic number theory. Acta Arith. 52 (1989), no. 3, 203-228. 

\bibitem{FouI} E. Fouvry and H. Iwaniec, Gaussian primes. Acta
  Arith. 79 (1997), no. 3, 249-287. 

\bibitem{FIO} J. Friedlander and H. Iwaniec, {\it Opera de cribro.}
  American Mathematical Society Colloquium Publications, 57. American
  Mathematical Society, Providence, RI, 2010. 

\bibitem{FI} J. Friedlander and H. Iwaniec, The polynomial $X^2+Y^4$
  captures its primes. Ann. of Math. (2) 148 (1998), no. 3, 945 -
  1040. 

\bibitem{FI2} J. Friedlander and H. Iwaniec, Gaussian sequences in
  arithmetic progressions. Funct. Approx. Comment. Math. 37 (2007),
  part 1, 149-157. 

\bibitem{FI3} J. Friedlander and H. Iwaniec, Asymptotic sieve for
  primes. Ann. of Math. (2) 148 (1998), no. 3, 1041-1065. 

\bibitem{Fogels} E. Fogels. On the zeros of Hecke's L-functions I. Acta Arith. 7 (1962), 87-106.

\bibitem{Har} G. Harman, {\it Prime-detecting sieves.} London
  Mathematical Society Monographs Series, 33. Princeton University
  Press, Princeton, NJ, 2007. 

\bibitem{HB} D.R. Heath-Brown, Primes represented by $x^3+2y^3$. Acta
  Math. 186 (2001), no. 1, 1-84. 
  
\bibitem{HBidentity} D.R. Heath-Brown, Prime numbers in short intervals and a generalized Vaughan identity. Canad. J. Math. 34 (1982), no. 6, 1365-1377.

\bibitem{HBL} D.R. Heath-Brown and Xiannan Li, Primes values of $a^2 + p^4$. To appear, Inventiones mathematicae.

\bibitem{HBM} D.R. Heath-Brown and B.Z. Moroz, On the representation
  of primes by cubic polynomials in two variables. Proc. London
  Math. Soc. (3) 88 (2004), no. 2, 289-312. 

\bibitem{He} E. Hecke, Eine Neue Art von Zetafunktionen und ihre Beziehungen zur Verteilung der Primzahlen, Math. Z. 6 (1920), 11-51.

\bibitem{Hux} M. N. Huxley, On the difference between consecutive primes, Invent. Math. 15 (1972), 164-170.

\bibitem{JM} James Maynard, Primes represented by incomplete norm forms, arXiv:1507.05080.

\bibitem{LSX} Peter Cho-Ho Lam, Damaris Schindler and Stanley Yao Xiao, On prime values of binary quadratic forms with a thin variable, Journal of the LMS 102 (2020), 749-772.

\bibitem{mitsui} T. Mitsui, Generalized prime number theorem.
Jap. J. Math. 26 (1956), 1--42. 

\bibitem{Mont} Hugh L. Montgomery, Topics in multiplicative number theory. 
Lecture Notes in Mathematics, Vol. 227. Springer-Verlag, Berlin-New York, 1971.

\bibitem{Shiu} P. Shiu, A Brun-Titchmarsh theorem for multiplicative functions, J. reine angew. Math. 318 (1980), 161-170.

\bibitem{titch} E.C. Titchmarsh, The theory of the Riemann
  zeta-function. Second edition. The Clarendon Press, 
Oxford University Press, New York, 1986.

\end{thebibliography}
\end{document}